\newtheorem{theorem}{Theorem}[section]					
\newtheorem{lemma}[theorem]{Lemma}
\newtheorem{corollary}[theorem]{Corollary}
\newtheorem{proposition}[theorem]{Proposition}
\theoremstyle{definition}
\newtheorem{definition}[theorem]{Definition}
\theoremstyle{definition}
\newtheorem{hypotheses}[theorem]{Hypotheses}
\newcommand{\N}{\mathbb{N}}								
\newcommand{\Z}{\mathbb{Z}}
\newcommand{\K}{\mathbb{K}}
\newcommand{\OO}{\mathcal{O}}
\newcommand{\catname}[1]{{\normalfont\textbf{#1}}}		
\newcommand{\bimod}[2]{{}_{#1}\catname{mod}_{#2}}
\newcommand{\lmod}[1]{{}_{#1}\catname{mod}}
\newcommand{\ltriv}[1]{{}_{#1}\catname{triv}}
\newcommand{\Aut}{\operatorname{Aut}}
\newcommand{\BE}{\mathcal{BE}}
\newcommand{\Bl}{\operatorname{Bl}}
\newcommand{\bli}{\operatorname{bli}}
\newcommand{\BP}{\mathcal{BP}}
\newcommand{\br}{\operatorname{br}}
\newcommand{\CentFun}{\operatorname{CentFun}}
\newcommand{\dsum}{\oplus}
\newcommand{\extens}[2]{\overset{#2}{\underset{#1}{\tensor}}}
\newcommand{\gp}[1]{\langle#1\rangle}
\newcommand{\Hom}{\operatorname{Hom}}
\newcommand{\IBr}{\operatorname{IBr}}
\newcommand{\id}{\operatorname{id}}
\newcommand{\Ind}{\operatorname{Ind}}
\newcommand{\into}{\hookrightarrow}
\newcommand{\Irr}{\operatorname{Irr}}
\newcommand{\iso}{\cong}
\newcommand{\isoto}{\overset{\sim}{\to}}
\newcommand{\nor}{\trianglelefteq}
\newcommand{\onto}{\twoheadrightarrow}
\newcommand{\Orb}{\operatorname{Orb}}
\newcommand{\PrInd}{\operatorname{PrInd}}
\newcommand{\Res}{\operatorname{Res}}
\newcommand{\set}[1]{\left\{#1\right\}}
\newcommand{\Stab}{\operatorname{Stab}}
\newcommand{\subgp}{\leq}
\newcommand{\tensor}{\otimes}
\newcommand{\tr}{\operatorname{tr}}
\newcommand{\tpitchfork}{%
	\vbox{
		\baselineskip\z@skip
		\lineskip-.52ex
		\lineskiplimit\maxdimen
		\m@th
		\ialign{##\crcr\hidewidth\smash{$-$}\hidewidth\crcr$\pitchfork$\crcr}
	}%
}
\title{Coherence conditions for the characters of trivial source modules and strong isotypies}
\author{John Revere McHugh}
\begin{document}
	
	\maketitle
	
	\begin{abstract}
		We introduce a new type of equivalence between blocks of finite group algebras called a \textit{strong isotypy}. A strong isotypy is equivalent to a $p$-permutation equivalence and restricts to an isotypy in the sense of Brou\'{e}. To prove these results we first establish that the group $T_{\OO}(B)$ of trivial source $B$-modules, where $B$ is a block of a finite group algebra, is isomorphic to groups of ``coherent character tuples.'' This provides a refinement of work by Boltje and Carman which characterizes the ring $T_{\OO}(G)$ of trivial source $\OO G$-modules, where $G$ is a finite group, in terms of coherent character tuples.
	\end{abstract}
	
	\section{Introduction}
	
	Let $G$ be a finite group, let $p$ be a prime number, and let $(\K,\OO,F)$ be a $p$-modular system large enough for $G$. Let $A$ be a block of $\OO G$ with defect group $D$ and let $B$ denote the Brauer correspondent of $A$ (which, we recall, is a block of $\OO N_G(D)$). In \cite{Broue_1990} Brou\'{e} conjectured that if $D$ is abelian then the bounded derived categories $D^b(\lmod{A})$ and $D^b(\lmod{B})$ are equivalent. Later, in \cite{Rickard_1996}, Rickard refined Brou\'{e}'s conjecture by postulating the existence of a special type of derived equivalence between $A$ and $B$: what is known now as a splendid Rickard equivalence. Turning to Grothendieck groups, a splendid Rickard equivalence induces a $p$\textit-permutation equivalence between $A$ and $B$ as defined by Boltje and Perepelitsky in \cite{Boltje_2020}. The existence of a splendid Rickard equivalence or of a $p$-permutation equivalence in the situation of the abelian defect group conjecture would provide an explanation for the phenomenon of an isotypy between $A$ and $B$: a block equivalence defined at the ``character level'' which has been observed in all examples computed to date. 
	
	The main aim of the present article is to provide a closer examination of the construction of an isotypy from a $p$-permutation equivalence (or, less generally, from a splendid Rickard equivalence). In this pursuit we are led to a new type of block equivalence that we call a \textit{strong isotypy} --- see Definition \ref{defn:strongisotypy}. The terminology has been chosen because a strong isotypy can be viewed as an extension of Brou\'{e}'s original conception of isotypy, which appeared in \cite{Broue_1990}. Let $G$ and $H$ be finite groups, $A$ a block of $\OO G$ and $B$ a block of $\OO H$. Recall that an isotypy between $A$ and $B$ is defined relative to a maximal $A$-Brauer pair $(D,e)$, a maximal $B$-Brauer pair $(E,f)$, and an isomorphism of fusion systems $\phi:\mathcal{F}_{(E,f)}(B)\isoto\mathcal{F}_{(D,e)}(A)$, and consists of a family of ``compatible'' perfect isometries $\mu_Q$, $Q\subgp E$, between the centralizers of the Brauer pairs corresponding under $\phi$ (see \cite[Definition 15.3]{Boltje_2020}). The requirement that the perfect isometries $\mu_Q$ be ``compatible'' with one another has two equivalent formulations: one in terms of commutative diagrams involving the generalized decomposition maps and another in terms of character value relations \cite[Proposition 4.7]{Broue_1990}. A strong isotypy is defined relative to the same data as an isotypy and again consists of a family of ``compatible'' virtual characters $\chi_Q$ indexed by the subgroups $Q\subgp E$; but now the characters $\chi_Q$ are defined on the \textit{normalizers} of the relevant Brauer pairs rather than on the centralizers. The ``compatiblity'' requirement placed on the characters $\chi_Q$ in the definition of strong isotypy is expressed in terms of character value relations. A strong isotypy induces an isotypy \`{a} la Brou\'{e} simply by restricting to centralizers --- see Theorem \ref{thm:strongisotimpliesisot}. We also find commutative diagrams associated to a strong isotypy that extend the diagrams appearing in the compatibility condition of the definition of ``isotypy.'' These diagrams again involve perfect isometries and generalized decomposition maps. Roughly speaking, however, our diagrams exist at the ``normalizer level'' rather than the ``centralizer level,'' as in the case of an isotypy. See Theorem \ref{thm:mydiagrams}. 
	
	We are reassured that our notion of strong isotypy is the correct extension of an isotypy in the sense of Brou\'{e} by the results of Section \ref{sec:strongisoandpperm}. In this section we show that a $p$-permutation equivalence between blocks $A$ and $B$ induces a strong isotypy between $A$ and $B$ and conversely that every strong isotypy ``comes from'' a $p$-permutation equivalence. In fact, there is a bijection between the set of $p$-permutation equivalences between $A$ and $B$ and the set of strong isotypies between $A$ and $B$ --- this is stated precisely and shown in Theorem \ref{thm:ppermequivtosi}. If an isotypy is the shadow of a $p$-permutation equivalence, as is expected in the situation of the abelian defect group conjecture, then it is really only a partial shadow: a strong isotypy fills in completely the missing compatibility criteria for the characters associated to a $p$-permutation equivalence.
	
	In \cite[Theorem A]{Boltje_2022} Boltje and Carman introduced a ring of so-called ``coherent character tuples'' that is isomorphic to the trivial source ring $T_\OO(G)$ of a finite group $G$ (we recall their result in Theorem \ref{thm:coherence1}). In this situation a coherent character tuple is a tuple of virtual characters $(\chi_P)_{P\in S_p(G)}$ indexed by the $p$-subgroups of $G$ such that $\chi_P\in R_\K(N_G(P)/P)$ for each $P\in S_p(G)$. In Section \ref{sec:coherenceconditions} we provide refinements of Boltje and Carman's work that describe the trivial source group $T_\OO(B)$, where $B$ is a block of $\OO G$, in terms of coherent character tuples. The main difference here is that the coherent character tuples that describe $T_\OO(B)$ can be indexed by the set of all $B$-Brauer pairs (see Theorem \ref{thm:coherence3}) or by the subgroups of a fixed defect group of $B$ (see Theorem \ref{thm:coherence4}). These theorems provide the avenue to the definition of strong isotypy and to the results described in the previous paragraphs.
	
	Throughout this note $p$ will denote a prime number and any $p$-modular system $(\K,\OO,F)$ will be assumed ``large enough'' for the finite groups under consideration. We write $\overline{\cdot}:\OO\onto F$ for the canonical surjection. 
	
	If $G$ is a finite group then $S_p(G)$ denotes the set of $p$-subgroups of $G$. We write $G_{p'}$ for the set of $g\in G$ of order not divisible by $p$. The elements of $G_{p'}$ are also called $p'$-\textit{elements} of $G$. We write $g\sim_G h$ if $g$ and $h$ are $G$-conjugate elements of $G$. 
	
	If $g,h\in G$ then we write $c_g(h)={}^gh=ghg^{-1}$.
	
	Recall that if $g\in G$ then there exists a $p$-element $g_p\in G$ and a $p'$-element $g_{p'}\in G$ for which $g=g_pg_{p'}=g_{p'}g_p$. Moreover, $g_p,g_{p'}\in\gp{g}$ and the pair $(g_p,g_{p'})$ is unique. The element $g_p$ is called the $p$-\textit{part of }$g$ and the element $g_{p'}$ is the $p'$-\textit{part of }$g$.
	
	If $G$ and $H$ are finite groups then $p_1:G\times H\onto G$ and $p_2:G\times H\onto H$ denote the canonical projections. If $X\subgp G\times H$ then we set
	\begin{equation*}
		k_1(X):=\set{g\in G|(g,1)\in X}\qquad\text{and}\qquad k_2(X):=\set{h\in H|(1,h)\in X}.
	\end{equation*}
	One has $k_i(X)\nor p_i(X)$ for $i=1,2$ and the projections $p_i$ induce isomorphisms $X/(k_1(X)\times k_2(X))\isoto p_i(X)/k_i(X)$.

	If $Q\subgp H$, $P\subgp G$, and $\phi:Q\isoto P$ is an isomorphism, set
	\begin{equation*}
		\Delta(P,\phi,Q):=\set{(\phi(y),y)|y\in Q}\subgp G\times H.
	\end{equation*}
	Subgroups of $G\times H$ of the form $\Delta(P,\phi,Q)$ are called \textit{twisted diagonal} subgroups. Write $S_p^\Delta(G\times H)$ for the collection of twisted diagonal $p$-subgroups of $G\times H$. Note that $S_p^\Delta(G\times H)$ is closed under $G\times H$-conjugation and closed under taking subgroups. In fact, if $\Delta(P,\phi,Q)$ is a twisted diagonal subgroup of $G\times H$ then
	\begin{equation*}
		{}^{(g,h)}\Delta(P,\phi,Q)=\Delta({}^gP,c_g\phi c_h^{-1},{}^hQ)
	\end{equation*} 
	for any $(g,h)\in G\times H$.
	
	All modules are assumed finitely generated unless stated otherwise. If $R$ is a commutative ring and $G$ and $H$ are finite groups we will always assume that the induced $R$-module structures on an $(RG,RH)$-bimodule coincide. In other words, if $M$ is an $(RG,RH)$-bimodule we will assume that $rm=mr$ for all $r\in R$ and $m\in M$. Any $(RG,RH)$-bimodule $M$ may be viewed as a left $R[G\times H]$-module (and vice versa) by defining $(g,h)m=gmh^{-1}$ for all $g\in G$, $h\in H$, and $m\in M$. One obtains an isomorphism of categories $\bimod{RG}{RH}\iso\lmod{R[G\times H]}$ in this way. 
	
	If $R$ is a commutative ring and $G$ is a finite group then $(\cdot)^\ast:RG\to RG$ will denote the antipode of $RG$, defined by $g^\ast=g^{-1}$ for all $g\in G$. The antipode is an $R$-module isomorphism and satisfies $(\alpha\beta)^\ast=\beta^\ast\alpha^\ast$ for all $\alpha,\beta\in RG$.
	
	A construction that will be used several times in Section \ref{sec:coherenceconditions} is the following: let $G$ act on a nonempty set $X$. For each $x\in X$, suppose that $A_x$ is an abelian group. Suppose also that for each $g\in G$ and $x\in X$ we have a group isomorphism $\varphi_{g,x}:A_x\isoto A_{{}^gx}$ such that
	\begin{itemize}
		\item[(1)] $\varphi_{1,x}=\id_{A_x}$ for all $x\in X$; and
		\item[(2)] $\varphi_{h,{}^gx}\circ\varphi_{g,x}=\varphi_{hg,x}$ for all $g,h\in G$ and all $x\in X$.
	\end{itemize}
	The product $A=\prod_{x\in X}A_x$ can then be given a $\Z G$-module structure by defining
	\begin{equation*}
		{}^g(a_x):=(\varphi_{g,{}^{g^{-1}}x}(a_{{}^{g^{-1}}x}))_{x\in X}
	\end{equation*}
	for all $g\in G$ and $(a_x)_{x\in X}\in A$. In other words, if $g\in G$ and $(a_x)\in A$ then the $x$-entry of ${}^g(a_x)$ is $\varphi_{g,{}^{g^{-1}}x}(a_{{}^{g^{-1}}x})$. Note that the subgroup of $G$-fixed points $A^G$ consists of all tuples $(a_x)\in A$ such that $\varphi_{g,x}(a_x)=a_{{}^gx}$ for all $x\in X$ and $g\in G$.

	
	
	
	\section{Brauer pairs and fusion systems}
	
	Throughout this section $p$ denotes a prime number and $(\K,\OO,F)$ is a $p$-modular system large enough for the finite groups under consideration. Many of the results of this section hold over both $\OO$ and $F$, so for brevity we let $R\in\set{\OO,F}$ and work over $R$. If $a\in\OO$ we write $\overline{a}$ for the image of $a$ under the canonical projection $\OO\onto F$. If $G$ is a finite group then the canonical projection $\OO\onto F$ extends to an $\OO$-algebra homomorphism $\OO G\onto FG$ and we write $\overline{\alpha}$ for the image of $\alpha\in\OO G$ under this map. If $\alpha\in FG$ we set $\overline{\alpha}=\alpha$.
	
	Let $G$ be a finite group. We write $\Bl(RG)$ for the set of block algebras of $RG$ and $\bli(RG)$ for the set of block idempotents of $RG$. If $B\in\Bl(RG)$ then $e_B\in\bli(RG)$ denotes the identity of $B$. Recall that the coefficient reduction map $\overline{\cdot}:\OO G\onto FG$ induces a bijection between the blocks of $\OO G$ and the blocks of $FG$. 
	
	If $P$ is a $p$-subgroup of $G$ then there is a surjective homomorphism of $RN_G(P)$-algebras
	\begin{equation*}
		\br_P^G:(RG)^P\onto FC_G(P),\qquad \sum_{g\in G}a_gg\mapsto\sum_{g\in C_G(P)}\overline{a_g}g
	\end{equation*}
	called the \textit{Brauer homomorphism}. When the overgroup $G$ is contextually clear we may write $\br_P$ in place of $\br_P^G$. Note that if $\alpha\in(RG)^P$ then $\alpha^\ast\in(RG)^P$ and $\br_P(\alpha^\ast)=\br_P(\alpha)^\ast$. Note also that if $H\subgp G$ and $P$ is a $p$-subgroup of $H$ then ${}^g\br_P^H(\alpha)=\br_{{}^gP}^{{}^gH}({}^g\alpha)$ for any $g\in G$ and $\alpha\in(RH)^P$.
	
	\subsection{Brauer pairs}
	
	An $RG$-\textit{Brauer pair} is an ordered pair $(P,e)$ where $P$ is a $p$-subgroup of $G$ and $e$ is a block idempotent of $RC_G(P)$. The set of $RG$-Brauer pairs is denoted $\BP_R(G)$. The group $G$ acts by conjugation on $\BP_R(G)$ and the stabilizer of $(P,e)\in\BP_R(G)$ is denoted $N_G(P,e)$. Note that for any $RG$-Brauer pair $(P,e)$ one has $PC_G(P)\subgp N_G(P,e)\subgp N_G(P)$. If $(Q,f),(P,e)\in\BP_R(G)$ write $(Q,f)\nor(P,e)$ if $Q\subgp P\subgp N_G(Q,f)$ and $\br_P(f)\overline{e}=\overline{e}$. The transitive closure of this relation makes $\BP_R(G)$ into a partially ordered set, and the action of $G$ by conjugation on $\BP_R(G)$ respects this order --- in other words, $\BP_R(G)$ is a $G$-poset.
	
	Both of the maps $\BP_R(G)\to\BP_R(G)$, $(P,e)\mapsto (P,e^\ast)$, and $\BP_\OO(G)\to\BP_F(G)$, $(P,e)\mapsto (P,\overline{e})$ are $G$-poset isomorphisms.
	
	An important fact about $RG$-Brauer pairs which will be used repeatedly in the sequel is that if $(P,e)\in\BP_R(G)$ and $Q\subgp P$ then there exists a unique block $f\in\bli(RC_G(Q))$ such that $(Q,f)\leq(P,e)$. See \cite[Theorem 2.10(a)]{Aschbacher_2011} for a proof of this fact in a more general setting.
	
	Let $H\subgp G$. If $(P,e)\in\BP_R(H)$ and $g\in G$ then we set ${}^g(P,e):=({}^gP,{}^ge)\in\BP_R({}^gH)$. Notice that ${}^1(P,e)=(P,e)$ and ${}^g({}^{g'}(P,e))={}^{gg'}(P,e)$ for any $g,g'\in G$ and $(P,e)\in\BP_R(H)$. If $(Q,f),(P,e)\in\BP_R(H)$ are such that $(Q,f)\leq(P,e)$ then ${}^g(Q,f)\leq{}^g(P,e)$ for any $g\in G$. It follows that for each $g\in G$ we have an isomorphism of posets
	\begin{align*}
		{}^g(\cdot):\BP_R(H)&\isoto\BP_R({}^gH)\\
			(P,e)&\mapsto{}^g(P,e).
	\end{align*}
	
	Let $B\in\Bl(RG)$. An $RG$-Brauer pair $(P,e)$ \textit{belongs to} $B$ if $\br_P(e_B)\overline{e}=\overline{e}$. If $(P,e)$ belongs to $B$ we also say that $(P,e)$ is a $B$-\textit{Brauer pair}. In the sequel the set of $RG$-Brauer pairs that belong to $B$ will be denoted by $\BP_R(G,B)$, $\BP_R(G,e_B)$, or simply by $\BP_R(B)$. Recall that $\BP_R(B)$ is a $G$-subposet of $\BP_R(G)$ and that if $(Q,f),(P,e)\in\BP_R(G)$ are such that $(Q,f)\leq(P,e)$ then $(Q,f)$ is a $B$-Brauer pair if and only if $(P,e)$ is a $B$-Brauer pair. If $(P,e)\in\BP_R(G,B)$ then $(P,e^\ast)\in\BP_R(G,B^\ast)$. The $G$-poset isomorphism $\BP_\OO(G)\isoto\BP_F(G)$ described above restricts to a $G$-poset isomorphism $\BP_\OO(B)\isoto\BP_F(\overline{B})$ for any block $B$ of $\OO G$. If $H\subgp G$, $B\in\Bl(RH)$, and $g\in G$ then the map ${}^g(\cdot):\BP_R(H)\isoto\BP_R({}^gH)$ defined above restricts to a poset isomorphism $\BP_R(H,B)\isoto\BP_R({}^gH,{}^gB)$.
	
	A \textit{Brauer element of }$RG$ is an ordered pair $(u,e)$ where $u$ is a $p$-element of $G$ and $e$ is a block idempotent of $RC_G(u)$. Write $\BE_R(G)$ for the set of Brauer elements of $RG$. Notice that if $(u,e)\in\BE_R(G)$ then $(\gp{u},e)\in\BP_R(G)$. The group $G$ acts on $\BE_R(G)$ by conjugation and the map $\BE_R(G)\to\BP_R(G)$ which sends $(u,e)$ to $(\gp{u},e)$ is $G$-equivariant.
	
	Let $B\in\Bl(RG)$ and let $(u,e)\in\BE_R(G)$. Say $(u,e)$ \textit{belongs to }$B$ or is a $B$-\textit{Brauer element} if $\br_{\gp{u}}(e_B)\overline{e}=\overline{e}$. In what follows the set of Brauer elements of $RG$ that belong to $B$ will be denoted by $\BE_R(G,B)$, by $\BE_R(G,e_B)$, or by $\BE_R(B)$. Notice that if $(u,e)$ is a Brauer element of $RG$ then $(u,e)$ belongs to $B$ if and only if $(\gp{u},e)$ belongs to $B$. The set of $B$-Brauer elements $\BE_R(B)$ is stable under conjugation by $G$. Note also that the subsets $\BE_R(B)$, where $B$ runs through the blocks of $RG$, form a partition of $\BE_R(G)$. 
	
	
	\begin{lemma}\label{lem:blockcent}
		Suppose that $P$ is a normal $p$-subgroup of $G$. Then every central idempotent of $RG$ belongs to $RC_G(P)$. More generally, if $P,Q\in S_p(G)$ and $P\nor G$ then every central idempotent of $RC_G(Q)$ belongs to $RC_G(PQ)$.
	\end{lemma}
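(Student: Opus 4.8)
The plan is to prove the statement first over $F$, then transfer to $\OO$ by a lifting argument, and finally to deduce the ``more general'' assertion from the basic one by passing from $G$ to $C_G(Q)$. For the basic statement over $F$: given an idempotent $e\in Z(FG)$ and a normal $p$-subgroup $P\nor G$, I would first note that $e\in(FG)^P$ since $e$ is central, and introduce the two-sided ideal $I:=\ker\bigl(FG\onto F[G/P]\bigr)=FG\cdot J(FP)$, which is nilpotent because $P\nor G$ and the augmentation ideal $J(FP)$ of $FP$ is nilpotent. The heart of the argument is the claim that every $\alpha\in(FG)^P$ whose support misses $C_G(P)$ already lies in $I$: such an $\alpha$ is an $F$-linear combination of $P$-conjugation orbit sums $\mathcal O^{+}=\sum_{g\in\mathcal O}g$ with $\mathcal O\subseteq G\setminus C_G(P)$ (the coefficients of $\alpha$ being constant on $P$-orbits), and for $\mathcal O=\{ug_0u^{-1}:u\in P\}$ one has $ug_0u^{-1}\in g_0P$ for every $u$ (as $g_0^{-1}ug_0u^{-1}\in P$, where $P\nor G$ is used), so $\mathcal O^{+}$ maps to $|\mathcal O|\cdot\overline{g_0}=0$ in $F[G/P]$ because $p$ divides $|\mathcal O|=[P:C_P(g_0)]$. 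Applying this to $\alpha:=e-\br_P^{G}(e)$, the part of $e$ supported off $C_G(P)$, yields $e\equiv\br_P^{G}(e)\pmod I$, where $\br_P^{G}(e)\in FC_G(P)$ is idempotent since $\br_P^{G}$ is a ring homomorphism on $(FG)^P$; because a central idempotent is the unique idempotent in its coset modulo a nil ideal, I conclude $e=\br_P^{G}(e)\in FC_G(P)$.

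To transfer to $\OO$, given $e\in Z(\OO G)$ idempotent I would reduce mod $\mathfrak m$, apply the $F$-case to obtain $\overline e\in FC_G(P)$, take the unique central idempotent $\widetilde e\in Z(\OO C_G(P))$ reducing to $\overline e$ (reduction being a bijection on blocks), and observe that since $P\nor G$ every $g\in G$ normalizes $C_G(P)$, so conjugation by $g$ is an $\OO$-algebra automorphism of $\OO C_G(P)$ fixing $\widetilde e$ by uniqueness; hence $\widetilde e\in Z(\OO G)$, and having the same reduction as $e$ it equals $e$, so $e\in\OO C_G(P)$. For the general statement, given $e\in Z(RC_G(Q))$ I would apply the basic statement to the group $C_G(Q)$ together with its normal $p$-subgroup $C_P(Q)=P\cap C_G(Q)$, obtaining $e\in R\,C_{C_G(Q)}(C_P(Q))$; the passage from $F$ to $\OO$ here is again by the above lifting argument, using that every element of $C_G(Q)$ normalizes $C_G(PQ)=C_G(P)\cap C_G(Q)$ (it normalizes $C_G(P)$ because it lies in $G=N_G(P)$, and it normalizes $C_G(Q)$).

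The delicate point, and the one I expect to be the main obstacle, is the final sharpening in the general case: in general $C_{C_G(Q)}(C_P(Q))$ is strictly larger than the target $C_G(PQ)=C_{C_G(Q)}(P)$, so the basic statement by itself does not give the conclusion. I would close this gap by reducing to the case where $e$ is a block idempotent, using that block idempotents of a group algebra are supported on $p$-regular elements, and then proving the coprime-action fact that a $p'$-element of $C_G(Q)$ centralizing $C_P(Q)$ must centralize all of $P$ — this is immediate when $P$ is elementary abelian, by a Maschke-type splitting of $P$ under the $p'$-automorphism it induces, and should follow in general by induction on $|P|$. Together these would confine the support of $e$ to $C_G(P)\cap C_G(Q)=C_G(PQ)$, completing the argument.
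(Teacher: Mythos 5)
The paper states this lemma without proof, treating it as standard, so there is no argument of the author's to compare against; your proposal therefore has to stand on its own, and it does. The first half is the classical argument: the kernel of $FG\onto F[G/P]$ is a nilpotent two-sided ideal because $P\nor G$ and the augmentation ideal of $FP$ is nilpotent; the $P$-orbit sums supported off $C_G(P)$ die in $F[G/P]$ since their lengths are positive powers of $p$; and your appeal to ``uniqueness of an idempotent in its coset modulo a nil ideal'' is legitimate here precisely because $e$ is central and hence commutes with the idempotent $\br_P^G(e)$, so that $(e-\br_P^G(e))^3=e-\br_P^G(e)$ together with nilpotence forces equality. The descent to $\OO$ via unique lifting of central idempotents and $G$-stability is also correct.

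In the second half you have isolated the one genuine difficulty: applying the first part inside $C_G(Q)$ with the normal $p$-subgroup $C_P(Q)=P\cap C_G(Q)$ only confines the support of $e$ to $C_{C_G(Q)}(C_P(Q))$, which is in general strictly larger than $C_G(PQ)=C_{C_G(Q)}(P)$. Your fix --- pass to block idempotents, use that these are supported on $p$-regular elements, and then show that a $p'$-element of $C_G(Q)$ centralizing $C_P(Q)$ must centralize $P$ --- is exactly right, and the ``coprime-action fact'' you need is precisely Thompson's $A\times B$ lemma, applied to the images of $\gp{g}$ and $Q$ in $\Aut(P)$ (these commute because $g\in C_G(Q)$, and both act on $P$ because $P\nor G$). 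Your Maschke splitting $P=C_P(g)\times[P,g]$ is the correct base case, but rather than reconstructing the induction you should simply cite the $A\times B$ lemma; note also that the $p$-regularity of the support is essential here, since a $p$-element of $C_G(Q)$ centralizing $C_P(Q)$ certainly need not centralize $P$. One minor economy: for the general statement the second lifting from $F$ to $\OO$ is unnecessary, since block idempotents of $\OO C_G(Q)$ are already supported on $p$-regular elements and the remaining steps concern only group elements, so the argument runs verbatim over $\OO$.
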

	
	Let $N$ be a normal subgroup of $G$. Then $G$ permutes the blocks of $RN$ by conjugation. If $c\in\bli(RN)$ then $\tr_{\Stab_G(c)}^G(c)$ is a (nonzero) central idempotent of $RG$ contained in $RN$. Recall that a block idempotent $b$ of $RG$ is said to \textit{cover} $c$ if $b\tr_{\Stab_G(c)}^G(c)\neq 0$, or equivalently if $bc\neq 0$. We refer to \cite[IV.6]{Aschbacher_2011} for more information about block covering. If $b\in\bli(RG)$ then there exists a unique $G$-orbit of blocks of $RN$ that are covered by $b$, and if $c\in\bli(RN)$ then there exists a block $b\in\bli(RG)$ covering $c$. Thus we have a surjective map
	\begin{align*}
		\bli(RG)	&\onto G\backslash\bli(RN)\\
		b&\mapsto\set{c\in\bli(RN)|bc\neq 0}.
	\end{align*}
	Note that if the map above is also injective, then
	\begin{equation*}
		\bli(RG)=\set{\tr_{\Stab_G(c)}^G(c)|c\in\bli(RN)}.
	\end{equation*}
	
	
	\begin{lemma}\label{lem:blockcent2}
		Assume that $P,Q\in S_p(G)$ and $P\nor G$. Then
		\begin{equation*}
			\bli(RC_G(Q))=\set{\tr_{\Stab_{C_G(Q)}(e)}^{C_G(Q)}(e)|e\in\bli(RC_G(PQ))}.
		\end{equation*}
		In particular, if $P$ is a normal $p$-subgroup of $G$ then
		\begin{equation*}
			\bli(RG)=\set{\tr_{N_G(P,e)}^G(e)|e\in\bli(RC_G(P))}.
		\end{equation*}
	\end{lemma}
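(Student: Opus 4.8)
The plan is to reduce the first displayed equality to the block-covering discussion that immediately precedes the lemma, applied with $C_G(PQ)$ playing the role of the normal subgroup ``$N$'' and $C_G(Q)$ playing the role of the ambient group; the ``in particular'' clause will then follow by taking $Q=1$. Two bookkeeping observations set this up. Since $P\nor G$, the product $PQ=QP$ is a $p$-subgroup of $G$, and any $g\in C_G(Q)$ normalizes $P$ (because $P\nor G$) and $Q$, hence normalizes $PQ$; consequently ${}^gC_G(PQ)=C_G({}^g(PQ))=C_G(PQ)$, so $C_G(PQ)\nor C_G(Q)$. The block-covering discussion therefore yields a surjection $\bli(RC_G(Q))\onto C_G(Q)\backslash\bli(RC_G(PQ))$ carrying a block $b$ to the $C_G(Q)$-orbit $\{c\in\bli(RC_G(PQ)):bc\neq 0\}$, and by the remark following that discussion it suffices to show this surjection is injective, for then $\bli(RC_G(Q))=\{\tr_{\Stab_{C_G(Q)}(c)}^{C_G(Q)}(c):c\in\bli(RC_G(PQ))\}$, which is the first equality.

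For injectivity I would invoke Lemma \ref{lem:blockcent}. Let $b\in\bli(RC_G(Q))$. By Lemma \ref{lem:blockcent}, $b\in RC_G(PQ)$; and since $b$ is central in $RC_G(Q)$ it is a fortiori central in the subalgebra $RC_G(PQ)$, so $b$ is a central idempotent there. For each block idempotent $c$ of $RC_G(PQ)$ the element $bc$ is then a central idempotent of $RC_G(PQ)$ satisfying $(bc)c=bc$, and primitivity of $c$ forces $bc\in\{0,c\}$; hence $b=b\cdot 1=\sum_c bc=\sum_{c:\,bc\neq 0}c$. Thus $b$ is recovered from the set $\{c:bc\neq 0\}$, so two blocks of $RC_G(Q)$ mapping to the same $C_G(Q)$-orbit must be equal. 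This establishes injectivity, hence the first equality.

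For the ``in particular'' statement I would take $Q=1$, so that $C_G(Q)=G$, $PQ=P$, and $C_G(PQ)=C_G(P)$; since $P\nor G$ we have $N_G(P)=G$, so for $e\in\bli(RC_G(P))$ the stabilizer $\Stab_{C_G(Q)}(e)=\Stab_G(e)$ is exactly $N_G(P,e)$, and substituting into the first equality gives $\bli(RG)=\{\tr_{N_G(P,e)}^G(e):e\in\bli(RC_G(P))\}$. I do not expect a serious obstacle; the one point needing care is the input from Lemma \ref{lem:blockcent} guaranteeing $b\in RC_G(PQ)$, since it is this containment, together with centrality, that lets us decompose $b$ into block idempotents of $RC_G(PQ)$ and thereby read off injectivity of the covering map --- without it a single block of $RC_G(Q)$ could in principle cover an orbit of blocks of $RC_G(PQ)$ without being their sum.
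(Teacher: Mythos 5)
Your proposal is correct and follows essentially the same route as the paper: establish $C_G(PQ)\nor C_G(Q)$, use Lemma \ref{lem:blockcent} to see that a block $b$ of $RC_G(Q)$ is a central idempotent of $RC_G(PQ)$ and hence equals the sum of the blocks it covers, conclude that the covering map $\bli(RC_G(Q))\to C_G(Q)\backslash\bli(RC_G(PQ))$ is injective, and invoke the remark preceding the lemma. The specialization $Q=1$ for the second displayed equality, with $\Stab_G(e)=N_G(P,e)$ because $P\nor G$, is exactly as intended.
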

	
	\begin{proof}
		We have $C_G(PQ)=C_G(P)\cap C_G(Q)\nor C_G(Q)$ since $P\nor G$. Now if $b\in\bli(RC_G(Q))$ then $b$ is a central idempotent of $RC_G(PQ)$ by Lemma \ref{lem:blockcent}. It follows that $b$ is equal to the sum of the blocks of $RC_G(PQ)$ covered by $b$. In particular, the map
		\begin{align*}
			\bli(RC_G(Q))	&\onto C_G(Q)\backslash\bli(RC_G(PQ))\\
			b				&\mapsto \set{c\in\bli(RC_G(PQ))|bc\neq 0}
		\end{align*}
		is injective. The result follows.
	\end{proof}

	\begin{lemma}
		Let $(P,e)\in\BP_R(G)$. Then $e$ is a block idempotent of $RN_G(P,e)$.
	\end{lemma}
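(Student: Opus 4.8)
The plan is to write $H:=N_G(P,e)$ and to deduce the statement by applying Lemma~\ref{lem:blockcent2} (its ``in particular'' clause) to $H$ in place of $G$. For this I first note that $P$ is a normal $p$-subgroup of $H$: indeed $P\nor N_G(P)$ always, and $H=N_G(P,e)\subgp N_G(P)$, so $P\nor H$. Thus Lemma~\ref{lem:blockcent2} will describe $\bli(RH)$ in terms of $\bli(RC_H(P))$ and certain Brauer-pair stabilizers, and it remains to identify these two ingredients in our situation.

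The key bookkeeping step is that $C_H(P)=C_G(P)$. One containment is trivial, and conversely any $c\in C_G(P)$ normalizes $P$ and acts on $C_G(P)$ by an inner automorphism, hence fixes every central element of $RC_G(P)$, in particular the block idempotent $e$; therefore $c\in N_G(P,e)=H$. So $C_G(P)\subgp H$ and $C_H(P)=C_G(P)$. In particular $e\in\bli(RC_G(P))=\bli(RC_H(P))$, which is the input Lemma~\ref{lem:blockcent2} wants. Finally, the stabilizer occurring in that lemma is $N_H(P,e)=\set{h\in H : {}^h(P,e)=(P,e)}$, and this is all of $H$ because, by the very definition of $N_G(P,e)$, every $h\in H$ fixes both $P$ and $e$.

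Putting these together, Lemma~\ref{lem:blockcent2} gives $\tr_{N_H(P,e)}^{H}(e)\in\bli(RH)$, and since $N_H(P,e)=H$ this element is just $\tr_H^H(e)=e$. Hence $e$ is a block idempotent of $RH=RN_G(P,e)$, as claimed. (If one prefers to avoid Lemma~\ref{lem:blockcent2}, the same conclusion follows by hand: each $h\in H$ satisfies ${}^he=e$, i.e.\ $he=eh$, so $e\in Z(RH)$ is a central idempotent; and if $e=e_1+e_2$ with $e_1,e_2$ orthogonal central idempotents of $RH$, then Lemma~\ref{lem:blockcent} applied to $H$ with its normal $p$-subgroup $P$ places $e_1,e_2$ in $RC_H(P)=RC_G(P)$, hence in $Z(RC_G(P))$, whereupon primitivity of $e$ there forces $e_1=0$ or $e_2=0$.)

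I do not anticipate a genuine obstacle: the statement is essentially a corollary of the preceding two lemmas. The only thing one must get right is the recognition that these lemmas should be invoked for the group $N_G(P,e)$ rather than for $G$, together with the verification of the identity $C_{N_G(P,e)}(P)=C_G(P)$; everything after that is immediate.
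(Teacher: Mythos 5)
Your proof is correct and follows the same route as the paper: set $I=N_G(P,e)$, observe $P\nor I$ and $C_I(P)=C_G(P)$, and apply the ``in particular'' clause of Lemma~\ref{lem:blockcent2} to conclude that $\tr_{N_I(P,e)}^I(e)=e$ is a block of $RI$. The extra detail you supply (the verification that $C_G(P)\subgp N_G(P,e)$ and the alternative hands-on argument) is fine but not needed beyond what the paper records.
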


	\begin{proof}
		Set $I=N_G(P,e)$. Then $P\nor I$ and $C_I(P)=C_G(P)$. So by Lemma \ref{lem:blockcent2}, $\tr_{N_I(P,e)}^I(e)=e$ is a block idempotent of $RI$.
	\end{proof}
	
	
	
	\begin{lemma}\label{lem:brauereltnormalizer}
		Let $B\in\Bl(RG)$. If $(P,e)\in\BP_R(B)$ write $I_{(P,e)}=N_G(P,e)$. Fix a $B$-Brauer pair $(P,e)\in\BP_R(B)$ and let $(Q,\epsilon)\in\BP_R(I_{(P,e)},e)$.
		\begin{itemize}
			\item[(a)] There exists a block $f\in\bli(RC_G(PQ))$ such that $\epsilon f\neq 0$. Any two $f,f'\in\bli(RC_G(PQ))$ with $\epsilon f\neq 0\neq \epsilon f'$ are $C_{I_{(P,e)}}(Q)$-conjugate.
			\item[(b)] Let $f\in\bli(RC_G(PQ))$ such that $\epsilon f\neq 0$. Then $(PQ,f)\in\BP_R(B)$, $(P,e)\nor (PQ,f)$, and
			\begin{equation}\label{eqn:epsilon}
				\epsilon=\tr_{\Stab_{C_{I_{(P,e)}}(Q)}(f)}^{C_{I_{(P,e)}}(Q)}(f).
			\end{equation}
			Moreover, we have
			\begin{equation}\label{eqn:subgpequalities}
				\Stab_{C_{I_{(P,e)}}(Q)}(f)=C_{I_{(P,e)}\cap I_{(PQ,f)}}(Q)=C_{I_{(PQ,f)}}(Q)\cap N_G(P).
			\end{equation}
		\end{itemize}
	\end{lemma}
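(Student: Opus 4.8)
The plan is to transport the whole situation into the group $I:=I_{(P,e)}=N_G(P,e)$, inside which $P$ is a \emph{normal} $p$-subgroup, and then to invoke Lemmas~\ref{lem:blockcent} and \ref{lem:blockcent2} together with the standard facts on block covering recalled above. Two compatibilities make this legitimate. First, since $C_G(P)\subgp PC_G(P)\subgp I$ we get $C_G(PQ)=C_G(P)\cap C_G(Q)\subgp I$, hence $C_G(PQ)=I\cap C_G(PQ)=C_I(PQ)$. Second, write $e=\sum_{x\in C_G(P)}a_x x$; since $e$ is fixed by $P$ (conjugation by $P$ is trivial on $C_G(P)$) and by $Q$ (because $Q\subgp N_G(P,e)$), it lies in $(RG)^{PQ}$ and in $(RI)^Q$, and comparing supports via $C_G(PQ)=C_I(PQ)=C_I(P)\cap C_I(Q)$ gives
\begin{equation*}
	\br_{PQ}^G(e)=\sum_{x\in C_G(PQ)}\overline{a_x}\,x=\br_Q^I(e)\in FC_G(PQ).
\end{equation*}

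For part (a): as $P\nor I$ we have $C_G(PQ)=C_I(PQ)=C_I(P)\cap C_I(Q)\nor C_I(Q)$, and $\epsilon$ is a central idempotent of $RC_I(Q)$; by Lemma~\ref{lem:blockcent} applied in $I$ it belongs to $RC_I(PQ)=RC_G(PQ)$, hence is a sum of blocks of $RC_G(PQ)$. In particular there is a block $f$ of $RC_G(PQ)$ with $\epsilon f=f\neq 0$, which is existence; and the blocks $f$ of $RC_G(PQ)$ with $\epsilon f\neq0$ are precisely those covered by the block $\epsilon$ of $RC_I(Q)$, so they form a single $C_I(Q)$-orbit, which is the uniqueness clause. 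Since $\epsilon$ is the sum of the blocks it covers and these constitute the $C_I(Q)$-orbit of $f$, we conclude $\epsilon=\tr_{\Stab_{C_I(Q)}(f)}^{C_I(Q)}(f)$, i.e.\ \eqref{eqn:epsilon} holds; alternatively this can be read off from Lemma~\ref{lem:blockcent2} applied in $I$.

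For part (b), start with the Brauer-pair relation. The subgroup $PQ$ is a $p$-subgroup of $G$ (as $Q\subgp N_G(P,e)\subgp N_G(P)$) with $P\subgp PQ\subgp I=N_G(P,e)$, so the first two clauses of the definition of $(P,e)\nor(PQ,f)$ hold. For the third, $\br_{PQ}^G(e)=\br_Q^I(e)$ and $\br_Q^I(e)\overline{\epsilon}=\overline{\epsilon}$ because $(Q,\epsilon)\in\BP_R(I,e)$; multiplying on the right by $\overline f$ and using $\epsilon f=f$ (so $\overline{\epsilon}\,\overline f=\overline f$) yields $\br_{PQ}^G(e)\overline f=\overline f$. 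Hence $(P,e)\nor(PQ,f)$, so in particular $(P,e)\leq(PQ,f)$; and since $(P,e)\in\BP_R(B)$ and membership in $\BP_R(B)$ is constant along $\leq$, we get $(PQ,f)\in\BP_R(B)$.

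It remains to verify \eqref{eqn:subgpequalities}. Any $g\in C_I(Q)$ normalizes $P$ (it lies in $I\subgp N_G(P)$) and centralizes $Q$, hence normalizes $PQ$; thus $g$ fixes $f$ iff $g\in N_G(PQ,f)=I_{(PQ,f)}$, giving $\Stab_{C_I(Q)}(f)=C_I(Q)\cap I_{(PQ,f)}$, and intersecting $C_I(Q)=I\cap C_G(Q)$ with $I_{(PQ,f)}$ rewrites this as $C_{I\cap I_{(PQ,f)}}(Q)$. For the last equality, $C_I(Q)\cap I_{(PQ,f)}=I\cap C_G(Q)\cap I_{(PQ,f)}\subgp N_G(P)\cap C_G(Q)\cap I_{(PQ,f)}=C_{I_{(PQ,f)}}(Q)\cap N_G(P)$ since $I\subgp N_G(P)$; conversely, if $g\in C_{I_{(PQ,f)}}(Q)\cap N_G(P)$ then ${}^g(P,e)=(P,{}^ge)\leq{}^g(PQ,f)=(PQ,f)$, using $(P,e)\leq(PQ,f)$, $g\in N_G(PQ,f)$ and $g\in N_G(P)$, whence by uniqueness of the Brauer pair with first component $P$ lying below $(PQ,f)$ we get ${}^ge=e$, i.e.\ $g\in N_G(P,e)=I$, so $g\in C_I(Q)\cap I_{(PQ,f)}$. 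The step I expect to be the crux is establishing the two compatibilities of the first paragraph, $C_G(PQ)=C_I(PQ)$ and $\br_{PQ}^G(e)=\br_Q^I(e)$, as these are exactly what license running the block-covering machinery inside $I$; after that, the only delicate bookkeeping is the reverse inclusion in the last equality of \eqref{eqn:subgpequalities}, which rests on the uniqueness of Brauer sub-pairs.
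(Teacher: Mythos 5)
Your proof is correct and follows essentially the same route as the paper: work inside $I=N_G(P,e)$ where $P$ is normal, reduce to block covering for $C_G(PQ)\nor C_I(Q)$ via Lemmas \ref{lem:blockcent} and \ref{lem:blockcent2}, use the identity $\br_{PQ}^G(e)=\br_Q^{I}(e)$ (which you justify explicitly, where the paper calls it readily computed), and settle the last containment of \eqref{eqn:subgpequalities} by uniqueness of sub-Brauer-pairs. The only variation is that you obtain $\br_{PQ}^G(e)\overline{f}=\overline{f}$ directly by multiplying $\br_Q^I(e)\overline{\epsilon}=\overline{\epsilon}$ by $\overline{f}$ and using $\epsilon f=f$, whereas the paper argues by contradiction over the $C_I(Q)$-orbit of $f$; both are valid and rest on the same facts.
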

	
	\begin{proof}
		Since $C_G(P)\nor I_{(P,e)}$ we have $C_G(PQ)\nor C_{I_{(P,e)}}(Q)$. Now $\epsilon$ is a block idempotent of $RC_{I_{(P,e)}}(Q)$, so there exists a unique $C_{I_{(P,e)}}(Q)$-orbit of blocks of $RC_G(PQ)$ covered by $\epsilon$. Part (a) follows. 
		
		Next let $f\in\bli(RC_G(PQ))$ such that $\epsilon f\neq 0$. Since $P,Q\in S_p(I_{(P,e)})$, $P\nor I_{(P,e)}$, and $C_{I_{(P,e)}}(PQ)=C_G(PQ)$, Lemma \ref{lem:blockcent2} gives us the equality in (\ref{eqn:epsilon}). Now certainly $(PQ,f)\in\BP_R(G)$. If we show that $(P,e)\nor(PQ,f)$ then since $(P,e)$ is a $B$-Brauer pair it will follow that $(PQ,f)$ is a $B$-Brauer pair as well. Clearly $P\subgp PQ\subgp I_{(P,e)}$ so to prove that $(P,e)\nor(PQ,f)$ it remains to see that $\br_{PQ}(e)\overline{f}=\overline{f}$, or equivalently that $\br_{PQ}(e)\overline{f}\neq 0$. Suppose, by way of contradiction, that $\br_{PQ}(e)\overline{f}=0$. One may readily compute that $\br_{PQ}(e)=\br_Q^{I_{(P,e)}}(e)$, so then $\br_Q^{I_{(P,e)}}(e)\overline{f}=0$. It follows that $\br_Q^{I_{(P,e)}}(e)\overline{{}^xf}=0$ for any $x\in C_{I_{(P,e)}}(Q)$, hence $\br_Q^{I_{(P,e)}}(e)\overline{\epsilon}=0$. But this contradicts the assumption that $(Q,\epsilon)\in\BP_R(I_{(P,e)},e)$. Thus we find that $\br_{PQ}(e)\overline{f}\neq 0$ and $(P,e)\nor(PQ,f)$.
		
		To complete the proof of part (b) it remains to verify the equalities in (\ref{eqn:subgpequalities}). Let $g\in C_{I_{(P,e)}}(Q)$ such that ${}^gf=f$. Since $C_{I_{(P,e)}}(Q)\subgp N_G(PQ)$ it follows that $g\in I_{(PQ,f)}$, hence $g\in C_{I_{(P,e)}\cap I_{(PQ,f)}}(Q)$. So $\Stab_{C_{I_{(P,e)}}(Q)}(f)\subgp C_{I_{(P,e)}\cap I_{(PQ,f)}}(Q)$. Because $I_{(P,e)}\subgp N_G(P)$ we have that $C_{I_{(P,e)}\cap I_{(PQ,f)}}(Q)\subgp C_{I_{(PQ,f)}}(Q)\cap N_G(P)$. Finally, let $h\in C_{I_{(PQ,f)}}(Q)\cap N_G(P)$. Conjugating the containment $(P,e)\nor (PQ,f)$ by $h$ yields $(P,{}^he)\nor(PQ,f)$, so ${}^he=e$ by uniqueness. Therefore $h\in I_{(P,e)}$. Since $h$ centralizes $Q$ and fixes $f$ we have $h\in\Stab_{C_{I_{(P,e)}}(Q)}(f)$. The element $h$ was arbitrary, so we conclude that $C_{I_{(PQ,f)}}(Q)\cap N_G(P)\subgp \Stab_{C_{I_{(P,e)}}(Q)}(f)$. The proof is complete.
	\end{proof}

	\begin{lemma}\label{lem:bijection1}
		Let $B\in\Bl(RG)$ and let $(P,e)\in\BP_R(B)$. Write $I_{(P,e)}=N_G(P,e)$ and let $Q$ be a $p$-subgroup of $I_{(P,e)}$. Set
		\begin{align*}
			\mathcal{E}=\{(\epsilon,f)|\epsilon&\in\bli(RC_{I_{(P,e)}}(Q)),f\in\bli(RC_G(PQ))\\
			&\text{ such that }(Q,\epsilon)\in\BP_R(I_{(P,e)},e)\text{ and }\epsilon f\neq 0\}
		\end{align*}
		and set
		\begin{equation*}
			\mathcal{F}=\set{f\in\bli(RC_G(PQ))|(P,e)\nor(PQ,f)}.
		\end{equation*}
		Then the map $\mathcal{E}\to\mathcal{F}$, $(\epsilon,f)\mapsto f$, is a well-defined bijection.
	\end{lemma}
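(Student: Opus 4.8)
The plan is to produce an explicit two-sided inverse $\Psi$ to the forgetful map $\Phi\colon\mathcal{E}\to\mathcal{F}$, $(\epsilon,f)\mapsto f$. Well-definedness of $\Phi$ is immediate from Lemma \ref{lem:brauereltnormalizer}(b): if $(\epsilon,f)\in\mathcal{E}$ then $(Q,\epsilon)\in\BP_R(I_{(P,e)},e)$ and $\epsilon f\neq 0$, so that lemma gives $(P,e)\nor(PQ,f)$, i.e.\ $f\in\mathcal{F}$. For the inverse I would define $\Psi\colon\mathcal{F}\to\mathcal{E}$ by $\Psi(f)=(\epsilon_f,f)$, where
\[
	\epsilon_f:=\tr_{\Stab_{C_{I_{(P,e)}}(Q)}(f)}^{C_{I_{(P,e)}}(Q)}(f).
\]

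The bulk of the argument is checking that $\Psi$ really takes values in $\mathcal{E}$. First, $P,Q\in S_p(I_{(P,e)})$ with $P\nor I_{(P,e)}$, and $C_{I_{(P,e)}}(PQ)=C_G(P)\cap I_{(P,e)}\cap C_G(Q)=C_G(PQ)$ because $C_G(P)\nor I_{(P,e)}$; hence Lemma \ref{lem:blockcent2} applied inside $I_{(P,e)}$ shows $\epsilon_f\in\bli(RC_{I_{(P,e)}}(Q))$, and $\epsilon_f f\neq 0$ since $f$ is one of the pairwise orthogonal block conjugates occurring in the sum $\epsilon_f$. It then remains to see that $(Q,\epsilon_f)\in\BP_R(I_{(P,e)},e)$, i.e.\ that $\br_Q^{I_{(P,e)}}(e)\,\overline{\epsilon_f}=\overline{\epsilon_f}$. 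Here I would use the identity $\br_Q^{I_{(P,e)}}(e)=\br_{PQ}^G(e)$ (a short direct computation, already invoked inside the proof of Lemma \ref{lem:brauereltnormalizer}); the fact that this element is fixed under conjugation by every $x\in C_{I_{(P,e)}}(Q)$, since such an $x$ normalizes $P$, fixes $e$, and centralizes $Q$; and the fact that $(P,e)\nor(PQ,f)$ says precisely that $\br_{PQ}^G(e)\,\overline{f}=\overline{f}$. Conjugating this last equation by a transversal of $\Stab_{C_{I_{(P,e)}}(Q)}(f)$ in $C_{I_{(P,e)}}(Q)$ and summing then yields $\br_{PQ}^G(e)\,\overline{\epsilon_f}=\overline{\epsilon_f}\neq 0$, as wanted.

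With both maps in hand, the composite $\Phi\circ\Psi$ is visibly $\id_{\mathcal{F}}$, while for $\Psi\circ\Phi$ one invokes equation (\ref{eqn:epsilon}) of Lemma \ref{lem:brauereltnormalizer}(b): for $(\epsilon,f)\in\mathcal{E}$ it gives $\epsilon=\tr_{\Stab_{C_{I_{(P,e)}}(Q)}(f)}^{C_{I_{(P,e)}}(Q)}(f)=\epsilon_f$, so $\Psi(\Phi(\epsilon,f))=(\epsilon,f)$. Hence $\Phi$ is a bijection with inverse $\Psi$. (Note that injectivity of $\Phi$ alone is already immediate from (\ref{eqn:epsilon}).) The one point I expect to require genuine care is the verification that $(Q,\epsilon_f)$ belongs to the block $e$ of $RI_{(P,e)}$---equivalently the surjectivity of $\Phi$---since this is the only ingredient not already packaged in Lemma \ref{lem:brauereltnormalizer}; everything else is bookkeeping with that lemma and Lemma \ref{lem:blockcent2}.
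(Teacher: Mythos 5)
Your proposal is correct and follows essentially the same route as the paper: well-definedness and injectivity come from Lemma \ref{lem:brauereltnormalizer}(b), and surjectivity is obtained by forming the orbit sum $\epsilon_f=\tr_{\Stab_{C_{I_{(P,e)}}(Q)}(f)}^{C_{I_{(P,e)}}(Q)}(f)$, invoking Lemma \ref{lem:blockcent2} to see it is a block of $RC_{I_{(P,e)}}(Q)$, and using the identity $\br_{PQ}(e)=\br_Q^{I_{(P,e)}}(e)$ together with conjugates of $\br_{PQ}(e)\overline{f}=\overline{f}$ to verify $(Q,\epsilon_f)\in\BP_R(I_{(P,e)},e)$. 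Packaging this as an explicit two-sided inverse rather than separate injectivity/surjectivity arguments is a purely presentational difference.
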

	
	\begin{proof}
		The map is well-defined and injective by part (b) of Lemma \ref{lem:brauereltnormalizer}. Let $f\in\mathcal{F}$. Then $\epsilon:=\tr_{\Stab_{C_{I_{(P,e)}}(Q)}(f)}^{C_{I_{(P,e)}}(Q)}(f)$ is a block idempotent of $RC_{I_{(P,e)}}(Q)$ by Lemma \ref{lem:blockcent2}. Now $(P,e)\nor(PQ,f)$ so for any $x\in C_{I_{(P,e)}}(Q)$ we have $(P,e)\nor (PQ,{}^xf)$, hence $\br_{PQ}(e)\overline{{}^xf}=\overline{{}^xf}$. Since $\br_{PQ}(e)=\br_Q^{I_{(P,e)}}(e)$ it follows that $\br_Q^{I_{(P,e)}}(e)\overline{\epsilon}=\overline{\epsilon}$. In other words, $(Q,\epsilon)\in\BP_R(I_{(P,e)},e)$. The pair $(\epsilon,f)$ is an element of $\mathcal{E}$ that maps to $f$, so the map $\mathcal{E}\to\mathcal{F}$ is surjective.
	\end{proof}
	
	
	Now let $G$ and $H$ be finite groups and assume that the $p$-modular system $(\K,\OO,F)$ is large enough for $G\times H$. It is not hard to see that if $P\subgp G\times H$ then
	\begin{equation*}
		C_{G\times H}(P)=C_G(p_1(P))\times C_H(p_2(P))
	\end{equation*}
	where $p_1:G\times H\onto G$ and $p_2:G\times H\onto H$ denote the canonical projections. Thus, after identifying $RC_{G\times H}(P)$ with $R[C_G(p_1(P))]\tensor_R R[C_H(p_2(P))]$, every block idempotent of $RC_{G\times H}(P)$ is of the form $e\tensor f$ for uniquely determined block idempotents $e\in\bli(R C_G(p_1(P)))$ and $f\in\bli(RC_H(p_2(P)))$. In particular, every $R[G\times H]$-Brauer pair is of the form $(P,e\tensor f)$ where $P\in S_p(G\times H)$, $e\in\bli(R C_G(p_1(P)))$, and $f\in\bli(RC_H(p_2(P)))$. Note that if $(P,e\tensor f)\in\BP_R(G\times H)$ then $(p_1(P),e)\in\BP_R(G)$ and $(p_2(P),f)\in\BP_R(H)$.
	
	\begin{lemma}\label{lem:GHBrauerpaircontainment}
		Let $G$ and $H$ be finite groups.
		\begin{itemize}
			\item[(a)] Let $(Q,c\tensor d),(P,e\tensor f)\in\BP_R(G\times H)$. If $(Q,c\tensor d)\nor(P,e\tensor f)$ then $(p_1(Q),c)\nor(p_1(P),e)$ in $\BP_R(G)$ and $(p_2(Q),d)\nor(p_2(P),f)$ in $\BP_R(H)$.
			\item[(b)] Let $Q,P\in S_p(G\times H)$ and assume $Q\nor P$. Let $(p_1(Q),c),(p_1(P),e)\in\BP_R(G)$ and $(p_2(Q),d),(p_2(P),f)\in\BP_R(H)$ be such that $(p_1(Q),c)\leq(p_1(P),e)$ and $(p_2(Q),d)\leq(p_2(P),f)$. Then $(Q,c\tensor d)\nor (P,e\tensor f)$ in $\BP_R(G\times H)$.
			\item[(c)] Let $(Q,c\tensor d),(P,e\tensor f)\in\BP_R(G\times H)$. Then $(Q,c\tensor d)\leq(P,e\tensor f)$ if and only if $Q\subgp P$, $(p_1(Q),c)\leq(p_1(P),e)$ in $\BP_R(G)$ and $(p_2(Q),d)\leq(p_2(P),f)$ in $\BP_R(H)$.
		\end{itemize}
	\end{lemma}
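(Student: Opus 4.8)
The plan is to establish the three parts in the order (a), (b), (c): part (a) carries the essential content, part (b) follows from (a) together with one standard fact about the Brauer order, and part (c) is then formal. Throughout I use freely that every block idempotent of $RC_{G\times H}(Q)$ decomposes uniquely as $c\tensor d$ with $c\in\bli(RC_G(p_1(Q)))$, $d\in\bli(RC_H(p_2(Q)))$, and that $FC_{G\times H}(P)=FC_G(p_1(P))\tensor_F FC_H(p_2(P))$.

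For (a) I would start from the defining data of $(Q,c\tensor d)\nor(P,e\tensor f)$: $Q\subgp P\subgp N_{G\times H}(Q,c\tensor d)$ and $\br_P^{G\times H}(c\tensor d)\,\overline{e\tensor f}=\overline{e\tensor f}$. The inclusion $p_1(Q)\subgp p_1(P)$ is immediate. For $g\in p_1(P)$ pick $h$ with $(g,h)\in P$; then ${}^{(g,h)}(Q,c\tensor d)=(Q,c\tensor d)$ gives ${}^gc\tensor{}^hd=c\tensor d$, and uniqueness of the tensor decomposition forces ${}^gc=c$ and ${}^hd=d$, so $p_1(P)\subgp N_G(p_1(Q),c)$, $p_2(P)\subgp N_H(p_2(Q),d)$, and in particular $c\in(RG)^{p_1(P)}$, $d\in(RH)^{p_2(P)}$. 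The key step is then a direct coefficient computation: using $C_G(p_1(P))\subgp C_G(p_1(Q))$ one gets $\br_P^{G\times H}(c\tensor d)=\br_{p_1(P)}^G(c)\tensor\br_{p_2(P)}^H(d)$. Since $\overline{e\tensor f}=\overline e\tensor\overline f$, the defining relation becomes
\begin{equation*}
	\bigl(\br_{p_1(P)}^G(c)\,\overline e\bigr)\tensor\bigl(\br_{p_2(P)}^H(d)\,\overline f\bigr)=\overline e\tensor\overline f\neq 0 ,
\end{equation*}
and cancelling this nonzero equality of elementary tensors over $F$, together with the fact that $\br_{p_1(P)}^G(c)\,\overline e$ is an idempotent (the image of the idempotent $c$ under the algebra map $\br_{p_1(P)}^G$, times the central idempotent $\overline e$), forces $\br_{p_1(P)}^G(c)\,\overline e=\overline e$ and $\br_{p_2(P)}^H(d)\,\overline f=\overline f$. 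That is exactly $(p_1(Q),c)\nor(p_1(P),e)$ and $(p_2(Q),d)\nor(p_2(P),f)$.

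For (b), assuming $Q\nor P$, the relations ${}^{(g,h)}Q=Q$ for $(g,h)\in P$ give $p_1(Q)\nor p_1(P)$ and $p_2(Q)\nor p_2(P)$. I would then invoke the standard fact that a containment of Brauer pairs whose smaller group is normal in the larger one is already a one-step containment; applied in $G$ and $H$ this upgrades the hypotheses to $(p_1(Q),c)\nor(p_1(P),e)$ and $(p_2(Q),d)\nor(p_2(P),f)$. (If this fact is not quoted it is proved by induction along a subnormal series from $p_1(Q)$ to $p_1(P)$, using uniqueness of sub-Brauer-pairs for the normalizer inclusion and transitivity of the Brauer homomorphism for $\br_{p_1(P)}^G(c)\,\overline e=\overline e$.) Now $c\in(RG)^{p_1(P)}$, $d\in(RH)^{p_2(P)}$, so the coefficient computation from (a) applies, and the three conditions defining $(Q,c\tensor d)\nor(P,e\tensor f)$ all hold: $Q\subgp P$; $c\tensor d$ is $P$-stable since each $g\in p_1(P)$ fixes $c$ and each $h\in p_2(P)$ fixes $d$; and $\br_P^{G\times H}(c\tensor d)\,\overline{e\tensor f}=(\br_{p_1(P)}^G(c)\overline e)\tensor(\br_{p_2(P)}^H(d)\overline f)=\overline e\tensor\overline f=\overline{e\tensor f}$. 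The step I expect to be the real obstacle is precisely this promotion of $\subgp$ to $\nor$ under normality: the rest is bookkeeping with the unique tensor decomposition of block idempotents and with uniqueness of sub-Brauer-pairs, whereas this relies on genuine block theory (transitivity of the Brauer homomorphism).

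For (c), the forward direction follows by choosing a chain realizing $(Q,c\tensor d)\subgp(P,e\tensor f)$, decomposing each intermediate block idempotent as a tensor, and applying (a) link-by-link to produce the two projected chains. For the converse, given $Q\subgp P$ and the two projected containments, let $c'\tensor d'$ be the unique block idempotent of $RC_{G\times H}(Q)$ with $(Q,c'\tensor d')\subgp(P,e\tensor f)$ (existence and uniqueness by \cite[Theorem 2.10(a)]{Aschbacher_2011}); the forward direction gives $(p_1(Q),c')\subgp(p_1(P),e)$ and $(p_2(Q),d')\subgp(p_2(P),f)$, so $c'=c$ and $d'=d$ by the same uniqueness statement in $\BP_R(G)$ and $\BP_R(H)$, whence $(Q,c\tensor d)=(Q,c'\tensor d')\subgp(P,e\tensor f)$.
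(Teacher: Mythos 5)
Your argument is correct. Parts (a) and (b) follow the paper's proof essentially verbatim: the same use of the unique tensor decomposition of block idempotents to get $P$-stability of $c$ and $d$, the same factorization $\br_P^{G\times H}(c\tensor d)=\br_{p_1(P)}^G(c)\tensor\br_{p_2(P)}^H(d)$, and the same promotion of $\leq$ to $\nor$ under normality of the projected subgroups (the paper cites \cite[Proposition 4.2(b)]{Boltje_2020} for exactly the fact you flag as the real content of (b)). You are in fact slightly more careful than the paper in justifying why the nonzero equality of elementary tensors forces equality factorwise. The one place you genuinely diverge is the converse of (c). The paper takes a subnormal chain $Q=Q_0\nor\cdots\nor Q_n=P$, lifts the two projected chains to chains of Brauer pairs in $\BP_R(G)$ and $\BP_R(H)$ via \cite[Proposition 4.2(a)]{Boltje_2020}, and then applies part (b) to each link. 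You instead take the unique pair $(Q,c'\tensor d')\leq(P,e\tensor f)$ supplied by \cite[Theorem 2.10(a)]{Aschbacher_2011}, push it down with the already-proved forward direction, and conclude $c'=c$, $d'=d$ by uniqueness of sub-Brauer-pairs in $G$ and in $H$. Your version is shorter and makes part (b) logically unnecessary for part (c); the paper's version is more constructive in that it exhibits an explicit chain witnessing the containment, which is in the spirit of how part (b) gets used elsewhere. Both rest on the same existence-and-uniqueness theory, so there is no gap either way.
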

	
	\begin{proof}
		(a) Assume that $(Q,c\tensor d)\nor(P,e\tensor f)$ is a normal containment of $R[G\times H]$-Brauer pairs. Then by definition $Q\subgp P\subgp N_{G\times H}(Q,c\tensor d)$ and $\br_P^{G\times H}(c\tensor d)\cdot\overline{e\tensor f}=\overline{e\tensor f}$. Since $Q\nor P$ we have $p_i(Q)\nor p_i(P)$ for $i=1,2$. Let $x\in p_1(P)$. Then there exists an element $y\in H$ such that $(x,y)\in P$. Since $P\subgp N_{G\times H}(Q,c\tensor d)$ we have $y\in N_H(p_2(Q))$. Now $x\in N_G(p_1(Q))$ and $c\in\bli(RC_G(p_1(Q)))$ so ${}^xc\in\bli(RC_G(p_1(Q)))$. Likewise ${}^yd\in\bli(RC_H(p_2(Q)))$. Since $c\tensor d={}^{(x,y)}(c\tensor d)={}^xc\tensor{}^yd$ it follows that ${}^xc=c$. Thus we have shown that $p_1(P)\subgp N_G(p_1(Q),c)$. A similar argument shows that $p_2(P)\subgp N_H(p_2(Q),d)$. Since $\br_P^{G\times H}(c\tensor d)=\br_{p_1(P)}^G(c)\tensor\br_{p_2(P)}^H(d)$ in $FC_{G\times H}(P)=(FC_G(p_1(P)))\tensor_F (FC_H(p_2(P)))$ the equality $\br_P^{G\times H}(c\tensor d)\cdot\overline{e\tensor f}=\overline{e\tensor f}$ gives that
		\begin{equation*}
			\br_{p_1(P)}^G(c)\overline{e}\tensor\br_{p_2(P)}^H(d)\overline{f}=\overline{e}\tensor\overline{f}.
		\end{equation*}
		So $\br_{p_1(P)}^G(c)\overline{e}=\overline{e}$ and $\br_{p_2(P)}^H(d)\overline{f}=\overline{f}$. Thus we find that $(p_1(Q),c)\nor(p_1(P),e)$ in $\BP_R(G)$ and $(p_2(Q),d)\nor(p_2(P),f)$ in $\BP_R(H)$. 
		
		(b) Let $Q\nor P$ be $p$-subgroups of $G\times H$ and let $(p_1(Q),c),(p_1(P),e)\in\BP_R(G)$, $(p_2(Q),d),(p_2(P),f)\in\BP_R(H)$ be such that $(p_1(Q),c)\leq(p_1(P),e)$ and $(p_2(Q),d)\leq(p_2(P),f)$. Since $p_i(Q)\nor p_i(P)$ for $i=1,2$, \cite[Proposition 4.2(b)]{Boltje_2020} implies that $(p_1(Q),c)\nor(p_1(P),e)$ and $(p_2(Q),d)\nor(p_2(P),f)$. In particular, we have $p_1(P)\subgp N_G(p_1(Q),c)$, $p_2(P)\subgp N_H(p_2(Q),d)$, $\br_{p_1(P)}^G(c)\overline{e}=\overline{e}$, and $\br_{p_2(P)}^H(d)\overline{f}=\overline{f}$. Let $(x,y)\in P$. Then $x\in p_1(P)$ so ${}^xc=c$ and $y\in p_2(P)$ so ${}^yd=d$. It follows that ${}^{(x,y)}(c\tensor d)={}^xc\tensor {}^yd=c\tensor d$ and hence $P\subgp N_{G\times H}(Q,c\tensor d)$. Now $\br_P^{G\times H}(c\tensor d)=\br_{p_1(P)}^G(c)\tensor\br_{p_2(P)}^H(d)$, so
		\begin{align*}
			\br_P^{G\times H}(c\tensor d)\cdot(\overline{e\tensor f})	&=(\br_{p_1(P)}^G(c)\tensor\br_{p_2(P)}^H(d))\cdot(\overline{e}\tensor\overline{f})\\
			&=\br_{p_1(P)}^G(c)\overline{e}\tensor\br_{p_2(P)}^H(d)\overline{f}\\
			&=\overline{e}\tensor\overline{f}\\
			&=\overline{e\tensor f}.
		\end{align*}
		We conclude that $(Q,c\tensor d)\nor(P,e\tensor f)$, as desired.
		
		(c) Let $(Q,c\tensor d),(P,e\tensor f)\in\BP_R(G\times H)$. If $(Q,c\tensor d)\leq(P,e\tensor f)$ then $Q\subgp P$ and part (a) implies that $(p_1(Q),c)\leq(p_1(P),e)$ in $\BP_R(G)$ and $(p_2(Q),d)\leq(p_2(P),f)$ in $\BP_R(H)$. Suppose conversely that $Q\subgp P$, $(p_1(Q),c)\leq(p_1(P),e)$, and $(p_2(Q),d)\leq(p_2(P),f)$. Let
		\begin{equation*}
			Q=Q_0\nor Q_1\nor\cdots\nor Q_n=P
		\end{equation*}
		be a subnormal chain of $p$-subgroups in $G\times H$. Then 
		\begin{equation*}
			p_i(Q)=p_i(Q_0)\nor p_i(Q_1)\nor\cdots\nor p_i(Q_n)=p_i(P)
		\end{equation*}
		is a subnormal chain of $p$-subgroups in $G$ (if $i=1$) or $H$ (if $i=2$). By \cite[Proposition 4.2(a)]{Boltje_2020} there exist Brauer pairs $(p_1(Q_i),c_i)\in\BP_R(G)$ and $(p_2(Q_i),d_i)\in\BP_R(H)$, $i=0,\ldots,n$, such that
		\begin{equation*}
			(p_1(Q),c)=(p_1(Q_0),c_0)\nor(p_1(Q_1),c_1)\nor\cdots\nor(p_1(Q_n),c_n)=(p_1(P),e)
		\end{equation*}
		in $\BP_R(G)$ and
		\begin{equation*}
			(p_2(Q),d)=(p_2(Q_0),d_0)\nor(p_2(Q_1),d_1)\nor\cdots\nor(p_2(Q_n),d_n)=(p_2(P),f)
		\end{equation*}
		in $\BP_R(H)$. Part (b) then implies that $(Q_i,c_i\tensor d_i)\nor(Q_{i+1},c_{i+1}\tensor d_{i+1})$ for each $i=0,\ldots,n-1$. We conclude that $(Q,c\tensor d)\leq (P,e\tensor f)$.
	\end{proof}
	
	\begin{lemma}\label{lem:pi1pi2}
		Let $G$ and $H$ be finite groups. The maps $\pi_1:\BP_R(G\times H)\to\BP_R(G)$ and $\pi_2:\BP_R(G\times H)\to\BP_R(H)$ defined by $\pi_1(P,e\tensor f)=(p_1(P),e)$ and $\pi_2(P,e\tensor f)=(p_2(P),f)$ are surjective morphisms of posets. Furthermore, if $(g,h)\in G\times H$ and $(P,e\tensor f)\in\BP_R(G\times H)$ then one has
		\begin{equation*}
			\pi_1({}^{(g,h)}(P,e\tensor f))={}^g\pi_1(P,e\tensor f)
		\end{equation*}
		and
		\begin{equation*}
			\pi_2({}^{(g,h)}(P,e\tensor f))={}^h\pi_2(P,e\tensor f).
		\end{equation*}
	\end{lemma}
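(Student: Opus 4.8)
The plan is to verify each clause in turn, leaning on the structural facts about Brauer pairs of a direct product recorded just before the statement and on Lemma~\ref{lem:GHBrauerpaircontainment}.

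First, well-definedness and the assertion that $\pi_1,\pi_2$ have the stated codomains are essentially already in hand: after identifying $RC_{G\times H}(P)$ with $RC_G(p_1(P))\tensor_R RC_H(p_2(P))$, every $R[G\times H]$-Brauer pair has the form $(P,e\tensor f)$ for \emph{uniquely determined} $e\in\bli(RC_G(p_1(P)))$ and $f\in\bli(RC_H(p_2(P)))$, and moreover $(p_1(P),e)\in\BP_R(G)$ and $(p_2(P),f)\in\BP_R(H)$. Thus $\pi_1$ and $\pi_2$ are well-defined maps into $\BP_R(G)$ and $\BP_R(H)$. That they are morphisms of posets is then immediate from Lemma~\ref{lem:GHBrauerpaircontainment}(c): $(Q,c\tensor d)\leq(P,e\tensor f)$ implies $(p_1(Q),c)\leq(p_1(P),e)$ and $(p_2(Q),d)\leq(p_2(P),f)$, which is exactly the statement that $\pi_1$ and $\pi_2$ are order-preserving.

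For surjectivity I would argue for $\pi_1$, the case of $\pi_2$ being symmetric. Given $(P_0,e_0)\in\BP_R(G)$, put $P:=P_0\times 1\subgp G\times H$, so that $p_1(P)=P_0$, $p_2(P)=1$, and $C_{G\times H}(P)=C_G(P_0)\times H$. Choosing any $f\in\bli(RH)$ (one exists since $RH\neq 0$), the pair $(P,e_0\tensor f)$ is an $R[G\times H]$-Brauer pair with $\pi_1(P,e_0\tensor f)=(p_1(P),e_0)=(P_0,e_0)$.

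Finally, the conjugation formulas. Fix $(g,h)\in G\times H$ and $(P,e\tensor f)\in\BP_R(G\times H)$. Since $p_1({}^{(g,h)}P)={}^g p_1(P)$ and $p_2({}^{(g,h)}P)={}^h p_2(P)$, the only point needing attention is how the block idempotent transforms. Conjugation by $(g,h)$ carries the identification $RC_{G\times H}(P)\iso RC_G(p_1(P))\tensor_R RC_H(p_2(P))$ to the identification $RC_{G\times H}({}^{(g,h)}P)\iso RC_G({}^g p_1(P))\tensor_R RC_H({}^h p_2(P))$, acting on elementary tensors by $\alpha\tensor\beta\mapsto{}^g\alpha\tensor{}^h\beta$; since block idempotents are preserved under algebra isomorphisms, this forces ${}^{(g,h)}(e\tensor f)={}^g e\tensor{}^h f$. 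Hence
\begin{equation*}
\pi_1\bigl({}^{(g,h)}(P,e\tensor f)\bigr)=({}^g p_1(P),{}^g e)={}^g(p_1(P),e)={}^g\pi_1(P,e\tensor f),
\end{equation*}
and symmetrically $\pi_2\bigl({}^{(g,h)}(P,e\tensor f)\bigr)={}^h\pi_2(P,e\tensor f)$. The only step that requires genuine care is this last one --- checking that the tensor-product decomposition of $RC_{G\times H}(P)$ intertwines $G\times H$-conjugation with componentwise conjugation on the two factors, so that the uniquely determined block idempotents transform equivariantly; everything else is a direct appeal to results already available.
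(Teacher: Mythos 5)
Your proposal is correct and follows essentially the same route as the paper: poset-morphism via Lemma \ref{lem:GHBrauerpaircontainment}(c), surjectivity via the pair $(P_0\times\set{1},e_0\tensor f)$ for an arbitrary $f\in\bli(RH)$, and the conjugation formulas by the direct computation ${}^{(g,h)}(P,e\tensor f)=({}^{(g,h)}P,{}^ge\tensor{}^hf)$. The only difference is that you spell out why conjugation acts componentwise on the tensor decomposition, a point the paper's proof takes as immediate.
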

	
	\begin{proof}
		The maps $\pi_1$ and $\pi_2$ are morphisms of posets by part (c) of Lemma \ref{lem:GHBrauerpaircontainment}. Let $(P,e)\in\BP_R(G)$. If $f$ is any block idempotent of $RH$ then $(P\times\set{1},e\tensor f)\in\BP_R(G\times H)$ and $\pi_1(P\times\set{1},e\tensor f)=(P,e)$. So $\pi_1$ is surjective. In a similar way one can show that $\pi_2$ is surjective. If $(g,h)\in G\times H$ and $(P,e\tensor f)\in\BP_R(G\times H)$ then
		\begin{align*}
			\pi_1({}^{(g,h)}(P,e\tensor f))	&=\pi_1({}^{(g,h)}P,{}^ge\tensor{}^hf)\\
			&=(p_1({}^{(g,h)}P),{}^ge)\\
			&=({}^gp_1(P),{}^ge)\\
			&={}^g(p_1(P),e)\\
			&={}^g\pi_1(P,e\tensor f).
		\end{align*}
		A similar computation shows that $\pi_2({}^{(g,h)}(P,e\tensor f))={}^h\pi_2(P,e\tensor f)$. The proof is complete.
	\end{proof}
	
	\begin{lemma}\label{lem:brauerpairAB}
		Let $A\in\Bl(RG)$, $B\in\Bl(RH)$, and let $(P,e\tensor f)\in\BP_R(G\times H)$. Then $(P,e\tensor f)\in\BP_R(A\tensor_R B)$ if and only if $(p_1(P),e)\in\BP_R(A)$ and $(p_2(P),f)\in\BP_R(B)$. In particular, if $\pi_1$ and $\pi_2$ are as in Lemma \ref{lem:pi1pi2} then
		\begin{equation*}
			\pi_1(\BP_R(A\tensor_R B))=\BP_R(A)\qquad\text{and}\qquad\pi_2(\BP_R(A\tensor_R B))=\BP_R(B).
		\end{equation*}
	\end{lemma}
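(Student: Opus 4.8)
The plan is to unwind the definition of ``belongs to a block'' and then exploit the multiplicativity of the Brauer homomorphism along the tensor factorization $R[G\times H]\iso RG\tensor_R RH$. The key preliminary observations are that the block idempotent of $A\tensor_R B$ is $e_A\tensor e_B$ (over the large-enough $p$-modular system the blocks of $R[G\times H]$ are exactly the algebras $A\tensor_R B$ with $A\in\Bl(RG)$ and $B\in\Bl(RH)$), and that for $P\in S_p(G\times H)$ the central idempotent $e_A\tensor e_B$ lies in $(R[G\times H])^P$ and satisfies
\begin{equation*}
	\br_P^{G\times H}(e_A\tensor e_B)=\br_{p_1(P)}^G(e_A)\tensor\br_{p_2(P)}^H(e_B)
\end{equation*}
inside $FC_{G\times H}(P)=FC_G(p_1(P))\tensor_F FC_H(p_2(P))$. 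This last identity is proved by comparing supports, exactly as in the factorization already invoked in the proof of Lemma \ref{lem:GHBrauerpaircontainment}: the coefficient of $(g,h)\in C_G(p_1(P))\times C_H(p_2(P))=C_{G\times H}(P)$ in $\br_P^{G\times H}(e_A\tensor e_B)$ is the reduction of the product of the corresponding coefficients of $e_A$ and $e_B$.

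Next I would rewrite the membership condition. By definition $(P,e\tensor f)\in\BP_R(A\tensor_R B)$ if and only if $\br_P^{G\times H}(e_A\tensor e_B)\cdot\overline{e\tensor f}=\overline{e\tensor f}$, which by the displayed factorization becomes
\begin{equation*}
	\bigl(\br_{p_1(P)}^G(e_A)\,\overline{e}\bigr)\tensor\bigl(\br_{p_2(P)}^H(e_B)\,\overline{f}\bigr)=\overline{e}\tensor\overline{f}.
\end{equation*}
Since $\br_{p_1(P)}^G$ is an algebra surjection onto $FC_G(p_1(P))$, the element $\br_{p_1(P)}^G(e_A)$ is a central idempotent there, and $\overline{e}$ is a block idempotent of $FC_G(p_1(P))$; hence $\br_{p_1(P)}^G(e_A)\,\overline{e}\in\set{0,\overline{e}}$, and symmetrically on the $H$-side. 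As $F$ is a field, $\overline{e}\tensor\overline{f}\neq 0$, so the displayed equality forces both tensor factors to be nonzero, i.e. $\br_{p_1(P)}^G(e_A)\,\overline{e}=\overline{e}$ and $\br_{p_2(P)}^H(e_B)\,\overline{f}=\overline{f}$; conversely these two equalities immediately give the displayed one. This says precisely that $(P,e\tensor f)\in\BP_R(A\tensor_R B)$ if and only if $(p_1(P),e)\in\BP_R(A)$ and $(p_2(P),f)\in\BP_R(B)$, which is the stated equivalence.

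For the ``in particular'' assertion, the inclusion $\pi_1(\BP_R(A\tensor_R B))\subseteq\BP_R(A)$ is immediate from the forward direction of the equivalence. For the reverse inclusion, given $(P_0,e)\in\BP_R(A)$ I would take $P=P_0\by\set{1}$ and $f=e_B$; then $p_1(P)=P_0$, $p_2(P)=\set{1}$, and $(\set{1},e_B)\in\BP_R(B)$ because $\br_{\set{1}}^H(e_B)=\overline{e_B}$ and $\overline{e_B}\,\overline{e_B}=\overline{e_B}$. Hence $(P,e\tensor e_B)\in\BP_R(A\tensor_R B)$ by the equivalence just proved, and $\pi_1(P,e\tensor e_B)=(P_0,e)$; the argument for $\pi_2$ is symmetric, using $\set{1}\by Q_0$.

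There is no serious obstacle here: the content is the multiplicativity of $\br_P^{G\times H}$ across the two factors, and the only points needing a word of justification are that the product of a central idempotent with a block idempotent in a group algebra over $F$ is again $0$ or that block idempotent, and that a tensor product over the field $F$ of two nonzero idempotents is nonzero. Both are standard, and everything else is bookkeeping with the definitions of $\BP_R(-)$ and of the Brauer homomorphism.
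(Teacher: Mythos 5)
Your proof is correct and follows essentially the same route as the paper's: factor $\br_P^{G\times H}(e_A\tensor f_B)$ as $\br_{p_1(P)}^G(e_A)\tensor\br_{p_2(P)}^H(f_B)$, reduce the membership condition to the two componentwise conditions, and realize surjectivity of $\pi_1$ via the pair $(P\times\set{1},e\tensor f_B)$. Your extra justification that each factor lies in $\set{0,\overline{e}}$ (resp. $\set{0,\overline{f}}$) and that a tensor product of nonzero idempotents over $F$ is nonzero merely makes explicit a step the paper leaves implicit.
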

	
	\begin{proof}
		Let $e_A$ and $f_B$ denote the identities of $A$ and $B$, respectively. Then $e_A\tensor f_B$ is the identity of $A\tensor_R B$. Note that
		\begin{equation*}
			\br_P^{G\times H}(e_A\tensor f_B)=\br_{p_1(P)}^G(e_A)\tensor\br_{p_2(P)}^H(f_B).
		\end{equation*}
		Now by definition $(P,e\tensor f)\in\BP_R(A\tensor_R B)$ if and only if $\br_P^{G\times H}(e_A\tensor f_B)\cdot(\overline{e\tensor f})=\overline{e\tensor f}$. This equality holds if and only if $\br_{p_1(P)}^G(e_A)\overline{e}\tensor\br_{p_2(P)}^H(f_B)\overline{f}=\overline{e}\tensor\overline{f}$, which in turn holds if and only if $\br_{p_1(P)}^G(e_A)\overline{e}=\overline{e}$ and $\br_{p_2(P)}^H(f_B)\overline{f}=\overline{f}$. Thus we see that $(P,e\tensor f)\in\BP_R(A\tensor_R B)$ if and only if $(p_1(P),e)\in\BP_R(A)$ and $(p_2(P),f)\in\BP_R(B)$.
		
		Now let $\pi_1$ and $\pi_2$ be as in Lemma \ref{lem:pi1pi2}. By what we have just shown, $\pi_1(\BP_R(A\tensor_R B))\subseteq\BP_R(A)$ and $\pi_2(\BP_R(A\tensor_R B))\subseteq\BP_R(B)$. If $(P,e)\in\BP_R(A)$ then the $R[G\times H]$-pair $(P\times\set{1},e\tensor f_B)$ belongs to $A\tensor_R B$ and maps to $(P,e)$ under $\pi_1$. This shows that $\pi_1(\BP_R(A\tensor_R B))=\BP_R(A)$. Similarly, $\pi_2(\BP_R(A\tensor_R B))=\BP_R(B)$.
	\end{proof}
	
	
	\subsection{Fusion systems}
	
	In this subsection we establish some facts about block fusion systems for later use. We refer the reader to \cite{Aschbacher_2011} for the definitions of any terms not recalled here. 
	
	If $\mathcal{F}$ is a fusion system over a $p$-group $S$ then elements $x,y\in S$ are said to be $\mathcal{F}$-\textit{conjugate} if there exists an $\mathcal{F}$-isomorphism $\varphi:\gp{x}\isoto\gp{y}$ satisfying $\varphi(x)=y$. If $Q\subgp S$ then $\mathcal{N}_{\mathcal{F}}(Q)$ and $\mathcal{C}_{\mathcal{F}}(Q)$ respectively denote the normalizer and centralizer subsystems of $Q$ in $\mathcal{F}$. If $\mathcal{E}$ is another fusion system defined over a $p$-group $R$ then a group isomorphism $\phi:S\isoto R$ induces an isomorphism of fusion systems $\phi:\mathcal{F}\isoto\mathcal{E}$ if
	\begin{equation*}
		\Hom_{\mathcal{E}}(\phi(P),\phi(Q))=\phi\circ\Hom_{\mathcal{F}}(P,Q)\circ\phi^{-1}
	\end{equation*}
	for all $P,Q\subgp S$. 
	
	If $\mathcal{E}$ and $\mathcal{F}$ are fusion systems defined over $p$-groups $R$ and $S$, respectively, then $\mathcal{E}\times\mathcal{F}$ is the fusion system over $R\times S$ generated by the set of all morphisms of the form $(\psi_1,\psi_2)\in\Hom(P_1\times Q_1,P_2\times Q_2)$ with $\psi_1\in\Hom_{\mathcal{E}}(P_1,P_2)$ and $\psi_2\in\Hom_{\mathcal{F}}(Q_1,Q_2)$. If $U,V\subgp R\times S$ then $\Hom_{\mathcal{E}\times\mathcal{F}}(U,V)$ consists of those group homomorphisms $\psi:U\to V$ for which there exist $\psi_1\in\Hom_{\mathcal{E}}(p_1(U),p_1(V))$ and $\psi_2\in\Hom_{\mathcal{F}}(p_2(U),p_2(V))$ such that $(\psi_1,\psi_2)|_U=\psi$.
	
	\begin{lemma}\label{lem:twisteddiagfullynormalized}
		Let $\mathcal{E}$ and $\mathcal{F}$ be fusion systems over $p$-groups $R$ and $S$, respectively. Assume that at least one of $\mathcal{E}$ or $\mathcal{F}$ is saturated. Let $\phi:S\isoto R$ be an isomorphism that induces an isomorphism of fusion systems $\phi:\mathcal{F}\isoto\mathcal{E}$. Let $Q$ be a fully $\mathcal{F}$-normalized subgroup of $S$ and set $P=\phi(Q)$. Then $\Delta(P,\phi,Q)$ is a fully $\mathcal{E}\times\mathcal{F}$-normalized subgroup of $R\times S$.
	\end{lemma}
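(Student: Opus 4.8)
The plan is to work directly from the counting definition of ``fully normalized'': I must show that $\lvert N_{R\by S}(\Delta(P,\phi,Q))\rvert\ge\lvert N_{R\by S}(\Delta')\rvert$ for every $\mathcal{E}\by\mathcal{F}$-conjugate $\Delta'$ of $\Delta(P,\phi,Q)$. First I would reduce to the case that $\mathcal{F}$ is saturated. Since $\phi$ is an isomorphism of the ambient groups we have $\phi(N_S(X))=N_R(\phi(X))$ and $\phi(C_S(X))=C_R(\phi(X))$ for all $X\subgp S$, and since $\phi$ is an isomorphism of fusion systems it carries the $\mathcal{F}$-conjugacy class of $Q$ bijectively onto the $\mathcal{E}$-conjugacy class of $P=\phi(Q)$; hence $P$ is fully $\mathcal{E}$-normalized if and only if $Q$ is fully $\mathcal{F}$-normalized. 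The coordinate swap is a group isomorphism $R\by S\isoto S\by R$ taking the fusion system $\mathcal{E}\by\mathcal{F}$ to $\mathcal{F}\by\mathcal{E}$ and the subgroup $\Delta(P,\phi,Q)$ to $\Delta(Q,\phi^{-1},P)$, and it preserves the property of being fully normalized; so replacing $(\mathcal{E},\mathcal{F},R,S,\phi,Q,P)$ by $(\mathcal{F},\mathcal{E},S,R,\phi^{-1},P,Q)$ if necessary, I may assume $\mathcal{F}$ is saturated. Saturation and the hypothesis that $Q$ is fully $\mathcal{F}$-normalized then give that $Q$ is fully $\mathcal{F}$-centralized and that $\Aut_S(Q)\in\Syl_p(\Aut_{\mathcal{F}}(Q))$.

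Next I would describe the $\mathcal{E}\by\mathcal{F}$-conjugates of $\Delta:=\Delta(P,\phi,Q)$ concretely. By the description of $\Hom_{\mathcal{E}\by\mathcal{F}}$ recalled before the lemma, any such conjugate is $(\psi_1,\psi_2)(\Delta)$ for some $\psi_1\in\Hom_{\mathcal{E}}(P,R)$ and $\psi_2\in\Hom_{\mathcal{F}}(Q,S)$, and a routine computation shows $(\psi_1,\psi_2)(\Delta)=\Delta(\psi_1(P),\psi_1\phi\psi_2^{-1},\psi_2(Q))$; so every $\mathcal{E}\by\mathcal{F}$-conjugate of $\Delta$ is a twisted diagonal subgroup $\Delta(P',\phi',Q')$ with $P'$ an $\mathcal{E}$-conjugate of $P$ and $Q'$ an $\mathcal{F}$-conjugate of $Q$. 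Now for \emph{any} twisted diagonal subgroup $\Delta(P',\phi',Q')\subgp R\by S$, the formula ${}^{(r,s)}\Delta(P',\phi',Q')=\Delta({}^rP',c_r\phi'c_s^{-1},{}^sQ')$ gives
\begin{equation*}
	N_{R\by S}(\Delta(P',\phi',Q'))=\{(r,s)\in N_R(P')\by N_S(Q') : c_r\phi'=\phi'c_s\text{ on }Q'\},
\end{equation*}
and a direct check gives $C_{R\by S}(\Delta(P',\phi',Q'))=C_R(P')\by C_S(Q')$. Passing to $\Aut_{R\by S}(\Delta(P',\phi',Q'))=N_{R\by S}/C_{R\by S}$ and observing that the first coordinate of a normalizing pair is determined modulo $C_R(P')$ by the second, one obtains
\begin{equation*}
	\lvert N_{R\by S}(\Delta(P',\phi',Q'))\rvert=\lvert C_R(P')\rvert\cdot\lvert C_S(Q')\rvert\cdot\lvert A'\rvert,
\end{equation*}
where $A':=\{\alpha\in\Aut_S(Q') : \phi'\alpha\phi'^{-1}\in\Aut_R(P')\}$ is a subgroup of $\Aut_S(Q')$.

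The crux is the identity $A=\Aut_S(Q)$ for $\Delta$ itself: since $\phi$ is a group isomorphism $S\isoto R$ with $\phi(Q)=P$, one has $\phi\,c_s\,\phi^{-1}=c_{\phi(s)}$ as automorphisms of $P$ for every $s\in N_S(Q)$, and $\phi(s)\in\phi(N_S(Q))=N_R(P)$, so $\phi\Aut_S(Q)\phi^{-1}=\Aut_R(P)$ and the condition defining $A$ is automatic. Then for a general $\mathcal{E}\by\mathcal{F}$-conjugate $\Delta'=\Delta(P',\phi',Q')$ of $\Delta$ I would bound the three factors: $\lvert C_S(Q')\rvert\le\lvert C_S(Q)\rvert$ because $Q$ is fully $\mathcal{F}$-centralized; $\lvert C_R(P')\rvert=\lvert C_S(\phi^{-1}(P'))\rvert\le\lvert C_S(Q)\rvert=\lvert C_R(P)\rvert$ because $\phi^{-1}(P')$ is $\mathcal{F}$-conjugate to $Q$; and, using in turn that $A'\subgp\Aut_S(Q')$, that $\Aut_S(Q')$ is a $p$-group contained in $\Aut_{\mathcal{F}}(Q')$, that $\Aut_{\mathcal{F}}(Q')\iso\Aut_{\mathcal{F}}(Q)$ since $Q'$ and $Q$ are $\mathcal{F}$-conjugate, that $\Aut_S(Q)\in\Syl_p(\Aut_{\mathcal{F}}(Q))$, and the identity $A=\Aut_S(Q)$,
\begin{equation*}
	\lvert A'\rvert\le\lvert\Aut_S(Q')\rvert\le\lvert\Aut_{\mathcal{F}}(Q')\rvert_p=\lvert\Aut_{\mathcal{F}}(Q)\rvert_p=\lvert\Aut_S(Q)\rvert=\lvert A\rvert.
\end{equation*}
Multiplying these bounds yields $\lvert N_{R\by S}(\Delta')\rvert\le\lvert N_{R\by S}(\Delta)\rvert$, and since $\Delta'$ was an arbitrary $\mathcal{E}\by\mathcal{F}$-conjugate of $\Delta$ this shows that $\Delta$ is fully $\mathcal{E}\by\mathcal{F}$-normalized.

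I expect the main work to lie in the normalizer computation: getting the clean product decomposition $\lvert N_{R\by S}(\Delta(P',\phi',Q'))\rvert=\lvert C_R(P')\rvert\,\lvert C_S(Q')\rvert\,\lvert A'\rvert$ and isolating the ``twisting'' factor $A'$, together with the bookkeeping that every $\mathcal{E}\by\mathcal{F}$-conjugate of $\Delta$ is again twisted diagonal with projections in the $\mathcal{E}$- and $\mathcal{F}$-conjugacy classes of $P$ and $Q$. Once $A'$ is isolated, the estimate $\lvert A'\rvert\le\lvert A\rvert$ is the only place where saturation is genuinely used (through the Sylow property $\Aut_S(Q)\in\Syl_p(\Aut_{\mathcal{F}}(Q))$ and the identity $A=\Aut_S(Q)$), and the two centralizer estimates are immediate from ``fully normalized implies fully centralized.''
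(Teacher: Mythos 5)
Your proof is correct, but it packages the counting differently from the paper's. Both arguments rest on the same two observations — that $N_{R\times S}(\Delta(P',\phi',Q'))$ consists of the pairs $(r,s)\in N_R(P')\times N_S(Q')$ with $c_r\phi'=\phi'c_s$ on $Q'$, and that for $\Delta(P,\phi,Q)$ itself the second projection of the normalizer is all of $N_S(Q)$ because $\phi$ is a global group isomorphism — but the paper stops at the two-factor count $|N_{R\times S}(V)|=|p_2(N_{R\times S}(V))|\cdot|C_R(p_1(V))|$, bounding the first factor by $|N_S(Q)|$ (full normalization of $Q$) and the second by $|C_R(P)|$ (full centralization of $P$, coming from saturation). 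You refine this to the three-factor count $|C_R(P')|\cdot|C_S(Q')|\cdot|A'|$, isolating the twisting subgroup $A'\subgp\Aut_S(Q')$, and you then need the Sylow axiom $\Aut_S(Q)\in\Syl_p(\Aut_{\mathcal{F}}(Q))$ together with the identity $A=\Aut_S(Q)$ to control $|A'|$. So your route invokes one more consequence of saturation than the paper's, which gets by with only ``fully normalized implies fully centralized''; in exchange, your decomposition makes explicit which of the three factors is extremal at $\Delta(P,\phi,Q)$ and why. Your reduction to the case that $\mathcal{F}$ is saturated via the coordinate swap is a clean, explicit replacement for the paper's appeal to ``a similar argument'' in the other case. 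All the individual steps (the description of the $\mathcal{E}\times\mathcal{F}$-conjugates of $\Delta(P,\phi,Q)$ as twisted diagonals, the centralizer estimates transported through $\phi$, and the chain of inequalities for $|A'|$) check out.
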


	\begin{proof}
		Since $\phi:\mathcal{F}\isoto\mathcal{E}$ is an isomorphism, $P$ is a fully $\mathcal{E}$-normalized subgroup of $R$. Notice that we have
		\begin{align*}
			p_1(N_{R\times S}(\Delta(P,\phi,Q)))=N_R(P),\qquad p_2(N_{R\times S}(\Delta(P,\phi,Q)))=N_S(Q),\\
			k_1(N_{R\times S}(\Delta(P,\phi,Q)))=C_R(P),\qquad k_2(N_{R\times S}(\Delta(P,\phi,Q)))=C_S(Q).
		\end{align*}
		Therefore
		\begin{equation*}
			N_R(P)/C_R(P)\iso N_{R\times S}(\Delta(P,\phi,Q))/(C_R(P)\times C_S(Q))\iso N_S(Q)/C_S(Q),
		\end{equation*}
		and in particular
		\begin{equation*}
			|N_{R\times S}(\Delta(P,\phi,Q))|=|N_S(Q)|\cdot |C_R(P)|=|N_R(P)|\cdot |C_S(Q)|.
		\end{equation*}
		Now let $V\subgp R\times S$ and suppose that $\psi:\Delta(P,\phi,Q)\isoto V$ is an $\mathcal{E}\times\mathcal{F}$-isomorphism. Then there exists an $\mathcal{E}$-isomorphism $\psi_1:P\isoto p_1(V)$ and an $\mathcal{F}$-isomorphism $\psi_2:Q\isoto p_2(V)$ such that $(\psi_1,\psi_2)|_{\Delta(P,\phi,Q)}=\psi$. In particular, $V=\Delta(p_1(V),\psi_1\phi\psi_2^{-1},p_2(V))$. It follows that
		\begin{align*}
			p_1(N_{R\times S}(V))\subgp N_R(p_1(V)),\qquad p_2(N_{R\times S}(V))\subgp N_S(p_2(V)),\\
			k_1(N_{R\times S}(V))=C_R(p_1(V)),\qquad k_2(N_{R\times S}(V))=C_S(p_2(V)).
		\end{align*}
		
		Suppose that $\mathcal{E}$ is saturated. Then $P$ is fully $\mathcal{E}$-centralized, so $|C_R(P)|\geq |C_R(p_1(V))|$. We also have $|N_S(Q)|\geq |N_S(p_2(V))|$ since $Q$ is fully $\mathcal{F}$-normalized. It follows that
		\begin{align*}
			|N_{R\times S}(V)|	&=|p_2(N_{R\times S}(V))|\cdot|C_R(p_1(V))|\\
								&\leq |N_S(Q)|\cdot |C_R(P)|=|N_{R\times S}(\Delta(P,\phi,Q))|,
		\end{align*}
		and hence $\Delta(P,\phi,Q)$ is fully $\mathcal{E}\times\mathcal{F}$-normalized. A similar argument achieves the same result if instead $\mathcal{F}$ is saturated.
	\end{proof}
	
	Let $B$ be a block of $RG$ and let $(D,e_D)\in\BP_R(B)$ be a maximal $B$-Brauer pair. For each subgroup $P\subgp D$ let $e_P$ denote the unique block idempotent of $RC_G(P)$ such that $(P,e_P)\leq (D,e_D)$. The \textit{fusion system of }$B$ \textit{associated to} $(D,e_D)$ is the subcategory $\mathcal{F}=\mathcal{F}_{(D,e_D)}(G,B)$ of the category of all finite groups whose objects are the subgroups of $D$ and with morphism sets $\Hom_{\mathcal{F}}(P,Q)$, for two subgroups $P$ and $Q$ of $D$, defined as the set of group homomorphisms $\varphi:P\to Q$ for which there exists a $g\in G$ satisfying $\varphi=c_g$ and ${}^g(P,e_P)\leq(Q,e_Q)$. By the results of \cite[IV.3]{Aschbacher_2011} the fusion system $\mathcal{F}$ is saturated. Since any two maximal $B$-Brauer pairs are $G$-conjugate (see \cite[IV, Theorem 2.20]{Aschbacher_2011}) the isomorphism class of $\mathcal{F}$ does not depend on the choice of maximal $B$-Brauer pair. If $B\in\Bl(\OO G)$ note that $\mathcal{F}_{(D,e_D)}(G,B)=\mathcal{F}_{(D,\overline{e_D})}(G,\overline{B})$.
	
	Keep the notation set above. If $P,Q\subgp D$ and $\varphi:P\to Q$ is a morphism in $\mathcal{F}$ then $\varphi$ is an $\mathcal{F}$-isomorphism if and only if $\varphi=c_g$ for some $g\in G$ such that ${}^g(P,e_P)=(Q,e_Q)$. In fact, if $\varphi:P\isoto Q$ is an $\mathcal{F}$-isomorphism and $g\in G$ is such that $\varphi=c_g$ and ${}^g(P,e_P)\leq (Q,e_Q)$ then necessarily ${}^g(P,e_P)=(Q,e_Q)$.
	
	Note that if $(D,e_D)\in\BP_R(B)$ is a maximal $B$-Brauer pair then $(D,e_D^\ast)$ is a maximal $B^\ast$-Brauer pair and $\mathcal{F}_{(D,e_D)}(G,B)=\mathcal{F}_{(D,e_D^\ast)}(G,B^\ast)$.
	
	
	
	\begin{lemma}\label{lem:normalizerbrauerpairs}
		Let $B\in\Bl(RG)$ and let $(D,e_D)\in\BP_R(B)$ be a maximal $B$-Brauer pair. If $P\subgp D$ write $e_P$ for the unique block idempotent of $RC_G(P)$ such that $(P,e_P)\leq(D,e_D)$ and set $I_P=N_G(P,e_P)$. Let $\mathcal{F}=\mathcal{F}_{(D,e_D)}(G,B)$. Fix $P\subgp D$.
		\begin{itemize}
			\item[(a)] $(N_D(P),e_{N_D(P)})$ is an $RI_Pe_P$-Brauer pair which is maximal if and only if $P$ is fully $\mathcal{F}$-normalized. In this case $\mathcal{N}_{\mathcal{F}}(P)=\mathcal{F}_{(N_D(P),e_{N_D(P)})}(I_P,e_P)$.
			\item[(b)] If $Q\subgp N_D(P)$ write $\epsilon_Q$ for the unique block idempotent of $RC_{I_P}(Q)$ such that $(Q,\epsilon_Q)\leq(N_D(P),e_{N_D(P)})$ is a containment of $RI_Pe_P$-Brauer pairs. Then $\epsilon_Q$ is the unique block of $RC_{I_P}(Q)$ covering $e_{PQ}$. Moreover,
			\begin{equation*}
				\epsilon_Q=\tr_{\Stab_{C_{I_P}(Q)}(e_{PQ})}^{C_{I_P}(Q)}(e_{PQ})
			\end{equation*} 
			and $\Stab_{C_{I_P}(Q)}(e_{PQ})=C_{I_P\cap I_{PQ}}(Q)$.
		\end{itemize}
	\end{lemma}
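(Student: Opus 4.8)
The plan is to establish (a) in two stages — first the Brauer‑pair assertion, then the maximality criterion and the identification of $\mathcal{N}_{\mathcal{F}}(P)$ — and then to deduce (b) from Lemmas~\ref{lem:blockcent}, \ref{lem:blockcent2} and \ref{lem:brauereltnormalizer} together with a transfer of Brauer‑pair containments between $\BP_R(G)$ and $\BP_R(I_P)$. For the Brauer‑pair assertion in (a): since $P\subgp N_D(P)$ and $(N_D(P),e_{N_D(P)})\subgp(D,e_D)$, uniqueness of the block below a Brauer pair forces $(P,e_P)\subgp(N_D(P),e_{N_D(P)})$, and as $P\nor N_D(P)$ this is a normal containment by \cite[Proposition 4.2(b)]{Boltje_2020}; hence $N_D(P)\subgp N_G(P,e_P)=I_P$ and $\br_{N_D(P)}^G(e_P)\overline{e_{N_D(P)}}=\overline{e_{N_D(P)}}$. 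Since $C_G(N_D(P))\subgp C_G(P)\subgp PC_G(P)\subgp I_P$ we have $C_{I_P}(N_D(P))=C_G(N_D(P))$, so $e_{N_D(P)}\in\bli(RC_{I_P}(N_D(P)))$, and $\br_{N_D(P)}^{I_P}$ agrees with $\br_{N_D(P)}^G$ on $RC_G(P)$; recalling that $e_P$ is a block idempotent of $RI_P$, it follows that $(N_D(P),e_{N_D(P)})$ is an $RI_Pe_P$‑Brauer pair.

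For the maximality criterion and the identification of $\mathcal{N}_{\mathcal{F}}(P)$: the maximal $RI_Pe_P$‑Brauer pairs are exactly those whose first component is a defect group of $e_P$ viewed as a block of $RI_P$, and any $\mathcal{F}$‑isomorphism $c_g\colon P\isoto P'$ — so that ${}^g(P,e_P)=(P',e_{P'})$ and ${}^gI_P=I_{P'}$ — transports $\BP_R(I_P,e_P)$ isomorphically onto $\BP_R(I_{P'},e_{P'})$. Comparing $|N_D(P)|=|{}^gN_D(P)|$ with $|N_D(P')|$ along these conjugations shows that $(N_D(P),e_{N_D(P)})$ is maximal exactly when $|N_D(P)|\geq|N_D(P')|$ for every $P'$ that is $\mathcal{F}$‑conjugate to $P$, i.e.\ exactly when $P$ is fully $\mathcal{F}$‑normalized; this is the block‑theoretic heart of the statement and is classical (it may be cited from \cite[IV.3]{Aschbacher_2011}). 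Granting maximality, the equality $\mathcal{N}_{\mathcal{F}}(P)=\mathcal{F}_{(N_D(P),e_{N_D(P)})}(I_P,e_P)$ follows by unwinding both sides: a morphism $c_g$ between subgroups of $N_D(P)$ lies in $\mathcal{N}_{\mathcal{F}}(P)$ iff $g$ may be chosen in $N_G(P)$ normalizing $(P,e_P)$ — hence in $I_P$ — and with ${}^g$ respecting the maximal $RI_Pe_P$‑Brauer pair at the relevant subgroups, which is precisely the defining condition for the block fusion system on the right.

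For (b): by (a), $(N_D(P),e_{N_D(P)})\in\BP_R(I_P,e_P)$, so $\epsilon_Q$ is well defined with $(Q,\epsilon_Q)\in\BP_R(I_P,e_P)$. As in (a), $(P,e_P)\nor(PQ,e_{PQ})$ in $\BP_R(G)$ (uniqueness, plus $P\nor PQ$) and, since $C_G(PQ)\subgp C_G(P)\subgp I_P$, also in $\BP_R(I_P)$; thus $(PQ,e_{PQ})\in\BP_R(I_P,e_P)$. Every $p$‑subgroup $Y$ with $PQ\subgp Y\subgp N_D(P)$ satisfies $C_G(Y)\subgp C_G(P)\subgp I_P$, so along any subnormal chain from $PQ$ to $N_D(P)$ the normal containments in $\BP_R(G)$ and in $\BP_R(I_P)$ coincide; since $(PQ,e_{PQ})\subgp(N_D(P),e_{N_D(P)})$ in $\BP_R(G)$ (both lie under $(D,e_D)$ and $PQ\subgp N_D(P)$), the same holds in $\BP_R(I_P)$, whence $(Q,\epsilon_Q)\subgp(PQ,e_{PQ})$ in $\BP_R(I_P)$. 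Now pick a chain $(Q,\epsilon_Q)=(Q_0,\delta_0)\nor\cdots\nor(Q_m,\delta_m)=(PQ,e_{PQ})$ in $\BP_R(I_P)$; then $Q\subgp Q_i\subgp PQ$ forces $PQ_i=PQ$, so by Lemma~\ref{lem:blockcent} (applied inside $I_P$ with normal $p$‑subgroup $P$) each $\delta_i$ is a central idempotent of $RC_{I_P}(PQ_i)=RC_G(PQ)$, say $\delta_i=\sum_{f\in X_i}f$ over $\bli(RC_G(PQ))$. The relation $(Q_i,\delta_i)\nor(Q_{i+1},\delta_{i+1})$ gives $\delta_i\delta_{i+1}\neq 0$, and decomposing $\delta_i$ first over $\bli(RC_{I_P}(Q_{i+1}))$ and then over $\bli(RC_G(PQ))$ yields $X_{i+1}\subseteq X_i$; hence $\{e_{PQ}\}=X_m\subseteq X_0$, i.e.\ $\epsilon_Qe_{PQ}\neq 0$. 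Lemma~\ref{lem:blockcent2} (inside $I_P$) then identifies $\epsilon_Q$ as the unique block of $RC_{I_P}(Q)$ covering $e_{PQ}$ with $\epsilon_Q=\tr_{\Stab_{C_{I_P}(Q)}(e_{PQ})}^{C_{I_P}(Q)}(e_{PQ})$, and Lemma~\ref{lem:brauereltnormalizer}(b) applied with $f=e_{PQ}$ gives $\Stab_{C_{I_P}(Q)}(e_{PQ})=C_{I_P\cap I_{PQ}}(Q)$.

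The step I expect to be the main obstacle is the verification that $\epsilon_Q$ covers $e_{PQ}$: the containment $(Q,\epsilon_Q)\subgp(PQ,e_{PQ})$ lives in the poset $\BP_R(I_P)$, whereas the desired conclusion concerns block idempotents of the strictly smaller algebra $RC_G(PQ)$, and bridging this gap is exactly what the uniform applicability of Lemma~\ref{lem:blockcent} — available because $PQ_i=PQ$ for every term of a realizing chain — and the resulting nesting $X_{i+1}\subseteq X_i$ accomplish. In part (a) the one genuinely substantive ingredient is the equivalence of maximality of $(N_D(P),e_{N_D(P)})$ with full $\mathcal{F}$‑normalization of $P$, which is standard block theory; everything else is bookkeeping with the Brauer homomorphism and centralizer inclusions.
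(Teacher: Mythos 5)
Your proposal is correct and follows essentially the same route as the paper: part (a) is deferred to the classical reference in \cite[IV.3]{Aschbacher_2011}, and part (b) hinges on the same two ingredients — Lemma \ref{lem:blockcent2} inside $I_P$ for uniqueness of the covering block, and a subnormal chain from $Q$ to $PQ$ whose intermediate idempotents all lie in $RC_G(PQ)$ by Lemma \ref{lem:blockcent}, so that the successive nonvanishing products force $\epsilon_Q e_{PQ}\neq 0$ — with the stabilizer identity then read off from Lemma \ref{lem:brauereltnormalizer}(b). The only cosmetic difference is that you establish $\epsilon_{PQ}=e_{PQ}$ by transporting the containment $(PQ,e_{PQ})\leq(N_D(P),e_{N_D(P)})$ between the posets $\BP_R(G)$ and $\BP_R(I_P)$, whereas the paper treats the case $P\subgp Q$ directly via $C_{I_P}(Q)=C_G(Q)$; both rest on the same centralizer equalities.
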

	
	\begin{proof}
		Part (a) is well-known: see \cite[IV, Theorem 3.19]{Aschbacher_2011}. Note that if $Q\subgp N_D(P)$ then $Q\subgp I_P$ and $C_{I_P}(PQ)=C_G(PQ)\nor C_{I_P}(Q)$, so by Lemma \ref{lem:blockcent2} there is a unique block of $RC_{I_P}(Q)$ that covers $e_{PQ}$. We must show that for each subgroup $Q\subgp N_D(P)$ the block $\epsilon_Q$ covers $e_{PQ}$. Suppose first that $P\subgp Q\subgp N_D(P)$. Then $C_{I_P}(Q)=C_G(Q)$, so $(Q,\epsilon_Q)$ is an $RG$-Brauer pair. The containment $(Q,\epsilon_Q)\leq(N_D(P),e_{N_D(P)})$ then implies the corresponding containment of $RG$-Brauer pairs. Hence $\epsilon_Q=e_Q=e_{PQ}$ and $\epsilon_Q$ clearly covers $e_{PQ}$ in this case. Now suppose that $P\not\subgp Q$. By what we have just shown $\epsilon_{PQ}=e_{PQ}$. Let $Q=Q_0\nor Q_1\nor\cdots\nor Q_n=PQ$ be a subnormal chain of subgroups. Then for each $i=0,\ldots,n-1$ we have a normal containment of $RI_Pe_P$-Brauer pairs $(Q_i,\epsilon_{Q_i})\nor(Q_{i+1},\epsilon_{Q_{i+1}})$. Now Lemma \ref{lem:blockcent} implies that $\epsilon_{Q_i}\in RC_{I_P}(PQ_i)=RC_G(PQ)$, so $\br_{Q_{i+1}}^{I_P}(\epsilon_{Q_i})=\overline{\epsilon_{Q_i}}$ for each $i$. It follows that $\overline{\epsilon_{Q_i}\epsilon_{Q_{i+1}}}=\overline{\epsilon_{Q_{i+1}}}$, which in turn implies that $\overline{\epsilon_Qe_{PQ}}\neq 0$. Thus we see that $\epsilon_Q$ covers $e_{PQ}$ for each $Q\subgp N_D(P)$. The final statement follows from part (b) of Lemma \ref{lem:brauereltnormalizer}.
	\end{proof}
	
	In the next couple of lemmas $G$ and $H$ denote finite groups and the $p$-modular system $(\K,\OO,F)$ is assumed large enough for $G\times H$.
	
	\begin{lemma}\label{lem:maxlABpair}
		Let $A\in\Bl(RG)$ and $B\in\Bl(RH)$. Let $(D,e_D)\in\BP_R(A)$ and $(E,f_E)\in\BP_R(B)$ be maximal Brauer pairs. Then $(D\times E,e_D\tensor f_E)$ is a maximal $A\tensor_R B$-Brauer pair. 
	\end{lemma}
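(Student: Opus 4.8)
The plan is to combine the structural description of $\BP_R(A\tensor_R B)$ already obtained in Lemmas \ref{lem:brauerpairAB} and \ref{lem:GHBrauerpaircontainment} with the maximality of $(D,e_D)$ and $(E,f_E)$ and one elementary fact about subgroups of a direct product. First I would record that $(D\times E,e_D\tensor f_E)$ is at least \emph{an} $A\tensor_R B$-Brauer pair: since $C_{G\times H}(D\times E)=C_G(D)\times C_H(E)$, the element $e_D\tensor f_E$ is a block idempotent of $RC_{G\times H}(D\times E)$, and because $p_1(D\times E)=D$ and $p_2(D\times E)=E$ with $(D,e_D)\in\BP_R(A)$ and $(E,f_E)\in\BP_R(B)$, Lemma \ref{lem:brauerpairAB} gives $(D\times E,e_D\tensor f_E)\in\BP_R(A\tensor_R B)$. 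It remains to show this pair is a maximal element of the finite poset $\BP_R(A\tensor_R B)$.

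To that end, choose a maximal $A\tensor_R B$-Brauer pair $(P,e\tensor f)$ lying above $(D\times E,e_D\tensor f_E)$; such a pair exists because the poset is finite. Applying Lemma \ref{lem:GHBrauerpaircontainment}(c) to the containment $(D\times E,e_D\tensor f_E)\leq(P,e\tensor f)$ and projecting gives $(D,e_D)\leq(p_1(P),e)$ in $\BP_R(G)$ and $(E,f_E)\leq(p_2(P),f)$ in $\BP_R(H)$, while Lemma \ref{lem:brauerpairAB} guarantees $(p_1(P),e)\in\BP_R(A)$ and $(p_2(P),f)\in\BP_R(B)$. Since $(D,e_D)$ is maximal among $A$-Brauer pairs and $(E,f_E)$ is maximal among $B$-Brauer pairs, both containments must be equalities, so $p_1(P)=D$, $e=e_D$, $p_2(P)=E$, and $f=f_E$. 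Finally, every subgroup $P\leq G\times H$ satisfies $P\leq p_1(P)\times p_2(P)$; here this reads $P\leq D\times E$, and combined with the reverse containment $D\times E\leq P$ it forces $P=D\times E$ and hence $e\tensor f=e_D\tensor f_E$, so $(P,e\tensor f)=(D\times E,e_D\tensor f_E)$ is maximal.

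I do not expect a genuine obstacle here, since the substantive content is already packaged in Lemmas \ref{lem:brauerpairAB} and \ref{lem:GHBrauerpaircontainment}. The only points needing a little care are to use ``maximal Brauer pair'' consistently as ``maximal element of the relevant poset,'' so that both the choice of $(P,e\tensor f)$ and the subsequent forcing of equalities are legitimate, and to remember when invoking Lemma \ref{lem:GHBrauerpaircontainment}(c) that $p_i(D\times E)$ is literally $D$ (for $i=1$) or $E$ (for $i=2$). An alternative route would be to show directly that $D\times E$ is a defect group of $A\tensor_R B$ and that any $A\tensor_R B$-Brauer pair whose first component is a defect group is automatically maximal; but that requires the extra input on defect groups of tensor-product blocks, so the argument above is preferable for being essentially self-contained.
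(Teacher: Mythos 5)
Your proof is correct and follows essentially the same route as the paper: establish membership via Lemma \ref{lem:brauerpairAB}, project a containment $(D\times E,e_D\tensor f_E)\leq(P,e\tensor f)$ using Lemma \ref{lem:GHBrauerpaircontainment}(c) (the paper cites Lemma \ref{lem:pi1pi2}, which is just that statement repackaged), invoke maximality of $(D,e_D)$ and $(E,f_E)$, and conclude from $P\subgp p_1(P)\times p_2(P)=D\times E$. The only cosmetic difference is that you pick a maximal pair above $(D\times E,e_D\tensor f_E)$ rather than an arbitrary one, which changes nothing.
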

	
	\begin{proof}
		By Lemma \ref{lem:brauerpairAB} we know that $(D\times E,e_D\tensor f_E)\in\BP_R(A\tensor_R B)$. Suppose that $(P,e\tensor f)\in\BP_R(A\tensor_R B)$ is such that $(D\times E,e_D\tensor f_E)\leq(P,e\tensor f)$. Then $(D,e_D)\leq(p_1(P),e)$ and $(E,f_E)\leq(p_2(P),f)$ by Lemma \ref{lem:pi1pi2}. By Lemma \ref{lem:brauerpairAB} we have $(p_1(P),e)\in\BP_R(A)$ and $(p_2(P),f)\in\BP_R(B)$. Since $(D,e_D)$ and $(E,f_E)$ are maximal Brauer pairs it follows that $(D,e_D)=(p_1(P),e)$ and $(E,f_E)=(p_2(P),f)$. Then $P\subgp p_1(P)\times p_2(P)=D\times E$, so we have $(D\times E,e_D\tensor f_E)=(P,e\tensor f)$. Therefore $(D\times E,e_D\tensor f_E)$ is a maximal $A\tensor_R B$-Brauer pair.
	\end{proof}
	
	\begin{lemma}\label{lem:tensorfusion}
		Let $A\in\Bl(RG)$ and $B\in\Bl(RH)$. Let $(D,e_D)\in\BP_R(A)$ and $(E,f_E)\in\BP_R(B)$ be maximal Brauer pairs. If $P\subgp D$ write $e_P$ for the unique block idempotent of $RC_G(P)$ such that $(P,e_P)\leq(D,e_D)$ and if $Q\subgp E$ write $f_Q$ for the unique block idempotent of $RC_H(Q)$ such that $(Q,f_Q)\leq(E,f_E)$. 
		\begin{itemize}
			\item[(a)] If $S\subgp D\times E$ then
			\begin{equation*}
				(S,e_{p_1(S)}\tensor f_{p_2(S)})\leq(D\times E,e_D\tensor f_E)
			\end{equation*}
			is a containment of $A\tensor_R B$-Brauer pairs.
			\item[(b)] Set $\mathcal{A}=\mathcal{F}_{(D,e_D)}(G,A)$ and $\mathcal{B}=\mathcal{F}_{(E,f_E)}(H,B)$. Then
			\begin{equation*}
				\mathcal{F}_{(D\times E,e_D\tensor f_E)}(G\times H,A\tensor_R B)=\mathcal{A}\times\mathcal{B}.
			\end{equation*}
		\end{itemize}
	\end{lemma}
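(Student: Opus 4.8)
The plan is to get part (a) directly from Lemma \ref{lem:GHBrauerpaircontainment}(c). First I would observe that, since $C_{G\times H}(S)=C_G(p_1(S))\times C_H(p_2(S))$ and $e_{p_1(S)}\in\bli(RC_G(p_1(S)))$, $f_{p_2(S)}\in\bli(RC_H(p_2(S)))$, the element $e_{p_1(S)}\tensor f_{p_2(S)}$ is a block idempotent of $RC_{G\times H}(S)$, so $(S,e_{p_1(S)}\tensor f_{p_2(S)})$ is a genuine $R[G\times H]$-Brauer pair. By Lemma \ref{lem:brauerpairAB} we have $(D\times E,e_D\tensor f_E)\in\BP_R(A\tensor_R B)$ (and it is maximal by Lemma \ref{lem:maxlABpair}), while by construction $(p_1(S),e_{p_1(S)})\leq(D,e_D)$ in $\BP_R(G)$ and $(p_2(S),f_{p_2(S)})\leq(E,f_E)$ in $\BP_R(H)$, and $S\subgp D\times E$ by hypothesis. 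Since $p_1(D\times E)=D$ and $p_2(D\times E)=E$, these are precisely the hypotheses of Lemma \ref{lem:GHBrauerpaircontainment}(c), which yields $(S,e_{p_1(S)}\tensor f_{p_2(S)})\leq(D\times E,e_D\tensor f_E)$; the smaller pair then belongs to $A\tensor_R B$ because the larger one does.

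For part (b), the plan is to unwind the definitions of both fusion systems and match morphism sets, since both $\mathcal{F}_{(D\times E,e_D\tensor f_E)}(G\times H,A\tensor_R B)$ and $\mathcal{A}\times\mathcal{B}$ are fusion systems over $D\times E$ (maximality of $(D\times E,e_D\tensor f_E)$ from Lemma \ref{lem:maxlABpair} is what makes the left-hand side defined). Fix $S,T\subgp D\times E$; by part (a) the unique block of $RC_{G\times H}(S)$ below $(D\times E,e_D\tensor f_E)$ is $e_{p_1(S)}\tensor f_{p_2(S)}$, and similarly for $T$. For the inclusion ``$\subseteq$'' I would take $\varphi=c_{(g,h)}$ with ${}^{(g,h)}(S,e_{p_1(S)}\tensor f_{p_2(S)})\leq(T,e_{p_1(T)}\tensor f_{p_2(T)})$, rewrite the left side as $({}^{(g,h)}S,{}^ge_{p_1(S)}\tensor{}^hf_{p_2(S)})$ using $p_1({}^{(g,h)}S)={}^gp_1(S)$ and $p_2({}^{(g,h)}S)={}^hp_2(S)$, and apply Lemma \ref{lem:GHBrauerpaircontainment}(c) to extract ${}^g(p_1(S),e_{p_1(S)})\leq(p_1(T),e_{p_1(T)})$ and ${}^h(p_2(S),f_{p_2(S)})\leq(p_2(T),f_{p_2(T)})$; these say $c_g\in\Hom_{\mathcal{A}}(p_1(S),p_1(T))$ and $c_h\in\Hom_{\mathcal{B}}(p_2(S),p_2(T))$, and since $(c_g,c_h)|_S=c_{(g,h)}=\varphi$ one gets $\varphi\in\Hom_{\mathcal{A}\times\mathcal{B}}(S,T)$.

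For ``$\supseteq$'' I would start from $\varphi=(\varphi_1,\varphi_2)|_S$ with $\varphi_1\in\Hom_{\mathcal{A}}(p_1(S),p_1(T))$ and $\varphi_2\in\Hom_{\mathcal{B}}(p_2(S),p_2(T))$, choose $g\in G$ realizing $\varphi_1=c_g$ together with ${}^g(p_1(S),e_{p_1(S)})\leq(p_1(T),e_{p_1(T)})$ and $h\in H$ realizing $\varphi_2=c_h$ together with ${}^h(p_2(S),f_{p_2(S)})\leq(p_2(T),f_{p_2(T)})$, and then check that $(g,h)$ works: $c_{(g,h)}|_S=(c_g,c_h)|_S=\varphi$ (so in particular ${}^{(g,h)}S=\varphi(S)\subgp T$), and Lemma \ref{lem:GHBrauerpaircontainment}(c) again assembles ${}^{(g,h)}(S,e_{p_1(S)}\tensor f_{p_2(S)})\leq(T,e_{p_1(T)}\tensor f_{p_2(T)})$ from its two projections, so $\varphi$ is a morphism in $\mathcal{F}_{(D\times E,e_D\tensor f_E)}(G\times H,A\tensor_R B)$. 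I expect the only place requiring care is exactly this last direction: one must not confuse the representatives, picking $g$ and $h$ that \emph{simultaneously} induce $\varphi_1,\varphi_2$ and carry the respective Brauer pairs into position, after which the $G\times H$-statement is a formal consequence of Lemma \ref{lem:GHBrauerpaircontainment}(c). Everything else is a routine translation between $G\times H$-Brauer pairs and their $G$- and $H$-projections.
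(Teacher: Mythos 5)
Your proposal is correct and follows essentially the same route as the paper: part (a) is a direct application of Lemma \ref{lem:GHBrauerpaircontainment}(c), and part (b) compares morphism sets in both directions using that same lemma to pass between a containment of $R[G\times H]$-Brauer pairs and the two projected containments (the paper invokes Lemma \ref{lem:pi1pi2} for the extraction step, but that is itself a corollary of Lemma \ref{lem:GHBrauerpaircontainment}(c)). The only cosmetic difference is that for $\mathcal{A}\times\mathcal{B}\subseteq\mathcal{F}$ the paper checks the generating morphisms $(\psi_1,\psi_2)$ and appeals to generation, whereas you use the explicit description of $\Hom_{\mathcal{A}\times\mathcal{B}}(S,T)$ directly; both are valid.
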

	
	\begin{proof}
		(a) Let $S\subgp D\times E$. Then we have $(p_1(S),e_{p_1(S)})\leq(D,e_D)$ and $(p_2(S),f_{p_2(S)})\leq (E,f_E)$, so $(S,e_{p_1(S)}\tensor f_{p_2(S)})\leq(D\times E,e_D\tensor f_E)$ by Part (c) of Lemma \ref{lem:GHBrauerpaircontainment}.
		
		(b) Recall from Lemma \ref{lem:maxlABpair} that $(D\times E,e_D\tensor f_E)$ is a maximal $A\tensor_R B$-Brauer pair. So the fusion system $\mathcal{F}:=\mathcal{F}_{(D\times E,e_D\tensor f_E)}(G\times H,A\tensor_R B)$ is well-defined. Now $\mathcal{F}$ and $\mathcal{A}\times\mathcal{B}$ are both fusion systems over $D\times E$ so to show $\mathcal{F}=\mathcal{A}\times\mathcal{B}$ we only need to compare their morphism sets. Let $P_1,P_2\subgp D$, $Q_1,Q_2\subgp E$, $\psi_1\in\Hom_{\mathcal{A}}(P_1,P_2)$, and $\psi_2\in\Hom_{\mathcal{B}}(Q_1,Q_2)$. Then $\psi_1=c_g$ for some $g\in G$ such that ${}^g(P_1,e_{P_1})\leq(P_2,e_{P_2})$ and $\psi_2=c_h$ for some $h\in H$ such that ${}^h(Q_1,f_{Q_1})\leq(Q_2,f_{Q_2})$. Observe that $(\psi_1,\psi_2)=c_{(g,h)}$ and
		\begin{align*}
			{}^{(g,h)}(P_1\times Q_1,e_{P_1}\tensor f_{Q_1})	&=({}^gP_1\times {}^hQ_1,{}^ge_{P_1}\tensor {}^hf_{Q_1})\\
			&\leq(P_2\times Q_2,e_{P_2}\tensor f_{Q_2})
		\end{align*}
		by Part (c) of Lemma \ref{lem:GHBrauerpaircontainment}. So $(\psi_1,\psi_2)\in\Hom_{\mathcal{F}}(P_1\times Q_1,P_2\times Q_2)$. Since morphisms of the form $(\psi_1,\psi_2)$ generate $\mathcal{A}\times\mathcal{B}$ it follows that $\mathcal{A}\times\mathcal{B}\subseteq\mathcal{F}$. Now let $S,T\subgp D\times E$ and let $\psi\in\Hom_{\mathcal{F}}(S,T)$. Then by definition $\psi=c_{(g,h)}$ for some $(g,h)\in G\times H$ such that
		\begin{equation*}
			{}^{(g,h)}(S,e_{p_1(S)}\tensor f_{p_2(S)})\leq (T,e_{p_1(T)}\tensor f_{p_2(T)}).
		\end{equation*}
		Note that this implies ${}^g(p_1(S),e_{p_1(S)})\leq(p_1(T),e_{p_1(T)})$ and ${}^h(p_2(S),f_{p_2(S)})\leq(p_2(T),f_{p_2(T)})$ by Lemma \ref{lem:pi1pi2}. Therefore if we set $\psi_1=c_g:p_1(S)\to p_1(T)$ and $\psi_2=c_h:p_2(S)\to p_2(T)$ then $\psi_1\in\Hom_{\mathcal{A}}(p_1(S),p_1(T))$ and $\psi_2\in\Hom_{\mathcal{B}}(p_2(S),p_2(T))$. Now consider the morphism $(\psi_1,\psi_2):p_1(S)\times p_2(S)\to p_1(T)\times p_2(T)$. By definition, $(\psi_1,\psi_2)$ is a morphism in $\mathcal{A}\times\mathcal{B}$. Since $(\psi_1,\psi_2)|_S=\psi$ we see that $\psi$ is a morphism in $\mathcal{A}\times\mathcal{B}$. Thus we have $\mathcal{F}\subseteq\mathcal{A}\times\mathcal{B}$ and the proof is complete.
	\end{proof}
	
	\section{Class functions and characters}\label{sec:classfunctionsand}
	
	
	Let $G$ be a finite group and let $(\K,\OO,F)$ stand for a $p$-modular system large enough for $G$. Write $\Irr_\K(G)=\Irr(\K G)$ for the set of characters of irreducible $\K G$-modules. Let $CF(G;\K)$ denote the $\K$-algebra of $\K$-valued class functions on $G$. Recall that $CF(G;\K)$ is endowed with the scalar product
	\begin{equation*}
		(\chi,\psi)_G=\frac{1}{|G|}\sum_{g\in G}\chi(g)\psi(g^{-1})
	\end{equation*}
	where $\chi,\psi\in CF(G;\K)$. The set $\Irr_\K(G)$ is an orthonormal basis for $CF(G;\K)$. Write $R_\K(G)=R(\K G)$ for the $\Z$-span of $\Irr_\K(G)$ in $CF(G;\K)$. Recall that $R_\K(G)$ is isomorphic to the Grothendieck ring of the category $\lmod{\K G}$ of finite-dimensional $\K G$-modules and that $\Irr_\K(G)$ forms a $\Z$-basis of $R_\K(G)$. 
	
	If $N$ is a normal subgroup of $G$ we write $R_\K(G/N)=R(\K[G/N])$ for the $\Z$-span in $R_\K(G)$ of the irreducible characters of $\K G$ that contain $N$ in their kernel. In other words, we will identify each character of the quotient group $G/N$ with its inflation to $G$. 
	
	Let $H\subgp G$ and let $g\in G$. Then the map
	\begin{align*}
		{}^g(\cdot):CF(H;\K)&\isoto CF({}^gH;\K)\\
		\chi&\mapsto({}^g\chi:{}^gh\mapsto\chi(h)\text{ for all }h\in H)
	\end{align*}
	is a $\K$-algebra isomorphism and satisfies $({}^g\chi,{}^g\psi)_{{}^gH}=(\chi,\psi)_H$ for all $\chi,\psi\in CF(H;\K)$. Note that ${}^g(\cdot)$ restricts to a ring isomorphism $R_\K(H)\isoto R_\K({}^gH)$. More generally, if $N\nor H\subgp G$ and $g\in G$ then ${}^g(\cdot)$ restricts to a ring isomorphism $R_\K(H/N)\isoto R_\K({}^gH/{}^gN)$. Note also that if $\chi\in CF(H;\K)$ and $g_1,g_2\in G$ then ${}^{g_2}({}^{g_1}\chi)={}^{g_2g_1}\chi$.
	
	A $\K$-linear map $\chi:\K G\to\K$ is called a \textit{central function} if $\chi(\alpha\beta)=\chi(\beta\alpha)$ for all $\alpha,\beta\in\K G$. Write $\CentFun(\K G)$ for the space of all central functions on $\K G$. We make $\CentFun(\K G)$ into a $Z(\K G)$-module by defining $(z\cdot\chi)(\alpha)=\chi(\alpha z)$ for all $z\in Z(\K G)$, $\chi\in\CentFun(\K G)$, and $\alpha\in\K G$. Each class function $G\to\K$ extends uniquely to a central function on $\K G$, and this gives rise to a $\K$-isomorphism $CF(G;\K)\isoto\CentFun(\K G)$. Thus $CF(G;\K)$ inherits a $Z(\K G)$-module structure given by $(z\cdot\chi)(g)=\chi(gz)$ for all $z\in Z(\K G)$, $\chi\in CF(G;\K)$, and $g\in G$. Note that $\chi(gz)$ is the value of the extension of $\chi$ to $\K G$ at $gz$. We will identify each class function on $G$ with its extension to $\K G$ without further comment.
	
	If $e$ is a central idempotent of $\K G$ set $CF(G,e;\K):=e\cdot CF(G;\K)$. Notice that
	\begin{equation*}
		CF(G,e;\K)=\set{\chi\in CF(G;\K)|\chi(ge)=\chi(g)\text{ for all }g\in G}.
	\end{equation*}
	Since we are assuming $\K$ is large enough, the set $\Irr_\K(G,e)=\Irr(\K Ge)$ of characters of irreducible $\K Ge$-modules forms a basis of $CF(G,e;\K)$. Write $R_\K(G,e)=R(\K Ge)$ for the $\Z$-span of $\Irr_\K(G,e)$ in $CF(G,e;\K)$. Recall that $R_\K(G,e)$ is isomorphic to the Grothendieck group of the category $\lmod{\K Ge}$ of finite-dimensional $\K Ge$-modules and that $\Irr_\K(G,e)$ forms a $\Z$-basis of $R_\K(G,e)$. Note also that $R_\K(G,e)=e\cdot R_\K(G)$.
	
	If $H\subgp G$, $g\in G$, and $e\in Z(\K H)$ is an idempotent then the isomorphism ${}^g(\cdot):R_\K(H)\isoto R_\K({}^gH)$ restricts to a group isomorphism $R_\K(H,e)\isoto R_\K({}^gH,{}^ge)$.
	
	If $I$ is a set of pairwise orthogonal idempotents of $Z(\K G)$ whose sum equals 1 then we have orthogonal decompositions
	\begin{equation*}
		CF(G;\K)=\bigoplus_{e\in I}CF(G,e;\K)\qquad\text{and}\qquad R_\K(G)=\bigoplus_{e\in I}R_\K(G,e).
	\end{equation*}
	In particular, for any $\chi,\psi\in CF(G;\K)$ one has
	\begin{equation*}
		(\chi,\psi)_G=\sum_{e\in I}(e\chi,e\psi)_G.
	\end{equation*}
	
	If $\overline{e}\in Z(FG)$ is an idempotent with lift $e\in Z(\OO G)\subseteq Z(\K G)$ then we write
	\begin{equation*}
		CF(G,\overline{e};\K):=CF(G,e;\K)\qquad\text{and}\qquad R_\K(G,\overline{e}):=R_\K(G,e).
	\end{equation*}
	Since every central idempotent of $FG$ has a unique lift to a central idempotent of $\OO G$, for any idempotent $f\in Z(FG)$ we may speak of $CF(G,f;\K)$ or $R_\K(G,f)$ without explicitly naming a lift of $f$ to $Z(\OO G)$.
	
	Let $N\nor G$ and let $e\in Z(\K G)$ be an idempotent. We set
	\begin{equation*}
		R_\K(G/N,e):=R_\K(G/N)\cap R_\K(G,e).
	\end{equation*}
	Note that $R_\K(G/N,e)$ is the subgroup of $R_\K(G)$ spanned by those $\chi\in\Irr_\K(G,e)$ that contain $N$ in their kernel. If $\overline{e}\in Z(FG)$ is an idempotent with lift $e\in Z(\OO G)$ set $R_\K(G/N,\overline{e}):=R_\K(G/N,e)$.
	
	Let $N\nor H\subgp G$ and let $e\in Z(\K H)$ be an idempotent. If $g\in G$ then the isomorphism ${}^g(\cdot):R_\K(H)\to R_\K({}^gH)$ restricts to a group isomorphism $R_\K(H/N,e)\isoto R_\K({}^gH/{}^gN,{}^ge)$.
	
	If $\chi\in CF(G;\K)$ we denote by $\chi^\circ$ the $\K$-valued class function on $G$ defined by
	\begin{equation*}
		\chi^\circ(g):=\chi(g^{-1})\qquad g\in G.
	\end{equation*}
	Note that if $\alpha\in\K G$ then $\chi^\circ(\alpha)=\chi(\alpha^\ast)$, where $\alpha^\ast$ is the image of $\alpha$ under the antipode of $\K G$. It follows easily that if $e$ is a central idempotent of $\K G$ and $\chi\in CF(G,e;\K)$ then $\chi^\circ\in CF(G,e^\ast;\K)$. If $M$ is a $\K G$-module affording the character $\chi$ then $\chi^\circ$ is the character of the dual (left) $\K G$-module $M^\circ:=\Hom_\K(M,\K)$. In particular, if $\chi\in\Irr(\K G)$ then $\chi^\circ\in\Irr(\K G)$. 
	
	Now let $G$ and $H$ be finite groups (and assume that $\K$ is large enough for $G\times H$). We identify $\K[G\times H]$ with $\K G\tensor_{\K}\K H$ via $(g,h)\mapsto g\tensor h$. Recall that the space of $\K$-valued class functions on $G\times H$ is identified with the space of $\K$-valued central functions on $\K[G\times H]$. Since we identify $\K[G\times H]$ with $\K G\tensor_{\K}\K H$, a central function on $\K[G\times H]$ is the same thing as a $\K$-bilinear map $\chi:\K G\times\K H\to\K$ that satisfies $\chi(\alpha\alpha',\beta\beta')=\chi(\alpha'\alpha,\beta'\beta)$ for all $\alpha,\alpha'\in\K G$ and all $\beta,\beta'\in\K H$.
	
	Let $e$ be a central idempotent of $\K G$ and let $f$ be a central idempotent of $\K H$. Then $e\tensor f$ is a central idempotent of $\K[G\times H]$. Under our identifications, we have
	\begin{equation*}
		CF(G\times H,e\tensor f;\K)=\set{\chi\in CF(G\times H;\K)|\chi(ge,hf)=\chi(g,h)\text{ for all }g\in G,h\in H}.
	\end{equation*}
	Let $\mu\in CF(G\times H,e\tensor f;\K)$. Then for any $g\in G$
	\begin{equation*}
		\mu(g,\cdot)\in CF(H,f;\K)
	\end{equation*}
	and for any $h\in H$
	\begin{equation*}
		\mu(\cdot,h)\in CF(G,e;\K).
	\end{equation*}
	More generally, for any $\alpha\in\K G$ we have
	\begin{equation*}
		\mu(\alpha,\cdot)\in CF(H,f;\K)
	\end{equation*}
	and for any $\beta\in\K H$ we have
	\begin{equation*}
		\mu(\cdot,\beta)\in CF(G,e;\K).
	\end{equation*}
	
	Now let $\mu\in CF(G\times H,e\tensor f^\ast;\K)$ where again $e$ is a central idempotent of $\K G$ and $f$ is a central idempotent of $\K H$ (so that $f^\ast$ is a central idempotent of $\K H$). If $\nu\in CF(H,f;\K)$ define $\mu\tensor_H\nu\in CF(G,e;\K)$ by the formula
	\begin{equation*}
		(\mu\tensor_H\nu)(g)=\frac{1}{|H|}\sum_{h\in H}\mu(g,h)\nu(h)\qquad g\in G.
	\end{equation*}
	If $M$ is a $(\K Ge,\K Hf)$-bimodule (i.e., a left $\K[G\times H](e\tensor f^\ast)$-module) with character $\mu$ and $N$ is a $\K Hf$-module with character $\nu$ then the character of $M\tensor_{\K H}N$ is $\mu\tensor_H\nu$, whence the definition above. (See \cite[Lemma 7.1.3]{Bouc_2010} for a proof of this character formula.) This construction yields a $\K$-bilinear map
	\begin{align*}
		-\tensor_H-:CF(G\times H,e\tensor f^\ast;\K)\times CF(H,f;\K)\to CF(G,e;\K)
	\end{align*} 
	defined by $(\mu,\nu)\mapsto \mu\tensor_H\nu$, and this bilinear map restricts to a biadditive map
	\begin{equation*}
		R_\K(G\times H,e\tensor f^\ast)\times R_\K(H,f)\to R_\K(G,e).
	\end{equation*}
	If $\chi\in\Irr(\K Ge)$ and $\psi,\psi'\in\Irr(\K Hf)$ then
	\begin{equation*}
		(\chi\times\psi^\circ)\tensor_H\psi'=\begin{cases}
			\chi	&\text{if }\psi=\psi'\\
			0		&\text{else.}
		\end{cases}
	\end{equation*}
	Notice that if $\mu\in CF(G\times H,e\tensor f^\ast;\K)$, $\nu\in CF(H,f;\K)$ and $g\in G$ then
	\begin{equation*}
		(\mu\tensor_H\nu)(g)=(\mu(g,\cdot),\nu^\circ)_H=(\mu(g,\cdot)^\circ,\nu)_H.
	\end{equation*}
	More generally, for any $\alpha\in\K G$ we have
	\begin{equation*}
		(\mu\tensor_H\nu)(\alpha)=(\mu(\alpha,\cdot),\nu^\circ)_H=(\mu(\alpha,\cdot)^\circ,\nu)_H.
	\end{equation*}

	\begin{proposition}
		Let $G$ and $H$ be finite groups, let $e$ be a central idempotent of $\K G$ and let $f$ be a central idempotent of $\K H$. The map
		\begin{align*}
			CF(G\times H,e\tensor f^\ast;\K)&\to\Hom_\K(CF(H,f;\K),CF(G,e;\K))\\
			\mu&\mapsto\mu\tensor_H-
		\end{align*}
		is an isomorphism of $\K$-vector spaces, with inverse
		\begin{align*}
			\Hom_\K(CF(H,f;\K),CF(G,e;\K))	&\to CF(G\times H,e\tensor f^\ast;\K)\\
			I&\mapsto\sum_{\psi\in\Irr(\K Hf)}I(\psi)\times\psi^\circ.
		\end{align*}
	\end{proposition}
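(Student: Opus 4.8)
The plan is to check directly that the two displayed maps are well-defined $\K$-linear maps and are mutually inverse, using only the orthogonality formula $(\chi\times\psi^\circ)\tensor_H\psi'=\chi$ if $\psi=\psi'$ and $0$ otherwise (for $\chi\in\Irr(\K Ge)$ and $\psi,\psi'\in\Irr(\K Hf)$) recorded just before the statement, together with the $\K$-bilinearity of $-\tensor_H-$. So the proof should be essentially a bookkeeping computation on bases.

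First I would set up the relevant bases. Since $\K$ is large enough for $G\times H$ we have $\Irr(\K[G\times H])=\{\chi\times\xi:\chi\in\Irr(\K G),\ \xi\in\Irr(\K H)\}$, and a character $\chi\times\xi$ lies in $\Irr(\K[G\times H](e\tensor f^\ast))$ exactly when $\chi\in\Irr(\K Ge)$ and $\xi\in\Irr(\K Hf^\ast)$; hence $\{\chi\times\xi:\chi\in\Irr(\K Ge),\ \xi\in\Irr(\K Hf^\ast)\}$ is a $\K$-basis of $CF(G\times H,e\tensor f^\ast;\K)$. Because $\psi\mapsto\psi^\circ$ is an involution of $\Irr(\K H)$ that carries $\Irr(\K Hf)$ bijectively onto $\Irr(\K Hf^\ast)$, the family $\{\chi\times\psi^\circ:\chi\in\Irr(\K Ge),\ \psi\in\Irr(\K Hf)\}$ is also a $\K$-basis of $CF(G\times H,e\tensor f^\ast;\K)$. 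In particular both vector spaces in the statement have dimension $|\Irr(\K Ge)|\cdot|\Irr(\K Hf)|$. The first map is well-defined $\K$-linear by the discussion of $-\tensor_H-$ preceding the statement, and the second map is well-defined because $\chi\times\psi^\circ$ (for $\chi\in CF(G,e;\K)$, $\psi^\circ\in CF(H,f^\ast;\K)$) lies in $CF(G\times H,e\tensor f^\ast;\K)$, which is immediate from the pointwise description of the $(e\tensor f^\ast)$-component.

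Next I would compute the two composites. Given $\mu\in CF(G\times H,e\tensor f^\ast;\K)$, expand $\mu=\sum_{\chi,\psi}a_{\chi,\psi}\,(\chi\times\psi^\circ)$ in the basis above; then bilinearity and the orthogonality formula give $\mu\tensor_H\psi'=\sum_{\chi}a_{\chi,\psi'}\,\chi$ for each $\psi'\in\Irr(\K Hf)$, so $\sum_{\psi'}(\mu\tensor_H\psi')\times(\psi')^\circ=\sum_{\chi,\psi'}a_{\chi,\psi'}\,(\chi\times(\psi')^\circ)=\mu$, i.e.\ the second map undoes the first. Conversely, given $I\in\Hom_\K(CF(H,f;\K),CF(G,e;\K))$, set $\mu=\sum_{\psi}I(\psi)\times\psi^\circ$ and expand $I(\psi)=\sum_{\chi}b_{\chi,\psi}\,\chi$ in the basis $\Irr(\K Ge)$; then for $\psi'\in\Irr(\K Hf)$ the orthogonality formula yields $\mu\tensor_H\psi'=\sum_{\psi}\bigl(\sum_{\chi}b_{\chi,\psi}\,(\chi\times\psi^\circ)\bigr)\tensor_H\psi'=I(\psi')$, and since $\Irr(\K Hf)$ spans $CF(H,f;\K)$ and $\nu\mapsto\mu\tensor_H\nu$ is $\K$-linear we conclude $\mu\tensor_H-=I$. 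Thus the two maps are mutually inverse, which proves the proposition.

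The only mild obstacle is making sure the basis $\{\chi\times\psi^\circ\}$ of $CF(G\times H,e\tensor f^\ast;\K)$ is legitimate --- this is exactly where the hypothesis that $\K$ is large enough and the fact that $\psi\mapsto\psi^\circ$ permutes irreducible characters enter --- and keeping the indices straight in the two expansions; no deeper input is required.
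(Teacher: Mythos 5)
Your proof is correct and follows essentially the same route as the paper's: the key step in both is the orthogonality computation showing that $\bigl(\sum_{\psi}I(\psi)\times\psi^\circ\bigr)\tensor_H\psi'=I(\psi')$ for $\psi'\in\Irr(\K Hf)$. The only cosmetic difference is that the paper verifies just this one composite and finishes with a dimension count, whereas you check both composites explicitly via basis expansions; both are fine.
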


	\begin{proof}
		Let $\Phi$ and $\Psi$ denote the maps defined first and second, respectively, in the statement of the proposition. We recall that if $\theta\in CF(H,f;\K)$ then $I(\theta)\times\theta^\circ\in CF(G\times H,e\tensor f^\ast;\K)$ is the class function defined by $(I(\theta)\times\theta^\circ)(g,h)=I(\theta)(g)\theta^\circ(h)$. Now if $I:CF(H,f;\K)\to CF(G,e;\K)$ is $\K$-transformation then
		\begin{equation*}
			\Phi(\Psi(I))=(\sum_{\psi\in\Irr(\K Hf)}I(\psi)\times\psi^\circ)\tensor_H-.
		\end{equation*}
		Let $\psi'\in\Irr(\K Hf)$ and let $g\in G$. Then
		\begin{align*}
			[(\sum_{\psi\in\Irr(\K Hf)}I(\psi)\times\psi^\circ)\tensor_H\psi'](g)	&=\sum_{\psi\in\Irr(\K Hf)}\frac{1}{|H|}\sum_{h\in H}I(\psi)(g)\psi^\circ(h)\psi'(h)\\
			&=\sum_{\psi\in\Irr(\K Hf)}I(\psi)(g)\cdot(\psi',\psi)_H\\
			&=I(\psi')(g).
		\end{align*}
		Since this holds for all $g\in G$, and all $\psi'\in\Irr(\K Hf)$ we find that $\Phi(\Psi(I))=I$. In particular, $\Phi$ is surjective. A dimension count then shows that $\Phi$ is an isomorphism, hence also $\Phi^{-1}=\Psi$. The proof is complete.
	\end{proof}

	\begin{corollary}
		Let $G$ and $H$ be finite groups, let $e$ be a central idempotent of $\K G$ and let $f$ be a central idempotent of $\K H$. The map
		\begin{align*}
			R_\K(G\times H,e\tensor f^\ast)&\to\Hom_\Z(R_\K(H,f),R_\K(G,e))\\
			\mu&\mapsto\mu\tensor_H-
		\end{align*}
		is a group isomorphism, with inverse
		\begin{align*}
			\Hom_\Z(R_\K(H,f),R_\K(G,e))	&\to R_\K(G\times H,e\tensor f^\ast)\\
			I&\mapsto\sum_{\psi\in\Irr(\K Hf)}I(\psi)\times\psi^\circ.
		\end{align*}
	\end{corollary}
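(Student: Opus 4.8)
The plan is to deduce the corollary directly from the preceding proposition by restricting the two $\K$-linear maps there (call them $\Phi$ and $\Psi$) to the appropriate $\Z$-lattices. First I would check that $\mu\mapsto\mu\tensor_H-$ carries $R_\K(G\times H,e\tensor f^\ast)$ into $\Hom_\Z(R_\K(H,f),R_\K(G,e))$. This is exactly the observation recorded just before the proposition: the $\K$-bilinear pairing $-\tensor_H-$ restricts to a biadditive map $R_\K(G\times H,e\tensor f^\ast)\times R_\K(H,f)\to R_\K(G,e)$. Hence for $\mu\in R_\K(G\times H,e\tensor f^\ast)$ the assignment $\nu\mapsto\mu\tensor_H\nu$ is a well-defined group homomorphism $R_\K(H,f)\to R_\K(G,e)$, so $\Phi$ restricts to a map into $\Hom_\Z(R_\K(H,f),R_\K(G,e))$. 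Since $\Phi$ is injective on all of $CF(G\times H,e\tensor f^\ast;\K)$, this restriction is injective as well.

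Next I would verify that the proposed inverse is well-defined, i.e.\ that for any $I\in\Hom_\Z(R_\K(H,f),R_\K(G,e))$ the virtual character $\sum_{\psi\in\Irr(\K Hf)}I(\psi)\times\psi^\circ$ genuinely lies in $R_\K(G\times H,e\tensor f^\ast)$. For each $\psi\in\Irr(\K Hf)$ the dual module gives $\psi^\circ\in\Irr(\K G\text{-side?})$—more precisely, $\psi^\circ$ is an irreducible character and, since $\psi\in CF(H,f;\K)$, one has $\psi^\circ\in CF(H,f^\ast;\K)$, so $\psi^\circ\in\Irr(\K Hf^\ast)$. Meanwhile $I(\psi)\in R_\K(G,e)$ by hypothesis. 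Because $\K$ is large enough, the external products $\chi\times\theta$ with $\chi\in\Irr(\K Ge)$ and $\theta\in\Irr(\K Hf^\ast)$ form a $\Z$-basis of $R_\K(G\times H,e\tensor f^\ast)$; consequently each summand $I(\psi)\times\psi^\circ$, and hence the whole sum, lies in $R_\K(G\times H,e\tensor f^\ast)$. Thus the second displayed map of the statement is a well-defined group homomorphism, which I will call $\Psi_0$.

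Finally I would identify $\Psi_0$ with a restriction of $\Psi$ and conclude. Given $I\in\Hom_\Z(R_\K(H,f),R_\K(G,e))$, let $\tilde I\in\Hom_\K(CF(H,f;\K),CF(G,e;\K))$ be its unique $\K$-linear extension; this exists because $\Irr(\K Hf)$ is simultaneously a $\Z$-basis of $R_\K(H,f)$ and a $\K$-basis of $CF(H,f;\K)$, and the values $I(\psi)$ already lie in $CF(G,e;\K)$. Then $\Psi(\tilde I)=\sum_{\psi\in\Irr(\K Hf)}\tilde I(\psi)\times\psi^\circ=\Psi_0(I)$, and by the proposition $\Phi(\Psi_0(I))=\Phi(\Psi(\tilde I))=\tilde I$, whose restriction to $R_\K(H,f)$ is $I$. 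Therefore $\Phi\circ\Psi_0=\id$ on $\Hom_\Z(R_\K(H,f),R_\K(G,e))$; together with the injectivity of the restricted $\Phi$ this shows that the restriction of $\Phi$ is a group isomorphism with inverse $\Psi_0$, which is precisely the assertion.

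I do not expect any genuine obstacle here: the argument is entirely formal once the proposition is available. The one point that needs a moment's care is the well-definedness of $\Psi_0$, which relies on the standard splitting-field fact that the irreducible characters of $G\times H$ lying in the block component cut out by $e\tensor f^\ast$ are exactly the external products of the irreducibles of $G$ in the $e$-component with those of $H$ in the $f^\ast$-component; granting this, everything else is bookkeeping.
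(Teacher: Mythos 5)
Your proof is correct and follows exactly the route the paper intends: the corollary is stated without proof as an immediate consequence of the preceding proposition, obtained by restricting $\Phi$ and $\Psi$ to the character lattices, and your verification of the two well-definedness points (the biadditive restriction of $-\tensor_H-$ and the fact that $I(\psi)\times\psi^\circ$ lands in $R_\K(G\times H,e\tensor f^\ast)$ because $\psi^\circ\in\Irr(\K Hf^\ast)$) supplies precisely the bookkeeping being elided.
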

	
	\begin{lemma}\label{lem:conjandtensor} 
		Suppose that $G'\subgp G$ and $H'\subgp H$. Let $e$ be a central idempotent of $\K G'$ and let $f$ be a central idempotent of $\K H'$. If $\mu\in CF(G'\times H',e\tensor f^\ast;\K)$ and $\nu\in CF(H',f;\K)$ then for any $(x,y)\in G\times H$ we have
	\begin{equation*}
		({}^{(x,y)}\mu)\tensor_{{}^yH'}({}^y\nu)={}^x(\mu\tensor_{H'}\nu).
	\end{equation*}
	\end{lemma}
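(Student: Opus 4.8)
The plan is to verify the identity by a direct computation: evaluate both sides on an arbitrary group element of ${}^xG'$ and unwind the definitions of $-\tensor_{H'}-$ and of the conjugation maps ${}^g(\cdot)$ on class functions.

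First I would confirm that both sides are well-defined elements of the same space. Since $\mu\in CF(G'\times H',e\tensor f^\ast;\K)$ and $\nu\in CF(H',f;\K)$, the construction preceding the lemma gives $\mu\tensor_{H'}\nu\in CF(G',e;\K)$, and the conjugation isomorphism carries this to ${}^x(\mu\tensor_{H'}\nu)\in CF({}^xG',{}^xe;\K)$. On the other side, conjugation by $(x,y)$ sends $\mu$ into $CF({}^xG'\times{}^yH',{}^xe\tensor{}^y(f^\ast);\K)$; since the antipode commutes with conjugation one has ${}^y(f^\ast)=({}^yf)^\ast$, and ${}^y\nu\in CF({}^yH',{}^yf;\K)$, so $({}^{(x,y)}\mu)\tensor_{{}^yH'}({}^y\nu)$ makes sense and also lies in $CF({}^xG',{}^xe;\K)$.

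Next, fix $g\in G'$; every element of ${}^xG'$ is of the form ${}^xg$ for some such $g$, so it suffices to compare the two sides at ${}^xg$. By definition of $-\tensor_{{}^yH'}-$,
\begin{equation*}
	\bigl(({}^{(x,y)}\mu)\tensor_{{}^yH'}({}^y\nu)\bigr)({}^xg)=\frac{1}{|{}^yH'|}\sum_{k\in{}^yH'}({}^{(x,y)}\mu)({}^xg,k)\,({}^y\nu)(k).
\end{equation*}
I would then substitute $k={}^yh$ with $h$ ranging over $H'$ — a bijection $H'\isoto{}^yH'$, and $|{}^yH'|=|H'|$ — and apply the defining relations ${}^{(x,y)}\mu({}^xg,{}^yh)=\mu(g,h)$ and ${}^y\nu({}^yh)=\nu(h)$. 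The sum becomes $\tfrac{1}{|H'|}\sum_{h\in H'}\mu(g,h)\nu(h)=(\mu\tensor_{H'}\nu)(g)$, which is exactly $\bigl({}^x(\mu\tensor_{H'}\nu)\bigr)({}^xg)$ by definition of the conjugation map. As ${}^xg$ was arbitrary, the two class functions coincide.

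There is no genuine obstacle: the argument is bookkeeping in the unwinding of definitions. The only points that require a little care are (i) checking that the target idempotents match, in particular the identity ${}^y(f^\ast)=({}^yf)^\ast$, so that the left-hand side is even defined, and (ii) handling the reindexing of the sum over ${}^yH'$ correctly. Both are routine.
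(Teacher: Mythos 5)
Your computation is correct: the paper states this lemma without proof, and your argument — reindexing the sum over ${}^yH'$ via $k={}^yh$ and applying the defining relations ${}^{(x,y)}\mu({}^xg,{}^yh)=\mu(g,h)$ and ${}^y\nu({}^yh)=\nu(h)$ — is exactly the routine verification the paper leaves implicit. Your preliminary check that ${}^y(f^\ast)=({}^yf)^\ast$ (so both sides live in $CF({}^xG',{}^xe;\K)$) is also right and worth including.
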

	
	\section{Brauer characters and the generalized decomposition map}\label{sec:Brauercharsand}
	
	Let $G$ be a finite group and let $(\K,\OO,F)$ be a $p$-modular system large enough for $G$. Let $CF_{p'}(G;\K)$ denote the subspace of $CF(G;\K)$ consisting of class functions $\chi:G\to\K$ for which $\chi(g)=0$ if $g\notin G_{p'}$. 
	
	Write $\IBr_F(G)=\IBr(FG)$ for the set of irreducible Brauer characters of $FG$. By convention we view each irreducible Brauer character of $FG$ as an element of $CF_{p'}(G;\K)$; or in other words, we extend each irreducible Brauer character to a class function on $G$ that vanishes on $G-G_{p'}$. With this convention, $\IBr_F(G)$ is a basis of $CF_{p'}(G;\K)$. Another basis for this space is given by the set $\PrInd_\OO(G)=\Pr\Ind(\OO G)$, which is the set of characters of projective indecomposable $\OO G$-modules. Write $R_F(G)$ for the subgroup of $CF_{p'}(G;\K)$ spanned by $\IBr_F(G)$. Then $R_F(G)$ is isomorphic to the Grothendieck ring of the category $\lmod{FG}$ of finite-dimensional $FG$-modules and $\IBr_F(G)$ is a $\Z$-basis of $R_F(G)$. 
	
	Let $u\in G$ be a $p$-element. If $\chi\in CF(G;\K)$ set $d_G^u(\chi)\in CF_{p'}(C_G(u);\K)$ equal to the class function on $C_G(u)$ defined by
	\begin{equation*}
		d_G^u(\chi)(s):=\begin{cases}
			\chi(us)	& \text{if }s\in C_G(u)_{p'}\\
			0			& \text{if }s\notin C_G(u)_{p'}.
		\end{cases}
	\end{equation*}
	This construction yields a $\K$-linear map $d_G^u:CF(G;\K)\to CF_{p'}(C_G(u);\K)$ called the \textit{generalized decomposition map }(\textit{associated to }$u$). When $u=1$ we obtain the usual decomposition map $d_G:CF(G;\K)\to CF_{p'}(G;\K)$.
	
	
	Note that for any $g\in G$ and any $\chi\in CF(G;\K)$ one has ${}^gd_G^u(\chi)=d_G^{{}^gu}(\chi)$. Note also that for any $p$-element $u\in G$ and any $\chi\in CF(G;\K)$ one has $d_G^u(\chi^\circ)=d_G^{u^{-1}}(\chi)^\circ$. 
	
	
	\begin{proposition}\label{prop:localclassfunction1}
		Let $\mathcal{U}$ be a set of representatives for the $G$-conjugacy classes of $p$-elements of $G$. Then
		\begin{equation*}
			\bigoplus_{u\in\mathcal{U}}d_G^u:CF(G;\K)\to\bigoplus_{u\in\mathcal{U}}CF_{p'}(C_G(u);\K)
		\end{equation*}
		is an isomorphism of $\K$-vector spaces. Moreover, for any $\chi,\psi\in CF(G;\K)$ one has
		\begin{equation*}
			(\chi,\psi)_G=\sum_{u\in\mathcal{U}}(d_G^u(\chi),d_G^{u^{-1}}(\psi))_{C_G(u)}.
		\end{equation*}
	\end{proposition}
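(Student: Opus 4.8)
The plan is to prove the two assertions in turn, beginning with the fact that the map is an isomorphism. For this I would first record that $\dim_\K CF(G;\K)$ is the number of conjugacy classes of $G$, while $\dim_\K CF_{p'}(C_G(u);\K)$ is the number of conjugacy classes of $C_G(u)$ contained in $C_G(u)_{p'}$. Using the unique decomposition $g=g_pg_{p'}$ recalled in the introduction, together with the fact that conjugation commutes with passage to $p$-parts, one checks that the assignment $[g]\mapsto\bigl(u,[{}^{h^{-1}}g_{p'}]_{C_G(u)}\bigr)$ — where $u\in\mathcal U$ is the representative with $g_p={}^hu$ — is a well-defined bijection from the set of conjugacy classes of $G$ onto the disjoint union over $u\in\mathcal U$ of the sets of $C_G(u)$-classes of $p'$-elements of $C_G(u)$. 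Hence the domain and codomain of the map in question are finite-dimensional of the same dimension, so it is enough to prove injectivity. If $\chi\in CF(G;\K)$ satisfies $d_G^u(\chi)=0$ for every $u\in\mathcal U$, then, because $\chi$ is a class function and ${}^gd_G^u(\chi)=d_G^{{}^gu}(\chi)$, we get $d_G^v(\chi)=0$ for every $p$-element $v$ of $G$; evaluating $d_G^{g_p}(\chi)$ at $g_{p'}\in C_G(g_p)_{p'}$ gives $\chi(g)=\chi(g_pg_{p'})=d_G^{g_p}(\chi)(g_{p'})=0$ for all $g\in G$, so $\chi=0$.

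For the scalar product identity I would unwind the definitions. Fix $u\in\mathcal U$. Since $C_G(u^{-1})=C_G(u)$, since $s\in C_G(u)_{p'}$ if and only if $s^{-1}\in C_G(u)_{p'}$, and since $u$ commutes with $s$, one computes directly that
\[
\bigl(d_G^u(\chi),d_G^{u^{-1}}(\psi)\bigr)_{C_G(u)}=\frac{1}{|C_G(u)|}\sum_{s\in C_G(u)_{p'}}\chi(us)\,\psi\bigl((us)^{-1}\bigr).
\]
The map $s\mapsto us$ is a bijection from $C_G(u)_{p'}$ onto the set $\Gamma_u$ of elements of $G$ whose $p$-part equals $u$ (with inverse $g\mapsto g_{p'}$), so the inner sum is $\sum_{g\in\Gamma_u}\chi(g)\psi(g^{-1})$. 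Now the sets $\Gamma_v$, as $v$ runs over all $p$-elements of $G$, partition $G$; conjugation permutes them via $\Gamma_{{}^gv}={}^g\Gamma_v$; and since $\chi$ and $\psi$ are class functions, $\sum_{g\in\Gamma_v}\chi(g)\psi(g^{-1})$ depends only on the $G$-conjugacy class of $v$, which contains $|G|/|C_G(u)|$ elements when $v$ is conjugate to $u\in\mathcal U$. Grouping $\sum_{g\in G}\chi(g)\psi(g^{-1})=\sum_v\sum_{g\in\Gamma_v}\chi(g)\psi(g^{-1})$ by conjugacy class therefore gives $\sum_{u\in\mathcal U}\frac{|G|}{|C_G(u)|}\sum_{g\in\Gamma_u}\chi(g)\psi(g^{-1})$, and dividing by $|G|$ produces exactly $\sum_{u\in\mathcal U}\bigl(d_G^u(\chi),d_G^{u^{-1}}(\psi)\bigr)_{C_G(u)}$.

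The step that requires the most care is the combinatorial bookkeeping in the second paragraph: checking that the partition $\{\Gamma_v\}$ of $G$ interacts with $G$-conjugacy so that the sum over the representative set $\mathcal U$, each term weighted by the index $|G|/|C_G(u)|$, reassembles the full sum over $G$. The remaining ingredients — the dimension count, the injectivity argument, and the evaluation of $\bigl(d_G^u(\chi),d_G^{u^{-1}}(\psi)\bigr)_{C_G(u)}$ — are straightforward unpackings of the definitions of the generalized decomposition maps, of the scalar product, and of the $p$-part/$p'$-part decomposition.
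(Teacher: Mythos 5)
Your proposal is correct and follows essentially the same route as the paper: a dimension count via the bijection between conjugacy classes of $G$ and pairs $(u,[s])$ with $u\in\mathcal{U}$ and $s$ a $p'$-element of $C_G(u)$, injectivity via the $p$-part/$p'$-part decomposition, and a direct reassembly of $\frac{1}{|G|}\sum_{g\in G}\chi(g)\psi(g^{-1})$ for the scalar-product identity. The only difference is cosmetic bookkeeping — you partition $G$ into the fibers $\Gamma_v$ of the $p$-part map and group by conjugacy classes of $p$-elements, whereas the paper reindexes each inner sum over $C_G(u)$-class representatives of $p'$-elements and uses $|C_{C_G(u)}(s)|=|C_G(us)|$ — but the underlying combinatorics is identical.
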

	
	\begin{proof}
		For each $u\in\mathcal{U}$ let $\mathcal{S}_u$ be a set of representatives of the $C_G(u)$-conjugacy classes of $p'$-elements of $C_G(u)$. Then $\mathcal{G}:=\cup_{u\in\mathcal{U}}\set{us|s\in\mathcal{S}_u}$ is a set of representatives for the $G$-conjugacy classes of $G$. It follows that
		\begin{equation*}
			\dim_\K CF(G;\K)=\dim_\K \bigoplus_{u\in\mathcal{U}}CF_{p'}(C_G(u);\K).
		\end{equation*}
		In particular, to show $\dsum_{u}d_G^u$ is a $\K$-isomorphism it is enough to show that it is injective. 
		
		Let $\chi\in\ker(\dsum_{u}d_G^u)$. Then $d_G^u(\chi)=0$ for all $u\in\mathcal{U}$. In particular, $\chi(us)=d_G^u(\chi)(s)=0$ for all $u\in\mathcal{U}$ and $s\in\mathcal{S}_u$. But every element of $G$ is conjugate to an element of the form $us$ for some $u\in\mathcal{U}$ and $s\in\mathcal{S}_u$. Therefore $\chi=0$, which proves that $\dsum_{u}d_G^u$ is a $\K$-isomorphism.
		
		Finally, if $\chi,\psi\in CF(G;\K)$ then
		\begin{align*}
			\sum_{u\in\mathcal{U}}(d_G^u(\chi),d_G^{u^{-1}}(\psi))_{C_G(u)}	&=\sum_{u\in\mathcal{U}}\frac{1}{|C_G(u)|}\sum_{s\in C_G(u)}d_G^u(\chi)(s)d_G^{u^{-1}}(\psi)(s^{-1})\\
			&=\sum_{u\in\mathcal{U}}\frac{1}{|C_G(u)|}\sum_{s\in C_G(u)_{p'}}\chi(us)\psi(u^{-1}s^{-1})\\
			&=\sum_{u\in\mathcal{U}}\frac{1}{|C_G(u)|}\sum_{s\in\mathcal{S}_u}\frac{|C_G(u)|}{|C_{C_G(u)}(s)|}\chi(us)\psi((us)^{-1})\\
			&=\sum_{u\in\mathcal{U}}\sum_{s\in\mathcal{S}_u}\frac{1}{|C_G(us)|}\chi(us)\psi((us)^{-1})\\
			&=\sum_{g\in\mathcal{G}}\frac{1}{|C_G(g)|}\chi(g)\psi(g^{-1})\\
			&=\frac{1}{|G|}\sum_{g\in G}\chi(g)\psi(g^{-1})\\
			&=(\chi,\psi)_G.
		\end{align*}
	\end{proof}
	
	
	We recall the definition of the \textit{generalized decomposition numbers}, following Radha Kessar's treatment in \cite[IV.5.1]{Aschbacher_2011}. Let $u\in G$ be a $p$-element and let $\chi\in\Irr_\K(G)$. Write
	\begin{equation*}
		\Res_{C_G(u)}^G\chi=\sum_{\zeta\in\Irr_\K(C_G(u))}n_{\chi,\zeta}\zeta\qquad n_{\chi,\zeta}\in\N_0.
	\end{equation*}
	Since $u\in Z(C_G(u))$, if $\zeta\in\Irr_\K(C_G(u))$ then there exists a root of unity $\lambda_{u,\zeta}\in \K$ of $p$-power order such that
	\begin{equation*}
		\zeta(uy)=\lambda_{u,\zeta}\zeta(y)\qquad\text{for all }y\in C_G(u).
	\end{equation*}
	For each $\zeta\in\Irr_\K(C_G(u))$ write
	\begin{equation*}
		d_{C_G(u)}^1(\zeta)=\sum_{\tau\in\IBr_F(C_G(u))}d_{\zeta,\tau}^{(u)}\tau\qquad d_{\zeta,\tau}^{(u)}\in\N_0.
	\end{equation*}
	Note that the $d_{\zeta,\tau}^{(u)}$ are just the usual decomposition numbers for the character $\zeta$ of the group $C_G(u)$. Now define, for each $\tau\in\IBr_F(C_G(u))$, the \textit{generalized decomposition number}
	\begin{equation*}
		d_{\chi,\tau}^u=\sum_{\zeta\in\Irr_\K(C_G(u))}n_{\chi,\zeta}\lambda_{u,\zeta}d_{\zeta,\tau}^{(u)}.
	\end{equation*}
	
	\begin{proposition}\label{prop:gendecomp1}
		Let $u\in G$ be a $p$-element and let $\chi\in\Irr_\K(G)$. Keep the notation set above. Then
		\begin{equation*}
			d_G^u(\chi)=\sum_{\tau\in\IBr_F(C_G(u))}d_{\chi,\tau}^u\tau.
		\end{equation*}
		In particular, the matrix of $d_G^u:CF(G;\K)\to CF_{p'}(C_G(u);\K)$ with respect to the bases $\Irr_\K(G)$ of $CF(G;\K)$ and $\IBr_F(C_G(u))$ of $CF_{p'}(C_G(u);\K)$ is the matrix whose entry in row $\tau$, column $\chi$ is $d_{\chi,\tau}^u$.
	\end{proposition}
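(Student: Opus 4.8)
The plan is to unwind both sides of the claimed identity $d_G^u(\chi) = \sum_{\tau} d_{\chi,\tau}^u \tau$ by evaluating at an arbitrary $p'$-element $s \in C_G(u)$, and to show the values agree. Since $\IBr_F(C_G(u))$ is a basis of $CF_{p'}(C_G(u);\K)$ and both sides lie in that space (the left side by definition of the generalized decomposition map, the right side visibly), it suffices to check equality of the class functions, and for that it suffices to compare values on $p'$-elements of $C_G(u)$.

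First I would use the definition $d_G^u(\chi)(s) = \chi(us)$ for $s \in C_G(u)_{p'}$, together with the restriction expansion $\Res_{C_G(u)}^G \chi = \sum_{\zeta \in \Irr_\K(C_G(u))} n_{\chi,\zeta}\, \zeta$, to write
\begin{equation*}
	d_G^u(\chi)(s) = \chi(us) = \sum_{\zeta \in \Irr_\K(C_G(u))} n_{\chi,\zeta}\, \zeta(us).
\end{equation*}
Next I would invoke the fact that $u$ is central in $C_G(u)$, so that $\zeta(us) = \lambda_{u,\zeta}\, \zeta(s)$ for the $p$-power root of unity $\lambda_{u,\zeta}$; substituting gives $d_G^u(\chi)(s) = \sum_\zeta n_{\chi,\zeta}\, \lambda_{u,\zeta}\, \zeta(s)$. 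Then, since $s$ is a $p'$-element, I would replace $\zeta(s)$ by its Brauer-character expansion: evaluated on $p'$-elements, $\zeta$ agrees with $d_{C_G(u)}^1(\zeta) = \sum_{\tau \in \IBr_F(C_G(u))} d_{\zeta,\tau}^{(u)} \tau$, i.e. $\zeta(s) = \sum_\tau d_{\zeta,\tau}^{(u)}\, \tau(s)$. Combining, and interchanging the two finite sums,
\begin{equation*}
	d_G^u(\chi)(s) = \sum_{\tau \in \IBr_F(C_G(u))} \Bigl( \sum_{\zeta \in \Irr_\K(C_G(u))} n_{\chi,\zeta}\, \lambda_{u,\zeta}\, d_{\zeta,\tau}^{(u)} \Bigr) \tau(s) = \sum_{\tau} d_{\chi,\tau}^u\, \tau(s),
\end{equation*}
which is exactly the value of the right-hand side at $s$, by the definition of $d_{\chi,\tau}^u$. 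Since both sides also vanish on $C_G(u) \setminus C_G(u)_{p'}$, they are equal as class functions. The final assertion about the matrix of $d_G^u$ is then immediate: the $\chi$-column of that matrix records the coefficients of $d_G^u(\chi)$ in the basis $\IBr_F(C_G(u))$, and we have just computed the $\tau$-coefficient to be $d_{\chi,\tau}^u$.

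I do not anticipate a genuine obstacle here — the statement is essentially a bookkeeping identity repackaging the classical relationship between ordinary characters, the Brauer characters of a centralizer, and the roots of unity coming from a central $p$-element. The one point requiring a little care is the justification that $\zeta(s)$ may be replaced by $d_{C_G(u)}^1(\zeta)(s)$: this is the standard fact that the decomposition map $d_{C_G(u)}^1$ is given on $p'$-elements by "restrict the character value," which is built into the definition of $d^1$ recalled in Section \ref{sec:Brauercharsand}. Everything else is a finite interchange of sums and substitution of definitions.
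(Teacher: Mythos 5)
Your proposal is correct and follows essentially the same route as the paper's proof: evaluate $d_G^u(\chi)$ at a $p'$-element $s\in C_G(u)$, expand $\chi(us)$ via the restriction to $C_G(u)$, pull out the root of unity $\lambda_{u,\zeta}$, substitute the decomposition of $\zeta$ into Brauer characters, and interchange the finite sums. No gaps; the matrix statement then follows exactly as you describe.
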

	
	\begin{proof}
		Let $s\in C_G(u)$. We must show that $d_G^u(\chi)(s)=\sum_{\tau}d_{\chi,\tau}^u\tau(s)$. This is clear if $s\notin C_G(u)_{p'}$, so assume that $s\in C_G(u)_{p'}$. Then
		\begin{align*}
			\chi(us)	&=\Res_{C_G(u)}^G(\chi)(us)=\sum_{\zeta\in\Irr_\K(C_G(u))}n_{\chi,\zeta}\zeta(us)\\
			&=\sum_{\zeta\in\Irr_\K(C_G(u))}n_{\chi,\zeta}\lambda_{u,\zeta}\zeta(s)\\
			&=\sum_{\zeta\in\Irr_\K(C_G(u))}n_{\chi,\zeta}\lambda_{u,\zeta}\left(\sum_{\tau\in\IBr_F(C_G(u))}d_{\zeta,\tau}^{(u)}\tau(s)\right)\\
			&=\sum_{\tau\in\IBr_F(C_G(u))}\left(\sum_{\zeta\in\Irr_\K(C_G(u))}n_{\chi,\zeta}\lambda_{u,\zeta}d_{\zeta,\tau}^{(u)}\right)\tau(s)\\
			&=\sum_{\tau\in\IBr_F(C_G(u))}d_{\chi,\tau}^{u}\tau(s).
		\end{align*}
		So $d_G^u(\chi)(s)=\sum_{\tau}d_{\chi,\tau}^u\tau(s)$, and the proof is complete.
	\end{proof}
	
	If $e$ is a central idempotent of $FG$ write $\IBr_F(G,e)=\IBr(FGe)$ for the set of Brauer characters of irreducible $FGe$-modules. If $e$ is a central idempotent of $\OO G$ write $\PrInd_\OO(G,e)=\PrInd(\OO Ge)$ for the set of characters of projective indecomposable $\OO Ge$-modules.
	
	Let $e\in Z(\OO G)$ be an idempotent. Set $CF_{p'}(G,e;\K):=e\cdot CF_{p'}(G;\K)$. Note that $\PrInd_\OO(G,e)$ is a basis of $CF_{p'}(G,e;\K)$. It follows that
	\begin{equation*}
		CF_{p'}(G,e;\K)=CF_{p'}(G;\K)\cap CF(G,e;\K),
	\end{equation*}
	so $CF_{p'}(G,e;\K)$ is equal to the subspace of $CF(G;\K)$ formed by the class functions $\chi:G\to\K$ that satisfy $\chi(g)=0$ if $g\notin G_{p'}$ and $\chi(ge)=\chi(g)$ for all $g\in G$. Since the Cartan matrix of $FG$ is invertible $\IBr_F(G,\overline{e})$ is also a basis of $CF_{p'}(G,e;\K)$. Note that if $I$ is a set of pairwise orthogonal idempotents of $Z(\OO G)$ whose sum equals 1 then
	\begin{equation*}
		CF_{p'}(G;\K)=\bigoplus_{e\in I}CF_{p'}(G,e;\K).
	\end{equation*}
	
	If $(u,e)\in\BE_\OO(G)$ then the \textit{generalized decomposition map }(\textit{associated to }$(u,e)$) is the $\K$-linear map
	\begin{align*}
		d_G^{u,e}:CF(G;\K)	&\to CF_{p'}(C_G(u),e;\K)\\
		\chi&\mapsto e\cdot d_G^u(\chi).
	\end{align*}
	In other words, $d_G^{u,e}$ is the composition
	\[\begin{tikzcd}
		{CF(G;\K)} & {CF_{p'}(C_G(u);\K)} & {CF_{p'}(C_G(u),e;\K).}
		\arrow["{d_G^u}", from=1-1, to=1-2]
		\arrow["{e\cdot-}", from=1-2, to=1-3]
	\end{tikzcd}\]
	
	Notice that if $u\in G$ is a $p$-element then $d_G^u=\sum_{e\in\bli(\OO C_G(u))}d_G^{u,e}$.
	
	\begin{proposition}\label{prop:gendecompvalues}
		Let $(u,e)\in\BE_\OO(G)$ and let $\chi\in CF(G;\K)$. If $s\in C_G(u)$ then
		\begin{equation*}
			d_G^{u,e}(\chi)(s)=\begin{cases}
				\chi(use)	&\text{if }s\in C_G(u)_{p'}\\
				0			&\text{if }s\notin C_G(u)_{p'}.
			\end{cases}
		\end{equation*}
	\end{proposition}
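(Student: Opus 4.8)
The plan is to push everything inside the group $H := C_{G}(u)$, where $u$ is central, and to use that the decomposition map is compatible with blocks. Write $\theta := \Res_{H}^{G}\chi\in CF(H;\K)$. Since $u\in Z(H)$ we have $C_{H}(u) = H$, so $d_{H}^{u}$ goes from $CF(H;\K)$ to $CF_{p'}(H;\K)$, and directly from the definitions $d_{G}^{u}(\chi) = d_{H}^{u}(\theta)$: both functions send a $p'$-element $y\in H$ to $\chi(uy) = \theta(uy)$ and vanish off $H_{p'}$. The case $s\notin C_{G}(u)_{p'}$ is then immediate: $d_{G}^{u,e}(\chi) = e\cdot d_{G}^{u}(\chi)$ lies in $CF_{p'}(C_{G}(u),e;\K)$, and the identity $CF_{p'}(G,e;\K) = CF_{p'}(G;\K)\cap CF(G,e;\K)$ noted above forces every function in this space to vanish outside $C_{G}(u)_{p'}$. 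So from now on assume $s\in C_{G}(u)_{p'}$.

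The crux is the following block-compatibility statement: for a block idempotent $e\in\bli(\OO H)$ one has $e\cdot d_{H}^{u}(\nu) = d_{H}^{u}(e\cdot\nu)$ for every $\nu\in CF(H;\K)$. Since both sides are $\K$-linear in $\nu$, it suffices to treat $\nu = \xi\in\Irr_{\K}(H)$. As $u$ is central it acts on a $\K H$-module affording $\xi$ by a scalar $\lambda_{u,\xi}$ (a root of unity of $p$-power order), whence $\xi(uy) = \lambda_{u,\xi}\xi(y)$ for all $y\in H$ and so $d_{H}^{u}(\xi) = \lambda_{u,\xi}\,d_{H}(\xi)$, where $d_{H} = d_{H}^{1}$. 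If $\xi$ belongs to the block $b$ of $\OO H$, then $d_{H}(\xi)$ is the Brauer character of the reduction mod $p$ of an $\OO Hb$-lattice affording $\xi$; that reduction is an $FH\overline{b}$-module, so $d_{H}(\xi)\in CF_{p'}(H,b;\K)$, and therefore also $d_{H}^{u}(\xi)\in CF_{p'}(H,b;\K)$. The identity now follows by cases: if $b = e$, then $e$ acts as the identity both on $\xi$ and on $d_{H}^{u}(\xi)$, so both sides equal $d_{H}^{u}(\xi)$; if $b\neq e$, then $e\cdot\xi = 0$ and $e\cdot d_{H}^{u}(\xi) = 0$, so both sides vanish.

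Granting this, the computation closes up:
\begin{align*}
	d_{G}^{u,e}(\chi)(s) &= \bigl(e\cdot d_{G}^{u}(\chi)\bigr)(s) = \bigl(e\cdot d_{H}^{u}(\theta)\bigr)(s)\\
	&= d_{H}^{u}(e\cdot\theta)(s) = (e\cdot\theta)(us),
\end{align*}
the last equality being the definition of $d_{H}^{u}$ at the $p'$-element $s$ (using $C_{H}(u) = H$). Finally, by the $Z(\K H)$-module structure on $CF(H;\K)$, and since $use = (us)e$ lies in $\OO H\subseteq\K H$ (as $us\in H$ and $e\in\OO H$), we get $(e\cdot\theta)(us) = \theta\bigl((us)e\bigr) = \theta(use) = \chi(use)$, the last step because $\theta = \Res_{H}^{G}\chi$ and restriction of central functions to $\K H$ is compatible with restriction of $\chi$. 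This gives the asserted value for $s\in C_{G}(u)_{p'}$ and completes the proof.

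The step I expect to be the main obstacle is the block-compatibility of $d_{H}^{u}$: expanding $e = \sum_{y\in H}a_{y}y$ and unwinding the module action gives $d_{G}^{u,e}(\chi)(s) = \sum_{y}a_{y}\,d_{G}^{u}(\chi)(sy)$, which differs from $\chi(use) = \sum_{y}a_{y}\chi(usy)$ exactly by the sum of the terms $a_{y}\chi(usy)$ over those $y$ with $sy$ $p$-singular, and there is no term-by-term reason for that residual sum to vanish. What makes it vanish is precisely that $e$ is a $p$-block idempotent, and the cleanest route to that fact is the compatibility of the decomposition map with the block decomposition.
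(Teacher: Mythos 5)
Your proof is correct and follows essentially the same route as the paper's: reduce to irreducible characters of $C_G(u)$, use that $u$ is central there to pull out the root of unity $\lambda_{u,\xi}$, and invoke the fact that the ordinary decomposition map respects blocks. The only difference is organizational — you isolate the block-compatibility $e\cdot d_H^u(\nu)=d_H^u(e\cdot\nu)$ as a standalone lemma, whereas the paper carries out the equivalent computation inline with generalized decomposition numbers.
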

	
	\begin{proof}
		Since the map $d_G^{u,e}$ is $\K$-linear we may assume without loss that $\chi\in\Irr_\K(G)$. Let $s\in C_G(u)$. Since $d_G^{u,e}(\chi)\in CF_{p'}(C_G(u);\K)$ we know that $d_G^{u,e}(\chi)(s)=0$ if $s\notin C_G(u)_{p'}$. So we assume that $s\in C_G(u)_{p'}$. For $\zeta\in\Irr_\K(C_G(u))$, $\tau\in\IBr_F(C_G(u))$ let $n_{\chi,\zeta}$, $\lambda_{u,\zeta}$, and $d_{\zeta,\tau}^{(u)}$ be as in the definition of the generalized decomposition numbers $d_{\chi,\tau}^u$. By Proposition \ref{prop:gendecomp1}, 
		\begin{equation*}
			d_G^u(\chi)=\sum_{\tau\in\IBr_F(C_G(u))}d_{\chi,\tau}^u\tau.
		\end{equation*}
		Now if $\tau\in\IBr_F(C_G(u),\overline{e})$ then $\tau\in CF_{p'}(C_G(u),e;\K)$, so $e\cdot\tau=\tau$. If $\tau\notin\IBr_F(C_G(u),\overline{e})$ then $e\cdot\tau=0$. It follows that
		\begin{equation*}
			d_G^{u,e}(\chi)=e\cdot d_G^u(\chi)=\sum_{\tau\in\IBr_F(C_G(u),\overline{e})}d_{\chi,\tau}^u\tau.
		\end{equation*}
		Now since $use\in\OO C_G(u)$ we have
		\begin{align*}
			\chi(use)	&=(\Res_{C_G(u)}^G\chi)(use)\\
			&=\sum_{\zeta\in\Irr_\K(C_G(u))}n_{\chi,\zeta}\zeta(use)\\
			&=\sum_{\zeta\in\Irr_\K(C_G(u),e)}n_{\chi,\zeta}\zeta(us)\\
			&=\sum_{\zeta\in\Irr_\K(C_G(u),e)}n_{\chi,\zeta}\lambda_{u,\zeta}\zeta(s)\\
			&=\sum_{\zeta\in\Irr_\K(C_G(u),e)}n_{\chi,\zeta}\lambda_{u,\zeta}\left(\sum_{\tau\in\IBr_F(C_G(u))}d_{\zeta,\tau}^{(u)}\tau(s)\right)\\
			&=\sum_{\tau\in\IBr_F(C_G(u))}\left(\sum_{\zeta\in\Irr_\K(C_G(u),e)}n_{\chi,\zeta}\lambda_{u,\zeta}d_{\zeta,\tau}^{(u)}\right)\tau(s)\\
			&=\sum_{\tau\in\IBr_F(C_G(u),\overline{e})}\left(\sum_{\zeta\in\Irr_\K(C_G(u),e)}n_{\chi,\zeta}\lambda_{u,\zeta}d_{\zeta,\tau}^{(u)}\right)\tau(s)\\
			&=\sum_{\tau\in\IBr_F(C_G(u),\overline{e})}d_{\chi,\tau}^u\tau(s)\\
			&=d_G^{u,e}(\chi)(s).
		\end{align*}
		In the above, we have used the fact that the usual decomposition numbers $d_{\zeta,\tau}^{(u)}$ are zero unless $\zeta$ and $\tau$ belong to the same block of $C_G(u)$. This completes the proof.
	\end{proof}
	
	\begin{corollary}\label{cor:conjandgendecomp}
		Let $H\subgp G$, $(u,e)\in\BE_{\OO}(H)$, and $\chi\in CF(H;\K)$. Then for any $g\in G$ one has
		\begin{equation*}
			{}^gd_H^{u,e}(\chi)=d_{{}^gH}^{{}^gu,{}^ge}({}^g\chi)
		\end{equation*}
	\end{corollary}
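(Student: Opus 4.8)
The plan is to verify the identity pointwise on the group $C_{{}^gH}({}^gu) = {}^g C_H(u)$, using the explicit description of the generalized decomposition map recorded in Proposition \ref{prop:gendecompvalues}.

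First I would check that both sides are well-defined elements of $CF_{p'}(C_{{}^gH}({}^gu),{}^ge;\K)$. For the right-hand side this amounts to observing that $({}^gu,{}^ge)\in\BE_\OO({}^gH)$: indeed, the conjugation isomorphism ${}^g(\cdot):\OO C_H(u)\isoto\OO({}^gC_H(u))=\OO C_{{}^gH}({}^gu)$ sends the block idempotent $e$ to the block idempotent ${}^ge$. For the left-hand side it is enough to recall that ${}^g(\cdot)$ restricts to an isomorphism $CF_{p'}(C_H(u),e;\K)\isoto CF_{p'}(C_{{}^gH}({}^gu),{}^ge;\K)$.

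Next, I would fix $s\in C_H(u)$, so that ${}^gs$ is an arbitrary element of $C_{{}^gH}({}^gu)$, and compute both sides at ${}^gs$. By the definition of the conjugation action on class functions the left-hand side takes the value $d_H^{u,e}(\chi)(s)$ there, which by Proposition \ref{prop:gendecompvalues} equals $\chi(use)$ when $s\in C_H(u)_{p'}$ and $0$ otherwise. Applying Proposition \ref{prop:gendecompvalues} once more, the right-hand side takes the value $({}^g\chi)({}^gu\cdot{}^gs\cdot{}^ge)$ at ${}^gs$ when ${}^gs\in C_{{}^gH}({}^gu)_{p'}$ and $0$ otherwise.

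Finally I would match the two descriptions. Since conjugation preserves element orders, $s\in C_H(u)_{p'}$ if and only if ${}^gs\in C_{{}^gH}({}^gu)_{p'}$, so the vanishing cases coincide. In the non-vanishing case, $use\in\K H$ and ${}^g(\cdot):\K H\to\K({}^gH)$ is a $\K$-algebra homomorphism, so ${}^gu\cdot{}^gs\cdot{}^ge={}^g(use)$; moreover $({}^g\chi)({}^g\alpha)=\chi(\alpha)$ for every $\alpha\in\K H$, directly from the definition of ${}^g\chi$ and $\K$-linearity, hence $({}^g\chi)({}^g(use))=\chi(use)$. Thus the two class functions agree at every point of $C_{{}^gH}({}^gu)$, which is the claim. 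I do not expect a genuine obstacle here: the corollary is essentially a bookkeeping consequence of Proposition \ref{prop:gendecompvalues}, the one place to be careful being that the idempotent $e$ lies in $\K C_H(u)\subseteq\K H$ but is not a group element, so one should check that the group-algebra conjugation isomorphism applied to the product $use$ is compatible with the definition of ${}^ge$ as a block idempotent of $\OO C_{{}^gH}({}^gu)$.
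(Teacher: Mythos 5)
Your argument is correct and matches the paper's intent exactly: the corollary is stated as an immediate consequence of Proposition \ref{prop:gendecompvalues}, and your pointwise verification (evaluating both sides at ${}^gs$, using that ${}^g(\cdot)$ is an algebra homomorphism sending $use$ to ${}^gu\,{}^gs\,{}^ge$ and that $({}^g\chi)({}^g\alpha)=\chi(\alpha)$) is precisely the bookkeeping the paper leaves implicit.
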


	Proposition \ref{prop:gendecompvalues} also implies that $d_G^{u,e}(\chi^\circ)=d_G^{u^{-1},e^\ast}(\chi)^\circ$ for any $(u,e)\in\BE_{\OO}(G)$ and any $\chi\in CF(G;K)$.
	
	Note that the matrix of $d_G^{u,e}:CF(G;\K)\to CF_{p'}(C_G(u),e;\K)$ with respect to the bases $\Irr_\K(G)$ and $\IBr_F(C_G(u),\overline{e})$ has entry $d_{\chi,\tau}^u$ in row $\tau$, column $\chi$. Brauer's 2nd Main Theorem states that certain columns of this matrix must be zero. 
	
	\begin{theorem}\label{thm:Brauers2nd}
		(Brauer's 2nd Main Theorem) Let $(u,e)\in\BE_{\OO}(G)$ and let $B\in\Bl(\OO G)$. If $(u,e)\notin\BE_\OO(B)$ then $d_{\chi,\tau}^u=0$ for any $\chi\in\Irr_K(G,B)$ and any $\tau\in\IBr_F(C_G(u),\overline{e})$.
	\end{theorem}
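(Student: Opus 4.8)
The plan is to reduce the statement to the vanishing of a single generalized decomposition map and then to localize using the Brauer homomorphism on $Z(\OO G)$. By Proposition \ref{prop:gendecomp1} the integers $d_{\chi,\tau}^u$ are the coordinates of $d_G^u(\chi)\in CF_{p'}(C_G(u);\K)$ in the basis $\IBr_F(C_G(u))$, and (as in the proof of Proposition \ref{prop:gendecompvalues}) $d_G^{u,e}(\chi)=\sum_{\tau\in\IBr_F(C_G(u),\overline e)}d_{\chi,\tau}^u\tau$. Since $\IBr_F(C_G(u),\overline e)$ is linearly independent, $d_{\chi,\tau}^u=0$ for all $\tau\in\IBr_F(C_G(u),\overline e)$ if and only if $d_G^{u,e}(\chi)=0$, which by Proposition \ref{prop:gendecompvalues} (and since $d_G^{u,e}(\chi)$ vanishes off $C_G(u)_{p'}$) is equivalent to $\chi(use)=0$ for all $s\in C_G(u)_{p'}$. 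So it suffices to show: if $(u,e)\notin\BE_\OO(B)$ and $\chi\in\Irr_\K(G,B)$, then $\chi(use)=0$ for every $s\in C_G(u)_{p'}$.

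Next I would localize. Because $\gp{u}\subseteq Z(C_G(u))$, conjugation by $u$ fixes $\OO C_G(u)$ elementwise, so $\OO C_G(u)\subseteq(\OO G)^{\gp{u}}$ and $\br_{\gp{u}}$ restricts to a unital algebra homomorphism $Z(\OO G)\to Z(FC_G(u))$ with $\br_{\gp{u}}(e)=\overline e$ for $e\in\bli(\OO C_G(u))$. Hence $(u,e)\notin\BE_\OO(B)$, i.e.\ $\br_{\gp{u}}(e_B)\overline e\neq\overline e$, actually forces $\br_{\gp{u}}(e_B)\overline e=0$, since $\overline e$ is a primitive idempotent and $\br_{\gp{u}}(e_B)$ a (central) idempotent of $FC_G(u)$. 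Now fix $s\in C_G(u)_{p'}$ and view $use\in\OO C_G(u)\subseteq\OO G$. Using $\chi=\chi e_B$ as a central function, the centrality of $e$ in $\OO C_G(u)$ (so $e$ commutes with $u$ and $s$), and the centrality of $e_B$,
\begin{equation*}
\chi(use)=\chi(e_B\,use)=\chi(e_B\,e\,us)=\chi\big((e\,e_B)\,us\big),
\end{equation*}
where $e\,e_B\in(\OO G)^{\gp{u}}$ satisfies $\br_{\gp{u}}(e\,e_B)=\overline e\cdot\br_{\gp{u}}(e_B)=0$, i.e.\ $e\,e_B\in\ker\br_{\gp{u}}$. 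Invoking Brauer's description $\ker\br_{\gp{u}}=\mathfrak p\,(\OO G)^{\gp{u}}+\tr_{\gp{u^p}}^{\gp{u}}\big((\OO G)^{\gp{u^p}}\big)$, writing $e\,e_B=\pi+\tr_{\gp{u^p}}^{\gp{u}}(\delta)$ with $\pi\in\mathfrak p\,(\OO G)^{\gp{u}}$, and using that $\chi$ is a class function with $u^i$ commuting with $us$ (so $\chi(\tr_{\gp{u^p}}^{\gp{u}}(\delta)\,us)=p\,\chi(\delta\,us)$), we obtain $\chi(use)\in\mathfrak p$; dualizing against the projective indecomposable characters and using the divisibility of their values on $p'$-elements then gives $d_{\chi,\tau}^u\in\mathfrak p$ for all $\tau\in\IBr_F(C_G(u),\overline e)$.

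The remaining, and main, obstacle is to upgrade this congruence to the exact equality $d_{\chi,\tau}^u=0$ --- this is the genuine content of Brauer's theorem. I would obtain it through the module-theoretic meaning of the generalized decomposition map, which is also the viewpoint underlying the rest of this paper: $\OO G e_B$ is a $p$-permutation $\OO[G\times G]$-module (a direct summand of the permutation module $\OO G$), and its Brauer construction at the twisted diagonal $\Delta(\gp{u})=\{(x,x):x\in\gp{u}\}$ is the $(FC_G(u),FC_G(u))$-bimodule
\begin{equation*}
FC_G(u)\cdot\br_{\gp{u}}(e_B)=\bigoplus_{(u,e')\in\BE_\OO(B)}FC_G(u)\overline{e'}.
\end{equation*}
Identifying $d_G^u$ with the effect on $\K$-characters of this Brauer functor applied to the $\K[G\times G]$-character $\sum_{\chi\in\Irr_\K(G,B)}\chi\times\chi^\circ$ of $\OO G e_B$ then shows that $d_G^u(\chi)$ is supported only on blocks $e'$ with $(u,e')\in\BE_\OO(B)$, which is exactly the assertion; the elementary congruence of the previous paragraph is merely the reduction of this statement modulo $\mathfrak p$. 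Making that character-level identification precise is the step I expect to require the most care.
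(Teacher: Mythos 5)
The paper does not prove this statement at all: it is quoted as Brauer's classical Second Main Theorem, so there is no internal proof to compare against. Judged on its own terms, your proposal contains a genuine gap at exactly the point you flag yourself. The reduction is fine: by Proposition \ref{prop:gendecomp1} and the linear independence of $\IBr_F(C_G(u),\overline{e})$, the claim is equivalent to $d_G^{u,e}(\chi)=0$, i.e.\ to $\chi(use)=0$ for all $s\in C_G(u)_{p'}$; and your observation that $(u,e)\notin\BE_\OO(B)$ forces $\br_{\gp{u}}(e_B)\overline{e}=0$ (primitivity of $\overline{e}$) so that $ee_B\in\ker\br_{\gp{u}}$ is correct. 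The localization via $\ker\br_{\gp{u}}=\mathfrak{p}(\OO G)^{\gp{u}}+\tr_{\gp{u^p}}^{\gp{u}}\bigl((\OO G)^{\gp{u^p}}\bigr)$ then legitimately yields $\chi(use)\in\mathfrak{p}$, but, as you acknowledge, a congruence modulo $\mathfrak{p}$ is strictly weaker than the theorem and cannot be upgraded by iterating or by any purely formal manipulation: the entire content of Brauer's theorem lives in the passage from ``$\equiv 0 \pmod{\mathfrak{p}}$'' to ``$=0$''.

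That passage is precisely the step you leave open. The asserted identification of $d_G^u$ with the character-level effect of the Brauer construction applied to $\OO Ge_B$ at $\Delta(\gp{u})$ is not a routine bookkeeping exercise — it \emph{is} the theorem (this is Brou\'{e}'s proof of the Second Main Theorem via $p$-permutation bimodules). To close the gap within this paper's toolkit you would need to: (i) compute $(\overline{\OO Ge_B})(\Delta\gp{u})\iso FC_G(u)\br_{\gp{u}}(\overline{e_B})=\bigoplus_{(u,e')\in\BE_F(\overline{B})}FC_G(u)\overline{e'}$ as an $(FC_G(u),FC_G(u))$-bimodule; (ii) apply the coherence identity $\chi_M(x)=\chi_{M(\gp{x_p})}(x)$ of Theorem \ref{thm:coherence1} to the trivial source $\OO[G\times G]$-module $M=\OO Ge_B$ with character $\sum_{\chi\in\Irr_\K(G,B)}\chi\times\chi^\circ$ at the $p$-element $(u,u^{-1})$ (taking care with the bimodule/antipode conventions); and (iii) disentangle the resulting identity $\sum_\chi d_G^{u}(\chi)\times d_G^{u^{-1}}(\chi)^\circ=\chi_{M(\Delta\gp{u})}$ block by block, using the orthogonality of Proposition \ref{prop:localclassfunction1}, to conclude that the $e$-component vanishes when $\br_{\gp{u}}(e_B)\overline{e}=0$. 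One must also check that the coherence identity being invoked does not itself rest on Brauer's Second Main Theorem (it does not — it is Brou\'{e}'s elementary result on $p$-permutation modules), but none of (i)--(iii) is carried out in your write-up, so as it stands the proposal is an outline whose decisive step is missing.
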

	
	Let $(u,e)\in\BE_\OO(G)$. Then there exists a unique block $B\in\Bl(\OO G)$ such that $(u,e)\in\BE_\OO(B)$. If $A$ is any block of $\OO G$ different from $B$ and $\chi\in\Irr_\K(G,A)$ then Brauer's 2nd Main Theorem implies that $d_G^{u,e}(\chi)=0$. It follows that the map $d_G^{u,e}$ is completely determined by its restriction to the subspace $CF(G,B;K)$ spanned by the irreducible characters of $\K G$ that belong to $B$.
	
	\begin{proposition}\label{prop:localclassfunction2}
		Let $B\in\Bl(\OO G)$ and let $\mathcal{U}$ be a set of representatives for the $G$-conjugacy classes of $\BE_\OO(B)$. Then
		\begin{equation*}
			\bigoplus_{(u,e)\in\mathcal{U}}d_G^{u,e}:CF(G,B;\K)\to\bigoplus_{(u,e)\in\mathcal{U}}CF_{p'}(C_G(u),e;\K)
		\end{equation*}
		is an isomorphism of $\K$-vector spaces. Moreover, for any $\chi,\psi\in CF(G,B;\K)$ one has
		\begin{equation*}
			(\chi,\psi)_G=\sum_{(u,e)\in\mathcal{U}}(d_G^{u,e}(\chi),d_G^{u^{-1},e}(\psi))_{C_G(u)}.
		\end{equation*}
	\end{proposition}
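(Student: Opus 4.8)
The plan is to obtain the block-refined statement from Proposition~\ref{prop:localclassfunction1} and Brauer's Second Main Theorem by a block-by-block comparison.

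First I would fix a set $\mathcal{U}_0$ of representatives for the $G$-conjugacy classes of $p$-elements of $G$ and put $\mathcal{U}:=\{(u,e)\mid u\in\mathcal{U}_0,\ (u,e)\in\BE_\OO(B)\}$. This $\mathcal{U}$ is a set of representatives for the $G$-conjugacy classes of $\BE_\OO(B)$: given $(v,c)\in\BE_\OO(B)$, pick $g\in G$ and $u\in\mathcal{U}_0$ with ${}^gv=u$; then ${}^g(v,c)=(u,{}^gc)$ lies in $\BE_\OO(B)$ because $\BE_\OO(B)$ is $G$-stable, so $(v,c)$ is $G$-conjugate into $\mathcal{U}$; and if $(u,e),(u',e')\in\mathcal{U}$ are $G$-conjugate then $u\sim_G u'$ forces $u=u'$, whereupon ${}^g(u,e)=(u,e')$ forces $g\in C_G(u)$ and hence $e'={}^ge=e$, since $e$ is a central idempotent of $\OO C_G(u)$. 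Moreover, any two such choices of representatives differ by a family of $G$-conjugations, and by Corollary~\ref{cor:conjandgendecomp} one has $d_G^{{}^gu,{}^ge}(\chi)={}^gd_G^{u,e}(\chi)$ for $\chi\in CF(G;\K)$ (where ${}^g\chi=\chi$ since $\chi$ is a class function on $G$), while conjugation of class functions is an isometry; so both assertions of the proposition are independent of the choice of $\mathcal{U}$, and it suffices to prove them for the $\mathcal{U}$ above.

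For the isomorphism I would begin with the $\K$-isomorphism $\bigoplus_{u\in\mathcal{U}_0}d_G^u\colon CF(G;\K)\to\bigoplus_{u\in\mathcal{U}_0}CF_{p'}(C_G(u);\K)$ of Proposition~\ref{prop:localclassfunction1}. Both sides split along the blocks $A$ of $\OO G$: the source as $\bigoplus_A CF(G,A;\K)$, and, using $d_G^u=\sum_{e\in\bli(\OO C_G(u))}d_G^{u,e}$ together with the fact that $\bli(\OO C_G(u))$ is partitioned by the relation $(u,e)\in\BE_\OO(A)$ (because the sets $\BE_\OO(A)$ partition $\BE_\OO(G)$), the target as $\bigoplus_A\bigl(\bigoplus_{u\in\mathcal{U}_0}\bigoplus_{e:(u,e)\in\BE_\OO(A)}CF_{p'}(C_G(u),e;\K)\bigr)$. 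By Brauer's Second Main Theorem (Theorem~\ref{thm:Brauers2nd}) and Proposition~\ref{prop:gendecomp1}, $d_G^{u,e}$ vanishes on $CF(G,A;\K)$ whenever $(u,e)\notin\BE_\OO(A)$, so $\bigoplus_u d_G^u$ carries the $A$-summand of the source into the $A$-summand of the target. An isomorphism compatible with two direct-sum decompositions indexed by the same set and sending each summand into the matching one restricts to an isomorphism on each pair of matching summands; taking $A=B$ then gives an isomorphism $CF(G,B;\K)\isoto\bigoplus_{(u,e)\in\mathcal{U}}CF_{p'}(C_G(u),e;\K)$, and on $CF(G,B;\K)$ this restriction is exactly $\bigoplus_{(u,e)\in\mathcal{U}}d_G^{u,e}$, because the components $d_G^{u,e}$ with $(u,e)\notin\BE_\OO(B)$ are zero there. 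For the inner-product formula, take $\chi,\psi\in CF(G,B;\K)$. Proposition~\ref{prop:localclassfunction1} gives $(\chi,\psi)_G=\sum_{u\in\mathcal{U}_0}(d_G^u(\chi),d_G^{u^{-1}}(\psi))_{C_G(u)}$, the pairings making sense because $C_G(u^{-1})=C_G(u)$. For each fixed $u$ the block idempotents of $\OO C_G(u)$ are pairwise orthogonal central idempotents of $\K C_G(u)$ summing to $1$, so applying the orthogonal decomposition of class functions along such a family (recalled in Section~\ref{sec:classfunctionsand}) to $d_G^u(\chi)$ and $d_G^{u^{-1}}(\psi)$ yields $(d_G^u(\chi),d_G^{u^{-1}}(\psi))_{C_G(u)}=\sum_{e\in\bli(\OO C_G(u))}(e\cdot d_G^u(\chi),e\cdot d_G^{u^{-1}}(\psi))_{C_G(u)}=\sum_{e}(d_G^{u,e}(\chi),d_G^{u^{-1},e}(\psi))_{C_G(u)}$ by the definition of $d_G^{u,e}$. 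Brauer's Second Main Theorem makes the $e$-th summand vanish unless $(u,e)\in\BE_\OO(B)$, so summing over $u\in\mathcal{U}_0$ and reindexing by $\mathcal{U}$ gives $(\chi,\psi)_G=\sum_{(u,e)\in\mathcal{U}}(d_G^{u,e}(\chi),d_G^{u^{-1},e}(\psi))_{C_G(u)}$.

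The substantive ingredients are imported directly: Proposition~\ref{prop:localclassfunction1} and, crucially, Brauer's Second Main Theorem, which forces the off-block components to vanish and so makes the refinement possible at all. The only delicate points are organisational --- verifying that $\mathcal{U}$ is a genuine transversal and that neither assertion depends on it, together with the elementary observation that an isomorphism compatible with two parallel direct-sum decompositions splits into isomorphisms of the corresponding summands.
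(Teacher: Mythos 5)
Your proposal is correct; the inner-product half is essentially the paper's computation, but your proof of the isomorphism statement is organised differently. The paper verifies injectivity and surjectivity by hand: injectivity by showing that vanishing of all $d_G^{u,e}(\chi)$ for $(u,e)\in\mathcal{U}$ forces $d_G^v(\chi)=0$ for every $p$-element $v$ (conjugating an alleged nonzero component into $\mathcal{U}$ after invoking Brauer's Second Main Theorem), and surjectivity by explicitly constructing a preimage --- extending the given family over all of $\BE_\OO(B)$ by conjugation, assembling the class functions $\varphi_v=\sum_{(v,f)\in\BE_\OO(B)}\psi_{(v,f)}$, pulling back through Proposition~\ref{prop:localclassfunction1}, and cutting by $e_B$. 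You instead observe that the global isomorphism $\bigoplus_{u}d_G^u$ is block-diagonal with respect to the decompositions $CF(G;\K)=\bigoplus_A CF(G,A;\K)$ and $\bigoplus_u\bigoplus_e CF_{p'}(C_G(u),e;\K)=\bigoplus_A\bigl(\bigoplus_{(u,e)\in\BE_\OO(A),\,u\in\mathcal{U}_0}CF_{p'}(C_G(u),e;\K)\bigr)$ --- the off-block components vanish by Brauer's Second Main Theorem --- and then apply the elementary fact that an isomorphism respecting two parallel direct-sum decompositions restricts to an isomorphism on each matching pair of summands. The substantive inputs (Proposition~\ref{prop:localclassfunction1} and Theorem~\ref{thm:Brauers2nd}) are identical, but your route gets surjectivity for free and avoids the explicit preimage construction, at the cost of having to fix a compatible transversal $\mathcal{U}$ first and then check (as you do, correctly, via Corollary~\ref{cor:conjandgendecomp}) that both assertions are independent of that choice; the paper's argument works for an arbitrary transversal from the outset and has the minor advantage of producing the inverse map explicitly.
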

	
	\begin{proof}
		Let $\chi\in CF(G,B;\K)$ be such that $d_G^{u,e}(\chi)=0$ for all $(u,e)\in\mathcal{U}$. I claim that $d_G^v(\chi)=0$ for all $p$-elements $v\in G$. Suppose not. Let $v\in G$ be a $p$-element such that $d_G^v(\chi)\neq 0$. Then there exists a block $f\in\bli(\OO C_G(v))$ such that $f\cdot d_G^v(\chi)\neq 0$, or in other words, $d_G^{v,f}(\chi)\neq 0$. Now $\chi=\sum_{\theta\in\Irr_\K(B)}c_\theta\theta$ for some scalars $c_\theta\in\K$, hence
		\begin{equation*}
			0\neq d_G^{v,f}(\chi)=\sum_{\theta\in\Irr_\K(B)}c_\theta d_G^{v,f}(\theta).
		\end{equation*}
		So $d_G^{v,f}(\theta)\neq 0$ for some $\theta\in\Irr_\K(B)$. Brauer's 2nd Main Theorem then implies that $(v,f)\in\BE_\OO(B)$. Let $(u,e)\in\mathcal{U}$ and $g\in G$ be such that ${}^g(v,f)=(u,e)$. Then one can verify that ${}^gd_G^{v,f}(\chi)=d_G^{u,e}(\chi)$. But by assumption $d_G^{u,e}(\chi)=0$, so $d_G^{v,f}(\chi)=0$, a contradiction. This proves the claim: we have $d_G^v(\chi)=0$ for all $p$-elements $v\in G$. Proposition \ref{prop:localclassfunction1} then gives that $\chi=0$. Thus we have shown that the map $\dsum_{(u,e)\in\mathcal{U}}d_G^{u,e}$ is injective, and it remains to see that the map is also surjective.
		
		Let $\sum_{(u,e)\in\mathcal{U}}\psi_{(u,e)}\in\dsum_{(u,e)\in\mathcal{U}}CF_{p'}(C_G(u),e;\K)$. So $\psi_{(u,e)}\in CF_{p'}(C_G(u),e;\K)$ for each $(u,e)\in\mathcal{U}$. If $(v,f)\in\BE_\OO(B)\setminus\mathcal{U}$ then there exists an element $g\in G$ such that ${}^g(v,f)\in\mathcal{U}$. Set $\psi_{(v,f)}={}^{g^{-1}}\psi_{{}^g(v,f)}$. Note that $\psi_{(v,f)}\in CF_{p'}(C_G(v),f;\K)$ and that the definition of $\psi_{(v,f)}$ does not depend on the choice of $g$. Now let $\mathcal{V}$ be a set of representatives for the $G$-conjugacy classes of $p$-elements of $G$. For each $v\in\mathcal{V}$ set
		\begin{equation*}
			\varphi_v:=\sum_{(v,f)\in\BE_\OO(B)}\psi_{(v,f)}\in CF_{p'}(C_G(v);\K).
		\end{equation*}
		The sum above is taken over all $B$-Brauer elements whose first component is equal to $v$. If there are no such Brauer elements then $\varphi_v=0$. By Proposition \ref{prop:localclassfunction1} there exists a class function $\chi\in CF(G;\K)$ such that $d_G^v(\chi)=\varphi_v$ for each $v\in\mathcal{V}$. I claim that $d_G^{u,e}(\chi)=\psi_{(u,e)}$ for all $(u,e)\in\mathcal{U}$. Let $(u,e)\in\mathcal{U}$. Since $u\in G$ is a $p$-element of $G$ there exists an element $g\in G$ such that $v:={}^gu\in\mathcal{V}$. Set $f:={}^ge$, so ${}^g(u,e)=(v,f)$. Note that $(v,f)\in\BE_\OO(B)$. Now
		\begin{equation*}
			d_G^{v,f}(\chi)=f\cdot d_G^v(\chi)=f\cdot\varphi_v=\psi_{(v,f)}={}^g\psi_{{}^{g^{-1}}(v,f)}={}^g\psi_{(u,e)}
		\end{equation*}
		and therefore $d_G^{u,e}(\chi)={}^{g^{-1}}d_G^{v,f}(\chi)=\psi_{(u,e)}$. This proves the claim. Since each $(u,e)\in\mathcal{U}$ belongs to $B$ we have $d_G^{u,e}(\chi)=d_G^{u,e}(e_B\cdot\chi)$ for all $(u,e)\in\mathcal{U}$. Now $e_B\cdot\chi\in CF(G,B;\K)$ and we have
		\begin{equation*}
			\left(\bigoplus_{(u,e)\in\mathcal{U}}d_G^{u,e}\right)(e_B\cdot\chi)=\sum_{(u,e)\in\mathcal{U}}\psi_{(u,e)}.
		\end{equation*}
		We have shown that $\dsum_{(u,e)\in\mathcal{U}}d_G^{u,e}$ is surjective, hence is an isomorphism.
		
		Now let $\chi,\psi\in CF(G,B;\K)$. Continue to let $\mathcal{V}$ denote a set of representatives for the $G$-conjugacy classes of $p$-elements of $G$. We compute:
		\begin{align*}
			\sum_{(u,e)\in\mathcal{U}}(d_G^{u,e}(\chi),d_G^{u^{-1},e}(\psi))_{C_G(u)}	&=\sum_{(u,e)\in\mathcal{U}}\frac{1}{|G:\Stab_G(u,e)|}\sum_{(v,f)\in\Orb_G(u,e)}(d_G^{v,f}(\chi),d_G^{v^{-1},f}(\psi))_{C_G(v)}\\
			&=\sum_{(u,e)\in\mathcal{U}}\frac{|C_G(u)|}{|G|}\sum_{(v,f)\in\Orb_G(u,e)}(d_G^{v,f}(\chi),d_G^{v^{-1},f}(\psi))_{C_G(v)}\\
			&=\sum_{(v,f)\in\BE_\OO(B)}\frac{|C_G(v)|}{|G|}(d_G^{v,f}(\chi),d_G^{v^{-1},f}(\psi))_{C_G(v)}\\
			&=\sum_{(v,f)\in\BE_\OO(G)}\frac{|C_G(v)|}{|G|}(d_G^{v,f}(\chi),d_G^{v^{-1},f}(\psi))_{C_G(v)}\\
			&=\sum_{\substack{v\in G\\\text{a }p\text{-element}}}\frac{|C_G(v)|}{|G|}\sum_{f\in\bli(\OO C_G(v))}(f\cdot d_G^v(\chi),f\cdot d_G^{v^{-1}}(\psi))_{C_G(v)}\\
			&=\sum_{\substack{v\in G\\\text{a }p\text{-element}}}\frac{|C_G(v)|}{|G|}(d_G^v(\chi),d_G^{v^{-1}}(\psi))_{C_G(v)}\\
			&=\sum_{v\in\mathcal{V}}(d_G^v(\chi),d_G^{v^{-1}}(\psi))_{C_G(v)}\\
			&=(\chi,\psi)_G.
		\end{align*}
		The fourth equality above follows from Brauer's 2nd Main Theorem (Theorem \ref{thm:Brauers2nd}) and the final equality follows from Proposition \ref{prop:localclassfunction1}. The proof is complete.
	\end{proof}
	
	Let $B\in\Bl(\OO G)$. If $\chi\in CF(G,B;\K)$ then Propositions \ref{prop:gendecompvalues} and \ref{prop:localclassfunction2} imply that $\chi$ is completely determined by the values $\chi(use)$ where $u\in G$ is a $p$-element, $s\in C_G(u)_{p'}$, and $e\in\bli(\OO C_G(u))$ such that $(u,e)\in\BE_\OO(B)$.
	
	Now let $G$ and $H$ be finite groups and assume that $(\K,\OO,F)$ is large enough for $G\times H$. Let $e\in Z(\OO G)$ and $f\in Z(\OO H)$ be central idempotents. Then $e\tensor f^\ast$ is a central idempotent of $\OO[G\times H]$. Let $\mu\in CF_{p'}(G\times H,e\tensor f^\ast;\K)$. Then for any $\nu\in CF(H,f;\K)$ we have $\mu\tensor_H\nu\in CF_{p'}(G,e;\K)$. Indeed, $\mu\tensor_H\nu$ is certainly an element of $CF(G,e;\K)$ and if $g\in G\setminus G_{p'}$ then $(g,h)\notin(G\times H)_{p'}$ for any $h\in H$ hence
	\begin{equation*}
		(\mu\tensor_H\nu)(g)=\frac{1}{|H|}\sum_{h\in H}\mu(g,h)\nu(h)=0.
	\end{equation*}
	
	\begin{proposition}\label{prop:Brouediagram}
		Let $G$ and $H$ be finite groups, let $A\in\Bl(\OO G)$, and let $B\in\Bl(\OO H)$. Let $\mathcal{U}$ denote a set of representatives for the $G$-conjugacy classes of $\BE_\OO(A)$ and let $\mathcal{V}$ denote a set of representatives for the $H$-conjugacy classes of $\BE_\OO(B)$. Let $\mu\in CF(G\times H,A\tensor B^\ast;\K)$. Then the $\K$-linear map
		\begin{align*}
			\bigoplus_{(v,f)\in\mathcal{V}}CF_{p'}(C_H(v),f;\K)	&\to\bigoplus_{(u,e)\in\mathcal{U}}CF_{p'}(C_G(u),e;\K)\\
			\sum_{(v,f)\in\mathcal{V}}\nu_{(v,f)}&\mapsto\sum_{(u,e)\in\mathcal{U}}\sum_{(v,f)\in\mathcal{V}}d_{G\times H}^{(u,v),e\tensor f^\ast}(\mu)\tensor_{C_H(v)}\nu_{(v,f)}
		\end{align*}
		is the unique map making the diagram below commute:
		\[\begin{tikzcd}
			{CF(H,B;\K)} & {CF(G,A;\K)} \\
			{\bigoplus_{(v,f)\in\mathcal{V}}CF_{p'}(C_H(v),f;\K)} & {\bigoplus_{(u,e)\in\mathcal{U}}CF_{p'}(C_G(u),e;\K)}
			\arrow["{\mu\tensor_H-}", from=1-1, to=1-2]
			\arrow["{\bigoplus_{(v,f)\in\mathcal{V}}d_H^{v,f}}"', from=1-1, to=2-1]
			\arrow["{\bigoplus_{(u,e)\in\mathcal{U}}d_G^{u,e}}", from=1-2, to=2-2]
			\arrow[dashed, from=2-1, to=2-2]
		\end{tikzcd}\]
	\end{proposition}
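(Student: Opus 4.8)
The plan is to exploit that the left-hand vertical map $\bigoplus_{(v,f)\in\mathcal{V}}d_H^{v,f}$ is an isomorphism --- this is Proposition \ref{prop:localclassfunction2} applied to the block $B$ --- which immediately forces the uniqueness of any map making the square commute. So the whole content is to check that the explicitly displayed map does make the square commute. Writing $\nu_{(v,f)}:=d_H^{v,f}(\nu)$ for $\nu\in CF(H,B;\K)$ and comparing $(u,e)$-components, this reduces to verifying, for each $(u,e)\in\mathcal{U}$, the identity
\begin{equation*}
	d_G^{u,e}(\mu\tensor_H\nu)=\sum_{(v,f)\in\mathcal{V}}d_{G\times H}^{(u,v),e\tensor f^\ast}(\mu)\tensor_{C_H(v)}d_H^{v,f}(\nu).
\end{equation*}
Both sides lie in $CF_{p'}(C_G(u),e;\K)$, so it suffices to evaluate at a fixed $s\in C_G(u)_{p'}$; off $C_G(u)_{p'}$ both sides vanish by construction.

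For the left-hand side, Proposition \ref{prop:gendecompvalues} gives $d_G^{u,e}(\mu\tensor_H\nu)(s)=(\mu\tensor_H\nu)(use)$, and the character formula for $\tensor_H$ from Section \ref{sec:classfunctionsand} rewrites this as $(\mu(use,\cdot),\nu^\circ)_H$. Setting $\phi:=\mu(use,\cdot)$, partial evaluation of $\mu\in CF(G\times H,e_A\tensor e_B^\ast;\K)$ in the first variable puts $\phi\in CF(H,B^\ast;\K)$, and $\nu^\circ\in CF(H,B^\ast;\K)$ as well, so the left-hand side at $s$ is $(\phi,\nu^\circ)_H$.

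For the right-hand side I would expand each summand using the definition of $\tensor_{C_H(v)}$ together with Proposition \ref{prop:gendecompvalues}: for $t\in C_H(v)_{p'}$ one has $d_{G\times H}^{(u,v),e\tensor f^\ast}(\mu)(s,t)=\mu\bigl((use)\tensor(vtf^\ast)\bigr)=d_H^{v,f^\ast}(\phi)(t)$ and $d_H^{v,f}(\nu)(t)=\nu(vtf)$, while the other $t$ contribute $0$. Hence the $(v,f)$-summand equals the scalar product $\bigl(d_H^{v,f^\ast}(\phi),\,d_H^{v,f}(\nu)^\circ\bigr)_{C_H(v)}$, which by the antipode rule $d_H^{v,f}(\nu)^\circ=d_H^{v^{-1},f^\ast}(\nu^\circ)$ (a consequence of $d_H^{u,e}(\chi^\circ)=d_H^{u^{-1},e^\ast}(\chi)^\circ$) equals $\bigl(d_H^{v,f^\ast}(\phi),\,d_H^{v^{-1},f^\ast}(\nu^\circ)\bigr)_{C_H(v)}$. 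Since $(v,f)\mapsto(v,f^\ast)$ is an $H$-equivariant bijection $\BE_\OO(B)\to\BE_\OO(B^\ast)$, the set $\{(v,f^\ast):(v,f)\in\mathcal{V}\}$ is a set of representatives for the $H$-conjugacy classes of $\BE_\OO(B^\ast)$; reindexing the sum over $\mathcal{V}$ accordingly, the right-hand side at $s$ becomes exactly the expansion of $(\phi,\nu^\circ)_H$ provided by Proposition \ref{prop:localclassfunction2} applied to the block $B^\ast$. So the right-hand side at $s$ is also $(\phi,\nu^\circ)_H$, and the identity is proved.

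The main obstacle is bookkeeping rather than ideas: one has to keep straight the antipodes and inverses ($f$ versus $f^\ast$, $v$ versus $v^{-1}$), confirm that $\phi=\mu(use,\cdot)$ is a genuine central function on $\K H$ lying in $CF(H,B^\ast;\K)$ so that Proposition \ref{prop:localclassfunction2} applies to it, and check that the substitution $(v,f)\leftrightarrow(v,f^\ast)$ correctly matches the index set $\mathcal{V}$ against the representatives of $\BE_\OO(B^\ast)$ demanded by that proposition. Brauer's Second Main Theorem enters only through Proposition \ref{prop:localclassfunction2}, which has already absorbed it.
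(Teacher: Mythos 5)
Your proposal is correct and follows essentially the same route as the paper: reduce to the componentwise identity $d_G^{u,e}(\mu\tensor_H\nu)=\sum_{(v,f)}d_{G\times H}^{(u,v),e\tensor f^\ast}(\mu)\tensor_{C_H(v)}d_H^{v,f}(\nu)$, evaluate at $s\in C_G(u)_{p'}$ via Proposition \ref{prop:gendecompvalues}, expand the scalar product by Proposition \ref{prop:localclassfunction2}, and handle the antipode/inverse bookkeeping with $d_H^{v,f}(\nu)^\circ=d_H^{v^{-1},f^\ast}(\nu^\circ)$, with uniqueness coming from the vertical arrows being isomorphisms. The only cosmetic difference is that you apply the orthogonality decomposition to $(\phi,\nu^\circ)_H$ in the block $B^\ast$ (reindexing $\mathcal{V}$ via $(v,f)\mapsto(v,f^\ast)$), whereas the paper applies it to $(\nu,\mu(use,\cdot)^\circ)_H$ in the block $B$; the two computations are mirror images of each other.
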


	\begin{proof}
		Note that if $(u,e)\in\mathcal{U}$ and $(v,f)\in\mathcal{V}$ then $((u,v),e\tensor f^\ast)$ is a Brauer element of $\OO[G\times H]$. Therefore $d_{G\times H}^{(u,v),e\tensor f^\ast}(\mu)\in CF_{p'}(C_G(u)\times C_H(v),e\tensor f^\ast;\K)$. In particular, $d_{G\times H}^{(u,v),e\tensor f^\ast}(\mu)\tensor_{C_H(v)}-$ defines a $\K$-linear map $CF_{p'}(C_H(v),f;\K)\to CF_{p'}(C_G(u),e;\K)$. Therefore the map defined in the statement makes sense.
		
		Let $\nu\in CF(H,B;\K)$. To see that the diagram commutes we must show that
		\begin{equation*}
			\sum_{(u,e)\in\mathcal{U}}d_G^{u,e}(\mu\tensor_H\nu)=\sum_{(u,e)\in\mathcal{U}}\sum_{(v,f)\in\mathcal{V}}d_{G\times H}^{(u,v),e\tensor f^\ast}(\mu)\tensor_{C_H(v)}d_H^{v,f}(\nu).
		\end{equation*}
		To accomplish this we will check that
		\begin{equation*}
			d_G^{u,e}(\mu\tensor_H\nu)=\sum_{(v,f)\in\mathcal{V}}d_{G\times H}^{(u,v),e\tensor f^\ast}(\mu)\tensor_{C_H(v)}d_H^{v,f}(\nu)
		\end{equation*}
		for any fixed $(u,e)\in\mathcal{U}$. Let $s\in C_G(u)_{p'}$. Note first that for any $(v,f)\in\mathcal{V}$ we have
		\begin{equation*}
			d_H^{v,f^\ast}(\mu(use,\cdot))=d_{G\times H}^{(u,v),e\tensor f^\ast}(\mu)(s,\cdot).
		\end{equation*}
		We compute:
		\begin{align*}
			d_G^{u,e}(\mu\tensor_H\nu)(s)	&=(\mu\tensor_H\nu)(use)\\
				&=(\nu,\mu(use,\cdot)^\circ)_H\\
				&=\sum_{(v,f)\in\mathcal{V}}(d_H^{v,f}(\nu),d_H^{v^{-1},f}(\mu(use,\cdot)^\circ))_{C_H(v)}\\
				&=\sum_{(v,f)\in\mathcal{V}}(d_H^{v,f}(\nu),d_H^{v,f^\ast}(\mu(use,\cdot))^\circ)_{C_H(v)}\\
				&=\sum_{(v,f)\in\mathcal{V}}(d_H^{v,f}(\nu),d_{G\times H}^{(u,v),e\tensor f^\ast}(\mu)(s,\cdot)^\circ)_{C_H(v)}\\
				&=\sum_{(v,f)\in\mathcal{V}}(d_{G\times H}^{(u,v),e\tensor f^\ast}(\mu)\tensor_{C_H(v)}d_H^{v,f}(\nu))(s).
		\end{align*}
		The first equality above holds by Proposition \ref{prop:gendecompvalues} and the third by Proposition \ref{prop:localclassfunction2}. Since $s$ was an arbitrary element of $C_G(u)_{p'}$ we find that
		\begin{equation*}
			d_G^{u,e}(\mu\tensor_H\nu)=\sum_{(v,f)\in\mathcal{V}}d_{G\times H}^{(u,v),e\tensor f^\ast}(\mu)\tensor_{C_H(v)}d_H^{v,f}(\nu).
		\end{equation*}
		It follows that
		\begin{equation*}
			\sum_{(u,e)\in\mathcal{U}}d_G^{u,e}(\mu\tensor_H\nu)=\sum_{(u,e)\in\mathcal{U}}\sum_{(v,f)\in\mathcal{V}}d_{G\times H}^{(u,v),e\tensor f^\ast}(\mu)\tensor_{C_H(v)}d_H^{v,f}(\nu)
		\end{equation*}
		and since $\nu$ was an arbitrary class function in $CF(H,B;\K)$ the diagram commutes.
		
		The uniqueness of the map follows from the fact that the vertical arrows in the diagram are isomorphisms, thanks to Proposition \ref{prop:localclassfunction2}.
	\end{proof}
	
	\section{Trivial source modules}
	
	Let $G$ be a finite group and let $(\K,\OO,F)$ be a $p$-modular system large enough for $G$. Let $R\in\set{\OO,F}$. In this section we establish some results about trivial source $RG$-modules that will be needed in the sequel. Recall that an $RG$-module $M$ is called a \textit{trivial source} or $p$-\textit{permutation module} if $\Res_P^GM$ is a permutation $RP$-module for all $p$-subgroups $P$ of $G$. We write $\ltriv{RG}$ for the category of (finitely generated) trivial source $RG$-modules and we let $T_R(G)=T(RG)$ denote the Grothendieck ring of $\ltriv{RG}$. 
	
	Recall that if $N$ is a trivial source $FG$-module then there exists a unique (up to isomorphism) trivial source $\OO G$-module $M$ such that $F\tensor_\OO M\iso N$.
	
	Let $M$ be a trivial source $\OO G$-module and let $P$ be a $p$-subgroup of $G$. We will write $\overline{M}(P)\in\ltriv{FN_G(P)}$ for the usual Brauer construction applied to $M$ and we will write $M(P)\in\ltriv{\OO N_G(P)}$ for a lift of $\overline{M}(P)$. Of course, $M(P)$ is only well-defined up to isomorphism. If $(P,e)\in\BP_F(G)$ we write
	\begin{equation*}
		\overline{M}(P,e):=e\Res_{N_G(P,e)}^{N_G(P)}\overline{M}(P)\in\ltriv{FN_G(P,e)e}
	\end{equation*}
	and if $(P,e)\in\BP_\OO(G)$ then we write
	\begin{equation*}
		M(P,e):=e\Res_{N_G(P,e)}^{N_G(P)}M(P)\in\ltriv{\OO N_G(P,e)e}.
	\end{equation*}
	Note that $M(P,e)$ is only well-defined up to isomorphism.
	
	
	\begin{lemma}\label{lem:brauerconst}
		Let $B\in\Bl(\OO G)$, $(P,e)\in\BP_\OO(B)$, and set $I_{(P,e)}=N_G(P,e)$. Let $M\in\ltriv{B}$ and let $Q$ be a $p$-subgroup of $I_{(P,e)}$. Then there is an isomorphism of $\OO N_{I_{(P,e)}}(Q)$-modules
		\begin{equation*}
			(M(P,e))(Q)\iso e(Q)\cdot\Res_{N_{I_{(P,e)}}(Q)}^{N_G(PQ)}(M(PQ))
		\end{equation*}
		where $e(Q)$ denotes the unique lift of $\br_Q^{I_{(P,e)}}(e)$ to a central idempotent of $\OO N_{I_{(P,e)}}(Q)$.
	\end{lemma}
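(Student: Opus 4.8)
The plan is to reduce everything to three standard properties of the Brauer construction on $p$-permutation modules, applied in succession. Throughout write $I=I_{(P,e)}=N_G(P,e)$, so that by definition $M(P,e)=e\cdot\Res_I^{N_G(P)}M(P)$ as an $\OO I$-module, and recall that $e$ is a block idempotent --- in particular a central idempotent --- of $\OO I$, by the lemma above identifying $e$ as a block idempotent of $RN_G(P,e)$. Since $M(P,e)$ and $M(PQ)$ are by convention lifts of the corresponding Brauer quotients over $F$, and lifts of trivial source $F$-modules are unique up to isomorphism, it suffices to prove the analogous isomorphism over $F$: both sides of the asserted isomorphism are trivial source $\OO$-modules (the left side because it is a Brauer construction of a trivial source module, the right side because it is a summand of a restriction of the trivial source module $M(PQ)$), so once the two reductions mod $J(\OO)$ are shown isomorphic the $\OO$-statement follows. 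Thus I would work with the $FN_I(Q)$-module $(\overline M(P,e))(Q)$, where $\overline M(P,e)=\overline e\cdot\Res_I^{N_G(P)}\overline M(P)$.

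The three ingredients, all immediate from the description ``$V(R)=$ span of the $R$-fixed points of a permutation basis of $V$'', are: (i) \emph{Brauer construction commutes with restriction} --- for $H\subgp G$ and $R\in S_p(H)$ one has $(\Res_H^G V)(R)\iso\Res_{N_H(R)}^{N_G(R)}\bigl(V(R)\bigr)$; (ii) \emph{iterated Brauer construction} --- for $R\in S_p(G)$ and a $p$-subgroup $S\subgp N_G(R)$ one has $(V(R))(S)\iso\Res_{N_G(R)\cap N_G(S)}^{N_G(RS)}\bigl(V(RS)\bigr)$, noting that $RS$ is a $p$-group and $N_G(R)\cap N_G(S)\subgp N_G(RS)$; and (iii) \emph{Brauer construction commutes with idempotent truncation} --- for a central idempotent $c$ of $\OO H$ and $R\in S_p(H)$ one has $(cV)(R)\iso\br_R^H(\overline c)\cdot V(R)$, where $\br_R^H(\overline c)$ is a central idempotent of $FN_H(R)$ (central because $\br_R^H$ is $N_H(R)$-equivariant and $c$ is $N_H(R)$-stable) and $V(R)$ is regarded as an $FN_H(R)$-module via the usual action, which factors the $(\OO H)^R$-action through $\br_R^H$. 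If a reference for (iii) is not at hand I would argue directly: since $c$ is $R$-fixed, $(cV)^R=c\,V^R$ is the direct summand of $V^R$ cut out by $c$, and $\tr_{R'}^R(c\,V^{R'})=c\,\tr_{R'}^R(V^{R'})$ for $R'<R$, so the Brauer quotient of $cV$ at $R$ is $\overline c\cdot V(R)$; moreover the $(\OO H)^R$-action on $V(R)$ annihilates $J(\OO)(\OO H)^R+\sum_{R'<R}\tr_{R'}^R\bigl((\OO H)^{R'}\bigr)$, hence factors through $\br_R^H$, carrying $c$ to $\br_R^H(\overline c)$.

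The assembly is then a short chain of isomorphisms of $FN_{I}(Q)$-modules. Applying (iii) with $H=I$, $V=\Res_I^{N_G(P)}M(P)$, $c=e$, $R=Q$ gives $(\overline M(P,e))(Q)\iso\br_Q^{I}(\overline e)\cdot\bigl(\Res_I^{N_G(P)}\overline M(P)\bigr)(Q)$. Applying (i) with ambient group $N_G(P)$, subgroup $I$, module $M(P)$, and $R=Q$ (legitimate since $Q\subgp I\subgp N_G(P)$) rewrites the last factor as $\Res_{N_I(Q)}^{N_{N_G(P)}(Q)}\bigl((\overline M(P))(Q)\bigr)$. Applying (ii) with ambient group $G$, module $M$, $R=P$, $S=Q$ (legitimate since $Q\subgp N_G(P)$) rewrites $(\overline M(P))(Q)$ as $\Res_{N_G(P)\cap N_G(Q)}^{N_G(PQ)}\bigl(\overline M(PQ)\bigr)$. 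Composing the two restrictions and using $N_{N_G(P)}(Q)=N_G(P)\cap N_G(Q)$ together with $N_I(Q)=N_G(P,e)\cap N_G(Q)=N_{I_{(P,e)}}(Q)$ yields
\[
(\overline M(P,e))(Q)\iso\br_Q^{I_{(P,e)}}(\overline e)\cdot\Res_{N_{I_{(P,e)}}(Q)}^{N_G(PQ)}\bigl(\overline M(PQ)\bigr)
\]
as $FN_{I_{(P,e)}}(Q)$-modules. Since $e(Q)$ is exactly the central lift of $\br_Q^{I_{(P,e)}}(\overline e)$ to $\OO N_{I_{(P,e)}}(Q)$, lifting this isomorphism to $\OO$ gives the statement.

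The main obstacle is ingredient (iii): verifying that $\br_Q^{I_{(P,e)}}(\overline e)$ is a well-defined central idempotent of $FN_{I_{(P,e)}}(Q)$ and that truncating by it commutes with the Brauer construction. Everything else is bookkeeping with the chain of normalizers $N_{I_{(P,e)}}(Q)\subgp N_G(P)\cap N_G(Q)=N_{N_G(P)}(Q)\subgp N_G(PQ)$ together with the permutation-basis identities (i) and (ii), which are routine.
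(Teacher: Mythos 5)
Your proposal is correct and follows essentially the same route as the paper: reduce to $F$ via uniqueness of lifts of trivial source modules, pull the idempotent through the Brauer construction (your ingredient (iii), for which the paper cites \cite[Lemma 3.7]{Boltje_2020}), and then combine compatibility of the Brauer construction with restriction and the iterated Brauer construction along the chain $N_{I_{(P,e)}}(Q)\subgp N_G(P)\cap N_G(Q)\subgp N_G(PQ)$. The only difference is that you supply a direct argument for (iii) where the paper invokes a reference.
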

	
	\begin{proof}
		Note that $(M(P,e))(Q)$ is an $\OO$-lift of $\overline{(\overline{M}(P,\overline{e}))}(Q)$ and that $e(Q)\cdot\Res_{N_{I_{(P,e)}}(Q)}^{N_G(PQ)}(M(PQ))$ is an $\OO$-lift of $\br_Q^{I_{(P,e)}}(\overline{e})\cdot\Res_{N_{I_{(P,e)}}(Q)}^{N_G(PQ)}(\overline{M}(PQ))$. Thus we only need to show that there is an isomorphism of $FN_{I_{(P,e)}}(Q)$-modules
		\begin{equation*}
			\overline{(\overline{M}(P,\overline{e}))}(Q)\iso \br_Q^{I_{(P,e)}}(\overline{e})\cdot\Res_{N_{I_{(P,e)}}(Q)}^{N_G(PQ)}(\overline{M}(PQ)).
		\end{equation*}
		First note that there is an isomorphism of $FN_{I_{(P,e)}}(Q)$-modules
		\begin{equation*}
			\overline{(\overline{M}(P,\overline{e}))}(Q)\iso\br_Q^{I_{(P,e)}}(\overline{e})\cdot(\overline{(\Res_{I_{(P,e)}}^{N_G(P)}(\overline{M}(P)))}(Q))
		\end{equation*}
		by \cite[Lemma 3.7]{Boltje_2020} (with the group $G$ of the Lemma replaced with $I_{(P,e)}$, $M$ replaced with $\Res_{I_{(P,e)}}^{N_G(P)}(\overline{M}(P))$, $P$ replaced with $Q$, $i$ with $\overline{e}$, and $H$ with $I_{(P,e)}$). Now observe that we have an isomorphism of $FN_{I_{(P,e)}}(Q)$-modules
		\begin{align*}
			\overline{(\Res_{I_{(P,e)}}^{N_G(P)}(\overline{M}(P)))}(Q)	&=\overline{(\Res_{N_{I_{(P,e)}}(Q)}^{I_{(P,e)}}\Res_{I_{(P,e)}}^{N_G(P)}(\overline{M}(P)))}(Q)\\
			&=\overline{(\Res_{N_{I_{(P,e)}}(Q)}^{N_G(P)}(\overline{M}(P)))}(Q)\\
			&=\Res_{N_{I_{(P,e)}}(Q)}^{N_G(P)\cap N_G(Q)}(\overline{(\overline{M}(P))}(Q))\\
			&\iso \Res_{N_{I_{(P,e)}}(Q)}^{N_G(P)\cap N_G(Q)}(\Res_{N_G(P)\cap N_G(Q)}^{N_G(PQ)}(\overline{M}(PQ)))\\
			&=\Res_{N_{I_{(P,e)}}(Q)}^{N_G(PQ)}(\overline{M}(PQ)).
		\end{align*}
		So the result follows.
	\end{proof}
	
	
	\begin{lemma}\label{lem:forppermimpliesisotypy}
		Let $G$ be a finite group, $B\in\Bl(\OO G)$. Let $M\in\ltriv{B}$, $(P,e)\in\BP_\OO(B)$, and $(U,\epsilon)\in\BP_\OO(C_G(P),e)$ such that $U$ is abelian. Then $(PU,\epsilon)$ is a $B$-Brauer pair, $(P,e)\nor(PU,\epsilon)$, and there is an isomorphism of $\OO C_G(PU)$-modules
		\begin{equation*}
			\Res_{C_G(PU)}^{N_{C_G(P)}(U,\epsilon)}((\Res_{C_G(P)}^{N_G(P,e)}M(P,e))(U,\epsilon))\iso\Res_{C_G(PU)}^{N_G(PU,\epsilon)}M(PU,\epsilon).
		\end{equation*}
	\end{lemma}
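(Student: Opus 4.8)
The plan is to settle the two Brauer-pair assertions by a short computation with Brauer homomorphisms, and then to reduce the module isomorphism, after unwinding both sides, to one application of Lemma~\ref{lem:brauerconst} together with an identification of block idempotents. For the Brauer-pair claims: since $U\subgp C_G(P)$, the subgroup $U$ centralizes $P$, so $PU$ is a $p$-subgroup of $G$ with $P\nor PU$ and $C_G(PU)=C_{C_G(P)}(U)$, whence $\epsilon\in\bli(\OO C_G(PU))$. Conjugation by an element of $U\subgp C_G(P)$ is an inner automorphism of $C_G(P)$ and so fixes the central idempotent $e$; hence $U\subgp N_G(P,e)$ and therefore $PU\subgp N_G(P,e)$. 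Reading off the defining formula of the Brauer homomorphism, $\br_{PU}^G(e)=\br_U^{C_G(P)}(e)$ (both equal the sum of the terms of $e$ supported on $C_G(PU)$). As $(U,\epsilon)\in\BP_\OO(C_G(P),e)$ gives $\br_U^{C_G(P)}(e)\,\overline{\epsilon}=\overline{\epsilon}$, we conclude $\br_{PU}^G(e)\,\overline{\epsilon}=\overline{\epsilon}$, i.e. $(P,e)\nor(PU,\epsilon)$; since $(P,e)$ belongs to $B$, so does $(PU,\epsilon)$.

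Next I would unwind both sides of the desired isomorphism. Write $I_{(P,e)}:=N_G(P,e)$. Since $C_G(PU)\subgp N_G(PU,\epsilon)$ and $M(PU,\epsilon)=\epsilon\Res_{N_G(PU,\epsilon)}^{N_G(PU)}M(PU)$, the right-hand side equals $\epsilon\Res_{C_G(PU)}^{N_G(PU)}M(PU)$. For the left-hand side, set $N:=\Res_{C_G(P)}^{N_G(P,e)}M(P,e)\in\ltriv{\OO C_G(P)e}$; then $N(U,\epsilon)=\epsilon\Res_{N_{C_G(P)}(U,\epsilon)}^{N_{C_G(P)}(U)}N(U)$, and since $C_G(PU)=C_{C_G(P)}(U)\subgp N_{C_G(P)}(U,\epsilon)$, the left-hand side equals $\epsilon\Res_{C_G(PU)}^{N_{C_G(P)}(U)}N(U)$. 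So it suffices to show $\epsilon\Res_{C_G(PU)}^{N_{C_G(P)}(U)}N(U)\iso\epsilon\Res_{C_G(PU)}^{N_G(PU)}M(PU)$ as $\OO C_G(PU)$-modules.

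To compute $N(U)$: because $U\subgp C_G(P)\subgp I_{(P,e)}$, the commutation of the Brauer construction with restriction (as used in the proof of Lemma~\ref{lem:brauerconst}) gives $N(U)\iso\Res_{N_{C_G(P)}(U)}^{N_{I_{(P,e)}}(U)}\bigl((M(P,e))(U)\bigr)$, and Lemma~\ref{lem:brauerconst} applied with $Q=U$ gives $(M(P,e))(U)\iso e(U)\cdot\Res_{N_{I_{(P,e)}}(U)}^{N_G(PU)}M(PU)$, where $e(U)$ is the unique central idempotent of $\OO N_{I_{(P,e)}}(U)$ lifting $\br_U^{I_{(P,e)}}(e)$. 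Now $\br_U^{I_{(P,e)}}(e)=\br_U^{C_G(P)}(e)$ is supported on $C_G(PU)$ and is central in $FC_G(PU)$ (being the image of the central idempotent $e$ under the algebra surjection $\br_U^{C_G(P)}$); let $\hat e\in Z(\OO C_G(PU))$ be its lift. Since $e$ is fixed under $I_{(P,e)}$-conjugation, $\hat e$ is $N_{I_{(P,e)}}(U)$-stable, hence central in $\OO N_{I_{(P,e)}}(U)$, and uniqueness of lifts forces $e(U)=\hat e\in\OO C_G(PU)$. Chaining the isomorphisms, then restricting to $C_G(PU)$ and multiplying by $\epsilon$, gives $\epsilon\Res_{C_G(PU)}^{N_{C_G(P)}(U)}N(U)\iso\epsilon\hat e\,\Res_{C_G(PU)}^{N_G(PU)}M(PU)$. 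Finally $\epsilon\hat e=\epsilon$: these are commuting central idempotents of $\OO C_G(PU)$ with $\overline{\epsilon\hat e}=\overline{\epsilon}\,\br_U^{C_G(P)}(e)=\overline{\epsilon}$, and an idempotent of $\OO C_G(PU)$ that reduces to $0$ in $FC_G(PU)$ must vanish. This produces the desired $\OO C_G(PU)$-module isomorphism.

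The main obstacle is organizational rather than conceptual: one must keep straight the tower of normalizers and centralizers ($N_{C_G(P)}(U)$, $N_{I_{(P,e)}}(U)$, $N_G(PU)$, $N_G(PU,\epsilon)$) and carefully verify the three identifications $\br_{PU}^G(e)=\br_U^{C_G(P)}(e)$, $e(U)=\hat e$, and $\epsilon\hat e=\epsilon$ --- it is precisely these that force the block idempotents appearing on the two sides to agree. (Abelianness of $U$ is not actually needed for this lemma; it is imposed because that is the case used in the sequel.)
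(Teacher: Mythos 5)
Your proof is correct and follows essentially the same route as the paper's: both sides are unwound to $\epsilon\cdot\Res_{C_G(PU)}^{N_G(PU)}M(PU)$ by means of Lemma~\ref{lem:brauerconst} applied at $Q=U$, together with the identifications $\br_{PU}^G(e)=\br_U^{C_G(P)}(e)=\br_U^{N_G(P,e)}(e)$ and $e(U)\cdot\epsilon=\epsilon$. The one genuine divergence is where the commutation of the Brauer construction with restriction is applied: you apply it once, to $\Res_{C_G(P)}^{N_G(P,e)}$, which only requires $U\subgp C_G(P)$; the paper instead pushes the restriction all the way down to $C_G(PU)$ and climbs back up, and its intermediate object $(\Res_{C_G(PU)}^{N_G(P,e)}M(P,e))(U)$ only makes sense when $U\subgp C_G(PU)$, i.e.\ when $U$ is abelian. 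Consequently your parenthetical remark that abelianness is not needed is consistent with your argument: the hypothesis enters the paper's proof only through its particular chain of isomorphisms (and through the intended application in the sequel), not through the statement itself.
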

	
	\begin{proof}
		Since $C_{C_G(P)}(U)=C_G(PU)$ the idempotent $\epsilon$ is a block of $\OO C_G(PU)$. In particular, $(PU,\epsilon)\in\BP_\OO(G)$. Since $(U,\epsilon)\in\BP_\OO(C_G(P),e)$ we have that $\br_U^{C_G(P)}(e)\overline{\epsilon}=\overline{\epsilon}$. But $\br_U^{C_G(P)}(e)=\br_{PU}^G(e)$, so $(P,e)\nor(PU,\epsilon)$ and in particular $(PU,\epsilon)$ belongs to $B$. Now by Lemma \ref{lem:brauerconst} there is an isomorphism of $\OO[N_G(P,e)\cap N_G(U)]$-modules
		\begin{equation*}
			(M(P,e))(U)\iso e(U)\cdot\Res_{N_G(P,e)\cap N_G(U)}^{N_G(PU)}M(PU)
		\end{equation*}
		where $e(U)$ is the unique lift of $\br_U^{N_G(P,e)}(e)$ to a central idempotent of $\OO[N_G(P,e)\cap N_G(U)]$. Since $\br_U^{N_G(P,e)}(e)=\br_{PU}^G(e)$ we have $e(U)\cdot\epsilon=\epsilon$. By restricting to $C_G(PU)$ and cutting with $\epsilon$ we obtain an isomorphism of $\OO C_G(PU)$-modules
		\begin{align*}
			\epsilon\cdot\Res_{C_G(PU)}^{N_G(P,e)\cap N_G(U)}((M(P,e))(U))	&\iso \epsilon\Res_{C_G(PU)}^{N_G(PU)}M(PU)\\
			&=\Res_{C_G(PU)}^{N_G(PU,\epsilon)}M(PU,\epsilon).
		\end{align*}
		By applying Remark 3.2(b) of \cite{Boltje_2020}, noting that $U\subgp C_G(PU)$ since $U$ is assumed to be abelian, we compute that
		\begin{align*}
			\epsilon\cdot&\Res_{C_G(PU)}^{N_G(P,e)\cap N_G(U)}((M(P,e))(U))\\	&=\epsilon\cdot((\Res_{C_G(PU)}^{N_G(P,e)}(M(P,e)))(U))\\
			&=\epsilon\cdot((\Res_{C_G(PU)}^{C_G(P)}\Res_{C_G(P)}^{N_G(P,e)}M(P,e))(U))\\
			&=\epsilon\cdot\Res_{C_G(PU)}^{N_{C_G(P)}(U)}((\Res_{C_G(P)}^{N_G(P,e)}M(P,e))(U))\\
			&=\epsilon\cdot\Res_{C_G(PU)}^{N_{C_G(P)}(U,\epsilon)}\Res_{N_{C_G(P)}(U,\epsilon)}^{N_{C_G(P)}(U)}((\Res_{C_G(P)}^{N_G(P,e)}M(P,e))(U))\\
			&=\Res_{C_G(PU)}^{N_{C_G(P)}(U,\epsilon)}(\epsilon\cdot\Res_{N_{C_G(P)}(U,\epsilon)}^{N_{C_G(P)}(U)}((\Res_{C_G(P)}^{N_G(P,e)}M(P,e))(U)))\\
			&=\Res_{C_G(PU)}^{N_{C_G(P)}(U,\epsilon)}((\Res_{C_G(P)}^{N_G(P,e)}M(P,e))(U,\epsilon)).
		\end{align*}
	\end{proof}
	
	\section{Coherence conditions}\label{sec:coherenceconditions}
	
	Let $G$ be a finite group and let $(\K,\OO,F)$ be a $p$-modular system large enough for $G$. Consider the product
	\begin{equation*}
		\prod_{P\in S_p(G)}R_\K(N_G(P)/P).
	\end{equation*}
	If $(\chi_P)_{P\in S_p(G)}$ is an element of this product and $g\in G$ define ${}^g(\chi_P)$ to be the tuple whose $P$th entry, for $P\in S_p(G)$, is equal to ${}^g\chi_{{}^{g^{-1}}P}$. Thus
	\begin{equation*}
		{}^g(\chi_P)_{P\in S_p(G)}=({}^g\chi_{{}^{g^{-1}}P})_{P\in S_p(G)}. 
	\end{equation*}
	This action makes $\prod_{P\in S_p(G)}R_\K(N_G(P)/P)$ into a $\Z G$-algebra. Therefore the subset $\left(\prod_{P\in S_p(G)}R_\K(N_G(P)/P)\right)^G$ of $G$-fixed tuples forms a unital subring. Note that this subring consists of all tuples $(\chi_P)_{P\in S_p(G)}$ that satisfy ${}^g\chi_P=\chi_{{}^gP}$ for all $g\in G$ and all $p$-subgroups $P\subgp G$.
	
	In \cite{Boltje_2022} Boltje and Carman introduce a ring homomorphism
	\begin{align*}
		\beta_G:=\beta:T_\OO(G)&\to\left(\prod_{P\in S_p(G)}R_\K(N_G(P)/P)\right)^G\\
		[M]&\mapsto(\chi_{M(P)})_{P\in S_p(G)}
	\end{align*}
	where $M$ is a trivial source $\OO G$-module and $\chi_{M(P)}$ is the character of an $\OO$-lift $M(P)$ of the Brauer construction of $M$ at $P$. Their main theorem about this homomorphism is given below.
	
	\begin{theorem}\label{thm:coherence1}
		(\cite[Theorem A]{Boltje_2022}) The ring homomorphism $\beta_G$ is injective and its image consists of those tuples $(\chi_P)\in \left(\prod_{P\in S_p(G)}R_\K(N_G(P)/P)\right)^G$ that satisfy
		\begin{equation*}
			\chi_P(x)=\chi_{P\gp{x_p}}(x)
		\end{equation*}
		for all $P\in S_p(G)$ and $x\in N_G(P)$, where $x_p$ denotes the $p$-part of $x$. 
	\end{theorem}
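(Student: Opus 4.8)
The plan is to establish injectivity of $\beta_G$ and then the two inclusions that pin down its image. For injectivity I would first pass to $F$ via the ring isomorphism $T_\OO(G)\iso T_F(G)$ coming from the unique lifting of trivial source modules. An indecomposable trivial source $FG$-module $V$ with vertex $Q$ has the property that $V(P)$ is a projective $F[N_G(P)/P]$-module which vanishes unless $P$ is $G$-subconjugate to $Q$, while $V(Q)$ is the indecomposable projective ``multiplicity module'' which, together with $Q$, determines $V$. Ordering $p$-subgroups by order, $V\mapsto(V(P))_P$ is therefore ``lower triangular'' with invertible diagonal blocks, hence injective on $T_F(G)$; and the ordinary character of an $\OO$-lift of a projective module determines that module, since the elements of $\PrInd_\OO(N_G(P)/P)$ are $\Z$-independent. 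Thus $\beta_G$ is injective, and it lands in the $G$-fixed tuples because $\overline M({}^gP)\iso{}^g(\overline M(P))$.

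To see that every tuple in the image satisfies the coherence relation, fix $M\in\ltriv{\OO G}$, $P\in S_p(G)$, and $x\in N_G(P)$, and write $x=us$ with $u=x_p$ and $s=x_{p'}$ in $\gp x\subgp N_G(P)$. If $u\in P$ there is nothing to prove, so suppose $u\notin P$; then $P\gp u$ is a $p$-subgroup normalized by $x$, the image $\bar u=uP$ is the $p$-part of $\bar x=xP$ in $N_G(P)/P$ with $p'$-part $\bar s=sP$, and by definition of the generalized decomposition map $\chi_{M(P)}(\bar x)=d_{N_G(P)/P}^{\bar u}(\chi_{M(P)})(\bar s)$. Since $M(P)$ is (the $\OO$-lift of) a $p$-permutation $F[N_G(P)/P]$-module, the generalized decomposition value at the $p$-element $\bar u$ is computed by the Brauer construction, $d_{N_G(P)/P}^{\bar u}(\chi_{M(P)})(\bar s)=\chi_{(M(P))(\gp{\bar u})}(\bar s)$; and by transitivity of the Brauer construction (as in the proof of Lemma~\ref{lem:brauerconst}), $(\overline M(P))(\gp{\bar u})\iso\overline M(P\gp u)$ after restriction to $N_G(P)\cap N_G(P\gp u)$ with $P$ acting trivially. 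Putting these together yields $\chi_P(x)=\chi_{M(P)}(x)=\chi_{(M(P))(\gp{\bar u})}(\bar s)=\chi_{M(P\gp u)}(x)=\chi_{P\gp{x_p}}(x)$. (When $P=1$ this is the classical identity $\chi_M(us)=\chi_{M(\gp u)}(s)$ for $p$-permutation modules; the general case is that identity applied after one Brauer construction.)

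The substantive part is surjectivity onto the coherent tuples, which I would prove by downward induction on the order of $p$-subgroups. Starting from a coherent $G$-fixed tuple $(\chi_P)$, suppose we have already subtracted an element of $\im\beta_G$ so that the remaining coherent tuple $(\chi_P')$ has $\chi_Q'=0$ for every $p$-subgroup $Q$ with $|Q|>n$, and let $P$ have order $n$. For $x\in N_G(P)$ with $x_p\notin P$ one has $P\gp{x_p}\supsetneq P$, so the coherence relation forces $\chi_P'(x)=\chi_{P\gp{x_p}}'(x)=0$; equivalently $\chi_P'$ vanishes on all $p$-singular elements of $N_G(P)/P$, and hence (a virtual ordinary character vanishing on $p$-singular elements is a virtual projective character) $\chi_P'$ is the character of a virtual projective $\OO[N_G(P)/P]$-module $W_P$. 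Inflating $W_P$ to $N_G(P)$ and transporting it to $G$ through the Green correspondence for trivial source modules produces a virtual trivial source $\OO G$-module $M_P$ whose constituents all have vertex $P$, with $\overline{M_P}(P)\iso\overline{W_P}$ and $\overline{M_P}(Q)=0$ whenever $Q$ is not $G$-subconjugate to $P$; thus $\beta_G(M_P)$ has $P$-component $\chi_P'$ and vanishing $Q$-component for all $Q$ with $|Q|\geq n$ except those $G$-conjugate to $P$. Summing the $M_P$ over representatives of the $G$-classes of $p$-subgroups of order $n$ and using $G$-invariance of $(\chi_P')$, subtracting $\beta_G$ of this sum removes all components of order $\geq n$ without disturbing those of larger order. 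Running the induction down to $n=1$, where the trivial source $\OO G$-modules of vertex $1$ are exactly the projective ones, reduces $(\chi_P)$ to $0$ and exhibits it as $\beta_G$ of a virtual trivial source module.

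The conceptual core, and the step I expect to need the most care, is the identity $d^{u}(\chi_{M(P)})=\chi_{(M(P))(\gp u)}$ linking generalized decomposition maps to Brauer constructions for $p$-permutation modules, together with transitivity of Brauer constructions: these are exactly what make the coherence relations hold, and dualizing them correctly is what drives the surjectivity argument. Surjectivity also leans on the classification of trivial source modules (so that a virtual projective $\OO[N_G(P)/P]$-module really does correspond to a virtual trivial source $\OO G$-module of vertex $P$ realizing the prescribed Brauer construction) and on the fact that a virtual character vanishing on $p$-singular elements is a virtual projective character; both are standard but should be invoked explicitly.
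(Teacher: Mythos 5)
This statement is quoted in the paper as \cite[Theorem A]{Boltje_2022} and is not proved there, so there is no internal argument to compare against; your proposal is a correct reconstruction of the standard proof (and, to my knowledge, of Boltje--Carman's own strategy): injectivity by triangularity of the Brauer construction with respect to subgroup order plus linear independence of projective indecomposable characters, coherence of the image via the character formula $\chi_N(us)=\chi_{N(\gp{u})}(s)$ for $p$-permutation modules combined with transitivity of the Brauer construction, and surjectivity by downward induction on $|P|$ using that a virtual character vanishing on $p$-singular elements lies in $\Z\PrInd_\OO(N_G(P)/P)$ and can then be realized by a virtual trivial source module with vertex $P$ via the bijection of \cite[Proposition 3.3(c)]{Boltje_2020}. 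One minor misstatement: for an indecomposable trivial source module $V$ with vertex $Q$, the Brauer quotient $V(P)$ is projective over $F[N_G(P)/P]$ only when $P$ is a vertex, not for every $P$ subconjugate to $Q$ (take $V=F$ for $G$ a $p$-group); your injectivity argument is unaffected, since it only uses vanishing of $V(P)$ for $P\not\leq_G Q$ together with projectivity and indecomposability at $P=Q$.
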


	Alternatively, the image of $\beta_G$ consists of those $G$-invariant tuples $(\chi_P)$ in $\prod_{P\in S_p(G)}R_\K(N_G(P)/P)$ that satisfy
	\begin{equation*}
		\chi_P(us)=\chi_{P\gp{u}}(s)
	\end{equation*}
	for all $P\in S_p(G)$, $u\in N_G(P)$ a $p$-element, and $s\in C_{N_G(P)}(u)_{p'}$. This condition is in turn equivalent to the condition that
	\begin{equation*}
		d_{N_G(P)}^u(\chi_P)=d_{C_{N_G(P)}(u)}^1(\Res_{C_{N_G(P)}(u)}^{N_G(P\gp{u})}(\chi_{P\gp{u}}))
	\end{equation*}
	for all $P\in S_p(G)$ and all $p$-elements $u\in N_G(P)$.
	
	Now let $e\in Z(\OO G)$ be an idempotent. If $P$ is a $p$-subgroup of $G$ then $\br_P(e)$ is an $N_G(P)$-stable central idempotent of $FC_G(P)$ and hence also a central idempotent of $FN_G(P)$. Thus we can consider the group $R_\K(N_G(P)/P,\br_P(e))$ which is the subgroup of $R_\K(N_G(P))$ spanned by the irreducible characters $\chi\in\Irr_\K(N_G(P))$ that belong to the unique lift of $\br_P(e)$ to a central idempotent of $\OO N_G(P)$ and that contain $P$ in their kernel. Note that if $\br_P(e)=0$ then $R_\K(N_G(P)/P,\br_P(e))=\set{0}$.
	
	We have
	\begin{equation*}
		\prod_{P\in S_p(G)}R_\K(N_G(P)/P,\br_P(e))\subseteq\prod_{P\in S_p(G)}R_\K(N_G(P)/P)
	\end{equation*}
	and the product on the left is a $G$-stable subgroup of the product on the right. Therefore we can consider the subgroup of $G$-fixed tuples
	\begin{equation*}
		\left(\prod_{P\in S_p(G)}R_\K(N_G(P)/P,\br_P(e))\right)^G
	\end{equation*}
	which again consists of all tuples $(\chi_P)\in\prod_{P\in S_p(G)}R_\K(N_G(P)/P,\br_P(e))$ that satisfy ${}^g\chi_P=\chi_{{}^gP}$ for all $P\in S_p(G)$ and all $g\in G$. Boltje and Carman show that the image of $T_\OO(G,e)$ under the map $\beta_G$ is contained in this subgroup. More precisely we have the following:
	
	\begin{theorem}\label{thm:coherence2}
		(\cite[Corollary 3.3]{Boltje_2022}) Let $e$ be a central idempotent of $\OO G$. Then the image of $T_\OO(G,e)$ under $\beta_G$ is the subgroup of tuples $(\chi_P)\in\left(\prod_{P\in S_p(G)}R_\K(N_G(P)/P,\br_P(e))\right)^G$ that satisfy $\chi_P(x)=\chi_{P\gp{x_p}}(x)$ for all $P\in S_p(G)$ and $x\in N_G(P)$.
	\end{theorem}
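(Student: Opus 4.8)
The plan is to deduce the statement from Theorem~\ref{thm:coherence1} together with the block decomposition $T_\OO(G)=\bigoplus_i T_\OO(G,e_i)$, where $e_1,\dots,e_n$ are the block idempotents of $\OO G$ and $e=\sum_{i\in I}e_i$ for the appropriate subset $I$. The one piece of input beyond Theorem~\ref{thm:coherence1} that I would isolate is the following claim: if $M\in\ltriv{\OO Ge}$ and $P\in S_p(G)$ then $\chi_{M(P)}\in R_\K(N_G(P)/P,\br_P(e))$. Granting this, we obtain $\beta_G(T_\OO(G,e))\subseteq\bigl(\prod_{P\in S_p(G)}R_\K(N_G(P)/P,\br_P(e))\bigr)^G$, and since $\beta_G(T_\OO(G,e))\subseteq\beta_G(T_\OO(G))$ already satisfies the coherence identity $\chi_P(x)=\chi_{P\gp{x_p}}(x)$ by Theorem~\ref{thm:coherence1}, the image $\beta_G(T_\OO(G,e))$ is contained in the asserted subgroup.

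To prove the claim, I would note that $\overline{M}(P)$ is by construction an $FN_G(P)/P$-module, so it suffices to see that the central idempotent $\br_P(e)$ of $FN_G(P)$ acts as the identity on it. This is a standard feature of the Brauer construction: the canonical surjection $(\overline{M})^P\onto\overline{M}(P)$ is a morphism of $(\OO G)^P$-modules once $\overline{M}(P)$ is regarded as an $FC_G(P)$-module via $\br_P$, and $e\in Z(\OO G)\subseteq(\OO G)^P$ acts as the identity on $\overline{M}$, hence on $(\overline{M})^P$, hence $\br_P(e)$ acts as the identity on $\overline{M}(P)$; being $N_G(P)$-stable it then acts as the identity on the $FN_G(P)$-module $\overline{M}(P)$ as well. (Alternatively, one may invoke \cite[Lemma~3.7]{Boltje_2020}, as in the proof of Lemma~\ref{lem:brauerconst}.)

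For the reverse inclusion, let $(\chi_P)$ lie in the subgroup described in the statement. By Theorem~\ref{thm:coherence1} there is a unique $[M]\in T_\OO(G)$ with $\beta_G([M])=(\chi_P)$, and I must show $[M]\in T_\OO(G,e)$. Decomposing $M=\bigoplus_i e_iM$ gives $[M]=\sum_i[e_iM]$ with $[e_iM]\in T_\OO(G,e_i)$, so it is enough to show $[e_iM]=0$ for $i\notin I$. Fix such an $i$ and a $p$-subgroup $P$. The $\br_P(e_j)$ are pairwise orthogonal central idempotents of $FN_G(P)$ summing to $\br_P(1)=1$, so applying the projector $\br_P(e_i)\cdot(-)$ to the identity $\chi_P=\sum_j\bigl(\beta_G([e_jM])\bigr)_P$ and using the claim yields $\bigl(\beta_G([e_iM])\bigr)_P=\br_P(e_i)\cdot\chi_P$. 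But $\chi_P\in R_\K(N_G(P)/P,\br_P(e))$ and $\br_P(e_i)\br_P(e)=\br_P(e_ie)=0$ since $e_ie=0$, so $\br_P(e_i)\cdot\chi_P=0$; hence $\bigl(\beta_G([e_iM])\bigr)_P=0$ for every $P$, so $\beta_G([e_iM])=0$, and $[e_iM]=0$ by the injectivity of $\beta_G$. Therefore $[M]=\sum_{i\in I}[e_iM]\in T_\OO(G,e)$, which finishes the argument.

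The main obstacle is the claim in the second paragraph — that the Brauer construction is compatible with cutting by a central idempotent, i.e.\ that passing to the $e$-part is preserved by $M\mapsto\overline{M}(P)$. Everything else is bookkeeping with orthogonal idempotents, the block decomposition of $T_\OO(G)$ (which rests on the fact that a direct summand of a permutation module over a $p$-group is again a permutation module), and the injectivity of $\beta_G$ furnished by Theorem~\ref{thm:coherence1}.
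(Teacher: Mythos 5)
Your argument is correct. The paper itself offers no proof of this statement---it is quoted directly from \cite[Corollary 3.3]{Boltje_2022}---so there is nothing in the text to compare against, but your derivation from Theorem \ref{thm:coherence1} is the standard one and is complete: the forward containment rests on the compatibility of the Brauer construction with cutting by a central idempotent (the claim you isolate, which is indeed covered by \cite[Lemma 3.7]{Boltje_2020}), and the reverse containment follows by projecting componentwise with the pairwise orthogonal idempotents $\br_P(e_i)$ of $FN_G(P)$ and invoking the injectivity of $\beta_G$ together with the block decomposition $T_\OO(G)=\bigoplus_i T_\OO(G,e_i)$.
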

	
	Let $B\in\Bl(\OO G)$. Our goal now is to give new ``coherent character conditions'' as in Theorems \ref{thm:coherence1} and \ref{thm:coherence2} that describe the group $T_\OO(B)$. 
	
	If $(P,e)\in\BP_\OO(G)$ set $I_{(P,e)}:=N_G(P,e)$. Note that $N_G(P,e)=N_G(P,\overline{e})$.
	
	Consider the product
	\begin{equation*}
		\prod_{(P,e)\in\BP_\OO(B)}R_\K(I_{(P,e)}/P,e).
	\end{equation*}
	If $(\chi_{(P,e)})$ is a tuple in this product and $g\in G$ define ${}^g(\chi_{(P,e)})=({}^g\chi_{{}^{g^{-1}}(P,e)})$. This defines an action of $G$ on $\prod_{(P,e)\in\BP_\OO(B)}R_\K(I_{(P,e)}/P,e)$ via group automorphisms. The subgroup $\left(\prod_{(P,e)\in\BP_\OO(B)}R_\K(I_{(P,e)}/P,e)\right)^G$ of $G$-fixed points consists of all tuples $(\chi_{(P,e)})$ such that ${}^g\chi_{(P,e)}=\chi_{{}^g(P,e)}$ for all $g\in G$ and all $(P,e)\in\BP_\OO(B)$.
	
	Define a map
	\begin{align*}
		\rho_B:=\rho:\prod_{P\in S_p(G)}R_\K(N_G(P)/P,\br_P(e_B))&\to \prod_{(P,e)\in\BP_\OO(B)}R_\K(I_{(P,e)}/P,e)\\
		(\chi_P)_{P\in S_p(G)}	&\mapsto (e\cdot\Res_{I_{(P,e)}}^{N_G(P)}(\chi_P))_{(P,e)\in\BP_\OO(B)}.
	\end{align*}
	Note that $\rho$ is a $G$-equivariant group homomorphism. It is also injective --- this follows from the following
	
	
	\begin{lemma}\label{lem:fong}
		Let $(Q,f)\in\BP_\OO(G)$, set $I:=N_G(Q,f)$ and set $e:=\tr_I^{N_G(Q)}(f)\in\bli(\OO N_G(Q))$. The functor $f\cdot\Res_I^{N_G(Q)}:\lmod{\OO N_G(Q)e}\to\lmod{\OO If}$ is an equivalence of categories, with inverse $\Ind_I^{N_G(Q)}:\lmod{\OO If}\to\lmod{\OO N_G(Q)e}$. In particular, the map $f\cdot\Res_I^{N_G(Q)}:R_\K(N_G(Q),e)\to R_\K(I,f)$ is a group isomorphism with inverse $\Ind_I^{N_G(Q)}$.
	\end{lemma}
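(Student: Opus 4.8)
The plan is to recognize the pair $\bigl(f\cdot\Res_I^{N},\ \Ind_I^{N}\bigr)$ as the Morita equivalence attached to a \emph{full idempotent}, after some preliminary bookkeeping about the idempotents involved. Throughout write $N:=N_G(Q)$, $C:=C_G(Q)=C_N(Q)\nor N$, and $A:=\OO Ne$. I would first record that $f$ is a block (hence central) idempotent of $\OO I$ --- this is the content of the lemma immediately preceding --- and that $e=\tr_I^{N}(f)$ is a block idempotent of $\OO N$, which follows from Lemma~\ref{lem:blockcent2} applied to the normal $p$-subgroup $Q\nor N$ (for which $C_N(Q)=C$). Since distinct $N$-conjugates of the block $f$ of $\OO C$ are mutually orthogonal, one has $e=\sum_{x\in[N/I]}{}^xf$ for any set $[N/I]$ of representatives of the cosets $N/I$; in particular $ef=fe=f$, so $f$ is an idempotent of $A$.

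Next I would compute the corner ring $fAf$ and check that $f$ is full in $A$. From $ef=f$ one gets $fAf=f\,\OO N\,f$, and for $x\in N$ one has $fxf=x\,({}^{x^{-1}}\!f)f$, which equals $xf$ when $x\in I$ and vanishes when $x\notin I$ by orthogonality of distinct conjugates of $f$; hence $fAf=\OO If$, the block algebra of $f$ in $\OO I$, with identity $f$. Likewise $AfA=\OO N f\OO N$ (again using $ef=f$ and that $e$ is central), which is a two-sided ideal of $\OO N$ containing every conjugate ${}^xf=xfx^{-1}$, hence containing $e=\sum_x{}^xf$, hence containing $\OO Ne=A$; since also $f\in A$ we get $AfA\subseteq A$, so $AfA=A$.

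With $f$ a full idempotent of $A$, the standard Morita theory of full idempotents yields that $f\cdot-\colon\lmod A\to\lmod{fAf}$, $M\mapsto fM$, and $Af\otimes_{fAf}-\colon\lmod{fAf}\to\lmod A$ are mutually inverse equivalences. It remains to identify these functors. For $M\in\lmod A$ one has $fM=f\cdot\Res_I^{N}M$ as modules over $fAf=\OO If$, so the first functor is $f\cdot\Res_I^{N}$. For the second, I would establish the $(\OO N,\OO If)$-bimodule isomorphism $Af=\OO Nf\cong\OO N\otimes_{\OO I}\OO If$, $n\otimes y\mapsto ny$ (an isomorphism because $\OO N$ is free as a right $\OO I$-module on a transversal $[N/I]$, so both sides decompose compatibly as direct sums of $[N:I]$ copies of $\OO If$, on each of which the map is the identity); then
\[
Af\otimes_{fAf}V\;\cong\;\OO N\otimes_{\OO I}\OO If\otimes_{\OO If}V\;\cong\;\OO N\otimes_{\OO I}V\;=\;\Ind_I^{N}V,
\]
so the second functor is $\Ind_I^{N}$, and in particular $\Ind_I^{N}$ carries $\lmod{\OO If}$ into $\lmod A=\lmod{\OO Ne}$. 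Finally, since $f\cdot\Res_I^{N}$ and $\Ind_I^{N}$ commute with $\K\otimes_\OO-$, the same argument over $\K$ (equivalently, extension of scalars) gives an equivalence $\lmod{\K Ne}\simeq\lmod{\K If}$ and hence the asserted mutually inverse isomorphisms $R_\K(N,e)\cong R_\K(I,f)$ on Grothendieck groups.

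The argument is largely formal once the idempotent gymnastics are in place; the one genuinely substantive input is the orbit-sum identity $e=\sum_{x\in[N/I]}{}^xf$ --- equivalently, that $I$ is the \emph{full} inertia group of $f$ in $N$ --- which is exactly what makes $f$ a full idempotent of $A$ and, at the level of functors, what forces $\Ind_I^{N}$ to land in $\lmod{\OO Ne}$ and to be inverse to $f\cdot\Res_I^{N}$. If one prefers not to invoke the full-idempotent Morita package, the two required natural isomorphisms can be produced by hand: $f(\OO N\otimes_{\OO I}V)=1\otimes V\cong V$ once $1$ is chosen in the transversal (using $({}^{x^{-1}}\!f)V=0$ for $x\notin I$), and $\Ind_I^{N}f\Res_I^{N}M\cong M$ by checking it first on finitely generated projective $A$-modules via a rank count (such modules are $\OO$-free) and then extending to arbitrary $M$ through a projective presentation together with the right-exactness of $\Ind_I^{N}f\Res_I^{N}$.
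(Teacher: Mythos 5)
Your argument is correct and complete. The paper itself states Lemma~\ref{lem:fong} with no proof at all, treating it as the well-known Fong--Reynolds correspondence for the stabilizer of a block under a normal subgroup (here $C_G(Q)\nor N_G(Q)$), so there is no "paper proof" to compare against; what you have written supplies exactly the standard argument. The idempotent bookkeeping is right: $f\in\bli(\OO I)$ by the unnamed lemma preceding Lemma~\ref{lem:brauereltnormalizer}, $e\in\bli(\OO N_G(Q))$ by Lemma~\ref{lem:blockcent2}, orthogonality of distinct $N_G(Q)$-conjugates of $f$ gives $ef=f$, the corner computation $fxf=x({}^{x^{-1}}f)f$ identifies $fAf$ with $\OO If$, and fullness of $f$ follows since the ideal $AfA$ contains $e=\sum_x{}^xf$. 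The identifications of $f(-)$ with $f\cdot\Res_I^{N_G(Q)}$ and of $Af\tensor_{fAf}-$ with $\Ind_I^{N_G(Q)}$ are also correct, and base change to $\K$ gives the statement on Grothendieck groups. One small remark on your optional "by hand" alternative at the end: a rank count on projectives by itself does not produce an isomorphism --- you need the natural map $\OO N_G(Q)\tensor_{\OO I}fM\to M$, $n\tensor m\mapsto nm$, and the cleanest way to see it is bijective for arbitrary $M$ is to match the decomposition $\OO N_G(Q)\tensor_{\OO I}fM=\Dsum_x\,x\tensor fM$ against $M=eM=\Dsum_x{}^xfM$ summand by summand; but since this is only offered as an alternative to the (complete) Morita argument, it does not affect the validity of the proof.
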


	\begin{lemma}\label{lem:iso1}
		The map $\rho_B$ defined above is injective, and restricts to a group isomorphism
		\begin{equation*}
			\rho_B:\left(\prod_{P\in S_p(G)}R_\K(N_G(P)/P,\br_P(e_B))\right)^G\isoto\left(\prod_{(P,e)\in\BP_\OO(B)}R_\K(I_{(P,e)}/P,e)\right)^G.
		\end{equation*}
	\end{lemma}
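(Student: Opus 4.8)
The plan is to establish injectivity of $\rho_B$ directly and then verify that $\rho_B$ restricts to an isomorphism on $G$-fixed points by exhibiting an explicit inverse on the level of tuples. For injectivity, suppose $(\chi_P)_{P\in S_p(G)}$ lies in the kernel, so that $e\cdot\Res_{I_{(P,e)}}^{N_G(P)}(\chi_P)=0$ for every $(P,e)\in\BP_\OO(B)$. Fix $P\in S_p(G)$. Since $\chi_P\in R_\K(N_G(P)/P,\br_P(e_B))$, decompose $\chi_P$ along the block idempotents of $\OO N_G(P)$ that lie above $\br_P(e_B)$. By Lemma \ref{lem:blockcent2} (applied with the normal $p$-subgroup $P\nor N_G(P)$, so that $C_{N_G(P)}(P)=PC_G(P)/\!\!\sim$ and the relevant centralizer computations go through), together with the Fong–Reynolds type statement of Lemma \ref{lem:fong}, each such block idempotent of $\OO N_G(P)$ has the form $\tr_{I_{(P,e)}}^{N_G(P)}(e)$ for a uniquely determined $e\in\bli(\OO C_G(P))$ with $(P,e)\in\BP_\OO(B)$, and the map $e\cdot\Res_{I_{(P,e)}}^{N_G(P)}$ is injective on the corresponding block summand of $R_\K(N_G(P))$. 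Hence $\chi_P=0$. As $P$ was arbitrary, $(\chi_P)=0$ and $\rho_B$ is injective.

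Next I would check that $\rho_B$ maps the subgroup of $G$-fixed tuples into the subgroup of $G$-fixed tuples; this is immediate from the $G$-equivariance of $\rho_B$ noted just before the lemma statement, since the conjugation actions on both products are compatible with $\rho_B$ by construction (the formula $e\cdot\Res_{I_{(P,e)}}^{N_G(P)}(\chi_P)$ is natural in $(P,e)$ under $G$-conjugation, using Corollary \ref{cor:conjandgendecomp} or simply the transport-of-structure identities for ${}^g(\cdot)$ recorded in Section \ref{sec:classfunctionsand}). It remains to show surjectivity onto $G$-fixed tuples. Given a $G$-fixed tuple $(\psi_{(P,e)})\in\left(\prod_{(P,e)\in\BP_\OO(B)}R_\K(I_{(P,e)}/P,e)\right)^G$, I would define, for each $P\in S_p(G)$,
\begin{equation*}
	\chi_P:=\sum_{e}\Ind_{I_{(P,e)}}^{N_G(P)}(\psi_{(P,e)}),
\end{equation*}
where $e$ runs over a set of representatives for the $N_G(P)$-orbits on those block idempotents of $\OO C_G(P)$ for which $(P,e)\in\BP_\OO(B)$. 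By Lemma \ref{lem:fong} each summand $\Ind_{I_{(P,e)}}^{N_G(P)}(\psi_{(P,e)})$ lies in $R_\K(N_G(P),\tr_{I_{(P,e)}}^{N_G(P)}(e))$, and since $P$ acts trivially on $\psi_{(P,e)}$ it acts trivially on the induced character as well (induction from a subgroup containing $P$ normally preserves having $P$ in the kernel, because $P\nor N_G(P)$); hence $\chi_P\in R_\K(N_G(P)/P,\br_P(e_B))$. One then checks, block-by-block using Lemma \ref{lem:fong} again, that $e\cdot\Res_{I_{(P,e)}}^{N_G(P)}(\chi_P)=\psi_{(P,e)}$ — the cross terms from non-conjugate idempotents vanish because they lie in different blocks, and the terms from $N_G(P)$-conjugates of $e$ other than $e$ itself also vanish after cutting by $e$ — so $\rho_B((\chi_P))=(\psi_{(P,e)})$. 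Finally, the independence of $\chi_P$ from the chosen orbit representatives, together with the $G$-invariance of the input tuple, forces $(\chi_P)$ to be $G$-fixed: for $g\in G$ one has ${}^g\chi_P=\sum_e\Ind_{I_{({}^gP,{}^ge)}}^{N_G({}^gP)}({}^g\psi_{(P,e)})=\sum_e\Ind_{I_{({}^gP,{}^ge)}}^{N_G({}^gP)}(\psi_{{}^g(P,e)})=\chi_{{}^gP}$, using equivariance of ${}^g(\cdot)$ with induction and restriction.

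The main obstacle I anticipate is bookkeeping the block decomposition cleanly: one must be careful that the idempotents $e\in\bli(\OO C_G(P))$ with $(P,e)\in\BP_\OO(B)$ are exactly those whose $N_G(P)$-orbit sums $\tr_{I_{(P,e)}}^{N_G(P)}(e)$ are the block idempotents of $\OO N_G(P)$ lying above $\br_P(e_B)$ — this is where Lemma \ref{lem:blockcent2} and the covering theory recalled after it are essential — and that the restriction–induction adjunction of Lemma \ref{lem:fong} genuinely inverts $\rho_B$ summand-by-summand without interference between distinct orbits. Once this correspondence is pinned down, both injectivity and the construction of the inverse are formal. Everything else (preservation of ``$P$ in the kernel,'' $G$-equivariance) is routine transport of structure.
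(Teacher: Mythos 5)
Your proposal is correct and follows essentially the same route as the paper: injectivity via the decomposition of the lift of $\br_P(e_B)$ into the orbit sums $\tr_{I_{(P,e_i)}}^{N_G(P)}(e_i)$ given by Lemma \ref{lem:blockcent2}, combined with the Fong--Reynolds isomorphism of Lemma \ref{lem:fong}, and surjectivity onto $G$-fixed tuples via the same explicit inverse $\chi_P=\sum_e\Ind_{I_{(P,e)}}^{N_G(P)}(\psi_{(P,e)})$ over orbit representatives, with the cross terms killed by cutting with $e$. The only slip is the parenthetical claim that $C_{N_G(P)}(P)=PC_G(P)$, which should read $C_{N_G(P)}(P)=C_G(P)$; it plays no role in your argument.
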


	\begin{proof}
		We first show that $\rho=\rho_B$ is injective. Let $(\chi_P)_{P\in S_p(G)}\in\ker(\rho)$. Then for all $(P,e)\in\BP_\OO(B)$ we have $e\cdot\Res_{I_{(P,e)}}^{N_G(P)}(\chi_P)=0$. Let $P\in S_p(G)$. We must show that $\chi_P=0$. This is clear if $\br_P(e_B)=0$, so assume $\br_P(e_B)\neq 0$. Write $e(P)$ for the unique central idempotent of $\OO N_G(P)$ such that $\overline{e(P)}=\br_P(e_B)$. By Lemma \ref{lem:blockcent2} we may write
		\begin{equation*}
			e(P)=\sum_{i=1}^n\tr_{I_{(P,e_i)}}^{N_G(P)}(e_i)
		\end{equation*}
		for some blocks $e_i\in\bli(\OO C_G(P))$. Note that for each $i$ in the range $1\leq i\leq n$ we have $e(P)e_i=e_i$, hence $\br_P(e_B)\overline{e_i}=\overline{e_i}$ and $(P,e_i)\in\BP_\OO(B)$. Now by Lemma \ref{lem:fong} we have an isomorphism
		\begin{align*}
			R_\K(N_G(P),\br_P(e_B))	=&\bigoplus_{i=1}^n R_\K(N_G(P),\tr_{I_{(P,e_i)}}^{N_G(P)}(e_i))\\
				\underset{\dsum e_i\cdot\Res_{I_{(P,e_i)}}^{N_G(P)}}{\isoto}&\bigoplus_{i=1}^n R_\K(I_{(P,e_i)},e_i)
		\end{align*}
		and the image of $\chi_P\in R_\K(N_G(P),\br_P(e_B))$ under this isomorphism is $\sum_{i=1}^ne_i\cdot\Res_{I_{(P,e_i)}}^{N_G(P)}(\chi_P)=0$. Therefore $\chi_P=0$. This shows that $\rho$ is injective.
		
		Since $\rho$ is $G$-equivariant we have that 
		\begin{equation*}
			\rho\left(\left(\prod_{P\in S_p(G)}R_\K(N_G(P)/P,\br_P(e_B))\right)^G\right)\subseteq \left(\prod_{(P,e)\in\BP_\OO(B)}R_\K(I_{(P,e)}/P,e)\right)^G.
		\end{equation*}
		To see that the reverse containment also holds, let $(\chi_{(P,e)})$ be a $G$-invariant tuple in $\prod_{(P,e)\in\BP_\OO(B)}R_\K(I_{(P,e)}/P,e)$. Define a tuple $(\psi_P)\in \prod_{P\in S_p(G)}R_\K(N_G(P)/P,\br_P(e_B))$ as follows: let $P\in S_p(G)$. If $\br_P(e_B)=0$ set $\psi_P=0$. Otherwise, let $e(P)$ denote the lift of $\br_P(e_B)$ to a central idempotent of $\OO N_G(P)$ and choose blocks $e_1,\ldots,e_n\in\bli(\OO C_G(P))$ such that $e(P)=\sum_{i=1}^n\tr_{I_{(P,e_i)}}^{N_G(P)}(e_i)$. Then $(P,e_i)\in\BP_\OO(B)$ for each $1\leq i\leq n$. Note that
		\begin{equation*}
			\Ind_{I_{(P,e_i)}}^{N_G(P)}(\chi_{(P,e_i)})\in R_\K(N_G(P)/P,\tr_{I_{(P,e_i)}}^{N_G(P)}(e_i)).
		\end{equation*}
		Set
		\begin{equation*}
			\psi_P:=\sum_{i=1}^n\Ind_{I_{(P,e_i)}}^{N_G(P)}(\chi_{(P,e_i)})\in R_\K(N_G(P)/P,\br_P(e_B)).
		\end{equation*}
		Since the tuple $(\chi_{(P,e)})$ is $G$-fixed the definition of $\psi_P$ does not depend on the choice of blocks $e_1,\ldots,e_n$. It follows also that $(\psi_P)$ is a $G$-fixed tuple in $\prod_{P\in S_p(G)}R_\K(N_G(P)/P,\br_P(e_B))$. So to complete the proof it remains to show that $\rho(\psi_P)=(\chi_{(P,e)})$, i.e., that for all $(P,e)\in\BP_\OO(B)$ we have
		\begin{equation*}
			e\Res_{I_{(P,e)}}^{N_G(P)}(\psi_P)=\chi_{(P,e)}.
		\end{equation*}
		Let $(P,e)\in\BP_\OO(B)$. Then $\br_P(e_B)\neq 0$. Let $e(P),e_1,\ldots,e_n$ be as above. Without loss of generality we may assume that $e=e_1$. Then
		\begin{align*}
			e\Res_{I_{(P,e)}}^{N_G(P)}(\psi_P)	&=e_1\Res_{I_{(P,e_1)}}^{N_G(P)}\left(\sum_{i=1}^n\Ind_{I_{(P,e_i)}}^{N_G(P)}(\chi_{(P,e_i)})\right)\\
				&=e_1\cdot\sum_{i=1}^{n}\Res_{I_{(P,e_1)}}^{N_G(P)}(\Ind_{I_{(P,e_i)}}^{N_G(P)}(\chi_{(P,e_i)}))\\
				&=e_1\Res_{I_{(P,e_1)}}^{N_G(P)}(\Ind_{I_{(P,e_1)}}^{N_G(P)}(\chi_{(P,e_1)}))\\
				&=\chi_{(P,e)}
		\end{align*}
		where the last equality holds by Lemma \ref{lem:fong}. The proof is complete.
	\end{proof}
	
	Now let 
	\begin{equation*}
		\alpha_B=\alpha:T_\OO(B)\to \left(\prod_{(P,e)\in\BP_\OO(B)}R_\K(I_{(P,e)}/P,e)\right)^G
	\end{equation*} 
	denote the composite of the maps $\beta$ and $\rho$ --- in other words, $\alpha$ is the homomorphism making the diagram below commute:
	\[\begin{tikzcd}
		{\left(\prod_{P\in S_p(G)}R_\K(N_G(P)/P,\br_P(e_B))\right)^G} & {\left(\prod_{(P,e)\in\BP_\OO(B)}R_\K(I_{(P,e)}/P,e)\right)^G} \\
		{T_\OO(B)}
		\arrow["\beta", from=2-1, to=1-1]
		\arrow["\rho", from=1-1, to=1-2]
		\arrow["\alpha"', dashed, from=2-1, to=1-2]
	\end{tikzcd}\]
	Note that if $M\in\ltriv{B}$ then 
	\begin{equation*}
		\alpha([M])=(\chi_{M(P,e)})_{(P,e)\in\BP_\OO(B)}
	\end{equation*}
	where $\chi_{M(P,e)}$ denotes the character of the trivial source $\OO I_{(P,e)}e$-module $M(P,e)=e\Res_{I_{(P,e)}}^{N_G(P)}M(P)$. 


	\begin{theorem}\label{thm:coherence3}
		Let $B\in\Bl(\OO G)$. The image of $T_\OO(B)$ under the map
		\begin{align*}
			\alpha_B=\alpha:T_\OO(B)&\into \left(\prod_{(P,e)\in\BP_\OO(B)}R_\K(I_{(P,e)}/P,e)\right)^G\\
			[M]&\mapsto(\chi_{M(P,e)})_{(P,e)\in\BP_\OO(B)}\qquad\qquad M\in\ltriv{B}
		\end{align*}
		is equal to the subgroup of tuples $(\chi_{(P,e)})\in \left(\prod_{(P,e)\in\BP_\OO(B)}R_\K(I_{(P,e)}/P,e)\right)^G$ that satisfy
		\begin{equation}\label{C1}\tag{C1}
			\left\{
			\begin{gathered}
			\chi_{(P,e)}(us\epsilon)=\Ind_{C_{I_{(P,e)}\cap I_{(P\gp{u},f)}}(u)}^{C_{I_{(P,e)}}(u)}(\Res_{C_{I_{(P,e)}\cap I_{(P\gp{u},f)}}(u)}^{I_{(P\gp{u},f)}}(\chi_{(P\gp{u},f)}))(s)\\
			\text{for all }(P,e)\in\BP_\OO(B)\text{, }(u,\epsilon)\in\BE_\OO(I_{(P,e)},e)\text{, }s\in C_{I_{(P,e)}}(u)_{p'}\text{,}\\
			\text{and all }f\in\bli(\OO C_G(P\gp{u}))\text{ such that }\epsilon\cdot f\neq 0.
			\end{gathered}
			\right.
		\end{equation}
		The image of $T_\OO(B)$ under $\alpha$ is also equal to the subgroup of tuples $(\chi_{(P,e)})\in \left(\prod_{(P,e)\in\BP_\OO(B)}R_\K(I_{(P,e)}/P,e)\right)^G$ that satisfy
		\begin{equation}\label{C2}\tag{C2}
			\left\{
			\begin{gathered}
				\chi_{(P,e)}(us)=\sum_{\substack{f\in\bli(\OO C_G(P\gp{u}))\\(P,e)\nor(P\gp{u},f)\\s\in I_{(P\gp{u},f)}}}\chi_{(P\gp{u},f)}(s)\\
				\text{for all }(P,e)\in\BP_\OO(B)\text{, all }p\text{-elements }u\in I_{(P,e)}\text{,}\\
				\text{and all }s\in C_{I_{(P,e)}}(u)_{p'}.
			\end{gathered}
			\right.
		\end{equation}
	\end{theorem}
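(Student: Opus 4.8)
The strategy is to transport the known description of the image of $\beta_G$ (Theorem \ref{thm:coherence1}, together with its refinement Theorem \ref{thm:coherence2}) through the isomorphism $\rho_B$ of Lemma \ref{lem:iso1}, and then to rewrite the resulting ``coherence condition'' indexed by $p$-subgroups into one indexed by $B$-Brauer pairs. Concretely, write $J$ for the image of $T_\OO(G,e_B)$ under $\beta_G$; by Theorem \ref{thm:coherence2} (in the equivalent ``Brauer element'' formulation following Theorem \ref{thm:coherence1}) this is the set of $G$-invariant tuples $(\chi_P)\in\prod_{P\in S_p(G)}R_\K(N_G(P)/P,\br_P(e_B))$ satisfying $\chi_P(us)=\chi_{P\gp{u}}(s)$ for every $P\in S_p(G)$, every $p$-element $u\in N_G(P)$, and every $s\in C_{N_G(P)}(u)_{p'}$. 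Since $\alpha_B=\rho_B\circ\beta_G$ and $\rho_B$ restricts to an isomorphism onto $G$-fixed points by Lemma \ref{lem:iso1}, the image of $T_\OO(B)=T_\OO(G,e_B)$ under $\alpha_B$ is exactly $\rho_B(J)$. So the whole theorem amounts to computing $\rho_B(J)$ explicitly.

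\textbf{Translating the condition through $\rho_B$.} Fix a $G$-invariant tuple $(\chi_{(P,e)})$ in $\prod_{(P,e)\in\BP_\OO(B)}R_\K(I_{(P,e)}/P,e)$; by the surjectivity argument inside the proof of Lemma \ref{lem:iso1}, such a tuple is $\rho_B(\psi_P)$ where $\psi_P=\sum_i\Ind_{I_{(P,e_i)}}^{N_G(P)}(\chi_{(P,e_i)})$, the sum over the blocks $e_i\in\bli(\OO C_G(P))$ with $(P,e_i)\in\BP_\OO(B)$ appearing in the decomposition of $\br_P(e_B)$. The task is to show that $(\psi_P)\in J$ — i.e. $\psi_P(us)=\psi_{P\gp{u}}(s)$ for all relevant $P,u,s$ — if and only if $(\chi_{(P,e)})$ satisfies \eqref{C1}, and (separately) if and only if it satisfies \eqref{C2}. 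The key computational input is a Mackey-type formula for $\Res_{C_{I_{(P,e)}}(u)}^{N_G(P\gp u)}\Ind_{I_{(P\gp u,f_j)}}^{N_G(P\gp u)}(\chi_{(P\gp u,f_j)})$ evaluated at $us$ with $u$ a $p$-element of $I_{(P,e)}$. Here one uses: (i) $C_{N_G(P)}(u)=C_{I_{(P,e)}}(u)$ whenever $(u,\epsilon)\in\BE_\OO(I_{(P,e)},e)$ for the appropriate $\epsilon$ (or, more basically, that cutting by $e$ localizes the relevant values — $\psi_P(us)$ only depends on $e\cdot d^u(\psi_P)$ because $(u,\epsilon)$ belongs to a unique block); (ii) the bijection of Lemma \ref{lem:bijection1} (equivalently Lemma \ref{lem:brauereltnormalizer}) identifying the pairs $(\epsilon,f)$ with block idempotents $f\in\bli(\OO C_G(P\gp u))$ satisfying $(P,e)\nor(P\gp u,f)$; and (iii) the subgroup equalities $\Stab_{C_{I_{(P,e)}}(u)}(f)=C_{I_{(P,e)}\cap I_{(P\gp u,f)}}(u)$ from \eqref{eqn:subgpequalities}, which are precisely what make the orbit sum in the Mackey formula collapse to the single induced character appearing on the right side of \eqref{C1}. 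Tracking the central idempotents $e$, $\epsilon$, $f$ carefully — in particular that $d^{u,\epsilon}$ picks out exactly the Brauer characters lying over the $f$'s with $\epsilon f\neq 0$ — should make $\psi_P(us\epsilon)=\sum_{\epsilon f\neq 0}(\text{induced character})(s)$ fall out, and since $\epsilon$ is itself the trace-sum $\tr_{\Stab}(f)$ of any one such $f$, the inner sum reduces further: this is the content of \eqref{C1}.

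\textbf{From \eqref{C1} to \eqref{C2} and vice versa.} Having established ``$(\psi_P)\in J \iff \eqref{C1}$,'' the remaining point is the equivalence \eqref{C1} $\iff$ \eqref{C2} for $G$-invariant tuples already living in $\prod R_\K(I_{(P,e)}/P,e)$. The forward direction sums \eqref{C1} over a set of representatives $\epsilon$ (equivalently, over the $C_{I_{(P,e)}}(u)$-orbits of the $f$'s with $(P,e)\nor(P\gp u,f)$) and uses $\Ind_{\Stab}^{C_{I_{(P,e)}}(u)}(\Res(\chi_{(P\gp u,f)}))(s)=\sum_{x}\chi_{(P\gp u,f)}({}^x s)$ over coset reps $x$, reorganizing the double sum (orbit reps $\times$ coset reps) into the single sum over all $f$ with $(P,e)\nor(P\gp u,f)$ and $s\in I_{(P\gp u,f)}$ — the condition ``$s\in I_{(P\gp u,f)}$'' being exactly the requirement that $\chi_{(P\gp u,f)}$, which is supported on $I_{(P\gp u,f)}$, not vanish at the conjugate; $G$-invariance of the tuple converts ${}^x s$-values into $s$-values at the conjugate Brauer pair ${}^{x^{-1}}(P\gp u,f)$, and these conjugates are precisely the other members of the orbit. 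The reverse direction runs the same bookkeeping backwards. I expect the main obstacle to be exactly this bookkeeping: keeping the normalizers $I_{(P,e)}$, $I_{(P\gp u,f)}$, their intersection, the centralizers-of-$u$ inside each, and the three families of idempotents all straight through a Mackey decomposition, and verifying that the support conditions match up so that no spurious terms survive. The algebraic ingredients are all in place (Lemmas \ref{lem:brauereltnormalizer}, \ref{lem:bijection1}, \ref{lem:fong}, Proposition \ref{prop:gendecompvalues} for the $\chi(us\epsilon)$ interpretation, and Theorems \ref{thm:coherence1}–\ref{thm:coherence2}); the work is in the careful assembly, and I would organize it by first proving a clean standalone Mackey/restriction-of-induction lemma at the level of $R_\K(\,\cdot\,)$ so that the two equivalences become short formal consequences.
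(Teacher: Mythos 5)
Your framework is right ($\alpha_B=\rho_B\circ\beta_G$, so the theorem amounts to computing $\rho_B$ of the image of $\beta_G$), and two of the three inclusions needed are handled essentially as in the paper: (\ref{C1}) implies (\ref{C2}) by summing over the blocks $\epsilon$ of $\OO C_{I_{(P,e)}}(u)$ and reindexing via Lemma \ref{lem:bijection1}, and a tuple satisfying (\ref{C2}) pulls back under $\rho_B^{-1}$ to a tuple $(\psi_P)$ satisfying the Boltje--Carman condition by exactly the coset/orbit bookkeeping you describe. The gap is in the remaining inclusion $\alpha(T_\OO(B))\subseteq(\ref{C1})$, which you propose to obtain by a Mackey computation applied to the coherence condition $\psi_P(us)=\psi_{P\gp{u}}(s)$, and equivalently in your claim that (\ref{C2}) yields (\ref{C1}) by ``running the same bookkeeping backwards.'' Condition (\ref{C1}) is strictly finer than (\ref{C2}): it prescribes each individual component $d_{I_{(P,e)}}^{u,\epsilon}(\chi_{(P,e)})$ as $\epsilon$ runs over the blocks of $\OO C_{I_{(P,e)}}(u)$ lying over $e$, whereas (\ref{C2}) only prescribes their sum $d_{I_{(P,e)}}^{u}(\chi_{(P,e)})$. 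Summation cannot be reversed formally, and the $N_G(P)$-level coherence condition, once cut by the idempotents available there (the blocks $e$ of $\OO C_G(P)$ and $f$ of $\OO C_G(P\gp{u})$), likewise only yields (\ref{C2}). In the paper the implication from (\ref{C2}) to (\ref{C1}) is obtained only as a consequence of the cycle $\alpha(T_\OO(B))\subseteq A_1\subseteq A_2\subseteq\alpha(T_\OO(B))$, i.e., after re-realizing the tuple by a virtual trivial source module.

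The missing idea for $\alpha(T_\OO(B))\subseteq(\ref{C1})$ is module-theoretic: one applies Theorem \ref{thm:coherence2} not to $M$ over $G$ but to the trivial source $\OO I_{(P,e)}e$-module $M(P,e)$ over the group $I_{(P,e)}$, obtaining $\chi_{M(P,e)}(us)=\chi_{(M(P,e))(\gp{u})}(s)$; one then needs Lemma \ref{lem:brauerconst} (the iterated Brauer construction) to identify $(M(P,e))(\gp{u})$ with $e(\gp{u})\cdot\Res_{N_{I_{(P,e)}}(\gp{u})}^{N_G(P\gp{u})}M(P\gp{u})$, and finally the module isomorphism of $\epsilon\cdot\Res_{C_{I_{(P,e)}}(u)}^{N_G(P\gp{u})}M(P\gp{u})$ with $\Ind_{C_{I_{(P,e)}\cap I_{(P\gp{u},f)}}(u)}^{C_{I_{(P,e)}}(u)}\bigl(\Res_{C_{I_{(P,e)}\cap I_{(P\gp{u},f)}}(u)}^{I_{(P\gp{u},f)}}(M(P\gp{u},f))\bigr)$ coming from the orbit decomposition of $\epsilon$ as the trace of $f$. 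It is this chain --- in particular Lemma \ref{lem:brauerconst}, which your proposal never invokes --- that produces the block-by-block refinement (\ref{C1}). Your character-level computation of $\Res\Ind$ applied to $\psi_{P\gp{u}}$ can recover the coarser identity (\ref{C2}), but it has no access to the decomposition of $d_{I_{(P,e)}}^{u}(\chi_{(P,e)})$ along the blocks of $C_{I_{(P,e)}}(u)$, so the proof as outlined does not close.
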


	\begin{proof}
		Before beginning the proof, we note that if $(P,e)\in\BP_\OO(B)$ and $(u,\epsilon)\in\BE_\OO(I_{(P,e)},e)$ then by Lemma \ref{lem:brauereltnormalizer} there exists a block idempotent $f\in\bli(\OO C_G(P\gp{u}))$ satisfying $\epsilon f\neq 0$, so Condition (\ref{C1}) makes sense. In Condition (\ref{C2}) the sum on the right is equal to 0 if no such block idempotent $f$ exists.
		
		For convenience, let $A_1$ and $A_2$ denote the collections of tuples $(\chi_{(P,e)})\in \left(\prod_{(P,e)\in\BP_\OO(B)}R_\K(I_{(P,e)}/P,e)\right)^G$ that satisfy Conditions (\ref{C1}) and (\ref{C2}), respectively. Observe that $A_1$ and $A_2$ are subgroups of $\left(\prod_{(P,e)\in\BP_\OO(B)}R_\K(I_{(P,e)}/P,e)\right)^G$. We must show that
		\begin{equation*}
			\alpha(T_\OO(B))=A_1=A_2.
		\end{equation*}
	
		We first show that $\alpha(T_\OO(B))\subseteq A_1$. Since $A_1$ is a subgroup it suffices to check that $\alpha([M])\in A_1$ for all $M\in\ltriv{B}$. So let $M$ be a trivial source $B$-module, let $(P,e)\in\BP_\OO(B)$, $(u,\epsilon)\in\BE_\OO(I_{(P,e)},e)$, and let $f\in\bli(\OO C_G(P\gp{u}))$ be such that $\epsilon\cdot f\neq 0$. Then by Lemma \ref{lem:brauereltnormalizer} we have $(P\gp{u},f)\in\BP_\OO(B)$, $(P,e)\nor(P\gp{u},f)$, and
		\begin{equation*}
			\epsilon=\tr_{C_{I_{(P,e)}\cap I_{(P\gp{u},f)}}(u)}^{C_{I_{(P,e)}}(u)}(f).
		\end{equation*}
		We must show that 
		\begin{equation*}
			\chi_{M(P,e)}(us\epsilon)=\Ind_{C_{I_{(P,e)}\cap I_{(P\gp{u},f)}}(u)}^{C_{I_{(P,e)}}(u)}(\Res_{C_{I_{(P,e)}\cap I_{(P\gp{u},f)}}(u)}^{I_{(P\gp{u},f)}}(\chi_{M(P\gp{u},f)}))(s)
		\end{equation*}
		for all $s\in C_{I_{(P,e)}}(u)_{p'}$.
		
		Now $M(P,e)$ is a trivial source $\OO I_{(P,e)}e$-module, so we may apply the map $\beta_{I_{(P,e)}}$ of Theorem \ref{thm:coherence2} to obtain a coherent character tuple
		\begin{equation*}
			\beta_{I_{(P,e)}}([M(P,e)])=(\chi_{(M(P,e))(Q)})\in\prod_{Q\in S_p(I_{(P,e)})}R_\K(N_{I_{(P,e)}}(Q)/Q,\br_Q^{I_{(P,e)}}(e)).
		\end{equation*}
		Notice that $\chi_{(M(P,e))(1)}=\chi_{M(P,e)}$. The coherence condition of Theorem \ref{thm:coherence2} then gives that
		\begin{equation*}
			\chi_{M(P,e)}(us)=\chi_{(M(P,e))(\gp{u})}(s)
		\end{equation*}
		for any $s\in C_{I_{(P,e)}}(u)_{p'}$. Now by Lemma \ref{lem:brauerconst} there is an isomorphism of $\OO N_{I_{(P,e)}}(\gp{u})$-modules
		\begin{equation*}
			(M(P,e))(\gp{u})\iso e(\gp{u})\cdot\Res_{N_{I_{(P,e)}}(\gp{u})}^{N_G(P\gp{u})}(M(P\gp{u}))
		\end{equation*}
		where $e(\gp{u})$ is the unique central idempotent of $\OO N_{I_{(P,e)}}(\gp{u})$ satisfying $\overline{e(\gp{u})}=\br_{\gp{u}}^{I_{(P,e)}}(e)$. Since $e(\gp{u})\in\OO C_{I_{(P,e)}}(u)$ we have
		\begin{align*}
			\chi_{M(P,e)}(us)	&=\chi_{(M(P,e))(\gp{u})}(s)\\
			&=\chi_{e(\gp{u})\cdot\Res_{N_{I_{(P,e)}}(\gp{u})}^{N_G(P\gp{u})}(M(P\gp{u}))}(s)\\
			&=\chi_{e(\gp{u})\cdot\Res_{C_{I_{(P,e)}}(u)}^{N_G(P\gp{u})}(M(P\gp{u}))}(s)
		\end{align*}
		for any $s\in C_{I_{(P,e)}}(u)_{p'}$. It follows that
		\begin{equation*}
			d_{I_{(P,e)}}^u(\chi_{M(P,e)})=d_{C_{I_{(P,e)}}(u)}^1(\chi_{e(\gp{u})\cdot\Res_{C_{I_{(P,e)}}(u)}^{N_G(P\gp{u})}(M(P\gp{u}))}),
		\end{equation*}
		which is an equality of class functions in $CF_{p'}(C_{I_{(P,e)}}(u);\K)$. Multiplying both sides of this equality by $\epsilon$ we obtain
		\begin{align*}
			d_{I_{(P,e)}}^{u,\epsilon}(\chi_{M(P,e)})	&=d_{C_{I_{(P,e)}}(u)}^{1,\epsilon}(\chi_{e(\gp{u})\cdot\Res_{C_{I_{(P,e)}}(u)}^{N_G(P\gp{u})}(M(P\gp{u}))})\\
			&=d_{C_{I_{(P,e)}}(u)}^{1}(\epsilon\cdot \chi_{e(\gp{u})\cdot\Res_{C_{I_{(P,e)}}(u)}^{N_G(P\gp{u})}(M(P\gp{u}))})\\
			&=d_{C_{I_{(P,e)}}(u)}^{1}(\chi_{\epsilon\cdot (e(\gp{u})\cdot\Res_{C_{I_{(P,e)}}(u)}^{N_G(P\gp{u})}(M(P\gp{u})))})\\
			&=d_{C_{I_{(P,e)}}(u)}^{1}(\chi_{\epsilon\cdot\Res_{C_{I_{(P,e)}}(u)}^{N_G(P\gp{u})}(M(P\gp{u}))})
		\end{align*}
		where the last equality holds because $(u,\epsilon)\in\BE_\OO(I_{(P,e)},e)$, so that $\epsilon\cdot e(\gp{u})=\epsilon$. Since $\epsilon=\tr_{C_{I_{(P,e)}\cap I_{(P\gp{u},f)}}(u)}^{C_{I_{(P,e)}}(u)}(f)$ we have a decomposition
		\begin{align*}
			\Res_{C_G(P\gp{u})}^{C_{I_{(P,e)}}(u)}&(\epsilon\cdot\Res_{C_{I_{(P,e)}}(u)}^{N_G(P\gp{u})}(M(P\gp{u})))\\
			&=\bigoplus_{x\in[C_{I_{(P,e)}}(u)/C_{I_{(P,e)}\cap I_{(P\gp{u},f)}}(u)]} {}^xf\cdot\Res_{C_G(P\gp{u})}^{N_G(P\gp{u})}(M(P\gp{u})).
		\end{align*}
		The summands are permuted transitively by $C_{I_{(P,e)}}(u)$, and the stabilizer of the $f$-component is $C_{I_{(P,e)}\cap I_{(P\gp{u},f)}}(u)$. It follows that
		\begin{align*}
			\epsilon\cdot\Res_{C_{I_{(P,e)}}(u)}^{N_G(P\gp{u})}&(M(P\gp{u}))\\
			&\iso\Ind_{C_{I_{(P,e)}\cap I_{(P\gp{u},f)}}(u)}^{C_{I_{(P,e)}}(u)}(\Res_{C_{I_{(P,e)}\cap I_{(P\gp{u},f)}}(u)}^{I_{(P\gp{u},f)}}(M(P\gp{u},f))).
		\end{align*}
		So now we have
		\begin{align*}
			d_{I_{(P,e)}}^{u,\epsilon}(\chi_{M(P,e)})	&=d_{C_{I_{(P,e)}}(u)}^{1}(\chi_{\epsilon\cdot\Res_{C_{I_{(P,e)}}(u)}^{N_G(P\gp{u})}(M(P\gp{u}))})\\
			&=d_{C_{I_{(P,e)}}(u)}^{1}(\Ind_{C_{I_{(P,e)}\cap I_{(P\gp{u},f)}}(u)}^{C_{I_{(P,e)}}(u)}(\Res_{C_{I_{(P,e)}\cap I_{(P\gp{u},f)}}(u)}^{I_{(P\gp{u},f)}}(\chi_{M(P\gp{u},f)}))).
		\end{align*}
		In particular, by Proposition \ref{prop:gendecompvalues} we obtain
		\begin{equation*}
			\chi_{M(P,e)}(us\epsilon)=\Ind_{C_{I_{(P,e)}\cap I_{(P\gp{u},f)}}(u)}^{C_{I_{(P,e)}}(u)}(\Res_{C_{I_{(P,e)}\cap I_{(P\gp{u},f)}}(u)}^{I_{(P\gp{u},f)}}(\chi_{M(P\gp{u},f)}))(s)
		\end{equation*}
		for any $s\in C_{I_{(P,e)}}(u)_{p'}$, as desired.
		
		We have shown that $\alpha(T_\OO(B))\subseteq A_1$. Next we show that $A_1\subseteq A_2$. Let $(\chi_{(P,e)})\in A_1$. Let $(P,e)\in\BP_\OO(B)$, $u\in I_{(P,e)}$ a $p$-element, and let $s\in C_{I_{(P,e)}}(u)_{p'}$. By Brauer's 2nd Main Theorem we have
		\begin{equation*}
			\chi_{(P,e)}(us)=d_{I_{(P,e)}}^u(\chi_{(P,e)})(s)=\sum_{\substack{\epsilon\in\bli(\OO C_{I_{(P,e)}}(u))\\(u,\epsilon)\in\BE_\OO(I_{(P,e)},e)}}d_{I_{(P,e)}}^{u,\epsilon}(\chi_{(P,e)})(s).
		\end{equation*}
		For each $\epsilon\in\bli(\OO C_{I_{(P,e)}}(u))$ such that $(u,\epsilon)\in\BE_\OO(I_{(P,e)},e)$ choose a block $f_\epsilon\in\bli(\OO C_G(P\gp{u}))$ such that $\epsilon\cdot f_\epsilon\neq 0$. Note that $f_\epsilon$ exists for each $\epsilon$ by Lemma \ref{lem:brauereltnormalizer} and that $\Stab_{C_{I_{(P,e)}}(u)}(f_\epsilon)=C_{I_{(P,e)}\cap I_{(P\gp{u},f_\epsilon)}}(u)$. Then for each $(u,\epsilon)\in\BE_\OO(I_{(P,e)},e)$ we have
		\begin{align*}
			\chi_{(P,e)}(us\epsilon)	&=\Ind_{C_{I_{(P,e)}\cap I_{(P\gp{u},f_\epsilon)}}(u)}^{C_{I_{(P,e)}}(u)}(\Res_{C_{I_{(P,e)}\cap I_{(P\gp{u},f_\epsilon)}}(u)}^{I_{(P\gp{u},f_\epsilon)}}(\chi_{(P\gp{u},f_\epsilon)}))(s)\\
			&=\sum_{\substack{g\in[C_{I_{(P,e)}}(u)/\Stab_{C_{I_{(P,e)}}(u)}(f_\epsilon)]\\s^g\in\Stab_{C_{I_{(P,e)}}(u)}(f_\epsilon) }}\chi_{(P\gp{u},f_\epsilon)}(s^g)\\
			&=\sum_{\substack{g\in[C_{I_{(P,e)}}(u)/\Stab_{C_{I_{(P,e)}}(u)}(f_\epsilon)]\\s\in\Stab_{C_{I_{(P,e)}}(u)}({}^gf_\epsilon) }}\chi_{(P\gp{u},{}^gf_\epsilon)}(s)\\
			&=\sum_{\substack{f\in\Orb_{C_{I_{(P,e)}}(u)}(f_\epsilon)\\s\in\Stab_{C_{I_{(P,e)}}(u)}(f)}}\chi_{(P\gp{u},f)}(s)\\
			&=\sum_{\substack{f\in\bli(\OO C_G(P\gp{u})\\f\cdot\epsilon\neq 0\\s\in I_{(P\gp{u},f)}}}\chi_{(P\gp{u},f)}(s).
		\end{align*}
		It follows that
		\begin{align*}
			\chi_{(P,e)}(us)	&=\sum_{\substack{\epsilon\in\bli(\OO C_{I_{(P,e)}}(u))\\(u,\epsilon)\in\BE_\OO(I_{(P,e)},e)}}\chi_{(P,e)}(us\epsilon)\\
			&=\sum_{\substack{\epsilon\in\bli(\OO C_{I_{(P,e)}}(u))\\(u,\epsilon)\in\BE_\OO(I_{(P,e)},e)}}\sum_{\substack{f\in\bli(\OO C_G(P\gp{u})\\f\cdot\epsilon\neq 0\\s\in I_{(P\gp{u},f)}}}\chi_{(P\gp{u},f)}(s)\\
			&=\sum_{\substack{f\in\bli(\OO C_G(P\gp{u}))\\(P,e)\nor(P\gp{u},f)\\s\in I_{(P\gp{u},f)}}}\chi_{(P\gp{u},f)}(s)
		\end{align*}
		where the last line follows from the bijection of Lemma \ref{lem:bijection1}. Thus we see that $(\chi_{(P,e)})\in A_2$, and that $A_1\subseteq A_2$.
		
		To complete the proof we must show that $A_2\subseteq\alpha(T_\OO(B))$. Let $(\chi_{(P,e)})\in A_2$. Recall the isomorphism $\rho$ of Lemma \ref{lem:iso1}. Write 
		\begin{equation*}
		(\psi_P)_{P\in S_p(G)}:=\rho^{-1}((\chi_{(P,e)}))\in\left(\prod_{P\in S_p(G)}R_\K(N_G(P)/P,\br_P(e_B))\right)^G.
		\end{equation*}
		I claim that $(\psi_P)\in\beta(T_\OO(B))$. Note that if the claim is correct then it will follow that $(\chi_{(P,e)})\in \rho(\beta(T_\OO(B)))=\alpha(T_\OO(B))$ and the proof will be complete.
		
		By Theorem \ref{thm:coherence2}, the tuple $(\psi_P)$ belongs to $\beta(T_\OO(B))$ if and only if 
		\begin{equation*}
			\psi_P(x)=\psi_{P\gp{x_p}}(x)
		\end{equation*}
		for all $P\in S_p(G)$ and $x\in N_G(P)$, or equivalently,
		\begin{equation*}
			\psi_P(us)=\psi_{P\gp{u}}(s)
		\end{equation*}
		for all $P\in S_p(G)$, all $p$-elements $u\in N_G(P)$, and all $s\in C_{N_G(P)}(u)_{p'}$. Recall from the proof of Lemma \ref{lem:iso1} how the characters $\psi_P$, $P\in S_p(G)$, are defined: if $\br_P(e_B)=0$ then $\psi_P=0$. If $\br_P(e_B)\neq 0$ let $e(P)$ denote the lift of $\br_P(e_B)$ to a central idempotent of $\OO N_G(P)$ and choose blocks $e_1,\ldots, e_n\in\bli(\OO C_G(P))$ such that $e(P)=\sum_{i=1}^n\tr_{I_{(P,e_i)}}^{N_G(P)}(e_i)$. Then
		\begin{equation*}
			\psi_P=\sum_{i=1}^n\Ind_{I_{(P,e_i)}}^{N_G(P)}(\chi_{(P,e_i)}).
		\end{equation*}
		In this case, the value $\psi_P(us)$ for $u\in N_G(P)$ a $p$-element and $s\in C_{N_G(P)}(u)_{p'}$ can be computed:
		\begin{align*}
			\psi_P(us)	&=\sum_{i=1}^n\Ind_{I_{(P,e_i)}}^{N_G(P)}(\chi_{(P,e_i)})(us)\\
			&=\sum_{i=1}^n\sum_{\substack{g\in N_G(P)/I_{(P,e_i)}\\(us)^g\in I_{(P,e_i)}}}\chi_{(P,e_i)}((us)^g)\\
			&=\sum_{i=1}^n\sum_{\substack{g\in N_G(P)/I_{(P,e_i)}\\ us\in I_{(P,{}^ge_i)}}}\chi_{(P,{}^ge_i)}(us)\\
			&=\sum_{\substack{e\in\bli(\OO C_G(P))\\(P,e)\in\BP_\OO(B)\\us\in I_{(P,e)}}}\chi_{(P,e)}(us).
		\end{align*}
	
		Note that the formula
		\begin{equation*}
			\psi_P(us)=\sum_{\substack{e\in\bli(\OO C_G(P))\\(P,e)\in\BP_\OO(B)\\us\in I_{(P,e)}}}\chi_{(P,e)}(us)
		\end{equation*}
		holds for each $P\in S_p(G)$, $u\in N_G(P)$ a $p$-element, and $s\in C_{N_G(P)}(u)_{p'}$ independent of whether $\br_P(e_B)=0$ or $\br_P(e_B)\neq 0$, for in the former case $\psi_P(us)=0$ and there do not exist $B$-Brauer pairs of the form $(P,e)$ so the sum on the right is 0.
		
		Now let $P\in S_p(G)$, $u\in N_G(P)$ a $p$-element, and let $s\in C_{N_G(P)}(u)_{p'}$. If $us\in I_{(P,e)}$ for a Brauer pair $(P,e)$ then $u\in I_{(P,e)}$ and $s\in C_{I_{(P,e)}}(u)_{p'}$, so by Condition (\ref{C2})
		\begin{equation*}
			\psi_P(us)=\sum_{\substack{e\in\bli(\OO C_G(P))\\(P,e)\in\BP_\OO(B)\\us\in I_{(P,e)}}}\sum_{\substack{f\in\bli(\OO C_G(P\gp{u}))\\(P,e)\nor(P\gp{u},f)\\s\in I_{(P\gp{u},f)}}}\chi_{(P\gp{u},f)}(s).
		\end{equation*}
		The sum above can be reindexed after making the following observations: let $\mathcal{I}$ denote the set of ordered pairs $(e,f)$ where $e\in\bli(\OO C_G(P))$ such that $(P,e)\in\BP_\OO(B)$ and $f\in\bli(\OO C_G(P\gp{u}))$ such that $(P,e)\nor(P\gp{u},f)$. Let $\mathcal{J}$ denote the set of block idempotents $f\in\bli(\OO C_G(P\gp{u}))$ such that $(P\gp{u},f)\in\BP_\OO(B)$. The map $\mathcal{I}\to\mathcal{J}$, $(e,f)\mapsto f$ is well-defined and is a bijection by the existence and uniqueness of Brauer pairs contained within a fixed Brauer pair (see \cite[Part IV, Theorem 2.10]{Aschbacher_2011}). Let $\mathcal{I}'$ denote the subset of $(e,f)\in\mathcal{I}$ for which $us\in I_{(P,e)}$ and $s\in I_{(P\gp{u},f)}$ and let $\mathcal{J}'$ denote the subset of $f\in\mathcal{J}$ such that $s\in I_{(P\gp{u},f)}$. The bijection $\mathcal{I}\isoto\mathcal{J}$ clearly maps $\mathcal{I}'$ into $\mathcal{J}'$. In fact the image of $\mathcal{I}'$ is precisely $\mathcal{J}'$, for if $f\in\mathcal{J}'$ and if $e$ is the unique block of $\OO C_G(P)$ such that $(P,e)\nor (P\gp{u},f)$ then $u,s\in N_G(P)\cap I_{(P\gp{u},f)}\subgp I_{(P,e)}$, hence $us\in I_{(P,e)}$ and $(e,f)\in\mathcal{I}'$. Now the sum above is indexed by the elements of $\mathcal{I}'$. Reindexing by the elements of $\mathcal{J}'$ gives
		\begin{equation*}
			\psi_P(us)=\sum_{\substack{f\in\bli(\OO C_G(P\gp{u}))\\(P\gp{u},f)\in\BP_\OO(B)\\s\in I_{(P\gp{u},f)}}}\chi_{(P\gp{u},f)}(s).
		\end{equation*}
		But this is precisely the formula for $\psi_{P\gp{u}}(s)$ given previously. We conclude that $\psi_P(us)=\psi_{P\gp{u}}(s)$ and that $(\psi_P)\in\beta(T_\OO(B))$, proving the claim and completing the proof of the theorem.
	\end{proof}
	
	\begin{corollary}
		Let $B\in\Bl(\OO G)$ and let $(\chi_{(P,e)})\in\alpha(T_\OO(B))$. For each $(P,e)\in\BP_\OO(B)$ set
		\begin{equation*}
			\psi_{(P,e)}:=\Res_{C_G(P)}^{I_{(P,e)}}(\chi_{(P,e)})\in R_\K(C_G(P),e).
		\end{equation*}
		Let $(P,e)\in\BP_\OO(B)$ and let $(u,f)\in\BE_\OO(C_G(P),e)$. Then $(P\gp{u},f)\in\BP_\OO(B)$ and
		\begin{equation*}
			\psi_{(P,e)}(usf)=\psi_{(P\gp{u},f)}(s)
		\end{equation*}
		for all $s\in C_G(P\gp{u})_{p'}$.
	\end{corollary}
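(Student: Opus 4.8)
The plan is to reduce to a single trivial source module and then read everything off Lemma~\ref{lem:forppermimpliesisotypy}. Since $\alpha_B$ is a group homomorphism, $T_\OO(B)$ is generated by the classes $[M]$ with $M\in\ltriv{B}$, and both $\psi_{(P,e)}(usf)$ and $\psi_{(P\gp{u},f)}(s)$ are $\Z$-linear in the tuple $(\chi_{(P,e)})$, it suffices to prove the identity when $\chi_{(P,e)}=\chi_{M(P,e)}$ for all $(P,e)\in\BP_\OO(B)$ and a fixed $M\in\ltriv{B}$; then $\psi_{(P,e)}$ is the character of the trivial source $\OO C_G(P)e$-module $N:=\Res_{C_G(P)}^{I_{(P,e)}}M(P,e)$. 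Observe that $(u,f)\in\BE_\OO(C_G(P),e)$ is the same as saying $(\gp{u},f)\in\BP_\OO(C_G(P),e)$: the idempotent $f$ is automatically a block of $\OO C_{C_G(P)}(u)=\OO C_G(P\gp{u})$, and the defining condition $\br_{\gp{u}}^{C_G(P)}(e)\overline{f}=\overline{f}$ is common to both. Hence Lemma~\ref{lem:forppermimpliesisotypy} applies with $U=\gp{u}$ (which is abelian) and $\epsilon=f$, and it delivers at once that $(P\gp{u},f)\in\BP_\OO(B)$ together with an isomorphism of $\OO C_G(P\gp{u})$-modules
\begin{equation*}
	\Res_{C_G(P\gp{u})}^{N_{C_G(P)}(\gp{u},f)}\bigl(N(\gp{u},f)\bigr)\iso\Res_{C_G(P\gp{u})}^{I_{(P\gp{u},f)}}M(P\gp{u},f).
\end{equation*}

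The character of the right-hand module is $\Res_{C_G(P\gp{u})}^{I_{(P\gp{u},f)}}\chi_{M(P\gp{u},f)}=\psi_{(P\gp{u},f)}$, so evaluating at $s$ gives exactly the right-hand side of the claimed identity. It therefore remains to show that the character of the left-hand module at $s\in C_G(P\gp{u})_{p'}$ equals $\psi_{(P,e)}(usf)$. First I would unwind the definition $N(\gp{u},f)=f\cdot\Res_{N_{C_G(P)}(\gp{u},f)}^{N_{C_G(P)}(\gp{u})}N(\gp{u})$ and note that $C_G(P\gp{u})=C_{C_G(P)}(u)\subgp N_{C_G(P)}(\gp{u},f)$ (it centralizes $u$ and fixes the central idempotent $f$ of $\OO C_G(P\gp{u})$), so the left-hand module is simply $f\cdot\Res_{C_G(P\gp{u})}^{N_{C_G(P)}(\gp{u})}N(\gp{u})$, whose value at $s$ is $\chi_{N(\gp{u})}(sf)$.

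To finish, I would apply Theorem~\ref{thm:coherence1} to the trivial source $\OO C_G(P)$-module $N$: its coherent character tuple satisfies $\chi_N(us)=\chi_{N(\gp{u})}(us)$, and because $\chi_{N(\gp{u})}$ lies in a group of the form $R_\K(N_{C_G(P)}(\gp{u})/\gp{u},\cdot)$ it contains $\gp{u}$ in its kernel, so $\chi_{N(\gp{u})}(us)=\chi_{N(\gp{u})}(s)$; thus $\chi_{N(\gp{u})}(s)=\chi_N(us)=\psi_{(P,e)}(us)$ for every $s\in C_G(P\gp{u})_{p'}$. Multiplying this $p'$-element identity by the block $f$ — concretely, invoking the evaluation formula of Proposition~\ref{prop:gendecompvalues} in $C_G(P)$ with the Brauer element $(u,f)$ and in $C_G(P\gp{u})$ with $(1,f)$ — converts it into $\chi_{N(\gp{u})}(sf)=\bigl(f\cdot d_{C_G(P)}^{u}(\psi_{(P,e)})\bigr)(s)=d_{C_G(P)}^{u,f}(\psi_{(P,e)})(s)=\psi_{(P,e)}(usf)$, which combined with the previous paragraph completes the proof.

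The only genuine difficulty is bookkeeping: correctly matching the normalizer $N_{C_G(P)}(\gp{u},f)$ appearing in Lemma~\ref{lem:forppermimpliesisotypy} with the centralizer $C_G(P\gp{u})$, and keeping straight which ambient group each character and each (generalized) decomposition map lives over. There is no conceptual obstacle beyond checking at the outset that the hypotheses of Lemma~\ref{lem:forppermimpliesisotypy} are actually met.
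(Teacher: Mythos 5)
Your argument is correct, but it follows a genuinely different path from the paper's. The paper proves the identity directly for an arbitrary coherent tuple: it applies Condition (\ref{C2}) of Theorem \ref{thm:coherence3} at $(P,e)$, $u$, and $s$ (noting that the constraint $s\in I_{(P\gp{u},f')}$ is automatic since $s\in C_G(P\gp{u})$), obtaining $\psi_{(P,e)}(us)=\sum_{f'}\psi_{(P\gp{u},f')}(s)$ summed over the blocks $f'$ with $(P,e)\nor(P\gp{u},f')$; it then reads this as an equality $d_{C_G(P)}^u(\psi_{(P,e)})=\sum_{f'}d^1_{C_G(P\gp{u})}(\psi_{(P\gp{u},f')})$ of class functions and projects onto the $f$-component. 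No reduction to modules is needed, which is natural since the corollary is stated for arbitrary elements of $\alpha(T_\OO(B))$. You instead reduce by linearity to a single trivial source module $M$ and deduce the identity from the module isomorphism of Lemma \ref{lem:forppermimpliesisotypy} together with Theorem \ref{thm:coherence1} applied to $N=\Res_{C_G(P)}^{I_{(P,e)}}M(P,e)$; your final block-projection step (comparing $d^{1,f}_{C_G(P\gp{u})}$ of one side with $d^{u,f}_{C_G(P)}$ of the other via Proposition \ref{prop:gendecompvalues}) is sound because both class functions vanish off $p'$-elements, so agreement on $p'$-elements upgrades to equality before cutting by $f$. Your route is longer and requires the reduction step, but it exposes the module-theoretic source of the identity — it is literally the character of an isomorphism of $\OO C_G(P\gp{u})$-modules — whereas the paper's route is shorter and stays entirely at the level of the coherence conditions already established.
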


	\begin{proof}
		Since $(u,f)$ is a Brauer element for $\OO C_G(P)$ we have by definition that $u$ is a $p$-element of $C_G(P)$ and $f$ is a block idempotent of $\OO C_{C_G(P)}(u)=\OO C_G(P\gp{u})$, so $(P\gp{u},f)$ is a Brauer pair of $\OO G$. Since $(u,f)$ belongs to $e$ we have $\br_{\gp{u}}^{C_G(P)}(e)\overline{f}=\overline{f}$. Now $\br_{\gp{u}}^{C_G(P)}(e)=\br_{P\gp{u}}^G(e)$, so we see that $(P,e)\nor(P\gp{u},f)$ and in particular $(P\gp{u},f)\in\BP_\OO(B)$.
		
		If $s\in C_G(P\gp{u})_{p'}$ then Condition (\ref{C2}) of Theorem \ref{thm:coherence3} gives
		\begin{equation*}
			\psi_{(P,e)}(us)=\sum_{\substack{f'\in\bli(\OO C_G(P\gp{u}))\\(P,e)\nor(P\gp{u},f')\\s\in I_{(P\gp{u},f')}}}\chi_{(P\gp{u},f')}(s)=\sum_{\substack{f'\in\bli(\OO C_G(P\gp{u}))\\(P,e)\nor(P\gp{u},f')}}\psi_{(P\gp{u},f')}(s).
		\end{equation*}
		It follows that
		\begin{equation*}
			d_{C_G(P)}^u(\psi_{(P,e)})=\sum_{\substack{f'\in\bli(\OO C_G(P\gp{u}))\\(P,e)\nor(P\gp{u},f')}}d_{C_G(P\gp{u})}^1(\psi_{(P\gp{u},f')}).
		\end{equation*}
		Projecting onto the $f$-component of this sum, we obtain
		\begin{equation*}
			d_{C_G(P)}^{u,f}(\psi_{(P,e)})=d_{C_G(P\gp{u})}^1(\psi_{(P\gp{u},f)})
		\end{equation*}
		and therefore $\psi_{(P,e)}(usf)=\psi_{(P\gp{u},f)}(s)$ for all $s\in C_G(P\gp{u})_{p'}$, as desired.
	\end{proof}
	
	Now let $B\in\Bl(\OO G)$ and let $(D,e_D)\in\BP_\OO(B)$ be a maximal $B$-Brauer pair. If $P\subgp D$ write $e_P$ for the unique block idempotent of $\OO C_G(P)$ such that $(P,e_P)\leq(D,e_D)$. Set $\mathcal{F}=\mathcal{F}_{(D,e_D)}(G,B)$ and for each subgroup $P\subgp D$ set
	\begin{equation*}
		I_P:=I_{(P,e_P)}=N_G(P,e_P).
	\end{equation*}
	
	Suppose that $P,Q\subgp D$ and that $\varphi:P\isoto Q$ is an $\mathcal{F}$-isomorphism. Let $g\in G$ be such that $\varphi=c_g$ and ${}^g(P,e_P)=(Q,e_Q)$. Then ${}^gI_P=I_Q$ and conjugation by $g$ induces a group isomorphism
	\begin{equation*}
		{}^g(\cdot):R_\K(I_P/P,e_P)\isoto R_\K(I_Q/Q,e_Q).
	\end{equation*}
	If $h\in G$ is another element such that $\varphi=c_h$ and ${}^h(P,e_P)=(Q,e_Q)$ then $h^{-1}g\in I_P$, hence ${}^g(\cdot)={}^h(\cdot)$ as maps $R_\K(I_P/P,e_P)\to R_\K(I_Q/Q,e_Q)$. In light of this, we obtain a well-defined group isomorphism
	\begin{align*}
		{}^\varphi(\cdot):R_\K(I_P/P,e_P)&\isoto R_\K(I_Q/Q,e_Q)\\
							\chi&\mapsto {}^g\chi
	\end{align*}
	where $g\in G$ is any element such that $\varphi=c_g$ and ${}^g(P,e_P)=(Q,e_Q)$, and we call this map \textit{conjugation by }$\varphi$. Note that if $\varphi:P\isoto Q$ and $\psi:Q\isoto R$ are $\mathcal{F}$-isomorphisms and $\chi\in R_\K(I_P/P,e_P)$ then ${}^\psi({}^\varphi\chi)={}^{\psi\varphi}\chi$.
	
	Now consider the product $\prod_{P\subgp D}R_\K(I_P/P,e_P)$. Say a tuple $(\chi_P)_{P\subgp D}\in \prod_{P\subgp D}R_\K(I_P/P,e_P)$ is $\mathcal{F}$-\textit{fixed} if ${}^\varphi\chi_P=\chi_{\varphi(P)}$ for all $P\subgp D$ and all $\mathcal{F}$-isomorphisms $\varphi:P\isoto Q$. The subset $\left(\prod_{P\subgp D}R_\K(I_P/P,e_P)\right)^\mathcal{F}$ of $\mathcal{F}$-fixed tuples forms a subgroup.
	
	\begin{proposition}\label{prop:pi}
		The canonical projection
		\begin{equation*}
			\pi:\prod_{(P,e)\in\BP_\OO(B)}R_\K(I_{(P,e)}/P,e)\onto\prod_{P\subgp D}R_\K(I_P/P,e_P)
		\end{equation*}
		restricts to a group isomorphism
		\begin{equation*}
			\left(\prod_{(P,e)\in\BP_\OO(B)}R_\K(I_{(P,e)}/P,e)\right)^G\isoto \left(\prod_{P\subgp D}R_\K(I_P/P,e_P)\right)^\mathcal{F}.
		\end{equation*}
	\end{proposition}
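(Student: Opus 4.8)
The plan is to exploit the fact that, below the fixed maximal pair $(D,e_D)$, the $G$-conjugacy of $B$-Brauer pairs is controlled entirely by $\mathcal{F}$: every $(Q,f)\in\BP_\OO(B)$ is $G$-conjugate to some $(P,e_P)$ with $P\subgp D$, and two pairs $(P,e_P),(P',e_{P'})$ with $P,P'\subgp D$ are $G$-conjugate precisely when there is an $\mathcal{F}$-isomorphism $P\isoto P'$. First I would record this dictionary carefully. Given $(Q,f)\in\BP_\OO(B)$, choose a maximal $B$-Brauer pair containing it; by transitivity of $G$ on maximal $B$-Brauer pairs \cite[IV, Theorem 2.20]{Aschbacher_2011} and uniqueness of the pair contained in a given one \cite[Part IV, Theorem 2.10]{Aschbacher_2011}, there exist $g\in G$ and $P\subgp D$ with ${}^g(P,e_P)=(Q,f)$; conjugation by such a $g$ carries $I_P=N_G(P,e_P)$ onto $I_{(Q,f)}=N_G(Q,f)$ and hence induces the isomorphism ${}^g(\cdot):R_\K(I_P/P,e_P)\isoto R_\K(I_{(Q,f)}/Q,f)$ appearing in the $G$-action. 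Moreover, when $P'\subgp D$ and ${}^g(P,e_P)=(P',e_{P'})$, the relation $(P',e_{P'})\leq(D,e_D)$ means exactly that $c_g|_P:P\isoto P'$ is an $\mathcal{F}$-isomorphism, by the characterization of $\mathcal{F}$-isomorphisms recalled above.

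Next I would verify that $\pi$ carries $G$-fixed tuples into $\mathcal{F}$-fixed tuples. If $(\chi_{(P,e)})$ is $G$-fixed then $\pi(\chi_{(P,e)})=(\chi_{(P,e_P)})_{P\subgp D}$, and for an $\mathcal{F}$-isomorphism $\varphi=c_g:P\isoto Q$ with ${}^g(P,e_P)=(Q,e_Q)$ one computes ${}^\varphi\chi_{(P,e_P)}={}^g\chi_{(P,e_P)}=\chi_{{}^g(P,e_P)}=\chi_{(Q,e_Q)}$, using $G$-invariance. Injectivity of the restricted map is then immediate: if $(\chi_{(P,e)})$ is $G$-fixed with $\chi_{(P,e_P)}=0$ for all $P\subgp D$, then for any $(Q,f)={}^g(P,e_P)$ we get $\chi_{(Q,f)}={}^g\chi_{(P,e_P)}=0$, so the whole tuple vanishes.

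For surjectivity I would construct the inverse by hand. Starting from an $\mathcal{F}$-fixed tuple $(\psi_P)_{P\subgp D}$, I set $\chi_{(Q,f)}:={}^g\psi_P$ whenever $g\in G$ and $P\subgp D$ satisfy ${}^g(P,e_P)=(Q,f)$. The crux is independence of the choices: if also ${}^h(P',e_{P'})=(Q,f)$ then ${}^{h^{-1}g}(P,e_P)=(P',e_{P'})$, so $c_{h^{-1}g}:P\isoto P'$ is an $\mathcal{F}$-isomorphism, and $\mathcal{F}$-invariance of $(\psi_P)$ gives ${}^{h^{-1}g}\psi_P={}^{c_{h^{-1}g}}\psi_P=\psi_{P'}$, whence ${}^g\psi_P={}^h\psi_{P'}$. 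Granting well-definedness, the rest is bookkeeping: taking $g=1$ shows $\chi_{(P,e_P)}=\psi_P$ for $P\subgp D$, so $\pi$ sends $(\chi_{(Q,f)})$ to $(\psi_P)$; and if $(Q,f)={}^g(P,e_P)$ then ${}^x(Q,f)={}^{xg}(P,e_P)$ yields $\chi_{{}^x(Q,f)}={}^{xg}\psi_P={}^x\chi_{(Q,f)}$, so the constructed tuple is $G$-fixed. I expect the only genuine work, and thus the main obstacle, to be the well-definedness step, i.e.\ nailing down precisely the equivalence between $G$-conjugacy of $B$-Brauer pairs lying below $(D,e_D)$ and $\mathcal{F}$-isomorphism of subgroups of $D$, together with the compatibility of the two conjugation actions on the character groups $R_\K(I_P/P,e_P)$; everything else follows formally.
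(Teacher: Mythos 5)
Your proposal is correct and follows essentially the same route as the paper's proof: check that $\pi$ sends $G$-fixed tuples to $\mathcal{F}$-fixed tuples, deduce injectivity from the fact that every $B$-Brauer pair is $G$-conjugate to one below $(D,e_D)$, and construct the inverse by transporting $\psi_P$ along a chosen conjugation, with well-definedness resting on the equivalence between $G$-conjugacy of pairs under $(D,e_D)$ and $\mathcal{F}$-isomorphism. The only difference is cosmetic (you write ${}^g(P,e_P)=(Q,f)$ where the paper uses ${}^g(Q,f)\leq(D,e_D)$, i.e., the inverse element).
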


	\begin{proof}
		Let $(\chi_{(P,e)})\in \left(\prod_{(P,e)\in\BP_\OO(B)}R_\K(I_{(P,e)}/P,e)\right)^G$. For each $P\subgp D$ set $\chi_P:=\chi_{(P,e_P)}$. So then $\pi((\chi_{(P,e)}))=(\chi_P)_{P\subgp D}$. Now let $P,Q\subgp D$ and let $\varphi:P\isoto Q$ be an $\mathcal{F}$-isomorphism. Say $g\in G$ is such that $\varphi=c_g$ and ${}^g(P,e_P)=(Q,e_Q)$. Then
		\begin{equation*}
			{}^\varphi\chi_P={}^g\chi_P={}^g\chi_{(P,e_P)}=\chi_{{}^g(P,e_P)}=\chi_{(Q,e_Q)}=\chi_Q=\chi_{\varphi(P)},
		\end{equation*}
		so $\pi$ restricts to a group homomorphism from $\left(\prod_{(P,e)\in\BP_\OO(B)}R_\K(I_{(P,e)}/P,e)\right)^G$ to $\left(\prod_{P\subgp D}R_\K(I_P/P,e_P)\right)^\mathcal{F}$. 
		
		Suppose that $(\chi_{(P,e)})\in \left(\prod_{(P,e)\in\BP_\OO(B)}R_\K(I_{(P,e)}/P,e)\right)^G$ is such that $\pi((\chi_{(P,e)}))=0$. Then $\chi_{(P,e_P)}=0$ for all $P\subgp D$. Now if $(P,e)$ is any $B$-Brauer pair then there exists some element $g\in G$ such that ${}^g(P,e)\leq(D,e_D)$. Then $0=\chi_{{}^g(P,e)}={}^g\chi_{(P,e)}$, and it follows that $\chi_{(P,e)}=0$. This shows that the restriction of $\pi$ to $\left(\prod_{(P,e)\in\BP_\OO(B)}R_\K(I_{(P,e)}/P,e)\right)^G$ is injective. 
		
		It remains to show that $\pi$ maps $\left(\prod_{(P,e)\in\BP_\OO(B)}R_\K(I_{(P,e)}/P,e)\right)^G$ onto $\left(\prod_{P\subgp D}R_\K(I_P/P,e_P)\right)^\mathcal{F}$. Let $(\chi_P)\in \left(\prod_{P\subgp D}R_\K(I_P/P,e_P)\right)^\mathcal{F}$. For each $(P,e)\in\BP_\OO(B)$ choose an element $g\in G$ such that ${}^g(P,e)\leq(D,e_D)$. Then set
		\begin{equation*}
			\chi_{(P,e)}:={}^{g^{-1}}\chi_{{}^gP}\in R_\K(I_{(P,e)}/P,e).
		\end{equation*}
		Note that the definition of $\chi_{(P,e)}$ does not depend on the choice of $g$: indeed, if $h\in G$ is another element such that ${}^h(P,e)\leq(D,e_D)$ then $\varphi=c_{gh^{-1}}:{}^hP\isoto {}^gP$ is an $\mathcal{F}$-isomorphism, hence ${}^{gh^{-1}}\chi_{{}^hP}={}^\varphi\chi_{{}^hP}=\chi_{{}^gP}$ and therefore ${}^{h^{-1}}\chi_{{}^hP}={}^{g^{-1}}\chi_{{}^gP}$. Observe that the tuple $(\chi_{(P,e)})\in \prod_{(P,e)\in\BP_\OO(B)}R_\K(I_{(P,e)}/P,e)$ just defined is $G$-fixed: to see this, let $(P,e)\in\BP_\OO(B)$ and let $g\in G$. Choose an element $h\in G$ such that ${}^h(P,e)\leq(D,e_D)$. Then $\chi_{(P,e)}={}^{h^{-1}}\chi_{{}^hP}$ by definition. Now ${}^{hg^{-1}}({}^g(P,e))\leq(D,e_D)$, so $\chi_{{}^g(P,e)}={}^{gh^{-1}}\chi_{{}^{hg^{-1}g}P}={}^{gh^{-1}}\chi_{{}^hP}$. It follows that
		\begin{equation*}
			{}^g\chi_{(P,e)}={}^g({}^{h^{-1}}\chi_{{}^hP})=\chi_{{}^g(P,e)}.
		\end{equation*}
		So the tuple $(\chi_{(P,e)})$ is $G$-fixed. Finally, note that if $P\subgp D$ then $\chi_{(P,e_P)}=\chi_P$, so that $\pi((\chi_{(P,e)}))=(\chi_P)$. The proof is complete.
	\end{proof}
	
	Let
	\begin{equation*}
		\delta_B:=\delta:T_\OO(B)\to\left(\prod_{P\subgp D}R_\K(I_P/P,e_P)\right)^\mathcal{F}
	\end{equation*}
	denote the composite of the maps $\alpha$ and $\pi$ --- in other words, $\delta$ is the homomorphism making the diagram below commute:
	\[\begin{tikzcd}
		{\left(\prod_{(P,e)\in\BP_\OO(B)}R_\K(I_{(P,e)}/P,e)\right)^G} & {\left(\prod_{P\subgp D}R_\K(I_P/P,e_P)\right)^\mathcal{F}} \\
		{T_\OO(B)}
		\arrow["\alpha", from=2-1, to=1-1]
		\arrow["\pi", from=1-1, to=1-2]
		\arrow["\delta"', dashed, from=2-1, to=1-2]
	\end{tikzcd}\]
	Note that if $M\in\ltriv{B}$ then
	\begin{equation*}
		\delta([M])=(\chi_{M(P,e_P)})_{P\subgp D}
	\end{equation*}
	where $\chi_{M(P,e_P)}$ denotes the character of the trivial source $\OO I_Pe_P$-module $M(P,e_P)=e_P\Res_{I_P}^{N_G(P)}M(P)$.
	
	\begin{theorem}\label{thm:coherence4}
		Let $B\in\Bl(\OO G)$ and let $(D,e_D)\in\BP_\OO(B)$ be a maximal $B$-Brauer pair. If $P\subgp D$ write $e_P$ for the unique block idempotent of $\OO C_G(P)$ such that $(P,e_P)\leq(D,e_D)$ and set $I_P=N_G(P,e_P)$. Let $\mathcal{F}=\mathcal{F}_{(D,e_D)}(G,B)$. The image of $T_\OO(B)$ under the map
		\begin{align*}
			\delta_B:=\delta:T_\OO(B)&\into\left(\prod_{P\subgp D}R_\K(I_P/P,e_P)\right)^\mathcal{F}\\
							[M]&\mapsto (\chi_{M(P,e_P)})_{P\subgp D}\qquad\qquad M\in\ltriv{B}
		\end{align*}
		is equal to the subgroup of tuples $(\chi_P)\in\left(\prod_{P\subgp D}R_\K(I_P/P,e_P)\right)^\mathcal{F}$ that satisfy
		\begin{equation}\label{C3}\tag{C3}
		\left\{
		\begin{gathered}
			\chi_P(us\epsilon_{\gp{u}})=\Ind_{C_{I_P\cap I_{P\gp{u}}}(u)}^{C_{I_P}(u)}(\Res_{C_{I_P\cap I_{P\gp{u}}}(u)}^{I_{P\gp{u}}}(\chi_{P\gp{u}}))(s)\\
			\text{for all }P\subgp D\text{, }u\in N_D(P)\text{, and }s\in C_{I_P}(u)_{p'}\text{,}\\
			\text{where }\epsilon_{\gp{u}}\text{ is the unique block of }\OO C_{I_P}(u)\text{ covering }e_{P\gp{u}}.
		\end{gathered}
		\right.
		\end{equation}
	\end{theorem}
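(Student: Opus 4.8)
The plan is to reduce everything to Theorem~\ref{thm:coherence3} by means of Proposition~\ref{prop:pi}. Since $\delta=\pi\circ\alpha$ and $\pi$ restricts to an isomorphism between the group of $G$-fixed tuples and the group of $\mathcal{F}$-fixed tuples, $\delta$ is injective and $\delta(T_\OO(B))=\pi(\alpha(T_\OO(B)))$, where $\alpha(T_\OO(B))$ is, by Theorem~\ref{thm:coherence3}, exactly the group of $G$-fixed tuples satisfying Condition~(\ref{C1}). Given an $\mathcal{F}$-fixed tuple $(\chi_P)_{P\subgp D}$, let $(\widehat{\chi}_{(P,e)})_{(P,e)\in\BP_\OO(B)}$ be its unique $G$-fixed preimage under $\pi$; concretely $\widehat{\chi}_{(P,e_P)}=\chi_P$ for $P\subgp D$, and $\widehat{\chi}_{(P,e)}={}^{g^{-1}}\chi_{{}^gP}$ whenever ${}^g(P,e)\leq(D,e_D)$. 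Thus it suffices to prove that $(\widehat{\chi}_{(P,e)})$ satisfies~(\ref{C1}) if and only if $(\chi_P)$ satisfies~(\ref{C3}).

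For the ``only if'' direction I would fix $P\subgp D$, $u\in N_D(P)$ and $s\in C_{I_P}(u)_{p'}$. Because $(P,e_P)\leq(P\gp{u},e_{P\gp{u}})\leq(D,e_D)$ and $P\nor P\gp{u}$, one gets a normal containment $(P,e_P)\nor(P\gp{u},e_{P\gp{u}})$; since $\br_{P\gp{u}}(e_P)=\br_{\gp{u}}^{I_P}(e_P)$ and this idempotent is central and $C_{I_P}(u)$-fixed in $FC_{I_P}(u)$, a short trace computation gives $\br_{\gp{u}}^{I_P}(e_P)\cdot\overline{\epsilon_{\gp{u}}}=\overline{\epsilon_{\gp{u}}}$, i.e.\ $(u,\epsilon_{\gp{u}})\in\BE_\OO(I_P,e_P)$, and of course $\epsilon_{\gp{u}}\cdot e_{P\gp{u}}\neq0$. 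Applying Condition~(\ref{C1}) to the $B$-Brauer pair $(P,e_P)$ with this Brauer element and with $f=e_{P\gp{u}}$, and using $I_{(P\gp{u},e_{P\gp{u}})}=I_{P\gp{u}}$, $\widehat{\chi}_{(P,e_P)}=\chi_P$ and $\widehat{\chi}_{(P\gp{u},e_{P\gp{u}})}=\chi_{P\gp{u}}$ together with Lemma~\ref{lem:brauereltnormalizer}(b) (which identifies the subgroup $C_{I_P\cap I_{P\gp{u}}}(u)$), yields exactly the identity~(\ref{C3}).

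For the ``if'' direction I would make two preliminary observations. First, Condition~(\ref{C1}) is $G$-equivariant in $(P,e)$: via Corollary~\ref{cor:conjandgendecomp} and the commutation of $\Ind$, $\Res$ and conjugation, it holds for $(P,e)$ with all admissible data precisely when it holds for every $G$-conjugate pair; since every $B$-Brauer pair is $G$-conjugate to one of the form $(P^{\ast},e_{P^{\ast}})$ with $P^{\ast}\subgp D$ fully $\mathcal{F}$-normalized (conjugate first into $(D,e_D)$, then along an $\mathcal{F}$-isomorphism to a fully normalized subgroup, using saturation of $\mathcal{F}$), it is enough to verify~(\ref{C1}) for such pairs. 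Second, for a fixed Brauer pair and Brauer element the right-hand side of~(\ref{C1}) does not depend on the choice of $f$ with $\epsilon f\neq0$: by Lemma~\ref{lem:brauereltnormalizer}(a) any two such $f$ are $C_{I_{(P,e)}}(u)$-conjugate, and the $G$-invariance of $(\widehat{\chi}_{(P,e)})$ together with the conjugation formulas makes the two right-hand sides agree. Now for $P^{\ast}$ fully $\mathcal{F}$-normalized, $(N_D(P^{\ast}),e_{N_D(P^{\ast})})$ is a maximal $\OO I_{P^{\ast}}e_{P^{\ast}}$-Brauer pair by Lemma~\ref{lem:normalizerbrauerpairs}(a), so any $(u,\epsilon)\in\BE_\OO(I_{P^{\ast}},e_{P^{\ast}})$ is $I_{P^{\ast}}$-conjugate to a Brauer element $(u',\epsilon_{\gp{u'}})$ with $u'\in N_D(P^{\ast})$, where $\epsilon_{\gp{u'}}$ is identified by Lemma~\ref{lem:normalizerbrauerpairs}(b). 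Condition~(\ref{C3}) for $P^{\ast}$ and $u'$ is precisely Condition~(\ref{C1}) for $(P^{\ast},e_{P^{\ast}})$, $(u',\epsilon_{\gp{u'}})$ and $f=e_{P^{\ast}\gp{u'}}$; by $f$-independence it then holds for all $f$, and conjugating back by the relevant element of $I_{P^{\ast}}$ gives~(\ref{C1}) for $(P^{\ast},e_{P^{\ast}})$ with $(u,\epsilon)$ and all $f$, as required.

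The conceptual content is carried entirely by Theorem~\ref{thm:coherence3}, Proposition~\ref{prop:pi}, and Lemmas~\ref{lem:brauereltnormalizer} and~\ref{lem:normalizerbrauerpairs}; I expect the only real work to lie in the ``if'' direction --- making precise the $f$-independence of~(\ref{C1}) and checking the compatibility of the passage to a fully $\mathcal{F}$-normalized representative with the various subgroups $C_{I_P\cap I_{P\gp{u}}}(u)$ occurring in the formulas --- rather than in anything genuinely difficult.
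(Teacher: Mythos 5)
Your proposal is correct and follows essentially the same route as the paper: reduce to Theorem \ref{thm:coherence3} via the isomorphism $\pi$ of Proposition \ref{prop:pi}, get (C3) from (C1) by specializing to $(u,\epsilon_{\gp{u}})$ and $f=e_{P\gp{u}}$ using Lemmas \ref{lem:brauereltnormalizer} and \ref{lem:normalizerbrauerpairs}, and recover (C1) from (C3) by first treating fully $\mathcal{F}$-normalized $P$ and conjugating a general Brauer element into $(N_D(P),e_{N_D(P)})$. The only cosmetic difference is that you isolate the independence of the right-hand side of (C1) from the choice of $f$ as a separate observation, whereas the paper absorbs it into the conjugation computation.
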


	\begin{proof}
		For ease, let $A$ denote the subgroup of tuples $(\chi_P)\in\left(\prod_{P\subgp D}R_\K(I_P/P,e_P)\right)^\mathcal{F}$ that satisfy Condition (\ref{C3}). We need to show $\delta(T_\OO(B))=A$. By definition of $\delta$, this means we need to show that $\pi(\alpha(T_\OO(B)))=A$. Let $(\chi_{(P,e)})\in\alpha(T_\OO(B))$. Then the tuple $(\chi_{(P,e)})$ satisfies Condition (\ref{C1}) of Theorem \ref{thm:coherence3}. Let $P\subgp D$, $u\in N_D(P)$, and $s\in C_{I_P}(u)_{p'}$. Set $\epsilon_{\gp{u}}=\tr_{C_{I_P\cap I_{P\gp{u}}}(u)}^{C_{I_P}(u)}(e_{P\gp{u}})$. Then by Lemma \ref{lem:normalizerbrauerpairs}, $(u,\epsilon_{\gp{u}})\in\BE_\OO(I_P,e_P)$. Note that $e_{P\gp{u}}\in\bli(\OO C_G(P\gp{u}))$ satisfies $\epsilon_{\gp{u}}\cdot e_{P\gp{u}}\neq 0$. So by Condition (\ref{C1}),
		\begin{equation*}
			\chi_{(P,e_P)}(us\epsilon_{\gp{u}})=\Ind_{C_{I_P\cap I_{P\gp{u}}}(u)}^{C_{I_P}(u)}(\Res_{C_{I_P\cap I_{P\gp{u}}}(u)}^{I_{P\gp{u}}}(\chi_{(P\gp{u},e_{P\gp{u}})}))(s).
		\end{equation*}
		This shows that $\pi((\chi_{(P,e)}))\in A$, hence $\delta(T_\OO(B))\subseteq A$.
		
		It remains to show that $A\subseteq\delta(T_\OO(B))$. Since $\delta(T_\OO(B))=\pi(\alpha(T_\OO(B)))$ and $\pi$ restricts to an isomorphism 
		\begin{equation*}
			\left(\prod_{(P,e)\in\BP_\OO(B)}R_\K(I_{(P,e)}/P,e)\right)^G\isoto \left(\prod_{P\subgp D}R_\K(I_P/P,e_P)\right)^\mathcal{F}.
		\end{equation*}
		it is enough to show that $\pi^{-1}(A)\subseteq\alpha(T_\OO(B))$. Let $(\chi_P)\in A$. Recall from the proof of Proposition \ref{prop:pi} how $\pi^{-1}((\chi_P))$ is defined: let $(P,e)\in\BP_\OO(B)$ and choose an element $g\in G$ such that ${}^g(P,e)\leq(D,e_D)$. Set $\chi_{(P,e)}:={}^{g^{-1}}\chi_{{}^gP}\in R_\K(I_{(P,e)}/P,e)$. Then the definition of $\chi_{(P,e)}$ does not depend on the choice of $g$, and $\pi^{-1}((\chi_P))=(\chi_{(P,e)})_{(P,e)\in\BP_\OO(B)}$. We need to show $(\chi_{(P,e)})\in\alpha(T_\OO(B))$. To achieve this, we show that the tuple $(\chi_{(P,e)})$ satisfies Condition (\ref{C1}) of Theorem \ref{thm:coherence3}.
		
		Let $P$ be a fully $\mathcal{F}$-normalized subgroup of $D$. Let $(u,\epsilon)\in\BE_\OO(I_P,e_P)$, let $s\in C_{I_P}(u)_{p'}$, and let $f\in\bli(\OO C_G(P\gp{u}))$ be such that $\epsilon\cdot f\neq 0$. Note that $(N_D(P),e_{N_D(P)})$ is a maximal $\OO I_Pe_P$-Brauer pair by Lemma \ref{lem:normalizerbrauerpairs}. Therefore there exists an element $g\in I_P$ such that ${}^g(\gp{u},\epsilon)\leq(N_D(P),e_{N_D(P)})$. By Lemma \ref{lem:normalizerbrauerpairs} and the uniqueness of Brauer pairs it follows that ${}^g\epsilon=\epsilon_{\gp{{}^gu}}$. Since $\epsilon\cdot f\neq 0$ we have that $\epsilon_{\gp{{}^gu}}\cdot{}^gf\neq 0$, hence ${}^gf$ is $C_{I_P}({}^gu)$-conjugate to $e_{P\gp{{}^gu}}$. Let $h\in C_{I_P}({}^gu)$ be such that ${}^{hg}f=e_{P\gp{{}^gu}}$. Now by Condition (\ref{C3}) we have that
		\begin{equation*}
			\chi_P({}^gu{}^gs\epsilon_{\gp{{}^gu}})=\Ind_{C_{I_P\cap I_{P\gp{{}^gu}}}({}^gu)}^{C_{I_P}({}^gu)}(\Res_{C_{I_P\cap I_{P\gp{{}^gu}}}({}^gu)}^{I_{P\gp{{}^gu}}}(\chi_{P\gp{{}^gu}}))({}^gs).
		\end{equation*}
		Note that the left hand side of the above equation can be rewritten:
		\begin{equation*}
			\chi_P({}^gu{}^gs\epsilon_{\gp{{}^gu}})=\chi_P({}^g(us\epsilon))=\chi_P(us\epsilon)
		\end{equation*}
		since $g\in I_P$. The right hand side can also be rewritten: since ${}^{hg}(P\gp{u},f)=(P\gp{{}^gu},e_{P\gp{{}^gu}})$ we have $\chi_{P\gp{{}^gu}}={}^{hg}\chi_{(P\gp{u},f)}$, and therefore
		\begin{align*}
			\Ind_{C_{I_P\cap I_{P\gp{{}^gu}}}({}^gu)}^{C_{I_P}({}^gu)}(&\Res_{C_{I_P\cap I_{P\gp{{}^gu}}}({}^gu)}^{I_{P\gp{{}^gu}}}(\chi_{P\gp{{}^gu}}))({}^gs)\\
			&=\Ind_{C_{I_P\cap I_{P\gp{{}^gu}}}({}^gu)}^{C_{I_P}({}^gu)}(\Res_{C_{I_P\cap I_{P\gp{{}^gu}}}({}^gu)}^{I_{P\gp{{}^gu}}}({}^{hg}\chi_{(P\gp{u},f)}))({}^gs)\\
			&={}^{hg}\Ind_{C_{I_P\cap I_{(P\gp{u},f)}}(u)}^{C_{I_P}(u)}(\Res_{C_{I_P\cap I_{(P\gp{u},f)}}(u)}^{I_{(P\gp{u},f)}}(\chi_{(P\gp{u},f)}))({}^gs)\\
			&={}^{gg^{-1}hg}\Ind_{C_{I_P\cap I_{(P\gp{u},f)}}(u)}^{C_{I_P}(u)}(\Res_{C_{I_P\cap I_{(P\gp{u},f)}}(u)}^{I_{(P\gp{u},f)}}(\chi_{(P\gp{u},f)}))({}^gs)\\
			&={}^{g}\Ind_{C_{I_P\cap I_{(P\gp{u},f)}}(u)}^{C_{I_P}(u)}(\Res_{C_{I_P\cap I_{(P\gp{u},f)}}(u)}^{I_{(P\gp{u},f)}}(\chi_{(P\gp{u},f)}))({}^gs)\\
			&=\Ind_{C_{I_P\cap I_{(P\gp{u},f)}}(u)}^{C_{I_P}(u)}(\Res_{C_{I_P\cap I_{(P\gp{u},f)}}(u)}^{I_{(P\gp{u},f)}}(\chi_{(P\gp{u},f)}))(s).
		\end{align*}
		Thus we have
		\begin{equation*}
			\chi_P(us\epsilon)=\Ind_{C_{I_P\cap I_{(P\gp{u},f)}}(u)}^{C_{I_P}(u)}(\Res_{C_{I_P\cap I_{(P\gp{u},f)}}(u)}^{I_{(P\gp{u},f)}}(\chi_{(P\gp{u},f)}))(s).
		\end{equation*}
	
		Now let $(P,e)\in\BP_\OO(B)$, let $(u,\epsilon)\in\BE_\OO(I_{(P,e)},e)$, let $s\in C_{I_{(P,e)}}(u)_{p'}$, and let $f\in\bli(\OO C_G(P\gp{u}))$ be such that $\epsilon\cdot f\neq 0$. We need to show that
		\begin{equation*}
			\chi_{(P,e)}(us\epsilon)=\Ind_{C_{I_{(P,e)}\cap I_{(P\gp{u},f)}}(u)}^{C_{I_{(P,e)}}(u)}(\Res_{C_{I_{(P,e)}\cap I_{(P\gp{u},f)}}(u)}^{I_{(P\gp{u},f)}}(\chi_{(P\gp{u},f)}))(s).
		\end{equation*}
		Let $g\in G$ be such that ${}^g(P,e)\leq(D,e_D)$. Since any subgroup of $D$ is $\mathcal{F}$-conjugate to a fully $\mathcal{F}$-normalized subgroup of $D$ we can assume that ${}^gP$ is fully $\mathcal{F}$-normalized. Recall that $\chi_{(P,e)}={}^{g^{-1}}\chi_{{}^gP}$. Now $({}^gu,{}^g\epsilon)\in\BE_\OO(I_{{}^gP},e_{{}^gP})$, ${}^gs\in C_{I_{{}^gP}}({}^gu)_{p'}$, and ${}^gf$ is a block idempotent of $\OO C_G({}^gP\gp{{}^gu})$ satisfying ${}^g\epsilon\cdot {}^gf\neq 0$. So by the previous paragraph we have
		\begin{equation*}
			\chi_{{}^gP}({}^g(us\epsilon))=\Ind_{C_{I_{{}^gP}\cap I_{({}^gP\gp{{}^gu},{}^gf)}}({}^gu)}^{C_{I_{{}^gP}}({}^gu)}(\Res_{C_{I_{{}^gP}\cap I_{({}^gP\gp{{}^gu},{}^gf)}}({}^gu)}^{I_{({}^gP\gp{{}^gu},{}^gf)}}(\chi_{({}^gP\gp{{}^gu},{}^gf)}))({}^gs).
		\end{equation*}
		The left hand side of this equation can be rewritten:
		\begin{equation*}
			\chi_{{}^gP}({}^g(us\epsilon))={}^{g^{-1}}\chi_{{}^gP}(us\epsilon)=\chi_{(P,e)}(us\epsilon).
		\end{equation*}
		Since $\chi_{({}^gP\gp{{}^gu},{}^gf)}=\chi_{{}^g(P\gp{u},f)}={}^g\chi_{(P\gp{u},f)}$ the right hand side can also be rewritten:
		\begin{align*}
			\Ind_{C_{I_{{}^gP}\cap I_{({}^gP\gp{{}^gu},{}^gf)}}({}^gu)}^{C_{I_{{}^gP}}({}^gu)}(&\Res_{C_{I_{{}^gP}\cap I_{({}^gP\gp{{}^gu},{}^gf)}}({}^gu)}^{I_{({}^gP\gp{{}^gu},{}^gf)}}(\chi_{({}^gP\gp{{}^gu},{}^gf)}))({}^gs)\\
			&=\Ind_{C_{I_{{}^gP}\cap I_{({}^gP\gp{{}^gu},{}^gf)}}({}^gu)}^{C_{I_{{}^gP}}({}^gu)}(\Res_{C_{I_{{}^gP}\cap I_{({}^gP\gp{{}^gu},{}^gf)}}({}^gu)}^{I_{({}^gP\gp{{}^gu},{}^gf)}}({}^g\chi_{(P\gp{u},f)}))({}^gs)\\
			&={}^g\Ind_{C_{I_{(P,e)}\cap I_{(P\gp{u},f)}}(u)}^{C_{I_{(P,e)}}(u)}(\Res_{C_{I_{(P,e)}\cap I_{(P\gp{u},f)}}(u)}^{I_{(P\gp{u},f)}}(\chi_{(P\gp{u},f)}))({}^gs)\\
			&=\Ind_{C_{I_{(P,e)}\cap I_{(P\gp{u},f)}}(u)}^{C_{I_{(P,e)}}(u)}(\Res_{C_{I_{(P,e)}\cap I_{(P\gp{u},f)}}(u)}^{I_{(P\gp{u},f)}}(\chi_{(P\gp{u},f)}))(s).
		\end{align*}
		Thus we have
		\begin{equation*}
			\chi_{(P,e)}(us\epsilon)=\Ind_{C_{I_{(P,e)}\cap I_{(P\gp{u},f)}}(u)}^{C_{I_{(P,e)}}(u)}(\Res_{C_{I_{(P,e)}\cap I_{(P\gp{u},f)}}(u)}^{I_{(P\gp{u},f)}}(\chi_{(P\gp{u},f)}))(s).
		\end{equation*}
		This shows that the tuple $(\chi_{(P,e)})$ satisfies Condition (\ref{C1}) of Theorem \ref{thm:coherence3}, and hence that $(\chi_{(P,e)})\in\alpha(T_\OO(B))$. By definition of $\delta$ we obtain that $(\chi_P)=\pi((\chi_{(P,e)}))\in\delta(T_\OO(B))$, and since $(\chi_P)$ was an arbitrary element of $A$ it follows that $A\subseteq\delta(T_\OO(B))$. The proof is complete.
	\end{proof}
	
	\section{Coherence conditions for trivial source bimodules with twisted diagonal vertices}
	
	Throughout this section $G$ and $H$ denote finite groups and $(\K,\OO,F)$ is a $p$-modular system large enough for $G\times H$. We follow the conventions set up in \cite{Boltje_2020}. In particular, if $R$ is a commutative ring and $M$ is an $(RG,RH)$-bimodule we always assume that the induced left and right $R$-module structures on $M$ coincide. Any $(RG,RH)$-bimodule $M$ may be viewed as a left $R[G\times H]$-module by defining $(g,h)m=gmh^{-1}$, and vice versa. One obtains an isomorphism of categories $\bimod{RG}{RH}\iso\lmod{R[G\times H]}$ in this way. We also identify $R[G\times H]$ with $(RG)\tensor_R(RH)$ via the isomorphism $(g,h)\mapsto g\tensor h$. If $e\in Z(RG)$ and $f\in Z(RH)$ are idempotents then an $(RGe,RHf)$-bimodule $M$ is the same thing as a left $R[G\times H](e\tensor f^\ast)$-module with these conventions. Here $f^\ast$ denotes the image of $f$ under the antipode $(-)^\ast:RH\to RH$, $h\mapsto h^{-1}$. 
	
	Set $T_\OO(G,H):=T(\OO G,\OO H):=T(\OO[G\times H])$. More generally, if $e\in Z(\OO G)$ and $f\in Z(\OO H)$ are idempotents, set
	\begin{equation*}
		T(\OO Ge,\OO Hf):=T(\OO[G\times H](e\tensor f^\ast)).
	\end{equation*}
	
	Let $e\in Z(\OO G)$ and $f\in Z(\OO H)$ be idempotents. Let $T^\Delta(\OO Ge,\OO Hf)=T^\Delta(\OO[G\times H](e\tensor f^\ast))$ denote the subgroup of $T(\OO Ge,\OO Hf)$ spanned by the standard basis elements $[M]$ where $M$ is an indecomposable trivial source $\OO[G\times H](e\tensor f^\ast)$-module with twisted diagonal vertices.
	
	Recall from Section \ref{sec:coherenceconditions} (specifically, Theorem \ref{thm:coherence2}) that we have an injective homomorphism
	\begin{equation*}
		\beta_{G\times H}=\beta:T(\OO Ge,\OO Hf)\into\left(\prod_{P\in S_p(G\times H)}R_\K(N_{G\times H}(P)/P,\br_P(e\tensor f^\ast))\right)^{G\times H}.
	\end{equation*}
	Our next result characterizes the image of $T^\Delta(\OO Ge,\OO Hf)$ under $\beta$.
	
	\begin{theorem}\label{thm:diagcoherence1}
		Let $G$ and $H$ be finite groups and let $e\in Z(\OO G)$, $f\in Z(\OO H)$ be idempotents. With the notation above,
		\begin{equation*}
			\beta(T^\Delta(\OO Ge,\OO Hf))=\set{(\chi_P)\in\beta(T(\OO Ge,\OO Hf))|\chi_P=0\text{ if }P\notin S_p^\Delta(G\times H)}.
		\end{equation*}
	\end{theorem}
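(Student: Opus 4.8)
The plan is to prove the two inclusions separately, writing $X$ for the right-hand side $\{(\chi_P)\in\beta(T(\OO Ge,\OO Hf))\mid\chi_P=0\text{ if }P\notin S_p^\Delta(G\times H)\}$.

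First, for $\beta(T^\Delta(\OO Ge,\OO Hf))\subseteq X$, I would use the standard support property of indecomposable $p$-permutation modules (see \cite{Boltje_2020}): if $M$ is an indecomposable trivial source $\OO[G\times H]$-module with vertex $P$, then for a $p$-subgroup $Q$ of $G\times H$ the Brauer construction $\overline{M}(Q)$ is nonzero only if $Q$ is $G\times H$-conjugate to a subgroup of $P$, while $\overline{M}(P)$ itself is a nonzero indecomposable projective $F[N_{G\times H}(P)/P]$-module, of which $M(P)$ (inflated to $N_{G\times H}(P)$) is an $\OO$-lift. If such an $M$ has a twisted diagonal vertex $P$ and $Q\in S_p(G\times H)\setminus S_p^\Delta(G\times H)$, then $Q$ is not $G\times H$-conjugate to a subgroup of $P$, because $S_p^\Delta(G\times H)$ is closed under $G\times H$-conjugation and under taking subgroups; hence $\overline{M}(Q)=0$, so $M(Q)=0$ and the $Q$-component $\chi_{M(Q)}$ of $\beta([M])$ vanishes. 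Since the classes $[M]$ with $M$ of this type span $T^\Delta(\OO Ge,\OO Hf)$, linearity gives $\beta(T^\Delta(\OO Ge,\OO Hf))\subseteq X$.

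For the reverse inclusion I would first record that the isomorphism classes of indecomposable trivial source $\OO[G\times H](e\otimes f^\ast)$-modules form a $\Z$-basis of $T(\OO Ge,\OO Hf)$, and that partitioning this basis according to whether or not the vertices are twisted diagonal yields a decomposition $T(\OO Ge,\OO Hf)=T^\Delta(\OO Ge,\OO Hf)\oplus T'$ of abelian groups. Given $(\chi_P)\in X$, injectivity of $\beta$ (Theorem \ref{thm:coherence2}) provides a unique $x\in T(\OO Ge,\OO Hf)$ with $\beta(x)=(\chi_P)$; write $x=x^\Delta+x'$ with $x^\Delta\in T^\Delta(\OO Ge,\OO Hf)$ and $x'\in T'$. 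By the inclusion just proved $\beta(x^\Delta)\in X$, so $\beta(x')=\beta(x)-\beta(x^\Delta)$ lies in $X$ as well — that is, $\beta(x')$ vanishes outside $S_p^\Delta(G\times H)$. It therefore suffices to show $x'=0$, for then $(\chi_P)=\beta(x^\Delta)\in\beta(T^\Delta(\OO Ge,\OO Hf))$.

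Finally, suppose $x'\neq 0$, and write $x'=\sum_M n_M[M]$ over indecomposable trivial source $\OO[G\times H](e\otimes f^\ast)$-modules, each of whose vertices lies outside $S_p^\Delta(G\times H)$, with not all $n_M$ zero. Let $\mathcal{V}\neq\emptyset$ be the set of $G\times H$-conjugacy classes of vertices occurring with $n_M\neq 0$, and pick a representative $P_0$ of a class maximal in $\mathcal{V}$ for the partial order ``$G\times H$-conjugate to a subgroup of''. Consider the $P_0$-component $\chi_{P_0}=\sum_M n_M\chi_{M(P_0)}$ of $\beta(x')$: if $\chi_{M(P_0)}\neq 0$ then $M(P_0)\neq 0$, so $P_0$ is $G\times H$-conjugate to a subgroup of a vertex of $M$, which lies in $\mathcal{V}$; by maximality that vertex must be $G\times H$-conjugate to $P_0$. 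So the sum reduces to the finitely many pairwise non-isomorphic $M$ with vertex conjugate to $P_0$. By the classification of indecomposable trivial source modules, $M\mapsto M(P_0)$ is injective on these with values among the indecomposable projective $\OO[N_{G\times H}(P_0)/P_0]$-modules, so their characters, inflated to $N_{G\times H}(P_0)$, are $\Z$-linearly independent; since $\chi_{P_0}=0$ (as $P_0\notin S_p^\Delta(G\times H)$ and $\beta(x')\in X$), it follows that $n_M=0$ for every $M$ with vertex conjugate to $P_0$, contradicting that the class of $P_0$ belongs to $\mathcal{V}$. Hence $x'=0$. The delicate point is this reduction to $x'$: without first peeling off $x^\Delta$, a twisted diagonal module of large vertex could contribute to small non-diagonal coordinates and spoil the ``maximal vertex'' argument; granting that, the contradiction rests only on the parametrization of indecomposable trivial source modules by vertices and projective modules of normalizer quotients, together with the $\Z$-linear independence of the characters of projective indecomposable modules.
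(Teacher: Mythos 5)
Your proof is correct and follows essentially the same route as the paper's: the forward inclusion via the support property of the Brauer construction, and the reverse inclusion via a maximal-vertex argument resting on the linear independence of the characters of projective indecomposable $\OO[N_{G\times H}(P_0)/P_0]$-modules. The only organizational difference is that you first peel off $x^\Delta$; the paper skips this step because a non-twisted-diagonal $P$ can never be subconjugate to a twisted diagonal vertex (subgroups of twisted diagonal subgroups are themselves twisted diagonal), so modules with twisted diagonal vertices contribute nothing at $P$ in any case --- your extra reduction is harmless but not needed.
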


	\begin{proof}
		First note that the collection of character tuples $(\chi_P)\in\beta(T(\OO Ge,\OO Hf))$ that satisfy $\chi_P=0$ if $P\notin S_p^\Delta(G\times H)$ is a subgroup of $\beta(T(\OO Ge,\OO Hf))$. For ease, let us denote this subgroup by $B$. So we need to show that $\beta(T^\Delta(\OO Ge,\OO Hf))=B$. Let $M$ be an indecomposable trivial source $\OO[G\times H](e\tensor f^\ast)$-module with twisted diagonal vertices (so $[M]$ is a standard basis element in $T^\Delta(\OO Ge,\OO Hf)$). Recall that $\beta([M])=(\chi_{M(P)})_{P\in S_p(G\times H)}$. Let $P\in S_p(G\times H)\setminus S_p^\Delta(G\times H)$ and suppose that $\chi_{M(P)}\neq 0$. Then $M(P)\neq 0$, hence $\overline{M}(P)\neq 0$ also. By \cite[Lemma 3.6(a)]{Boltje_2020} $P$ must be $G\times H$-conjugate to a subgroup of a vertex of $M$. But this implies that $P$ is a twisted diagonal subgroup of $G\times H$, a contradiction. Thus $\chi_{M(P)}=0$. This shows that $\beta([M])\in B$. Since character tuples of the form $\beta([M])$ generate $\beta(T^\Delta(\OO Ge,\OO Hf))$ it follows that $\beta(T^\Delta(\OO Ge,\OO Hf))\subseteq B$. 
		
		It remains to show that $B\subseteq\beta(T^\Delta(\OO Ge,\OO Hf))$. It suffices to show that $\beta^{-1}(B)\subseteq T^\Delta(\OO Ge,\OO Hf)$. Suppose, by way of contradiction, that $\beta^{-1}(B)\not\subseteq T^\Delta(\OO Ge,\OO Hf)$. Let $m\in\beta^{-1}(B)\setminus T^\Delta(\OO Ge,\OO Hf)$. Write $m=\sum a_{[M]}[M]$ where $[M]$ runs over the standard basis elements of $T(\OO Ge,\OO Hf)$ --- i.e., the isomorphism classes of indecomposable trivial source $\OO[G\times H](e\tensor f^\ast)$-modules --- and where $a_{[M]}\in\Z$. Since $m\notin T^\Delta(\OO Ge,\OO Hf)$ there exists a standard basis element $[M]$ such that $a_{[M]}\neq 0$ and $M$ has a vertex that is not twisted diagonal. Choose a $p$-subgroup $P\in S_p(G\times H)\setminus S_p^\Delta(G\times H)$ maximal with respect to the property that $P$ is a vertex of an indecomposable trivial source $\OO[G\times H](e\tensor f^\ast)$-module $M$ with $a_{[M]}\neq 0$. Since $P$ is not a twisted diagonal $p$-subgroup of $G\times H$ and $\beta(m)\in B$ we have that $\sum a_{[M]}\chi_{M(P)}=0$. Now if $M$ is an indecomposable trivial source $\OO[G\times H](e\tensor f^\ast)$-module and $P$ is not $G\times H$-conjugate to a subgroup of a vertex of $M$ then $M(P)=0$ by \cite[Lemma 3.6(a)]{Boltje_2020}. On the other hand, if $P$ is $G\times H$-conjugate to a subgroup of a vertex of $M$ and $a_{[M]}\neq 0$ then $P$ must be a vertex of $M$ by maximality. Thus we have that
		\begin{equation*}
			\sum_{P\in\operatorname{vtx}(M)}a_{[M]}\chi_{M(P)}=0.
		\end{equation*}
		(Note: the sum above is taken over the set of isomorphism classes of indecomposable trivial source $\OO[G\times H](e\tensor f^\ast)$-modules that have $P$ as a vertex.) Recall that the Brauer construction $M\mapsto \overline{M}(P)$ induces a bijection between the set of isomorphism classes of indecomposable trivial source $\OO[G\times H]$-modules with vertex $P$ and the set of isomorphism classes of projective indecomposable $F[N_{G\times H}(P)/P]$-modules (see \cite[Proposition 3.3(c)]{Boltje_2020}). It follows that the ``$\OO$-lifted'' Brauer construction $M\mapsto M(P)$ induces a bijection between the set of isomorphism classes of indecomposable trivial source $\OO[G\times H]$-modules with vertex $P$ and the set of isomorphism classes of projective indecomposable $\OO[N_{G\times H}(P)/P]$-modules. So we see that the equality $\sum_{P\in\operatorname{vtx}(M)}a_{[M]}\chi_{M(P)}=0$ is a nontrivial dependence relation between the characters of the projective indecomposable $\OO[N_{G\times H}(P)/P]$-modules, contradicting the fact that such characters are always $\K$-linearly independent. Therefore we must have $\beta^{-1}(B)\subseteq T^\Delta(\OO Ge,\OO Hf)$, and hence $B\subseteq\beta(T^\Delta(\OO Ge,\OO Hf))$. The proof is complete.
	\end{proof}

	The corollary below follows from Theorems \ref{thm:coherence2} and \ref{thm:diagcoherence1}.

	\begin{corollary}\label{cor:diagcoherencecor1}
		Let $G$ and $H$ be finite groups and let $e\in Z(\OO G)$, $f\in Z(\OO H)$ be idempotents. The image of $T^\Delta(\OO Ge,\OO Hf)$ under $\beta_{G\times H}$ is the subgroup of tuples $(\chi_P)\in\left(\prod_{P\in S_p(G\times H)}R_\K(N_{G\times H}(P)/P,\br_P^{G\times H}(e\tensor f^\ast))\right)^{G\times H}$ such that
		\begin{itemize}
			\item[(1)] $\chi_P(us,vt)=\chi_{P\gp{(u,v)}}(s,t)$ for all $P\in S_p(G\times H)$, all $p$-elements $(u,v)\in N_{G\times H}(P)$, and all $(s,t)\in C_{N_{G\times H}(P)}(u,v)_{p'}$; and
			\item[(2)] $\chi_P=0$ if $P\notin S_p^\Delta(G\times H)$.
		\end{itemize}
	\end{corollary}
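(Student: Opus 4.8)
The plan is to obtain the statement by directly combining Theorems \ref{thm:coherence2} and \ref{thm:diagcoherence1}, so that the argument is essentially bookkeeping. First I would apply Theorem \ref{thm:coherence2} with the finite group $G\times H$ in place of $G$ and the central idempotent $e\tensor f^\ast$ of $\OO[G\times H]$ in place of $e$. Since $T(\OO Ge,\OO Hf) = T_\OO(G\times H, e\tensor f^\ast)$ by definition, this identifies $\beta_{G\times H}(T(\OO Ge,\OO Hf))$ with the subgroup of those $(\chi_P)$ in $\left(\prod_{P\in S_p(G\times H)}R_\K(N_{G\times H}(P)/P,\br_P^{G\times H}(e\tensor f^\ast))\right)^{G\times H}$ satisfying $\chi_P(x) = \chi_{P\gp{x_p}}(x)$ for all $P\in S_p(G\times H)$ and all $x\in N_{G\times H}(P)$.

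Next I would rewrite this coherence condition in the ``Brauer element'' form recorded immediately after Theorem \ref{thm:coherence1}: it is equivalent to demanding $\chi_P(wz) = \chi_{P\gp{w}}(z)$ for every $P\in S_p(G\times H)$, every $p$-element $w\in N_{G\times H}(P)$, and every $z\in C_{N_{G\times H}(P)}(w)_{p'}$. Writing $w = (u,v)$ with $u$ a $p$-element of $G$ and $v$ a $p$-element of $H$, and $z = (s,t)$, one uses $C_{G\times H}(u,v) = C_G(u)\times C_H(v)$ to see that $z$ ranges exactly over the pairs $(s,t)$ with $s\in C_G(u)_{p'}$ and $t\in C_H(v)_{p'}$ lying in $N_{G\times H}(P)$, and that $(us,vt)$ has $p$-part $(u,v)$ and $p'$-part $(s,t)$. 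Hence the rewritten condition is precisely condition (1) of the corollary, so that $\beta_{G\times H}(T(\OO Ge,\OO Hf))$ is the group of tuples satisfying (1).

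Finally I would invoke Theorem \ref{thm:diagcoherence1}, according to which $\beta_{G\times H}(T^\Delta(\OO Ge,\OO Hf))$ is exactly the subgroup of $\beta_{G\times H}(T(\OO Ge,\OO Hf))$ consisting of those tuples with $\chi_P = 0$ whenever $P\notin S_p^\Delta(G\times H)$ --- that is, those additionally satisfying condition (2). Intersecting this with the description from the previous paragraph yields the stated characterization of $\beta_{G\times H}(T^\Delta(\OO Ge,\OO Hf))$.

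I do not expect a genuine obstacle here: the corollary is a formal consequence of the two cited results. The only point requiring any care is checking that the passage from the coherence condition $\chi_P(x) = \chi_{P\gp{x_p}}(x)$ to the coordinatewise form in (1) is an equivalence, which in turn rests on the elementary facts that $p$-parts and $p'$-parts in $G\times H$ are computed coordinatewise and that centralizers in $G\times H$ split as direct products; both of these are already used freely in the earlier sections.
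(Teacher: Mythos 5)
Your proposal is correct and matches the paper's intended argument exactly: the paper itself states only that the corollary "follows from Theorems \ref{thm:coherence2} and \ref{thm:diagcoherence1}," and your filling-in of the coordinatewise rewriting of the coherence condition (via the $p$-part/$p'$-part and centralizer decompositions in $G\times H$) is the routine verification being left implicit.
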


	Let $G$ and $H$ be finite groups and let $e\in Z(\OO G)$, $f\in Z(\OO H)$ be idempotents. Note that if $\Delta(P,\phi,Q)$ is a twisted diagonal $p$-subgroup of $G\times H$ then
	\begin{equation*}
		\br_{\Delta(P,\phi,Q)}^{G\times H}(e\tensor f^\ast)=\br_P^G(e)\tensor\br_Q^H(f)^\ast.
	\end{equation*}
	Write $\pi$ and $i$ for the obvious projection and inclusion maps below:
	\[\begin{tikzcd}
		{\prod_{P\in S_p(G\times H)}R_\K(N_{G\times H}(P)/P,\br_P^{G\times H}(e\tensor f^\ast))} \\
		{\prod_{\Delta(P,\phi,Q)\in S_p^\Delta(G\times H)}R_\K(N_{G\times H}(\Delta(P,\phi,Q))/\Delta(P,\phi,Q),\br_P^G(e)\tensor\br_Q^H(f)^\ast)}
		\arrow["\pi"', two heads, from=1-1, to=2-1]
		\arrow["i"', shift right=5, hook', from=2-1, to=1-1]
	\end{tikzcd}\]
	Since $\pi$ and $i$ are $G\times H$-homomorphisms they restrict to maps on the respective subgroups of $G\times H$-invariant tuples. We will abusively denote these restrictions by $\pi$ and $i$ in what follows. Note that the image of (the restriction of) $i$ is the subgroup of tuples $(\chi_P)$ in $(\prod_{P\in S_p(G\times H)}R_\K(N_{G\times H}(P)/P,\br_P^{G\times H}(e\tensor f^\ast)))^{G\times H}$ that satisfy $\chi_P=0$ if $P\notin S_p^\Delta(G\times H)$.
	
	\begin{corollary}\label{cor:pi1}
		Let $G$ and $H$ be finite groups and let $e\in Z(\OO G)$, $f\in Z(\OO H)$ be idempotents. The composite $\pi\beta$ is injective on $T^\Delta(\OO Ge,\OO Hf)$, and the image of $T^\Delta(\OO Ge,\OO Hf)$ under $\pi\beta$ is the subgroup of tuples $(\chi_{\Delta(P,\phi,Q)})$ in $(\prod_{\Delta(P,\phi,Q)\in S_p^\Delta(G\times H)}R_\K(N_{G\times H}(\Delta(P,\phi,Q))/\Delta(P,\phi,Q),\br_P^G(e)\tensor\br_Q^H(f)^\ast)^{G\times H}$ that satisfy:
		\begin{equation*}
			\chi_{\Delta(P,\phi,Q)}(us,vt)=\begin{cases}
				\chi_{\Delta(P,\phi,Q)\gp{(u,v)}}(s,t)	&\text{if }\Delta(P,\phi,Q)\gp{(u,v)}\in S_p^\Delta(G\times H)\\
				0	&\text{else}
			\end{cases}
		\end{equation*}
		for all $\Delta(P,\phi,Q)\in S_p^\Delta(G\times H)$, all $p$-elements $(u,v)\in N_{G\times H}(\Delta(P,\phi,Q))$, and all $(s,t)\in C_{N_{G\times H}(\Delta(P,\phi,Q))}(u,v)_{p'}$.
	\end{corollary}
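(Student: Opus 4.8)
The plan is to combine Corollary \ref{cor:diagcoherencecor1} with a transport-of-structure argument through the maps $\pi$ and $i$. First I would record the easy fact that $i$ is injective with image exactly the subgroup of $G\times H$-invariant tuples $(\chi_P)$ satisfying $\chi_P=0$ for $P\notin S_p^\Delta(G\times H)$; this is already noted in the paragraph preceding the corollary. Moreover $\pi\circ i=\id$ on that image, since $i$ merely re-lists the nonzero components and $\pi$ forgets the components indexed by non-twisted-diagonal subgroups. By Theorem \ref{thm:diagcoherence1}, $\beta(T^\Delta(\OO Ge,\OO Hf))$ lies in the image of $i$, so $\pi$ is injective on $\beta(T^\Delta(\OO Ge,\OO Hf))$, hence $\pi\beta$ is injective on $T^\Delta(\OO Ge,\OO Hf)$ (using injectivity of $\beta$ from Theorem \ref{thm:coherence2}). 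This settles the first assertion and shows that $i$ restricts to a bijection from $\pi\beta(T^\Delta(\OO Ge,\OO Hf))$ onto $\beta(T^\Delta(\OO Ge,\OO Hf))$.

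Next I would translate the description in Corollary \ref{cor:diagcoherencecor1} across this bijection. Write $C$ for the subgroup of invariant tuples $(\chi_{\Delta(P,\phi,Q)})$ described in the statement of Corollary \ref{cor:pi1}. The key observation is that under $i$, a tuple $(\chi_{\Delta(P,\phi,Q)})_{\Delta(P,\phi,Q)\in S_p^\Delta(G\times H)}$ is sent to the tuple $(\psi_P)_{P\in S_p(G\times H)}$ with $\psi_P=\chi_P$ for $P$ twisted diagonal and $\psi_P=0$ otherwise; and I claim that $(\psi_P)$ satisfies condition (1) of Corollary \ref{cor:diagcoherencecor1} if and only if $(\chi_{\Delta(P,\phi,Q)})$ satisfies the displayed condition in Corollary \ref{cor:pi1}. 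For this, fix a twisted diagonal $p$-subgroup $\Delta(P,\phi,Q)$, a $p$-element $(u,v)\in N_{G\times H}(\Delta(P,\phi,Q))$, and $(s,t)\in C_{N_{G\times H}(\Delta(P,\phi,Q))}(u,v)_{p'}$. Since $(u,v)$ normalizes $\Delta(P,\phi,Q)$, the subgroup $\Delta(P,\phi,Q)\gp{(u,v)}$ is again a $p$-subgroup of $G\times H$, and condition (1) for $\psi$ at this data reads $\psi_{\Delta(P,\phi,Q)}(us,vt)=\psi_{\Delta(P,\phi,Q)\gp{(u,v)}}(s,t)$. If $\Delta(P,\phi,Q)\gp{(u,v)}$ is twisted diagonal, the right-hand side is $\chi_{\Delta(P,\phi,Q)\gp{(u,v)}}(s,t)$; if it is not twisted diagonal, then $\psi_{\Delta(P,\phi,Q)\gp{(u,v)}}=0$, matching the ``$0$ else'' branch of the Corollary \ref{cor:pi1} formula. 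Conversely, condition (1) of Corollary \ref{cor:diagcoherencecor1} for $(\psi_P)$ is automatic whenever $P$ itself is not twisted diagonal, because then $\psi_P=0$ and $P\gp{(u,v)}\supseteq P$ forces $P\gp{(u,v)}$ to be non-twisted-diagonal as well (a subgroup of a twisted diagonal group is twisted diagonal), so $\psi_{P\gp{(u,v)}}=0$ too. Hence only the twisted-diagonal indices contribute a genuine constraint, and that constraint is exactly the one in Corollary \ref{cor:pi1}.

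Putting these pieces together: by Corollary \ref{cor:diagcoherencecor1}, $\beta(T^\Delta(\OO Ge,\OO Hf))$ equals the set of invariant tuples satisfying (1) and (2), which by the previous paragraph is precisely $i(C)$. Applying $\pi$ and using $\pi i=\id$ on the relevant subgroup gives $\pi\beta(T^\Delta(\OO Ge,\OO Hf))=\pi(i(C))=C$, as desired.

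I expect the main obstacle to be purely bookkeeping: one must be careful that ``$\Delta(P,\phi,Q)\gp{(u,v)}$'' is the correct group appearing in condition (1) of Corollary \ref{cor:diagcoherencecor1} — i.e.\ that adjoining the cyclic group generated by the normalizing $p$-element $(u,v)$ to $\Delta(P,\phi,Q)$ produces the same subgroup $P\gp{(u,v)}$ referenced there — and that the closure properties of $S_p^\Delta(G\times H)$ under taking subgroups (recorded in the introduction) are invoked correctly to handle the non-twisted-diagonal indices. There is no deep content beyond Theorem \ref{thm:diagcoherence1} and Corollary \ref{cor:diagcoherencecor1}; the work is entirely in checking that the two descriptions correspond componentwise under $\pi$ and $i$.
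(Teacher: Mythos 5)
Your proposal is correct and follows essentially the same route as the paper: both reduce the statement to Corollary \ref{cor:diagcoherencecor1} by transporting tuples back and forth through $\pi$ and $i$, with the injectivity of $\pi\beta$ coming from Theorem \ref{thm:diagcoherence1} together with the injectivity of $\beta$. The only difference is cosmetic --- you make explicit (via the closure of $S_p^\Delta(G\times H)$ under subgroups) why condition (1) of Corollary \ref{cor:diagcoherencecor1} is vacuous at non-twisted-diagonal indices, a verification the paper leaves as ``straightforward to check.''
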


	\begin{proof}
		We first show that $\pi\beta$ is injective on $T^\Delta(\OO Ge,\OO Hf)$. Let $m\in T^\Delta(\OO Ge,\OO Hf)$ and suppose that $\pi\beta(m)=0$. Write $\beta(m)=(\chi_P)_{P\in S_p(G\times H)}$. By Theorem \ref{thm:diagcoherence1} $\chi_P=0$ if $P\notin S_p^\Delta(G\times H)$. Since $\pi\beta(m)=0$ also $\chi_P=0$ if $P\in S_p^\Delta(G\times H)$. Therefore $\beta(m)=0$. But $\beta$ is injective, so $m=0$. 
		
		Now let $(\chi_P)\in\beta(T^\Delta(\OO Ge,\OO Hf))$. Then $$(\chi_P)\in\left(\prod_{P\in S_p(G\times H)}R_\K(N_{G\times H}(P)/P,\br_P^{G\times H}(e\tensor f^\ast))\right)^{G\times H}$$ and $(\chi_P)$ satisfies Conditions (1) and (2) of Corollary \ref{cor:diagcoherencecor1}. Of course we have
		\begin{equation*}
			\pi((\chi_P))\in (\prod_{\Delta(P,\phi,Q)\in S_p^\Delta(G\times H)}R_\K(N_{G\times H}(\Delta(P,\phi,Q))/\Delta(P,\phi,Q),\br_P^G(e)\tensor\br_Q^H(f)^\ast)^{G\times H}.
		\end{equation*}
		Let $\Delta(P,\phi,Q)\in S_p^\Delta(G\times H)$, let $(u,v)\in N_{G\times H}(\Delta(P,\phi,Q))$ be a $p$-element, and let $(s,t)\in C_{N_{G\times H}(\Delta(P,\phi,Q))}(u,v)_{p'}$. If $\Delta(P,\phi,Q)\gp{(u,v)}$ is not twisted diagonal then $\chi_{\Delta(P,\phi,Q)\gp{(u,v)}}=0$, so $\chi_{\Delta(P,\phi,Q)}(us,vt)=0$. If $\Delta(P,\phi,Q)\gp{(u,v)}$ is twisted diagonal then $\chi_{\Delta(P,\phi,Q)}(us,vt)=\chi_{\Delta(P,\phi,Q)\gp{(u,v)}}(s,t)$ by Condition (1) of Corollary \ref{cor:diagcoherencecor1}. This shows that $\pi\beta(T^\Delta(\OO Ge,\OO Hf))$ is contained in the subgroup of character tuples specified in the statement of the corollary.
		
		Conversely, suppose that $$(\chi_{\Delta(P,\phi,Q)})\in(\prod_{\Delta(P,\phi,Q)\in S_p^\Delta(G\times H)}R_\K(N_{G\times H}(\Delta(P,\phi,Q))/\Delta(P,\phi,Q),\br_P^G(e)\tensor\br_Q^H(f)^\ast)^{G\times H}$$ and that $(\chi_{\Delta(P,\phi,Q)})$ satisfies
		\begin{equation*}
			\chi_{\Delta(P,\phi,Q)}(us,vt)=\begin{cases}
				\chi_{\Delta(P,\phi,Q)\gp{(u,v)}}(s,t)	&\text{if }\Delta(P,\phi,Q)\gp{(u,v)}\in S_p^\Delta(G\times H)\\
				0	&\text{else}
			\end{cases}
		\end{equation*}
		for all $\Delta(P,\phi,Q)\in S_p^\Delta(G\times H)$, all $p$-elements $(u,v)\in N_{G\times H}(\Delta(P,\phi,Q))$, and all $(s,t)\in C_{N_{G\times H}(\Delta(P,\phi,Q))}(u,v)_{p'}$. It is straightforward to check that $i((\chi_{\Delta(P,\phi,Q)}))\in\beta(T^\Delta(\OO Ge,\OO Hf))$ using Corollary \ref{cor:diagcoherencecor1}. It follows that
		\begin{equation*}
			(\chi_{\Delta(P,\phi,Q)})=\pi i((\chi_{\Delta(P,\phi,Q)}))\in\pi\beta(T^\Delta(\OO Ge,\OO Hf)).
		\end{equation*} 
		This completes the proof.
	\end{proof}
	
	Now let $G$ and $H$ be finite groups, and let $A\in\Bl(\OO G)$, $B\in\Bl(\OO H)$ with respective identities $e_A$ and $f_B$. Then every Brauer pair of $A\tensor_\OO B^\ast$ is of the form $(P,e\tensor f^\ast)$ where $P\in S_p(G\times H)$, $(p_1(P),e)\in\BP_\OO(A)$ and $(p_2(P),f)\in\BP_\OO(B)$. Set
	\begin{equation*}
		Y_{(P,e\tensor f^\ast)}=N_{G\times H}(P,e\tensor f^\ast)
	\end{equation*}
	for each $(P,e\tensor f^\ast)\in\BP_\OO(A\tensor B^\ast)$. Recall from Section \ref{sec:coherenceconditions} that we have an injective group homomorphism
	\[\begin{tikzcd}
		{T_\OO(A,B)} & {\left(\prod_{(P,e\tensor f^\ast)\in\BP_\OO(A\tensor B^\ast)}R_\K(Y_{(P,e\tensor f^\ast)}/P,e\tensor f^\ast)\right)^{G\times H}}
		\arrow["\alpha", hook, from=1-1, to=1-2]
	\end{tikzcd}\]
	defined by
	\begin{equation*}
		\alpha([M])=(\chi_{M(P,e\tensor f^\ast)})_{(P,e\tensor f^\ast)\in\BP_\OO(A\tensor B^\ast)}\qquad M\in\ltriv{A\tensor B^\ast}.
	\end{equation*}

	\begin{theorem}\label{thm:diagcoherence2}
		Let $G$ and $H$ be finite groups and let $A\in\Bl(\OO G)$, $B\in\Bl(\OO H)$ with respective identities $e_A$ and $f_B$. With the notation above,
		\begin{equation*}
			\alpha(T^\Delta(A,B))=\set{(\chi_{(P,e\tensor f^\ast)})\in\alpha(T(A,B))|\chi_{(P,e\tensor f^\ast)}=0\text{ if }P\notin S_p^\Delta(G\times H)}.
		\end{equation*}
	\end{theorem}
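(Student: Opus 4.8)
The plan is to deduce this from Theorem~\ref{thm:diagcoherence1} by exploiting the factorization $\alpha=\rho_{A\tensor_\OO B^\ast}\circ\beta_{G\times H}$ recorded in Section~\ref{sec:coherenceconditions}. Here $\beta_{G\times H}$ is the Boltje--Carman map restricted to $T_\OO(A,B)=T(A,B)=T(\OO[G\times H](e_A\tensor f_B^\ast))$, landing in $\bigl(\prod_{P\in S_p(G\times H)}R_\K(N_{G\times H}(P)/P,\br_P^{G\times H}(e_A\tensor f_B^\ast))\bigr)^{G\times H}$, and $\rho_{A\tensor_\OO B^\ast}$ is the injective $G\times H$-equivariant homomorphism of Lemma~\ref{lem:iso1} with $G$ replaced by $G\times H$ and $B$ by $A\tensor_\OO B^\ast$. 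The crucial structural fact is that $\rho$ is computed entrywise: the $(P,e\tensor f^\ast)$-component of $\rho\bigl((\chi_P)_P\bigr)$ equals $(e\tensor f^\ast)\cdot\Res_{Y_{(P,e\tensor f^\ast)}}^{N_{G\times H}(P)}(\chi_P)$ and depends only on $\chi_P$; moreover, the argument in the proof of Lemma~\ref{lem:iso1} (via the Fong-type equivalence of Lemma~\ref{lem:fong}) shows that, for each fixed $P\in S_p(G\times H)$, the assignment sending $\chi_P\in R_\K(N_{G\times H}(P)/P,\br_P^{G\times H}(e_A\tensor f_B^\ast))$ to the family $\bigl((e\tensor f^\ast)\cdot\Res_{Y_{(P,e\tensor f^\ast)}}^{N_{G\times H}(P)}(\chi_P)\bigr)$, indexed over the block idempotents $e\tensor f^\ast$ of $\OO C_{G\times H}(P)$ with $(P,e\tensor f^\ast)\in\BP_\OO(A\tensor B^\ast)$, is injective. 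Consequently, for $m\in T_\OO(A,B)$ with $\beta_{G\times H}(m)=(\chi_P)_P$ and $\alpha(m)=(\chi_{(P,e\tensor f^\ast)})$, and for a fixed $p$-subgroup $P$, one has $\chi_P=0$ if and only if $\chi_{(P,e\tensor f^\ast)}=0$ for every block idempotent $e\tensor f^\ast$ with $(P,e\tensor f^\ast)\in\BP_\OO(A\tensor B^\ast)$.

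For the forward inclusion it is enough to show that $\alpha([M])$ lies in the right-hand side for every indecomposable trivial source $\OO[G\times H](e_A\tensor f_B^\ast)$-module $M$ all of whose vertices are twisted diagonal, since such classes $[M]$ span $T^\Delta(A,B)$. Given such an $M$ and a Brauer pair $(P,e\tensor f^\ast)\in\BP_\OO(A\tensor B^\ast)$ with $P\notin S_p^\Delta(G\times H)$: the group $P$ cannot be $G\times H$-conjugate to a subgroup of a vertex of $M$, because $S_p^\Delta(G\times H)$ is closed under $G\times H$-conjugation and under passage to subgroups; hence $\overline{M}(P)=0$ by \cite[Lemma~3.6(a)]{Boltje_2020}, so $M(P)=0$ and $M(P,e\tensor f^\ast)=(e\tensor f^\ast)\cdot\Res_{Y_{(P,e\tensor f^\ast)}}^{N_{G\times H}(P)}M(P)=0$, giving $\chi_{M(P,e\tensor f^\ast)}=0$. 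As $\alpha([M])$ also lies in $\alpha(T(A,B))$, the forward inclusion follows.

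For the reverse inclusion, let $m\in T_\OO(A,B)$ with $\alpha(m)=(\chi_{(P,e\tensor f^\ast)})$ vanishing at every Brauer pair of $A\tensor B^\ast$ whose first component is not twisted diagonal. Writing $\beta_{G\times H}(m)=(\chi_P)_P$, the entrywise injectivity of $\rho$ recorded in the first paragraph forces $\chi_P=0$ for all $P\notin S_p^\Delta(G\times H)$. By Theorem~\ref{thm:diagcoherence1} (with $e=e_A$ and $f=f_B$) this says $\beta_{G\times H}(m)\in\beta_{G\times H}(T^\Delta(A,B))$, and since $\beta_{G\times H}$ is injective (Theorem~\ref{thm:coherence2}) we get $m\in T^\Delta(A,B)$, hence $\alpha(m)\in\alpha(T^\Delta(A,B))$.

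I do not expect a genuine obstacle; the one point that needs care is the passage between the vanishing condition on the tuple indexed by Brauer pairs of $A\tensor B^\ast$ and the vanishing condition on the tuple indexed by $p$-subgroups of $G\times H$, which is precisely the entrywise injectivity of $\rho$ isolated from the proof of Lemma~\ref{lem:iso1}. An alternative route that avoids Theorem~\ref{thm:diagcoherence1} would mimic its proof directly: in a hypothetical $m\in T_\OO(A,B)$ lying in the right-hand side but not in $T^\Delta(A,B)$, choose a non-twisted-diagonal $p$-subgroup $P$ maximal among vertices of the indecomposable trivial source modules occurring in $m$ with nonzero coefficient, and extract from the character relation at $P$ a nontrivial $\K$-linear dependence among the characters of the projective indecomposable $\OO[N_{G\times H}(P)/P]$-modules; but the factorization argument above is shorter.
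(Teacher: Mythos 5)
Your proposal is correct and follows essentially the same route as the paper: both factor $\alpha$ as $\rho\circ\beta$ and reduce to Theorem \ref{thm:diagcoherence1}, the key step in the reverse inclusion being that the $P$-indexed entry of $\rho^{-1}$ of the tuple vanishes whenever all its Brauer-pair components at $P$ vanish (the paper phrases this via the explicit formula for $\rho^{-1}$ from the proof of Lemma \ref{lem:iso1}, you via the entrywise injectivity of $\rho$ --- the same Fong-type fact from Lemma \ref{lem:fong}). Your forward inclusion re-runs the vertex/Brauer-construction argument directly at the level of $\alpha$ instead of pushing the conclusion of Theorem \ref{thm:diagcoherence1} through $\rho$ as the paper does, but this is only a cosmetic difference.
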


	\begin{proof}
		Recall from Section \ref{sec:coherenceconditions} that we have a commutative diagram
		\[\begin{tikzcd}
			{T(A,B)} & {\left(\underset{P\in S_p(G\times H)}{\prod}R_\K(N_{G\times H}(P)/P,\br_P^{G\times H}(e_A\tensor f_B^\ast))\right)^{G\times H}} \\
			& {\left(\underset{(P,e\tensor f^\ast)\in\BP_\OO(A\tensor B^\ast)}{\prod}R_\K(Y_{(P,e\tensor f^\ast)}/P,e\tensor f^\ast)\right)^{G\times H}}
			\arrow["\beta", hook, from=1-1, to=1-2]
			\arrow["\rho", hook, two heads, from=1-2, to=2-2]
			\arrow["\alpha"', hook, from=1-1, to=2-2]
		\end{tikzcd}\]
		The isomorphism $\rho$ maps a $G\times H$-fixed character tuple $(\chi_P)_{P\in S_p(G\times H)}$ to the tuple whose $(P,e\tensor f^\ast)$-component is $(e\tensor f^\ast)\cdot\Res_{Y_{(P,e\tensor f^\ast)}}^{N_{G\times H}(P)}(\chi_P)$; that is
		\begin{equation*}
			\rho((\chi_P)_{P\in S_p(G\times H)})=((e\tensor f^\ast)\cdot\Res_{Y_{(P,e\tensor f^\ast)}}^{N_{G\times H}(P)}(\chi_P))_{(P,e\tensor f^\ast)\in\BP_\OO(A\tensor B^\ast)}.
		\end{equation*}
		
		Let $(\chi_P)\in\beta(T^\Delta(A,B))$. Then by Theorem \ref{thm:diagcoherence1} we have that $(\chi_P)\in\beta(T(A,B))$ and $\chi_P=0$ if $P\notin S_p^\Delta(G\times H)$. For each $A\tensor B^\ast$-Brauer pair $(P,e\tensor f^\ast)$ set $\psi_{(P,e\tensor f^\ast)}=(e\tensor f^\ast)\cdot\Res_{Y_{(P,e\tensor f^\ast)}}^{N_{G\times H}(P)}(\chi_P)$, so that $\rho((\chi_P))=(\psi_{(P,e\tensor f^\ast)})$. Since $(\chi_P)\in\beta(T(A,B))$ we have that $(\psi_{(P,e\tensor f^\ast)})\in\alpha(T(A,B))$. Moreover if $(P,e\tensor f^\ast)$ is an $A\tensor B^\ast$-Brauer pair such that $P$ is not a twisted diagonal subgroup of $G\times H$ then $\chi_P=0$, hence $\psi_{(P,e\tensor f^\ast)}=0$. Since $\alpha(T^\Delta(A,B))=\rho(\beta(T^\Delta(A,B)))$ this shows that
		\begin{equation*}
			\alpha(T^\Delta(A,B))\subseteq\set{(\chi_{(P,e\tensor f^\ast)})\in\alpha(T(A,B))|\chi_{(P,e\tensor f^\ast)}=0\text{ if }P\notin S_p^\Delta(G\times H)}.
		\end{equation*}
	
		For the reverse containment, suppose that $(\chi_{(P,e\tensor f^\ast)})\in\alpha(T(A,B))$ with the property that $\chi_{(P,e\tensor f^\ast)}=0$ if $P\notin S_p^\Delta(G\times H)$. Set $(\psi_P)=\rho^{-1}((\chi_{(P,e\tensor f^\ast)}))$. Then $(\psi_P)\in\beta(T(A,B))$. Furthermore if $P$ is a $p$-subgroup of $G\times H$ that is not twisted diagonal then the proof of Lemma \ref{lem:iso1} makes it clear that $\psi_P=0$ (indeed, if $\br_P^{G\times H}(e_A\tensor f_B^\ast)=0$ then $\psi_P=0$, and otherwise $\psi_P$ is a sum of characters of the form $\Ind_{Y_{(P,e\tensor f^\ast)}}^{N_{G\times H}(P)}(\chi_{(P,e\tensor f^\ast)})$, which are all 0). So Theorem \ref{thm:diagcoherence1} tells us that $(\psi_P)\in\beta(T^\Delta(A,B))$. It follows that $(\chi_{(P,e\tensor f^\ast)})\in\alpha(T^\Delta(A,B))$. This gives the reverse containment and completes the proof.
	\end{proof}
	
	The following corollary is immediate from Theorem \ref{thm:diagcoherence2} and Condition (\ref{C1}) of Theorem \ref{thm:coherence3}.
	
	\begin{corollary}
		Let $G$ and $H$ be finite groups, let $A\in\Bl(\OO G)$ and let $B\in\Bl(\OO H)$ with respective identities $e_A$ and $f_B$. For each Brauer pair $(P,e\tensor f^\ast)\in\BP_\OO(A\tensor B^\ast)$ set $Y_{(P,e\tensor f^\ast)}=N_{G\times H}(P,e\tensor f^\ast)$. The image of $T^\Delta(A,B)$ under the map
		\[\begin{tikzcd}
			{T_\OO(A,B)} & {\left(\prod_{(P,e\tensor f^\ast)\in\BP_\OO(A\tensor B^\ast)}R_\K(Y_{(P,e\tensor f^\ast)}/P,e\tensor f^\ast)\right)^{G\times H}}
			\arrow["\alpha", hook, from=1-1, to=1-2]
		\end{tikzcd}\]
		is the subgroup of tuples $(\chi_{(P,e\tensor f^\ast)})\in(\prod_{(P,e\tensor f^\ast)\in\BP_\OO(A\tensor B^\ast)}R_\K(Y_{(P,e\tensor f^\ast)}/P,e\tensor f^\ast))^{G\times H}$ that satisfy $\chi_{(P,e\tensor f^\ast)}=0$ if $P\notin S_p^\Delta(G\times H)$ and
		\begin{align*}
		&\chi_{(P,e\tensor f^\ast)}((us,vt)\epsilon)\\
		&=\Ind_{C_{Y_{(P,e\tensor f^\ast)}\cap Y_{(P\gp{(u,v)},e'\tensor f'^\ast)}}(u,v)}^{C_{Y_{(P,e\tensor f^\ast)}}(u,v)}(\Res_{C_{Y_{(P,e\tensor f^\ast)}\cap Y_{(P\gp{(u,v)},e'\tensor f'^\ast)}}(u,v)}^{Y_{(P\gp{(u,v)},e'\tensor f'^\ast)}}(\chi_{(P\gp{(u,v)},e'\tensor f'^\ast)}))(s,t)
		\end{align*}
		for all $(P,e\tensor f^\ast)\in\BP_\OO(A\tensor B^\ast)$, $((u,v),\epsilon)\in\BE_\OO(Y_{(P,e\tensor f^\ast)},e\tensor f^\ast)$, $(s,t)\in C_{Y_{(P,e\tensor f^\ast)}}(u,v)_{p'}$, and all $e'\in\bli(\OO C_G(p_1(P)\gp{u}))$, $f'\in\bli(\OO C_H(p_2(P)\gp{v}))$ such that $\epsilon\cdot(e'\tensor f'^\ast)\neq 0$.
	\end{corollary}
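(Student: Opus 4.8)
The plan is to obtain the statement by combining Theorem~\ref{thm:diagcoherence2} with the description of $\alpha(T_\OO(A\tensor_\OO B^\ast))$ provided by Condition~(\ref{C1}) of Theorem~\ref{thm:coherence3}. First I would apply Theorem~\ref{thm:coherence3} to the finite group $G\times H$ and its block $A\tensor_\OO B^\ast$ of $\OO[G\times H]$ (which has identity $e_A\tensor f_B^\ast$). Since an $(\OO Ge_A,\OO Hf_B)$-bimodule is the same as an $\OO[G\times H](e_A\tensor f_B^\ast)$-module, we have $T(A,B)=T_\OO(A\tensor_\OO B^\ast)$, and the map $\alpha$ of the corollary is exactly $\alpha_{A\tensor_\OO B^\ast}$. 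Theorem~\ref{thm:coherence3} then identifies $\alpha(T(A,B))$ with the subgroup of $G\times H$-fixed tuples $(\chi_{(P,e\tensor f^\ast)})$ indexed by $\BP_\OO(A\tensor B^\ast)$ that satisfy Condition~(\ref{C1}), where the group $G$ of that theorem is replaced by $G\times H$, the block $B$ by $A\tensor_\OO B^\ast$, and $I_{(P,e)}$ by $Y_{(P,e\tensor f^\ast)}=N_{G\times H}(P,e\tensor f^\ast)$. Theorem~\ref{thm:diagcoherence2} then intersects this subgroup with the condition that $\chi_{(P,e\tensor f^\ast)}=0$ whenever $P\notin S_p^\Delta(G\times H)$. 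So the only thing left to do is to verify that Condition~(\ref{C1}), transported into this setting, reads literally as the displayed relation in the corollary.

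That verification is a translation through the following dictionary. By Lemma~\ref{lem:brauerpairAB} every Brauer pair of $A\tensor_\OO B^\ast$ has the form $(P,e\tensor f^\ast)$ with $P\in S_p(G\times H)$, $(p_1(P),e)\in\BP_\OO(A)$ and $(p_2(P),f)\in\BP_\OO(B)$, so the indexing set $\BP_\OO(B)$ of Condition~(\ref{C1}) becomes $\BP_\OO(A\tensor B^\ast)$. A $p$-element of $Y_{(P,e\tensor f^\ast)}$ is a pair $(u,v)$ with $u$ a $p$-element of $G$ and $v$ a $p$-element of $H$, a Brauer element $(u,\epsilon)\in\BE_\OO(I_{(P,e)},e)$ becomes $((u,v),\epsilon)\in\BE_\OO(Y_{(P,e\tensor f^\ast)},e\tensor f^\ast)$, and $s\in C_{I_{(P,e)}}(u)_{p'}$ becomes $(s,t)\in C_{Y_{(P,e\tensor f^\ast)}}(u,v)_{p'}$. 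Using the identification $C_{G\times H}(P\gp{(u,v)})=C_G(p_1(P)\gp{u})\times C_H(p_2(P)\gp{v})$ recalled in Section~2, every block idempotent of $\OO C_{G\times H}(P\gp{(u,v)})$ factors uniquely, and writing it in the form $e'\tensor f'^\ast$ with $e'\in\bli(\OO C_G(p_1(P)\gp{u}))$, $f'\in\bli(\OO C_H(p_2(P)\gp{v}))$, the blocks $f$ of $\OO C_G(P\gp{u})$ with $\epsilon f\neq 0$ appearing in Condition~(\ref{C1}) correspond to the pairs $(e',f')$ with $\epsilon\cdot(e'\tensor f'^\ast)\neq 0$. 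Finally $P\gp{u}$ becomes $P\gp{(u,v)}$ and $C_{I_{(P,e)}\cap I_{(P\gp{u},f)}}(u)$ becomes $C_{Y_{(P,e\tensor f^\ast)}\cap Y_{(P\gp{(u,v)},e'\tensor f'^\ast)}}(u,v)$. Substituting all of this into Condition~(\ref{C1}) produces exactly the formula asserted in the corollary, which completes the argument.

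I do not expect a genuine obstacle here: the content of the corollary is purely the observation that Theorems~\ref{thm:coherence3} and~\ref{thm:diagcoherence2} are compatible, together with unwinding the Brauer-pair and centralizer structure of $G\times H$, all of which is already established. The only point that requires a moment of care is keeping the antipode $(-)^\ast$ on the $H$-side bookkeeping consistent, since passing to $A\tensor_\OO B^\ast$ forces the second tensor factor of each relevant idempotent (centralizer block, covering block $\epsilon$, and the blocks $e'\tensor f'^\ast$ of $\OO C_{G\times H}(P\gp{(u,v)})$) to be displayed in the form $f^\ast$; once this convention is fixed, everything matches on the nose.
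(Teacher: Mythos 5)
Your proposal is correct and matches the paper's own treatment: the paper states this corollary is immediate from Theorem \ref{thm:diagcoherence2} together with Condition (\ref{C1}) of Theorem \ref{thm:coherence3}, applied to the group $G\times H$ and the block $A\tensor_\OO B^\ast$, which is exactly the combination and notational translation you carry out. Your dictionary (Brauer pairs of $A\tensor B^\ast$ via Lemma \ref{lem:brauerpairAB}, the factorization of blocks of $\OO C_{G\times H}(P\gp{(u,v)})$ as $e'\tensor f'^\ast$, and the antipode bookkeeping) is the right unwinding and introduces no gap.
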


	\begin{corollary}
		Let $G$ and $H$ be finite groups, let $A\in\Bl(\OO G)$ and let $B\in\Bl(\OO H)$ with respective identities $e_A$ and $f_B$. For each Brauer pair $(P,e\tensor f^\ast)\in\BP_\OO(A\tensor B^\ast)$ set $Y_{(P,e\tensor f^\ast)}=N_{G\times H}(P,e\tensor f^\ast)$. The image of $T^\Delta(A,B)$ under the map
		\[\begin{tikzcd}
			{T_\OO(A,B)} & {\left(\prod_{(P,e\tensor f^\ast)\in\BP_\OO(A\tensor B^\ast)}R_\K(Y_{(P,e\tensor f^\ast)}/P,e\tensor f^\ast)\right)^{G\times H}}
			\arrow["\alpha", hook, from=1-1, to=1-2]
		\end{tikzcd}\]
		is the subgroup of tuples $(\chi_{(P,e\tensor f^\ast)})\in(\prod_{(P,e\tensor f^\ast)\in\BP_\OO(A\tensor B^\ast)}R_\K(Y_{(P,e\tensor f^\ast)}/P,e\tensor f^\ast))^{G\times H}$ that satisfy $\chi_{(P,e\tensor f^\ast)}=0$ if $P\notin S_p^\Delta(G\times H)$ and
		\begin{equation*}
			\chi_{(P,e\tensor f^\ast)}(us,vt)=\sum_{\substack{e'\in\bli(\OO C_G(p_1(P)\gp{u}))\\(p_1(P),e)\nor(p_1(P)\gp{u},e')\\s\in N_G(p_1(P)\gp{u},e')}}\sum_{\substack{f'\in\bli(\OO C_H(p_2(P)\gp{v}))\\(p_2(P),f)\nor(p_2(P)\gp{v},f')\\t\in N_H(p_2(P)\gp{v},f')}}\chi_{(P\gp{(u,v)},e'\tensor f'^\ast)}(s,t)
		\end{equation*}
		for all $(P,e\tensor f^\ast)\in\BP_\OO(A\tensor B^\ast)$, all $p$-elements $(u,v)\in Y_{(P,e\tensor f^\ast)}$, and all $(s,t)\in C_{Y_{(P,e\tensor f^\ast)}}(u,v)_{p'}$.
	\end{corollary}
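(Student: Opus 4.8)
This corollary is parallel to the one immediately preceding it: where that statement invokes Condition (\ref{C1}) of Theorem \ref{thm:coherence3}, this one invokes Condition (\ref{C2}). The plan is accordingly to combine Theorem \ref{thm:diagcoherence2} with the (\ref{C2})-description of $\alpha(T(A,B))=\alpha(T_\OO(A\tensor_\OO B^\ast))$. Recall that $T_\OO(A,B)=T_\OO(A\tensor_\OO B^\ast)$ and that the map $\alpha$ in the statement is exactly the map $\alpha_{A\tensor_\OO B^\ast}$ of Theorem \ref{thm:coherence3} for the group $G\times H$ and its block $A\tensor_\OO B^\ast$. By Theorem \ref{thm:diagcoherence2}, $\alpha(T^\Delta(A,B))$ is obtained from $\alpha(T(A,B))$ by imposing the extra requirement that the $(P,e\tensor f^\ast)$-component vanish whenever $P\notin S_p^\Delta(G\times H)$; so it suffices to rewrite the membership condition for $\alpha(T(A,B))$ coming from (\ref{C2}) in terms of the coordinate data on $G$ and $H$.

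Next I would write out Condition (\ref{C2}) for the block $A\tensor_\OO B^\ast$ of $G\times H$: a $G\times H$-fixed tuple $(\chi_{(P,e\tensor f^\ast)})$ in the product lies in $\alpha(T(A,B))$ precisely when, for every Brauer pair $(P,e\tensor f^\ast)\in\BP_\OO(A\tensor B^\ast)$, every $p$-element $(u,v)\in Y_{(P,e\tensor f^\ast)}$, and every $(s,t)\in C_{Y_{(P,e\tensor f^\ast)}}(u,v)_{p'}$, the value $\chi_{(P,e\tensor f^\ast)}(us,vt)$ equals the sum of $\chi_{(P\gp{(u,v)},g)}(s,t)$ over all block idempotents $g$ of $\OO C_{G\times H}(P\gp{(u,v)})$ satisfying $(P,e\tensor f^\ast)\nor(P\gp{(u,v)},g)$ and $(s,t)\in N_{G\times H}(P\gp{(u,v)},g)$. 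I would then unpack this index set. Using that $p_i$ is a homomorphism, one has $p_1(P\gp{(u,v)})=p_1(P)\gp{u}$ and $p_2(P\gp{(u,v)})=p_2(P)\gp{v}$, so $C_{G\times H}(P\gp{(u,v)})=C_G(p_1(P)\gp{u})\times C_H(p_2(P)\gp{v})$, and therefore every such $g$ has the form $g=e'\tensor f'^\ast$ for uniquely determined $e'\in\bli(\OO C_G(p_1(P)\gp{u}))$ and $f'\in\bli(\OO C_H(p_2(P)\gp{v}))$. (Writing the second factor as $f'^\ast$ rather than $f'$ is a harmless reparametrization since the antipode permutes block idempotents of $\OO C_H(p_2(P)\gp{v})$, and it is what makes the $H$-coordinate conditions below appear without stars.)

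The substance is then two index-set identifications. First, by parts (a) and (b) of Lemma \ref{lem:GHBrauerpaircontainment} (applicable since $P\nor P\gp{(u,v)}$, via Lemma \ref{lem:brauerpairAB} to see $(P,e\tensor f^\ast)$ really is an $A\tensor B^\ast$-Brauer pair) together with the fact that the antipode induces a poset automorphism of $\BP_\OO(H)$ compatible with normal containments, the relation $(P,e\tensor f^\ast)\nor(P\gp{(u,v)},e'\tensor f'^\ast)$ is equivalent to the conjunction of $(p_1(P),e)\nor(p_1(P)\gp{u},e')$ in $\BP_\OO(G)$ and $(p_2(P),f)\nor(p_2(P)\gp{v},f')$ in $\BP_\OO(H)$. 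Second, since $(s,t)$ lies in $Y_{(P,e\tensor f^\ast)}\subseteq N_{G\times H}(P)$ and centralizes $(u,v)$, it automatically normalizes $P\gp{(u,v)}$, so the condition $(s,t)\in N_{G\times H}(P\gp{(u,v)},e'\tensor f'^\ast)$ collapses to $s\in N_G(p_1(P)\gp{u},e')$ together with $t\in N_H(p_2(P)\gp{v},f'^\ast)=N_H(p_2(P)\gp{v},f')$. With these identifications the single sum over $g$ factors as the iterated double sum displayed in the statement, the (\ref{C2})-relation becomes exactly the asserted identity for $\chi_{(P,e\tensor f^\ast)}(us,vt)$, and intersecting the resulting description of $\alpha(T(A,B))$ with the vanishing condition from Theorem \ref{thm:diagcoherence2} gives the claim. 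I expect the only real obstacle to be this bookkeeping — in particular getting the antipode compatibility on the $H$-coordinate right and checking the collapse of the normalizer condition — while everything else is formal.
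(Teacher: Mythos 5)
Your proposal is correct and follows essentially the same route as the paper: combine Theorem \ref{thm:diagcoherence2} with Condition (\ref{C2}) of Theorem \ref{thm:coherence3} applied to the block $A\tensor_\OO B^\ast$ of $G\times H$, then reindex the sum via the bijection $(e',f')\mapsto e'\tensor f'^\ast$, justified by Lemma \ref{lem:GHBrauerpaircontainment}. Your extra care with the antipode on the $H$-coordinate and the collapse of the normalizer condition is exactly the bookkeeping the paper leaves implicit.
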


	\begin{proof}
		From Theorem \ref{thm:diagcoherence2} we know that $\alpha(T^\Delta(A,B))$ is the subgroup of tuples $(\chi_{(P,e\tensor f^\ast)})$ in $(\prod_{(P,e\tensor f^\ast)\in\BP_\OO(A\tensor B^\ast)}R_\K(Y_{(P,e\tensor f^\ast)}/P,e\tensor f^\ast))^{G\times H}$ satisfying $\chi_{(P,e\tensor f^\ast)}=0$ if $P\notin S_p^\Delta(G\times H)$ and Condition (\ref{C2}) of Theorem \ref{thm:coherence3}, which translates literally to
		\begin{equation*}
			\chi_{(P,e\tensor f^\ast)}(us,vt)=\sum_{\substack{\varphi\in\bli(\OO C_{G\times H}(P\gp{(u,v)}))\\(P,e\tensor f^\ast)\nor(P\gp{(u,v)},\varphi)\\(s,t)\in Y_{(P\gp{(u,v)},\varphi)}}}\chi_{(P\gp{(u,v)},\varphi)}(s,t)
		\end{equation*}
		for all $(P,e\tensor f^\ast)\in\BP_\OO(A\tensor B^\ast)$, all $p$-elements $(u,v)\in Y_{(P,e\tensor f^\ast)}$, and all $(s,t)\in C_{Y_{(P,e\tensor f^\ast)}}(u,v)_{p'}$. But for fixed such $(P,e\tensor f^\ast)$, $(u,v)$, and $(s,t)$ this sum can be reindexed, because the indexing set is in bijection with the set of ordered pairs $(e',f')$ where $e'\in\bli(\OO C_G(p_1(P)\gp{u}))$ is such that $(p_1(P),e)\nor(p_1(P)\gp{u},e')$ and $s\in N_G(p_1(P)\gp{u},e')$ and where $f'\in\bli(\OO C_H(p_2(P)\gp{v}))$ is such that $(p_2(P),f)\nor(p_2(P)\gp{v},f')$ and $t\in N_H(p_2(P)\gp{v},f')$: a bijection is given by $(e',f')\mapsto e'\tensor f'^\ast$. Note that to prove this map is a well-defined bijection one makes use of Lemma \ref{lem:GHBrauerpaircontainment}. Reindexing the sum via this bijection gives the result.
	\end{proof}

	Let $G$ and $H$ be finite groups and let $A\in\Bl(\OO G)$, $B\in\Bl(\OO H)$. Let $\BP_\OO^\Delta(A\tensor B^\ast)$ denote the subset of $\BP_\OO(A\tensor B^\ast)$ consisting of Brauer pairs with twisted diagonal first component. Note that $\BP_\OO^\Delta(A\tensor B^\ast)$ is stable under $G\times H$-conjugation. Let $\pi$ and $i$ denote the obvious projection and inclusion maps in the diagram below:
	\[\begin{tikzcd}
		{\prod_{(R,e\tensor f^\ast)\in\BP_\OO(A\tensor B^\ast)}R_\K(Y_{(R,e\tensor f^\ast)}/R,e\tensor f^\ast)} \\
		{\prod_{(\Delta(P,\phi,Q),e\tensor f^\ast)\in\BP_\OO^\Delta(A\tensor B^\ast)}R_\K(Y_{(\Delta(P,\phi,Q),e\tensor f^\ast)}/\Delta(P,\phi,Q),e\tensor f^\ast)}
		\arrow["\pi"', shift right=5, two heads, from=1-1, to=2-1]
		\arrow["i"', shift right=5, hook, from=2-1, to=1-1]
	\end{tikzcd}\]
	The maps $\pi$ and $i$ are both $G\times H$-equivariant, hence restrict to maps on the respective subgroups of $G\times H$-fixed tuples. The restriction of $\pi$ to the respective subgroups of $G\times H$-fixed tuples is a surjective map, and the image of the restriction of $i$ is equal to the subgroup of $G\times H$-fixed tuples $(\chi_{(R,e\tensor f^\ast)})$ that satisfy $\chi_{(R,e\tensor f^\ast)}=0$ if $R\notin S_p^\Delta(G\times H)$.
	
	The following corollaries can be proved in the same way as Corollary \ref{cor:pi1}.
	
	\begin{corollary}
		Let $G$ and $H$ be finite groups and let $A\in\Bl(\OO G)$, $B\in\Bl(\OO H)$. The composite $\pi\alpha$ is injective on $T^\Delta(A,B)$. The image of $T^\Delta(A,B)$ under $\pi\alpha$ is equal to the subgroup of tuples $(\chi_{(\Delta(P,\phi,Q),e\tensor f^\ast)})$ in
		\begin{equation*}
			(\prod_{(\Delta(P,\phi,Q),e\tensor f^\ast)\in\BP_\OO^\Delta(A\tensor B^\ast)}R_\K(Y_{(\Delta(P,\phi,Q),e\tensor f^\ast)}/\Delta(P,\phi,Q),e\tensor f^\ast))^{G\times H}
		\end{equation*}
		that satisfy: for all $(\Delta(P,\phi,Q),e\tensor f^\ast)\in\BP_\OO^\Delta(A\tensor B^\ast)$, $((u,v),\epsilon)\in\BE_\OO(Y_{(\Delta(P,\phi,Q),e\tensor f^\ast)},e\tensor f^\ast)$, $(s,t)\in C_{Y_{(\Delta(P,\phi,Q),e\tensor f^\ast)}}(u,v)_{p'}$, and all $e'\in\bli(\OO C_G(P\gp{u}))$ and $f'\in\bli(\OO C_H(Q\gp{v}))$ such that $\epsilon\cdot(e'\tensor f'^\ast)\neq 0$, one has
		\begin{align*}
			&\chi_{(\Delta(P,\phi,Q),e\tensor f^\ast)}((us,vt)\epsilon)=\\
			&\Ind_{C\cap Y_{(\Delta(P,\phi,Q)\gp{(u,v)},e'\tensor f'^\ast)}}^{C}(\Res_{C\cap Y_{(\Delta(P,\phi,Q)\gp{(u,v)},e'\tensor f'^\ast)}}^{Y_{(\Delta(P,\phi,Q)\gp{(u,v)},e'\tensor f'^\ast)}}(\chi_{(\Delta(P,\phi,Q)\gp{(u,v)},e'\tensor f'^\ast)}))(s,t)
		\end{align*}
		if $\Delta(P,\phi,Q)\gp{(u,v)}\in S_p^\Delta(G\times H)$, where for ease we write $C=C_{Y_{(\Delta(P,\phi,Q),e\tensor f^\ast)}}(u,v)$; and
		\begin{equation*}
			\chi_{(\Delta(P,\phi,Q),e\tensor f^\ast)}((us,vt)\epsilon)=0
		\end{equation*}
		if $\Delta(P,\phi,Q)\gp{(u,v)}\notin S_p^\Delta(G\times H)$.
	\end{corollary}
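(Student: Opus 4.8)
The plan is to mimic the proof of Corollary~\ref{cor:pi1} almost verbatim, with $\beta_{G\times H}$ replaced by $\alpha$, Theorem~\ref{thm:diagcoherence1} replaced by Theorem~\ref{thm:diagcoherence2}, and Corollary~\ref{cor:diagcoherencecor1} replaced by the characterization of $\alpha(T^\Delta(A,B))$ already obtained above (the $G\times H$-fixed tuples that vanish off $S_p^\Delta(G\times H)$ and satisfy Condition~(\ref{C1}) of Theorem~\ref{thm:coherence3}). For injectivity of $\pi\alpha$ on $T^\Delta(A,B)$: if $m\in T^\Delta(A,B)$ satisfies $\pi\alpha(m)=0$, write $\alpha(m)=(\chi_{(R,e\tensor f^\ast)})$; by Theorem~\ref{thm:diagcoherence2} we have $\chi_{(R,e\tensor f^\ast)}=0$ whenever $R\notin S_p^\Delta(G\times H)$, while $\pi\alpha(m)=0$ forces $\chi_{(R,e\tensor f^\ast)}=0$ for every $(R,e\tensor f^\ast)\in\BP_\OO^\Delta(A\tensor B^\ast)$; hence $\alpha(m)=0$, and since $\alpha$ is injective, $m=0$.

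For the forward containment I would take any tuple in $\alpha(T^\Delta(A,B))$, which by Theorem~\ref{thm:diagcoherence2} is $G\times H$-fixed, vanishes off $S_p^\Delta(G\times H)$, and satisfies Condition~(\ref{C1}), apply $\pi$, and then read off the two cases. Fix a twisted diagonal Brauer pair $(\Delta(P,\phi,Q),e\tensor f^\ast)$, a Brauer element $((u,v),\epsilon)$ belonging to $e\tensor f^\ast$, and $(s,t)\in C_{Y_{(\Delta(P,\phi,Q),e\tensor f^\ast)}}(u,v)_{p'}$. Condition~(\ref{C1}) expresses $\chi_{(\Delta(P,\phi,Q),e\tensor f^\ast)}((us,vt)\epsilon)$ through $\chi_{(\Delta(P,\phi,Q)\gp{(u,v)},\varphi)}$ for a block $\varphi$ of $\OO C_{G\times H}(\Delta(P,\phi,Q)\gp{(u,v)})$ with $\epsilon\varphi\neq 0$. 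If $\Delta(P,\phi,Q)\gp{(u,v)}\in S_p^\Delta(G\times H)$, then because the centralizer of a twisted diagonal subgroup of $G\times H$ splits as $C_G(-)\times C_H(-)$ one has $\varphi=e'\tensor f'^\ast$ with $e'\in\bli(\OO C_G(P\gp u))$ and $f'\in\bli(\OO C_H(Q\gp v))$, and Condition~(\ref{C1}) becomes exactly the first branch of the asserted formula; if $\Delta(P,\phi,Q)\gp{(u,v)}\notin S_p^\Delta(G\times H)$, then $(\Delta(P,\phi,Q)\gp{(u,v)},\varphi)$ has non-twisted-diagonal first component, so $\chi_{(\Delta(P,\phi,Q)\gp{(u,v)},\varphi)}=0$ and hence the left-hand side vanishes, which is the second branch. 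This shows $\pi\alpha(T^\Delta(A,B))$ is contained in the stated subgroup.

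For the reverse containment I would argue as in Corollary~\ref{cor:pi1}: given a $G\times H$-fixed tuple $(\chi_{(\Delta(P,\phi,Q),e\tensor f^\ast)})$ over $\BP_\OO^\Delta(A\tensor B^\ast)$ satisfying the displayed conditions, let $i$ carry it to the tuple over $\BP_\OO(A\tensor B^\ast)$ that agrees with it on twisted diagonal pairs and is zero elsewhere. Using that $S_p^\Delta(G\times H)$ is closed under passage to subgroups (so $R\notin S_p^\Delta(G\times H)$ implies $R\gp{(u,v)}\notin S_p^\Delta(G\times H)$), one checks in the three cases $R\notin S_p^\Delta(G\times H)$; $R=\Delta(P,\phi,Q)$ with $R\gp{(u,v)}\notin S_p^\Delta(G\times H)$; and $R=\Delta(P,\phi,Q)$ with $R\gp{(u,v)}\in S_p^\Delta(G\times H)$ that $i((\chi))$ vanishes off $S_p^\Delta(G\times H)$ and satisfies Condition~(\ref{C1}) of Theorem~\ref{thm:coherence3}: in the first two cases both sides of (\ref{C1}) are zero, and in the third it is precisely the displayed hypothesis. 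By Theorem~\ref{thm:diagcoherence2}, $i((\chi))\in\alpha(T^\Delta(A,B))$, whence $(\chi_{(\Delta(P,\phi,Q),e\tensor f^\ast)})=\pi i((\chi))\in\pi\alpha(T^\Delta(A,B))$, completing the proof.

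The main obstacle --- the rest being the bookkeeping already carried out for Corollary~\ref{cor:pi1} --- is matching the shape of Condition~(\ref{C1}) to the case split in the statement. Concretely, when $\Delta(P,\phi,Q)\gp{(u,v)}$ is twisted diagonal it equals $\Delta(P\gp u,\psi,Q\gp v)$ for a unique extension $\psi$ of $\phi$ (its kernels $k_1,k_2$ being trivial), so its centralizer is $C_G(P\gp u)\times C_H(Q\gp v)$ and the block $\varphi$ above genuinely factors as $e'\tensor f'^\ast$; and when it is not twisted diagonal, the corresponding component of any element of $\alpha(T^\Delta(A,B))$ vanishes by Theorem~\ref{thm:diagcoherence2}. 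I would use Lemma~\ref{lem:GHBrauerpaircontainment} together with the identity $C_{G\times H}(R)=C_G(p_1(R))\times C_H(p_2(R))$ to make these identifications precise, and I expect no further difficulties.
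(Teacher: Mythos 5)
Your proposal is correct and is exactly the adaptation the paper intends: the paper states that this corollary "can be proved in the same way as Corollary \ref{cor:pi1}," and you carry out precisely that argument, substituting $\alpha$ for $\beta_{G\times H}$, Theorem \ref{thm:diagcoherence2} for Theorem \ref{thm:diagcoherence1}, and Condition (\ref{C1}) of Theorem \ref{thm:coherence3} for the coherence condition of Corollary \ref{cor:diagcoherencecor1}, with the case analysis on whether $\Delta(P,\phi,Q)\gp{(u,v)}$ remains twisted diagonal handled correctly. The identification of the block $\varphi$ of $\OO C_{G\times H}(\Delta(P,\phi,Q)\gp{(u,v)})$ with $e'\tensor f'^\ast$ via $C_{G\times H}(R)=C_G(p_1(R))\times C_H(p_2(R))$ is also exactly the right point to make precise.
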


	\begin{corollary}
		Let $G$ and $H$ be finite groups and let $A\in\Bl(\OO G)$, $B\in\Bl(\OO H)$. The composite $\pi\alpha$ is injective on $T^\Delta(A,B)$. The image of $T^\Delta(A,B)$ under $\pi\alpha$ is equal to the subgroup of tuples $(\chi_{(\Delta(P,\phi,Q),e\tensor f^\ast)})$ in
		\begin{equation*}
			(\prod_{(\Delta(P,\phi,Q),e\tensor f^\ast)\in\BP_\OO^\Delta(A\tensor B^\ast)}R_\K(Y_{(\Delta(P,\phi,Q),e\tensor f^\ast)}/\Delta(P,\phi,Q),e\tensor f^\ast))^{G\times H}
		\end{equation*}
		that satisfy: for all $(\Delta(P,\phi,Q),e\tensor f^\ast)\in\BP_\OO^\Delta(A\tensor B^\ast)$, all $p$-elements $(u,v)\in Y_{(\Delta(P,\phi,Q),e\tensor f^\ast)}$, and all $(s,t)\in C_{Y_{(\Delta(P,\phi,Q),e\tensor f^\ast)}}(u,v)_{p'}$ one has
		\begin{equation*}
			\chi_{(\Delta(P,\phi,Q),e\tensor f^\ast)}(us,vt)=\sum_{\substack{e'\in\bli(\OO C_G(P\gp{u}))\\(P,e)\nor(P\gp{u},e')\\s\in N_G(P\gp{u},e')}}\sum_{\substack{f'\in\bli(\OO C_H(Q\gp{v}))\\(Q,f)\nor(Q\gp{v},f')\\t\in N_H(Q\gp{v},f')}}\chi_{(\Delta(P,\phi,Q)\gp{(u,v)},e'\tensor f'^\ast)}(s,t)
		\end{equation*}
		if $\Delta(P,\phi,Q)\gp{(u,v)}\in S_p^\Delta(G\times H)$, and
		\begin{equation*}
			\chi_{(\Delta(P,\phi,Q),e\tensor f^\ast)}(us,vt)=0
		\end{equation*}
		if $\Delta(P,\phi,Q)\gp{(u,v)}\notin S_p^\Delta(G\times H)$.
	\end{corollary}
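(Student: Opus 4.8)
The plan is to imitate the proof of Corollary \ref{cor:pi1}, systematically replacing the map $\beta$ and Theorem \ref{thm:diagcoherence1} by the map $\alpha$ and Theorem \ref{thm:diagcoherence2}, and replacing Corollary \ref{cor:diagcoherencecor1} by the corollary immediately preceding the present statement (the one expressing $\alpha(T^\Delta(A,B))$ via the Condition (\ref{C2})-type double-sum identity together with the vanishing of all components indexed by Brauer pairs whose first component lies outside $S_p^\Delta(G\times H)$). Note first that $\pi i$ is the identity on $\prod_{(\Delta(P,\phi,Q),e\tensor f^\ast)\in\BP_\OO^\Delta(A\tensor B^\ast)}R_\K(Y_{(\Delta(P,\phi,Q),e\tensor f^\ast)}/\Delta(P,\phi,Q),e\tensor f^\ast)$, and that $i$ identifies this group, and likewise its subgroup of $G\times H$-fixed tuples, with the subgroup of $\prod_{(R,e\tensor f^\ast)\in\BP_\OO(A\tensor B^\ast)}R_\K(Y_{(R,e\tensor f^\ast)}/R,e\tensor f^\ast)$ consisting of tuples supported on Brauer pairs with twisted diagonal first component. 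For injectivity of $\pi\alpha$ on $T^\Delta(A,B)$: if $m\in T^\Delta(A,B)$ satisfies $\pi\alpha(m)=0$, write $\alpha(m)=(\chi_{(R,e\tensor f^\ast)})$; Theorem \ref{thm:diagcoherence2} forces $\chi_{(R,e\tensor f^\ast)}=0$ whenever $R\notin S_p^\Delta(G\times H)$, while $\pi\alpha(m)=0$ kills the remaining components, so $\alpha(m)=0$ and hence $m=0$ by injectivity of $\alpha$.

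For the inclusion $\pi\alpha(T^\Delta(A,B))\subseteq$ (the stated subgroup), take $(\chi_{(R,e\tensor f^\ast)})\in\alpha(T^\Delta(A,B))$. By the preceding corollary this tuple is $G\times H$-fixed, vanishes off $S_p^\Delta(G\times H)$, and obeys the Condition (\ref{C2})-type identity; in particular $\pi((\chi_{(R,e\tensor f^\ast)}))$ is a $G\times H$-fixed tuple on $\BP_\OO^\Delta(A\tensor B^\ast)$. Fix $(\Delta(P,\phi,Q),e\tensor f^\ast)\in\BP_\OO^\Delta(A\tensor B^\ast)$, a $p$-element $(u,v)\in Y_{(\Delta(P,\phi,Q),e\tensor f^\ast)}$, and $(s,t)\in C_{Y_{(\Delta(P,\phi,Q),e\tensor f^\ast)}}(u,v)_{p'}$. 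Since $p_1(\Delta(P,\phi,Q)\gp{(u,v)})=P\gp{u}$ and $p_2(\Delta(P,\phi,Q)\gp{(u,v)})=Q\gp{v}$, every Brauer pair indexing the right-hand side of that identity has first component $\Delta(P,\phi,Q)\gp{(u,v)}$. If this group is not twisted diagonal, all such components vanish, so the identity yields $\chi_{(\Delta(P,\phi,Q),e\tensor f^\ast)}(us,vt)=0$; if it is twisted diagonal, the identity reads verbatim as the asserted formula. Hence $\pi\alpha(T^\Delta(A,B))$ lies in the described subgroup.

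For the reverse inclusion, let $(\chi_{(\Delta(P,\phi,Q),e\tensor f^\ast)})$ be a $G\times H$-fixed tuple on $\BP_\OO^\Delta(A\tensor B^\ast)$ satisfying the stated relations, and set $(\psi_{(R,e\tensor f^\ast)}):=i((\chi_{(\Delta(P,\phi,Q),e\tensor f^\ast)}))$, so that $(\psi_{(R,e\tensor f^\ast)})$ is $G\times H$-fixed and $\psi_{(R,e\tensor f^\ast)}=0$ whenever $R\notin S_p^\Delta(G\times H)$. It suffices to check that $(\psi_{(R,e\tensor f^\ast)})$ satisfies Condition (\ref{C2}) of Theorem \ref{thm:coherence3} for the block $A\tensor B^\ast$ of $\OO[G\times H]$, after the reindexing furnished by Lemma \ref{lem:GHBrauerpaircontainment}; for then $(\psi_{(R,e\tensor f^\ast)})\in\alpha(T^\Delta(A,B))$ by the preceding corollary, whence $(\chi_{(\Delta(P,\phi,Q),e\tensor f^\ast)})=\pi i((\chi_{(\Delta(P,\phi,Q),e\tensor f^\ast)}))\in\pi\alpha(T^\Delta(A,B))$. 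Fix $(R,e\tensor f^\ast)\in\BP_\OO(A\tensor B^\ast)$, a $p$-element $(u,v)\in Y_{(R,e\tensor f^\ast)}$, and $(s,t)\in C_{Y_{(R,e\tensor f^\ast)}}(u,v)_{p'}$; every Brauer pair indexing the right-hand side of Condition (\ref{C2}) has first component $R\gp{(u,v)}\supseteq R$. Since $S_p^\Delta(G\times H)$ is closed under passing to subgroups there are exactly three cases: if $R\notin S_p^\Delta(G\times H)$ then $R\gp{(u,v)}\notin S_p^\Delta(G\times H)$ and both sides of Condition (\ref{C2}) vanish; if $R\in S_p^\Delta(G\times H)$ but $R\gp{(u,v)}\notin S_p^\Delta(G\times H)$, the right-hand side vanishes while the left-hand side $\psi_{(R,e\tensor f^\ast)}(us,vt)$ vanishes by the second relation assumed on the input tuple; and if both $R$ and $R\gp{(u,v)}$ are twisted diagonal, Condition (\ref{C2}) is precisely the first relation assumed. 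This gives the reverse inclusion.

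The argument is essentially bookkeeping, exactly as in Corollary \ref{cor:pi1}; the only step that requires attention is the three-way case split in the last paragraph, which depends on the closure of $S_p^\Delta(G\times H)$ under subgroups, together with its closure under $G\times H$-conjugation to see that $i$ of a $G\times H$-fixed tuple is again $G\times H$-fixed and supported on a $G\times H$-stable set, so that the prescribed vanishing on twisted diagonal and on non-twisted-diagonal Brauer pairs never conflicts with Condition (\ref{C2}).
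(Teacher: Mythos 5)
Your proposal is correct and is precisely the argument the paper intends: the text explicitly states that this corollary "can be proved in the same way as Corollary \ref{cor:pi1}," and your substitution of $\alpha$ for $\beta$, Theorem \ref{thm:diagcoherence2} for Theorem \ref{thm:diagcoherence1}, and the preceding corollary for Corollary \ref{cor:diagcoherencecor1}, together with the three-way case split using closure of $S_p^\Delta(G\times H)$ under subgroups and conjugation, fills in the details faithfully.
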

	
	Let $G$ and $H$ be finite groups, let $A\in\Bl(\OO G)$ and let $B\in\Bl(\OO H)$. Let $(D,e_D)\in\BP_\OO(A)$ be a maximal $A$-Brauer pair and let $(E,f_E)\in\BP_\OO(B)$ be a maximal $B$-Brauer pair. If $P\subgp D$ write $e_P$ for the unique block idempotent of $\OO C_G(P)$ such that $(P,e_P)\leq(D,e_D)$. Likewise if $Q\subgp E$ write $(Q,f_Q)\leq(E,f_E)$. Recall that $(E,f_E^\ast)$ is a maximal $B^\ast$-Brauer pair and if $Q\subgp E$ then $(Q,f_Q^\ast)\leq(E,f_E^\ast)$. By Lemma \ref{lem:maxlABpair} $(D\times E,e_D\tensor f_E^\ast)$ is a maximal $A\tensor_\OO B^\ast$-Brauer pair. If $R\subgp D\times E$ then
	\begin{equation*}
		(R,e_{p_1(R)}\tensor f_{p_2(R)}^\ast)\leq(D\times E,e_D\tensor f_E^\ast)
	\end{equation*}
	is a containment of $A\tensor B^\ast$-Brauer pairs by Lemma \ref{lem:tensorfusion}. Let $\mathcal{A}=\mathcal{F}_{(D,e_D)}(A,G)$ and let $\mathcal{B}^\ast=\mathcal{F}_{(E,f_E^\ast)}(B^\ast,H)$. Note that $\mathcal{B}^\ast=\mathcal{F}_{(E,f_E)}(B,H)$ --- we prefer to write $\mathcal{B}^\ast$ for consistency of notation. By Lemma \ref{lem:tensorfusion} we have
	\begin{equation*}
		\mathcal{F}_{(D\times E),e_D\tensor f_E^\ast)}(A\tensor_\OO B^\ast,G\times H)=\mathcal{A}\times\mathcal{B}^\ast.
	\end{equation*}

	For each $R\subgp D\times E$ set
	\begin{equation*}
		Y_R:=Y_{(R,e_{p_1(R)}\tensor f_{p_2(R)}^\ast)}=N_{G\times H}(R,e_{p_1(R)}\tensor f_{p_2(R)}^\ast).
	\end{equation*}
	Recall from Section \ref{sec:coherenceconditions} that we have an injective group homomorphism
	\begin{equation*}
		\delta_{A\tensor B^\ast}=\delta:T(A,B)\into\left(\prod_{R\subgp D\times E}R_\K(Y_R/R,e_{p_1(R)}\tensor f_{p_2(R)}^\ast)\right)^{\mathcal{A}\times\mathcal{B}^\ast}.
	\end{equation*}
	If $M\in\ltriv{A\tensor B^\ast}$ then 
	\begin{equation*}
		\delta([M])=(\chi_{M(R,e_{p_1(R)}\tensor f_{p_2(R)}^\ast)})_{R\subgp D\times E}.
	\end{equation*}
	
	\begin{theorem}\label{thm:diagcoherence3}
		Let $G$ and $H$ be finite groups, let $A\in\Bl(\OO G)$ and $B\in\Bl(\OO H)$ and choose maximal Brauer pairs $(D,e_D)\in\BP_\OO(A)$, $(E,f_E)\in\BP_\OO(B)$. With the notation set above,
		\begin{equation*}
			\delta(T^\Delta(A,B))=\set{(\chi_R)_{R\subgp D\times E}\in\delta(T(A,B))|\chi_R=0\text{ if }R\notin S_p^\Delta(G\times H)}.
		\end{equation*}
	\end{theorem}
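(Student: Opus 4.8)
The plan is to deduce this from Theorem~\ref{thm:diagcoherence2} by transporting it along the isomorphism $\pi$ of Proposition~\ref{prop:pi}, in exactly the way Theorem~\ref{thm:coherence4} is obtained from Theorem~\ref{thm:coherence3}. The essential structural input is that $\delta = \pi\circ\alpha$, where $\pi$ is the restriction to $(G\times H)$-fixed (resp. $\mathcal{A}\times\mathcal{B}^\ast$-fixed) tuples of the canonical projection $\prod_{(P,e\tensor f^\ast)\in\BP_\OO(A\tensor B^\ast)}R_\K(Y_{(P,e\tensor f^\ast)}/P,e\tensor f^\ast)\onto\prod_{R\subgp D\times E}R_\K(Y_R/R,e_{p_1(R)}\tensor f_{p_2(R)}^\ast)$. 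This is legitimate because $(D\times E,e_D\tensor f_E^\ast)$ is a maximal $A\tensor_\OO B^\ast$-Brauer pair by Lemma~\ref{lem:maxlABpair}, the containments $(R,e_{p_1(R)}\tensor f_{p_2(R)}^\ast)\leq(D\times E,e_D\tensor f_E^\ast)$ hold by Lemma~\ref{lem:tensorfusion}(a), and $\mathcal{F}_{(D\times E,e_D\tensor f_E^\ast)}(G\times H,A\tensor_\OO B^\ast)=\mathcal{A}\times\mathcal{B}^\ast$ by Lemma~\ref{lem:tensorfusion}(b); hence Proposition~\ref{prop:pi} applies verbatim with $G$ replaced by $G\times H$, $B$ by $A\tensor_\OO B^\ast$, and $D$ by $D\times E$.

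Granting this, by Theorem~\ref{thm:diagcoherence2} the subgroup $\alpha(T^\Delta(A,B))$ is precisely the subgroup $V$ of $\alpha(T(A,B))$ of tuples $(\chi_{(P,e\tensor f^\ast)})$ that vanish on every Brauer pair $(P,e\tensor f^\ast)$ with $P\notin S_p^\Delta(G\times H)$. Since $\delta(T^\Delta(A,B))=\pi(\alpha(T^\Delta(A,B)))=\pi(V)$, the theorem reduces to showing that $\pi$ carries $V$ onto the set $W:=\{(\chi_R)_{R\subgp D\times E}\in\delta(T(A,B))\mid\chi_R=0\text{ if }R\notin S_p^\Delta(G\times H)\}$. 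One inclusion is immediate: $\pi$ sends a tuple $(\chi_{(P,e\tensor f^\ast)})$ to $(\chi_{(R,e_{p_1(R)}\tensor f_{p_2(R)}^\ast)})_{R\subgp D\times E}$, whose $R$-component is $0$ whenever $R$ is not twisted diagonal, and the resulting tuple already lies in $\pi(\alpha(T(A,B)))=\delta(T(A,B))$; thus $\pi(V)\subseteq W$.

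For the reverse inclusion I would start from $(\chi_R)\in W$, put $(\chi_{(P,e\tensor f^\ast)}):=\pi^{-1}((\chi_R))\in\alpha(T(A,B))$, and check that this preimage lies in $V$. Here I would invoke the explicit description of $\pi^{-1}$ from the proof of Proposition~\ref{prop:pi}: for each $A\tensor_\OO B^\ast$-Brauer pair $(P,e\tensor f^\ast)$ one chooses $(g,h)\in G\times H$ with ${}^{(g,h)}(P,e\tensor f^\ast)\leq(D\times E,e_D\tensor f_E^\ast)$ and sets $\chi_{(P,e\tensor f^\ast)}={}^{(g,h)^{-1}}\chi_{{}^{(g,h)}P}$. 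Because $S_p^\Delta(G\times H)$ is closed under $(G\times H)$-conjugation, $P\notin S_p^\Delta(G\times H)$ forces ${}^{(g,h)}P\notin S_p^\Delta(G\times H)$, so $\chi_{{}^{(g,h)}P}=0$ by definition of $W$ and hence $\chi_{(P,e\tensor f^\ast)}=0$. Therefore $(\chi_{(P,e\tensor f^\ast)})\in V=\alpha(T^\Delta(A,B))$ by Theorem~\ref{thm:diagcoherence2}, and applying $\pi$ gives $(\chi_R)=\pi((\chi_{(P,e\tensor f^\ast)}))\in\pi(V)=\delta(T^\Delta(A,B))$.

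I do not expect a genuine obstacle: all the real content is already contained in Theorem~\ref{thm:diagcoherence2} and Proposition~\ref{prop:pi}, and the argument is purely formal diagram-chasing. The only point requiring care is the bookkeeping that sets up the $(G\times H)$-version of $\pi$ and $\pi^{-1}$ — in particular, confirming via Lemma~\ref{lem:tensorfusion} that the indexing of Brauer pairs below $(D\times E,e_D\tensor f_E^\ast)$ used to define the $Y_R$ and the tuples $(\chi_R)_{R\subgp D\times E}$ matches the one produced by Proposition~\ref{prop:pi}, so that no new argument beyond Lemma~\ref{lem:tensorfusion}(a),(b) is needed.
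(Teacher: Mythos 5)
Your proposal is correct and follows essentially the same route as the paper: both arguments factor $\delta=\pi\circ\alpha$, apply Theorem~\ref{thm:diagcoherence2} to identify $\alpha(T^\Delta(A,B))$, and for the reverse inclusion use the explicit formula $\psi_{(R,e\tensor f^\ast)}={}^{(g,h)^{-1}}\chi_{{}^{(g,h)}R}$ from the proof of Proposition~\ref{prop:pi} together with the fact that $S_p^\Delta(G\times H)$ is closed under conjugation. The setup via Lemmas~\ref{lem:maxlABpair} and~\ref{lem:tensorfusion} that you flag as the only point requiring care is exactly the bookkeeping the paper relies on as well.
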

	
	\begin{proof}
		Recall that we have a commutative diagram
		\[\begin{tikzcd}
			{\left(\prod_{(R,e\tensor f^\ast)\in\BP_\OO(A\tensor B^\ast)}R_\K(Y_{(R,e\tensor f^\ast)}/R,e\tensor f^\ast)\right)^{G\times H}} & {\left(\prod_{R\subgp D\times E}R_\K(Y_R/R,e_{p_1(R)}\tensor f_{p_2(R)}^\ast)\right)^{\mathcal{A}\times\mathcal{B}^\ast}} \\
			{T(A,B)}
			\arrow["\alpha", hook, from=2-1, to=1-1]
			\arrow["\pi", hook, two heads, from=1-1, to=1-2]
			\arrow["\delta"', from=2-1, to=1-2]
		\end{tikzcd}\]
		In particular, $\delta(T^\Delta(A,B))=\pi(\alpha(T^\Delta(A,B)))$. By Theorem \ref{thm:diagcoherence2} we have that
		\begin{equation*}
			\alpha(T^\Delta(A,B))=\set{(\chi_{(R,e\tensor f^\ast)})\in\alpha(T(A,B))|\chi_{(R,e\tensor f^\ast)}=0\text{ if }R\notin S_p^\Delta(G\times H)}.
		\end{equation*}
	
		Now let $(\chi_{(R,e\tensor f^\ast)})\in\alpha(T^\Delta(A,B))$. Then
		\begin{equation*}
			\pi((\chi_{(R,e\tensor f^\ast)}))=(\chi_{(R,e\tensor f^\ast)})_{R\subgp D\times E}\in\delta(T(A,B))
		\end{equation*}
		and if $R\subgp D\times E$ is such that $R\notin S_p^\Delta(G\times H)$ then $\chi_{(R,e\tensor f^\ast)}=0$. It follows that
		\begin{equation*}
			\delta(T^\Delta(A,B))\subseteq\set{(\chi_R)_{R\subgp D\times E}\in\delta(T(A,B))|\chi_R=0\text{ if }R\notin S_p^\Delta(G\times H)}.
		\end{equation*} 
	
		Conversely, let $(\chi_R)_{R\subgp D\times E}\in\delta(T(A,B))$ be such that $\chi_R=0$ if $R\notin S_p^\Delta(G\times H)$. Set $(\psi_{(R,e\tensor f^\ast)})=\pi^{-1}((\chi_R))$. Recall from the proof of Proposition \ref{prop:pi} that if $(R,e\tensor f^\ast)\in\BP_{\OO}(A\tensor B^\ast)$ then
		\begin{equation*}
			\psi_{(R,e\tensor f^\ast)}={}^{(g,h)^{-1}}\chi_{{}^{(g,h)}R}
		\end{equation*}
		where $(g,h)\in G\times H$ is an element satisfying ${}^{(g,h)}(R,e\tensor f^\ast)\leq(D\times E,e_D\tensor f_E^\ast)$. Since $(\chi_R)\in\delta(T(A,B))=\pi(\alpha(T(A,B)))$ we have that $(\psi_{(R,e\tensor f^\ast)})\in\alpha(T(A,B))$. If $(R,e\tensor f^\ast)$ is an $A\tensor B^\ast$-Brauer pair such that $R\notin S_p^\Delta(G\times H)$ then $\psi_{(R,e\tensor f^\ast)}={}^{(g,h)^{-1}}\chi_{{}^{(g,h)}R}=0$ since $\chi_{{}^{(g,h)}R}=0$. Thus we have that $(\psi_{(R,e\tensor f^\ast)})\in\alpha(T^\Delta(A,B))$, and so $(\chi_R)\in \pi(\alpha(T^\Delta(A,B)))=\delta(T^\Delta(A,B))$. The proof is complete.
	\end{proof}

	The corollary below follows immediately from Theorems \ref{thm:coherence4} and \ref{thm:diagcoherence3}.
	
	\begin{corollary}\label{cor:diagcoherencecor3}
		Let $G$ and $H$ be finite groups, $A\in\Bl(\OO G)$, $B\in\Bl(\OO H)$. Let $(D,e_D)$ and $(E,f_E)$ be maximal Brauer pairs (over $\OO$) for $A$ and $B$, respectively. For each $P\subgp D$ write $e_P$ for the unique block idempotent of $\OO C_G(P)$ such that $(P,e_P)\leq(D,e_D)$. Similarly, for each $Q\subgp E$ write $(Q,f_Q)\leq(E,f_E)$. Set $\mathcal{A}=\mathcal{F}_{(D,e_D)}(A,G)$ and $\mathcal{B}^\ast=\mathcal{F}_{(E,f_E^\ast)}(B^\ast,H)$. For each $R\subgp D\times E$ set $Y_R=Y_{(R,e_{p_1(R)}\tensor f_{p_2(R)}^\ast)}=N_{G\times H}(R,e_{p_1(R)}\tensor f_{p_2(R)}^\ast)$. Then the image of $T^\Delta(A,B)$ under the map
		\begin{equation*}
			\delta:T(A,B)\into\left(\prod_{R\subgp D\times E}R_\K(Y_R/R,e_{p_1(R)}\tensor f_{p_2(R)}^\ast)\right)^{\mathcal{A}\times\mathcal{B}^\ast}.
		\end{equation*}
		is equal to the subgroup of tuples $(\chi_R)\in \left(\prod_{R\subgp D\times E}R_\K(Y_R/R,e_{p_1(R)}\tensor f_{p_2(R)}^\ast)\right)^{\mathcal{A}\times\mathcal{B}^\ast}$ that satisfy $\chi_R=0$ if $R\notin S_p^\Delta(G\times H)$ and
		\begin{equation*}
			\chi_R((us,vt)\epsilon_{\gp{(u,v)}})=\Ind_{C_{Y_R\cap Y_{R\gp{(u,v)}}}(u,v)}^{C_{Y_R}(u,v)}(\Res_{C_{Y_R\cap Y_{R\gp{(u,v)}}}(u,v)}^{Y_{R\gp{(u,v)}}}(\chi_{R\gp{(u,v)}}))(s,t)
		\end{equation*}
		for all $R\subgp D\times E$, $(u,v)\in N_{D\times E}(R)$, and $(s,t)\in C_{Y_R}(u,v)_{p'}$, where
		\begin{equation*}
			\epsilon_{\gp{(u,v)}}=\tr_{C_{Y_R\cap Y_{R\gp{(u,v)}}}(u,v)}^{C_{Y_R}(u,v)}(e_{p_1(R)\gp{u}}\tensor f_{p_2(R)\gp{v}}^\ast).
		\end{equation*}
	\end{corollary}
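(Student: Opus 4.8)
The plan is to derive the statement by specializing Theorem~\ref{thm:coherence4} to the finite group $G\times H$ and the block $A\tensor_\OO B^\ast$ of $\OO[G\times H]$, and then cutting down the resulting description by the vanishing condition furnished by Theorem~\ref{thm:diagcoherence3}. Almost all of the work is translation of notation; no computation beyond the structural lemmas of Section~2 is required.

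First I would set up the specialization. By Lemma~\ref{lem:maxlABpair} the pair $(D\times E,e_D\tensor f_E^\ast)$ is a maximal $A\tensor_\OO B^\ast$-Brauer pair, so Theorem~\ref{thm:coherence4} applies to it. For $R\subgp D\times E$ one has $C_{G\times H}(R)=C_G(p_1(R))\times C_H(p_2(R))$, and by Lemma~\ref{lem:tensorfusion}(a) the unique block idempotent of $\OO C_{G\times H}(R)$ lying below $(D\times E,e_D\tensor f_E^\ast)$ is $e_{p_1(R)}\tensor f_{p_2(R)}^\ast$; hence the stabilizer ``$I_R$'' of this Brauer pair appearing in Theorem~\ref{thm:coherence4} is exactly $Y_R=N_{G\times H}(R,e_{p_1(R)}\tensor f_{p_2(R)}^\ast)$. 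By Lemma~\ref{lem:tensorfusion}(b) the associated fusion system is $\mathcal{A}\times\mathcal{B}^\ast$. Thus Theorem~\ref{thm:coherence4}, with $G$ replaced by $G\times H$, $B$ by $A\tensor_\OO B^\ast$, and $(D,e_D)$ by $(D\times E,e_D\tensor f_E^\ast)$, identifies $\delta(T(A,B))$ with the subgroup of $\bigl(\prod_{R\subgp D\times E}R_\K(Y_R/R,e_{p_1(R)}\tensor f_{p_2(R)}^\ast)\bigr)^{\mathcal{A}\times\mathcal{B}^\ast}$ carved out by Condition~(C3).

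Next I would rewrite Condition~(C3) in the present notation. Fix $R\subgp D\times E$ and $(u,v)\in N_{D\times E}(R)$; then $R\gp{(u,v)}\subgp D\times E$ with $p_1(R\gp{(u,v)})=p_1(R)\gp{u}$ and $p_2(R\gp{(u,v)})=p_2(R)\gp{v}$, so (again by Lemma~\ref{lem:tensorfusion}(a)) the idempotent playing the role of ``$e_{P\gp{u}}$'' is $e_{p_1(R)\gp{u}}\tensor f_{p_2(R)\gp{v}}^\ast$, the group ``$I_{P\gp{u}}$'' is $Y_{R\gp{(u,v)}}$, and ``$C_{I_P\cap I_{P\gp{u}}}(u)$'' is $C_{Y_R\cap Y_{R\gp{(u,v)}}}(u,v)$. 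Finally, the block idempotent $\epsilon_{\gp{(u,v)}}$ of $\OO C_{Y_R}(u,v)$ that Theorem~\ref{thm:coherence4} describes as the unique block covering $e_{p_1(R)\gp{u}}\tensor f_{p_2(R)\gp{v}}^\ast$ equals
\[
	\tr_{C_{Y_R\cap Y_{R\gp{(u,v)}}}(u,v)}^{C_{Y_R}(u,v)}\bigl(e_{p_1(R)\gp{u}}\tensor f_{p_2(R)\gp{v}}^\ast\bigr)
\]
by Lemma~\ref{lem:normalizerbrauerpairs}(b), which is exactly the expression in the statement. Hence (C3) becomes verbatim the displayed identity in the corollary, quantified over the same parameters $R$, $(u,v)$, $(s,t)$.

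To conclude, I would invoke Theorem~\ref{thm:diagcoherence3}, which says that $\delta(T^\Delta(A,B))$ is the subgroup of those $(\chi_R)\in\delta(T(A,B))$ with $\chi_R=0$ whenever $R\notin S_p^\Delta(G\times H)$. Intersecting this with the description of $\delta(T(A,B))$ obtained above gives precisely the claimed characterization of $\delta(T^\Delta(A,B))$. The main (and only mildly delicate) point is the bookkeeping of the previous paragraph: one must check that each centralizer, normalizer, and block idempotent produced by the substitution into Condition~(C3) really is the corresponding object named in the corollary. Every such identification follows at once from Lemmas~\ref{lem:tensorfusion} and~\ref{lem:normalizerbrauerpairs} together with the product formula $C_{G\times H}(S)=C_G(p_1(S))\times C_H(p_2(S))$, so this step is routine rather than substantive.
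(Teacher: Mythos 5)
Your proposal is correct and is exactly the paper's route: the paper derives this corollary by combining Theorem \ref{thm:coherence4} (specialized to $G\times H$ and the block $A\tensor_\OO B^\ast$ with maximal Brauer pair $(D\times E,e_D\tensor f_E^\ast)$) with Theorem \ref{thm:diagcoherence3}, and your notational bookkeeping via Lemmas \ref{lem:maxlABpair}, \ref{lem:tensorfusion}, and \ref{lem:normalizerbrauerpairs} is precisely the translation the paper leaves implicit when it says the result "follows immediately."
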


\section{Strong isotypies and $p$-permutation equivalences}\label{sec:strongisoandpperm}

	Throughout this section $G$ and $H$ are finite groups, $(\K,\OO,F)$ is a $p$-modular system large enough for $G\times H$, $A$ is a block of $\OO G$, and $B$ is a block of $\OO H$. The purpose of this section is to introduce a new type of block equivalence that we call a \textit{strong isotypy} and to compare this definition with that of a $p$-permutation equivalence.

	\begin{hypotheses}\label{hyp}
		Let $A\in\Bl(\OO G)$ and $B\in\Bl(\OO H)$. Let $(D,e_D)\in\BP_\OO(A)$ and $(E,f_E)\in\BP_\OO(B)$ be maximal Brauer pairs for $A$ and $B$, respectively. For each subgroup $P\subgp D$ (respectively, $Q\subgp E$) write $e_P$ (resp. $f_Q$) for the unique block idempotent of $\OO C_G(P)$ (resp. $\OO C_H(Q)$) such that $(P,e_P)\leq(D,e_D)$ (resp. $(Q,f_Q)\leq (E,f_E)$). Set $\mathcal{A}=\mathcal{F}_{(D,e_D)}(G,A)$, set $\mathcal{B}=\mathcal{F}_{(E,f_E)}(H,B)$, and let $\phi:E\isoto D$ be a group isomorphism that induces an isomorphism of fusion systems $\phi:\mathcal{B}\isoto\mathcal{A}$. For each $Q\subgp E$ set
		\begin{align*}
			Y_Q&=N_{G\times H}(\Delta(\phi(Q),\phi,Q),e_{\phi(Q)}\tensor f_Q^\ast),\\
			J_Q&=N_H(Q,f_Q)
		\end{align*}
		and for each $P\subgp D$ set
		\begin{equation*}
			I_P=N_G(P,e_P).
		\end{equation*}
	\end{hypotheses}
	
	The definition below makes use of the notion of the ``extended tensor product'' of two characters. See \cite[Section 6]{Boltje_2020} for more information.
	
	
	
	
	
	
	\begin{definition}\label{defn:strongisotypy}
		Assume Hypotheses \ref{hyp}. Then a \textit{strong isotypy between} $A$ \textit{and} $B$ (relative to $(D,e_D)$, $(E,f_E)$, and $\phi:E\isoto D$) is a family of virtual characters
		\begin{equation*}
			\chi_Q\in R_\K(Y_Q/\Delta(\phi(Q),\phi,Q),e_{\phi(Q)}\tensor f_Q^\ast),\qquad Q\subgp E
		\end{equation*}
		that satisfies each of the following conditions:
		\begin{itemize}
			\item[(1)] Let $Q_1,Q_2\subgp E$ and set $P_i:=\phi(Q_i)$, $i=1,2$. If $g\in G$, $h\in H$ are such that ${}^g(P_1,e_{P_1})=(P_2,e_{P_2})$, ${}^h(Q_1,f_{Q_1})=(Q_2,f_{Q_2})$, and $c_g\phi=\phi c_h:Q_1\isoto P_2$, then ${}^{(g,h)}\chi_{Q_1}=\chi_{Q_2}$.
			\item[(2)] Let $Q\subgp E$ and set $P:=\phi(Q)$. Let $((u,v),\epsilon)\in\BE_\OO(Y_Q,e_P\tensor f_Q^\ast)$ be such that
			\begin{equation*}
				(\gp{(u,v)},\epsilon)\leq(N_{D\times E}(\Delta(P,\phi,Q)),e_{N_D(P)}\tensor f_{N_E(Q)}^\ast)
			\end{equation*}
			and let $(s,t)\in C_{Y_Q}(u,v)_{p'}$.
			\begin{itemize}
				\item[(a)] If $u=\phi(v)$ then
				\begin{equation*}
					\chi_Q((us,vt)\epsilon)=\Ind_{C_{Y_Q\cap Y_{Q\gp{v}}}(u,v)}^{C_{Y_Q}(u,v)}(\Res_{C_{Y_Q\cap Y_{Q\gp{v}}}(u,v)}^{Y_{Q\gp{v}}}(\chi_{Q\gp{v}}))(s,t).
				\end{equation*}
				\item[(b)] If $\chi_Q((us,vt)\epsilon)\neq 0$ then
				\begin{equation*}
					(\Delta(P,\phi,Q)\gp{(u,v)},e_{P\gp{u}}\tensor f_{Q\gp{v}}^\ast)\leq_{G\times H}(\Delta(D,\phi,E),e_D\tensor f_E^\ast)
				\end{equation*}
				or equivalently, there exists an $\mathcal{A}$-isomorphism $\alpha:P\gp{u}\isoto\alpha(P\gp{u})$ and a $\mathcal{B}$-isomorphism $\beta:Q\gp{v}\isoto\beta(Q\gp{v})$ such that $\alpha=\phi\beta\phi^{-1}:P\isoto\alpha(P)$ and $\alpha(u)=\phi(\beta(v))$.
			\end{itemize}
			\item[(3)] Let $Q\subgp E$ and set $P:=\phi(Q)$. Then
			\begin{equation*}
				\chi_Q\extens{H}{Y_Q,Y_Q^\circ}\chi_Q^\circ=[\K C_G(P)e_P]\in R_\K(N_{G\times G}(\Delta(P),e_P\tensor e_P^\ast)/\Delta(P),e_P\tensor e_P^\ast)
			\end{equation*}
			and
			\begin{equation*}
				\chi_Q^\circ\extens{G}{Y_Q^\circ,Y_Q}\chi_Q=[\K C_H(Q)f_Q]\in R_\K(N_{H\times H}(\Delta(Q),f_Q\tensor f_Q^\ast)/\Delta(Q),f_Q\tensor f_Q^\ast).
			\end{equation*}
		\end{itemize}
	\end{definition}
	
	Before continuing we analyze further the conclusion of condition (2b) in Definition \ref{defn:strongisotypy}. 
	
	
	
	\begin{lemma}\label{lem:moreon2b}
		Let $\mathcal{A}$ and $\mathcal{B}$ be saturated fusion systems over $D$ and $E$, respectively, and let $\phi:E\isoto D$ be a group isomorphism that induces an isomorphism of fusion systems $\phi:\mathcal{B}\isoto\mathcal{A}$. Let $Q\subgp E$ and set $P:=\phi(Q)$. Let $u\in N_D(P)$, $v\in N_E(Q)$ and suppose that there exists an $\mathcal{A}$-isomorphism $\alpha:P\gp{u}\isoto\alpha(P\gp{u})$ and a $\mathcal{B}$-isomorphism $\beta:Q\gp{v}\isoto\beta(Q\gp{v})$ such that $\alpha=\phi\beta\phi^{-1}:P\isoto\alpha(P)$ and $\alpha(u)=\phi(\beta(v))$. Then
		\begin{itemize}
			\item[(1)] There exists an $\mathcal{A}$-isomorphism $\psi:P\gp{u}\isoto P\gp{\phi(v)}$ such that $\psi|_P=\id_P$ and $\psi(u)=\phi(v)$. In particular, $u$ and $\phi(v)$ are $\mathcal{N}_{\mathcal{A}}(P)$-conjugate.
			\item[(2)] There exists a $\mathcal{B}$-isomorphism $\omega:Q\gp{v}\isoto Q\gp{\phi^{-1}(u)}$ such that $\omega|_Q=\id_Q$ and $\omega(v)=\phi^{-1}(u)$. In particular, $v$ and $\phi^{-1}(u)$ are $\mathcal{N}_{\mathcal{B}}(Q)$-conjugate.
		\end{itemize}
	\end{lemma}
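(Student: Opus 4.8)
The plan is to prove (1) by transporting $\beta$ across $\phi$ and composing with $\alpha^{-1}$, and then to obtain (2) from (1) by the evident symmetry of the hypotheses in $\mathcal{A}$ and $\mathcal{B}$.

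First I would record the preliminary fact that $\phi(v)\in N_D(P)$: since $\phi:E\isoto D$ is a group isomorphism with $\phi(Q)=P$, conjugation gives $\phi(N_E(Q))=N_D(P)$, and $v\in N_E(Q)$. Now set $\gamma:=\phi\beta\phi^{-1}$. Because $\phi$ induces an isomorphism of fusion systems $\mathcal{B}\isoto\mathcal{A}$, the $\mathcal{B}$-isomorphism $\beta:Q\gp{v}\isoto\beta(Q\gp{v})$ is carried to an $\mathcal{A}$-isomorphism $\gamma:\phi(Q\gp{v})=P\gp{\phi(v)}\isoto\phi(\beta(Q\gp{v}))$; note that $\gamma|_P=\phi\beta\phi^{-1}|_P=\alpha|_P$ and that $\gamma(\phi(v))=\phi(\beta(v))=\alpha(u)$, the last equality being the hypothesis $\alpha(u)=\phi(\beta(v))$. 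Consequently $\alpha(P\gp{u})=\langle\alpha(P),\alpha(u)\rangle=\langle\gamma(P),\gamma(\phi(v))\rangle=\gamma(P\gp{\phi(v)})$, so $\alpha$ and $\gamma$ have the same image, and $\psi:=\gamma^{-1}\circ\alpha:P\gp{u}\isoto P\gp{\phi(v)}$ is an $\mathcal{A}$-isomorphism (inverses and composites of isomorphisms in a fusion system are again morphisms). Since $\gamma|_P=\alpha|_P$ we get $\psi|_P=\id_P$, and $\psi(u)=\gamma^{-1}(\alpha(u))=\gamma^{-1}(\gamma(\phi(v)))=\phi(v)$, which is precisely the map required in (1). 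For the ``in particular'' clause, $\psi$ restricts to an isomorphism $\gp{u}\isoto\gp{\phi(v)}$ with $u\mapsto\phi(v)$ that extends to the $\mathcal{A}$-morphism $\psi$ on $\gp{u}P=P\gp{u}$ with $\psi(P)=P$; by the definition of the normalizer subsystem this restriction is a morphism of $\mathcal{N}_{\mathcal{A}}(P)$, so $u$ and $\phi(v)$ are $\mathcal{N}_{\mathcal{A}}(P)$-conjugate.

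Finally, (2) follows from (1) by symmetry. Applying $\phi^{-1}$ to the relation $\alpha|_P=\phi\beta\phi^{-1}|_P$ gives $\beta|_Q=\phi^{-1}\alpha\phi|_Q$, and applying $\phi^{-1}$ to $\alpha(u)=\phi(\beta(v))$ gives $\beta(v)=\phi^{-1}(\alpha(u))$; moreover $\phi^{-1}:D\isoto E$ induces the inverse isomorphism of fusion systems $\mathcal{A}\isoto\mathcal{B}$. Thus the hypotheses are invariant under the substitution $(\mathcal{A},D,P,u,\alpha)\leftrightarrow(\mathcal{B},E,Q,v,\beta)$, $\phi\leftrightarrow\phi^{-1}$, and (1) applied in the substituted situation produces the $\mathcal{B}$-isomorphism $\omega:Q\gp{v}\isoto Q\gp{\phi^{-1}(u)}$ with $\omega|_Q=\id_Q$ and $\omega(v)=\phi^{-1}(u)$, together with the $\mathcal{N}_{\mathcal{B}}(Q)$-conjugacy of $v$ and $\phi^{-1}(u)$. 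The only genuinely delicate point is the bookkeeping of domains and codomains when transporting $\beta$ across $\phi$ and verifying $\gamma|_P=\alpha|_P$; I do not anticipate a serious obstacle.
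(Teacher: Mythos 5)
Your proof is correct and is essentially the paper's argument: your $\psi=\gamma^{-1}\circ\alpha$ with $\gamma=\phi\beta\phi^{-1}$ is exactly the composite $\phi\beta^{-1}\phi^{-1}\alpha$ used in the paper, and your symmetry argument for (2) produces the same map $\omega=\phi^{-1}\psi^{-1}\phi$ that the paper writes down explicitly.
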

	
	\begin{proof}
		The assumptions imply that $\phi\beta^{-1}\phi^{-1}:\alpha(P\gp{u})\isoto P\gp{\phi(v)}$ is an $\mathcal{A}$-isomorphism. Then the $\mathcal{A}$-isomorphism $\psi=\phi\beta^{-1}\phi^{-1}\alpha:P\gp{u}\isoto P\gp{\phi(v)}$ satisfies $\psi|_P=\id_P$ and $\psi(u)=\phi(v)$. In particular, $\psi|_{\gp{u}}:\gp{u}\isoto\gp{\phi(v)}$ is an $\mathcal{N}_{\mathcal{A}}(P)$-isomorphism that maps $u$ to $\phi(v)$. This shows that (1) holds. Setting $\omega=\phi^{-1}\psi^{-1}\phi:Q\gp{v}\isoto Q\gp{\phi^{-1}(u)}$ gives (2).
	\end{proof}
	
	\begin{corollary}\label{cor:2bcor}
		Assume Hypotheses \ref{hyp} and let $\chi_Q$, $Q\subgp E$, be a strong isotypy between $A$ and $B$. Let $Q\subgp E$, $P:=\phi(Q)$, and let $((u,v),\epsilon)\in\BE_\OO(Y_Q,e_P\tensor f_Q^\ast)$ be such that
		\begin{equation*}
			(\gp{(u,v)},\epsilon)\leq(N_{D\times E}(\Delta(P,\phi,Q)),e_{N_D(P)}\tensor f_{N_E(Q)}^\ast).
		\end{equation*}
		If $u$ and $\phi(v)$ are not $\mathcal{N}_{\mathcal{A}}(P)$-conjugate then $d_{Y_Q}^{(u,v),\epsilon}(\chi_Q)=0$.
	\end{corollary}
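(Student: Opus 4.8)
The plan is to argue by contradiction, feeding condition (2b) of Definition \ref{defn:strongisotypy} into Lemma \ref{lem:moreon2b} after translating the vanishing of the generalized decomposition map into a statement about character values via Proposition \ref{prop:gendecompvalues}. First I would observe that, since $((u,v),\epsilon)\in\BE_\OO(Y_Q,e_P\tensor f_Q^\ast)$, Proposition \ref{prop:gendecompvalues} (applied with $G$ replaced by $Y_Q$, the $p$-element $u$ replaced by $(u,v)$, the block idempotent $e$ replaced by $\epsilon$) gives $d_{Y_Q}^{(u,v),\epsilon}(\chi_Q)(s,t)=\chi_Q((us,vt)\epsilon)$ for every $(s,t)\in C_{Y_Q}(u,v)_{p'}$ and $d_{Y_Q}^{(u,v),\epsilon}(\chi_Q)(s,t)=0$ for $(s,t)\notin C_{Y_Q}(u,v)_{p'}$. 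Consequently $d_{Y_Q}^{(u,v),\epsilon}(\chi_Q)=0$ is equivalent to $\chi_Q((us,vt)\epsilon)=0$ for all $p$-regular $(s,t)\in C_{Y_Q}(u,v)$.

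Next I would suppose, for contradiction, that $d_{Y_Q}^{(u,v),\epsilon}(\chi_Q)\neq 0$, so that $\chi_Q((us,vt)\epsilon)\neq 0$ for some $(s,t)\in C_{Y_Q}(u,v)_{p'}$. Here the hypothesis of the corollary supplies exactly the data needed to invoke condition (2b) of the definition of strong isotypy, namely that $((u,v),\epsilon)\in\BE_\OO(Y_Q,e_P\tensor f_Q^\ast)$ and that $(\gp{(u,v)},\epsilon)\leq(N_{D\times E}(\Delta(P,\phi,Q)),e_{N_D(P)}\tensor f_{N_E(Q)}^\ast)$. Condition (2b) then produces an $\mathcal{A}$-isomorphism $\alpha\colon P\gp{u}\isoto\alpha(P\gp{u})$ and a $\mathcal{B}$-isomorphism $\beta\colon Q\gp{v}\isoto\beta(Q\gp{v})$ with $\alpha=\phi\beta\phi^{-1}$ on $P$ and $\alpha(u)=\phi(\beta(v))$.

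Finally I would feed this into Lemma \ref{lem:moreon2b}(1). For this I should note that the inclusion $\gp{(u,v)}\subgp N_{D\times E}(\Delta(P,\phi,Q))$ forces $u\in N_D(P)$ and $v\in N_E(Q)$ — this is the projection computation carried out in the proof of Lemma \ref{lem:twisteddiagfullynormalized}, where $p_1(N_{D\times E}(\Delta(P,\phi,Q)))=N_D(P)$ and $p_2(N_{D\times E}(\Delta(P,\phi,Q)))=N_E(Q)$ — and that the block fusion systems $\mathcal{A}$ and $\mathcal{B}$ of Hypotheses \ref{hyp} are saturated, so that Lemma \ref{lem:moreon2b} applies. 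Its part (1) then asserts that $u$ and $\phi(v)$ are $\mathcal{N}_{\mathcal{A}}(P)$-conjugate, contradicting the hypothesis of the corollary; hence $d_{Y_Q}^{(u,v),\epsilon}(\chi_Q)=0$.

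I do not anticipate a genuine obstacle here: essentially all the content is already packaged in Proposition \ref{prop:gendecompvalues}, in condition (2b), and in Lemma \ref{lem:moreon2b}. The only point requiring a line of care is verifying that the subgroup containment in the hypotheses really places $u$ in $N_D(P)$ and $v$ in $N_E(Q)$, so that Lemma \ref{lem:moreon2b} is legitimately applicable; this is routine given the normalizer/projection identities already established.
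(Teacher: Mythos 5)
Your argument is correct and is essentially the paper's own proof, just written out contrapositively: the paper likewise combines Proposition \ref{prop:gendecompvalues}, condition (2b) of Definition \ref{defn:strongisotypy}, and Lemma \ref{lem:moreon2b} to conclude that all values $\chi_Q((us,vt)\epsilon)$ vanish when $u$ and $\phi(v)$ are not $\mathcal{N}_{\mathcal{A}}(P)$-conjugate. Your extra care in checking that $u\in N_D(P)$ and $v\in N_E(Q)$ and that the fusion systems are saturated is a welcome (if routine) addition that the paper leaves implicit.
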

	
	\begin{proof}
		If $u$ and $\phi(v)$ are not $\mathcal{N}_{\mathcal{A}}(P)$-conjugate then Lemma \ref{lem:moreon2b} and (2b) of Definition \ref{defn:strongisotypy} imply that $\chi_Q((us,vt)\epsilon)=0$ for all $(s,t)\in C_{Y_Q}(u,v)_{p'}$, hence $d_{Y_Q}^{(u,v),\epsilon}(\chi_Q)=0$.
	\end{proof}
	
	\begin{lemma}\label{lem:charsofppermequiv}
		Let $A\in\Bl(\OO G)$, $B\in\Bl(\OO H)$, and let $\gamma\in T^\Delta(A,B)$ be a $p$-permutation equivalence with maximal $\gamma$-Brauer pair $(\Delta(D,\phi,E),e_D\tensor f_E^\ast)$. Then $(D,e_D)\in\BP_\OO(G,A)$ and $(E,f_E)\in\BP_\OO(H,B)$ are maximal Brauer pairs. For each $P\subgp D$ (respectively, $Q\subgp E$) write $e_P$ (resp. $f_Q$) for the unique block idempotent of $\OO C_G(P)$ (resp. $\OO C_H(Q)$) such that $(P,e_P)\leq(D,e_D)$ (resp. $(Q,f_Q)\leq(E,f_E)$). Set $\mathcal{A}=\mathcal{F}_{(D,e_D)}(G,A)$ and set $\mathcal{B}=\mathcal{F}_{(E,f_E)}(H,B)$. Then $\phi:E\isoto D$ induces a fusion system isomorphism $\phi:\mathcal{B}\isoto\mathcal{A}$. For each $Q\subgp E$ set
		\begin{equation*}
			Y_Q:=N_{G\times H}(\Delta(\phi(Q),\phi,Q),e_{\phi(Q)}\tensor f_Q^\ast)
		\end{equation*}
		and set
		\begin{equation*}
			\chi_Q:=\chi_{\gamma(\Delta(\phi(Q),\phi,Q),e_{\phi(Q)}\tensor f_Q^\ast)}\in R_\K(Y_Q/\Delta(\phi(Q),\phi,Q),e_{\phi(Q)}\tensor f_Q^\ast).
		\end{equation*}
		Then the characters $\chi_Q$, $Q\subgp E$, form a strong isotypy between $A$ and $B$.
	\end{lemma}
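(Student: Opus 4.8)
The plan is to verify the four conditions of Definition~\ref{defn:strongisotypy} for the family $(\chi_Q)_{Q\subgp E}$, relying on the coherence description of $T^\Delta(A,B)$ in Corollary~\ref{cor:diagcoherencecor3} and on the basic structure theory of $p$-permutation equivalences from \cite{Boltje_2020}. First I would dispatch the preliminary assertions: that $(D,e_D)$, $(E,f_E)$ are maximal Brauer pairs and that $\phi$ induces a fusion system isomorphism $\phi:\mathcal{B}\isoto\mathcal{A}$ are standard properties of maximal $\gamma$-Brauer pairs \cite{Boltje_2020}, so Hypotheses~\ref{hyp} hold; we may then form the map $\delta=\delta_{A\tensor B^\ast}$ of Corollary~\ref{cor:diagcoherencecor3} relative to the maximal $A\tensor B^\ast$-Brauer pair $(D\times E,e_D\tensor f_E^\ast)$ (Lemmas~\ref{lem:maxlABpair} and~\ref{lem:tensorfusion}). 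The crucial observation is that, by definition, $\chi_Q$ is precisely the $\Delta(\phi(Q),\phi,Q)$-component of $\delta([\gamma])$; hence $\chi_Q$ indeed lies in $R_\K(Y_Q/\Delta(\phi(Q),\phi,Q),e_{\phi(Q)}\tensor f_Q^\ast)$, the family $(\chi_Q)_{Q\subgp E}$ is the restriction of the tuple $\delta([\gamma])\in\delta(T^\Delta(A,B))$ to the index subset $\set{\Delta(\phi(Q),\phi,Q):Q\subgp E}$, and Corollary~\ref{cor:diagcoherencecor3} and Theorem~\ref{thm:diagcoherence3} apply to it.

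Conditions (1) and (2)(a) are then essentially translations. For (1), if $g\in G$, $h\in H$ satisfy the stated compatibilities, then ${}^{(g,h)}\Delta(P_1,\phi,Q_1)=\Delta(P_2,c_g\phi c_h^{-1},Q_2)=\Delta(P_2,\phi,Q_2)$ and ${}^{(g,h)}(e_{P_1}\tensor f_{Q_1}^\ast)=e_{P_2}\tensor f_{Q_2}^\ast$, so (using Lemma~\ref{lem:tensorfusion} to identify $\mathcal{A}\times\mathcal{B}^\ast$ with $\mathcal{F}_{(D\times E,e_D\tensor f_E^\ast)}(G\times H,A\tensor B^\ast)$) $c_{(g,h)}$ is an $\mathcal{A}\times\mathcal{B}^\ast$-isomorphism $\Delta(P_1,\phi,Q_1)\isoto\Delta(P_2,\phi,Q_2)$ realized by $(g,h)$, and ${}^{(g,h)}\chi_{Q_1}=\chi_{Q_2}$ follows from the $\mathcal{A}\times\mathcal{B}^\ast$-invariance of $\delta([\gamma])$. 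For (2)(a), when $u=\phi(v)$ one checks that $\Delta(P,\phi,Q)\gp{(u,v)}=\Delta(\phi(Q\gp{v}),\phi,Q\gp{v})$, so $\chi_{\Delta(P,\phi,Q)\gp{(u,v)}}=\chi_{Q\gp{v}}$ and $Y_{\Delta(P,\phi,Q)\gp{(u,v)}}=Y_{Q\gp{v}}$; substituting these into the coherence identity of Corollary~\ref{cor:diagcoherencecor3} for $R=\Delta(P,\phi,Q)$ gives verbatim the identity of Definition~\ref{defn:strongisotypy}(2)(a). The one point needing care is that the idempotent $\epsilon$ of Definition~\ref{defn:strongisotypy}(2), characterized by $(\gp{(u,v)},\epsilon)\leq(N_{D\times E}(\Delta(P,\phi,Q)),e_{N_D(P)}\tensor f_{N_E(Q)}^\ast)$, agrees with the $\epsilon_{\gp{(u,v)}}=\tr_{C_{Y_Q\cap Y_{Q\gp{v}}}(u,v)}^{C_{Y_Q}(u,v)}(e_{P\gp{u}}\tensor f_{Q\gp{v}}^\ast)$ of Corollary~\ref{cor:diagcoherencecor3}; this follows from Lemma~\ref{lem:normalizerbrauerpairs}(b) applied in $G\times H$, once one notes that since $\phi$ is a genuine group isomorphism $p_1(N_{D\times E}(\Delta(P,\phi,Q)))=N_D(P)$ and $p_2(N_{D\times E}(\Delta(P,\phi,Q)))=N_E(Q)$, so that $e_{N_D(P)}\tensor f_{N_E(Q)}^\ast$ is the block part of the maximal $\OO Y_Q(e_P\tensor f_Q^\ast)$-Brauer pair.

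For (2)(b) I would argue by nonvanishing. If $\chi_Q((us,vt)\epsilon)\neq 0$ for some $(s,t)\in C_{Y_Q}(u,v)_{p'}$, then by Proposition~\ref{prop:gendecompvalues} and the coherence identity of Corollary~\ref{cor:diagcoherencecor3} (for $R=\Delta(P,\phi,Q)$) the virtual character $\chi_{\Delta(P,\phi,Q)\gp{(u,v)}}$ is nonzero; Theorem~\ref{thm:diagcoherence3} then forces $\Delta(P,\phi,Q)\gp{(u,v)}$ to be twisted diagonal, and since $\chi_{\Delta(P,\phi,Q)\gp{(u,v)}}=\chi_{\gamma(\Delta(P,\phi,Q)\gp{(u,v)},e_{P\gp{u}}\tensor f_{Q\gp{v}}^\ast)}$ is nonzero the Brauer construction $\gamma(\Delta(P,\phi,Q)\gp{(u,v)},e_{P\gp{u}}\tensor f_{Q\gp{v}}^\ast)$ is nonzero. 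The standard fact that every $\gamma$-Brauer pair of a $p$-permutation equivalence is $G\times H$-subconjugate to a maximal one \cite{Boltje_2020} then yields $(\Delta(P,\phi,Q)\gp{(u,v)},e_{P\gp{u}}\tensor f_{Q\gp{v}}^\ast)\leq_{G\times H}(\Delta(D,\phi,E),e_D\tensor f_E^\ast)$, which is the first (hence, by the definition, either) formulation of Definition~\ref{defn:strongisotypy}(2)(b).

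The main obstacle will be Condition (3), the one place where the invertibility of $\gamma$ enters; here I expect to combine three ingredients from \cite{Boltje_2020}. First, as a $p$-permutation equivalence $\gamma$ satisfies $\gamma\cdot_H\gamma^\circ=[A]$ in $T^\Delta(A,A)$ and $\gamma^\circ\cdot_G\gamma=[B]$ in $T^\Delta(B,B)$. Second, the Brauer construction is multiplicative with respect to the tensor product of trivial source bimodules with twisted diagonal vertices \cite[Section 6]{Boltje_2020}, so the $(\Delta(P),e_P\tensor e_P^\ast)$-component of $\delta_{A\tensor A^\ast}([\gamma\cdot_H\gamma^\circ])$ equals the extended tensor product of the $(\Delta(P,\phi,Q),e_P\tensor f_Q^\ast)$-component of $\delta([\gamma])$ with the $(\Delta(Q,\phi^{-1},P),f_Q\tensor e_P^\ast)$-component of $\delta([\gamma^\circ])$, that is $\chi_Q\extens{H}{Y_Q,Y_Q^\circ}\chi_Q^\circ$ (the latter component being $\chi_Q^\circ$ by compatibility of $(\cdot)^\circ$ with Brauer constructions). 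Third, the $(\Delta(P),e_P\tensor e_P^\ast)$-component of $\delta_{A\tensor A^\ast}([A])$ is the inflation to $N_{G\times G}(\Delta(P),e_P\tensor e_P^\ast)$ of $[\K C_G(P)e_P]$, computed from $\br_{\Delta(P)}^{G\times G}(e_A\tensor e_A^\ast)=\br_P^G(e_A)\tensor\br_P^G(e_A)^\ast$, the identification $\overline{(\OO G)}(\Delta(P))\iso FC_G(P)$, and $(P,e_P)\leq(D,e_D)$. Comparing the two expressions gives the first identity of Definition~\ref{defn:strongisotypy}(3), and the second follows symmetrically from $\gamma^\circ\cdot_G\gamma=[B]$. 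The delicate part of this step is the bookkeeping of the composition --- which subgroups of $G\times G$ arise, the $\phi$ versus $\phi^{-1}$ twists, and the passage from centralizers to normalizers --- but at the level of centralizers this is exactly the classical construction of an isotypy from a $p$-permutation equivalence, so (3) should be viewed as its normalizer-level refinement.
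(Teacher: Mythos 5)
Your proposal is correct and follows essentially the same route as the paper: identify $(\chi_Q)$ with the relevant components of $\delta([\gamma])$, deduce (1) from $\mathcal{A}\times\mathcal{B}^\ast$-invariance, deduce (2a) and (2b) from the coherence condition of Corollary~\ref{cor:diagcoherencecor3} together with Lemma~\ref{lem:normalizerbrauerpairs} and the conjugacy of maximal $\gamma$-Brauer pairs, and obtain (3) from the invertibility of $\gamma$ combined with the compatibility of the Brauer construction with extended tensor products. The only cosmetic difference is that for (3) the paper simply cites \cite[Propositions 11.8, 11.9]{Boltje_2020} (after matching $Y_Q\ast Y_Q^\circ$ with $N_{G\times G}(\Delta(P),e_P\tensor e_P^\ast)$), whereas you sketch the computation those propositions encapsulate.
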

	
	\begin{proof}
		That $(D,e_D)$ and $(E,f_E)$ are maximal Brauer pairs for $A$ and $B$, respectively, follows from \cite[Theorem 10.11(c)]{Boltje_2020}, and $\phi:E\isoto D$ induces an isomorphism of fusion systems $\phi:\mathcal{B}\isoto\mathcal{A}$ by \cite[Theorem 11.2]{Boltje_2020}. Recall that if $\mathcal{B}^\ast:=\mathcal{F}_{(E,f_E^\ast)}(H,B^\ast)$ then $\mathcal{B}=\mathcal{B}^\ast$ and also $\mathcal{A}\times\mathcal{B}^\ast=\mathcal{F}_{(D\times E,e_D\tensor f_E^\ast)}(G\times H,A\tensor B^\ast)$ by Lemma \ref{lem:tensorfusion}. Write $(\theta_R)_{R\subgp D\times E}$ for the image of $\gamma$ under the map $\delta$ of Theorem \ref{thm:diagcoherence3} and Corollary \ref{cor:diagcoherencecor3}. Note that if $Q\subgp E$ then $\chi_Q=\theta_{\Delta(\phi(Q),\phi,Q)}$.
		
		We first verify condition (1) of Definition \ref{defn:strongisotypy}. Let $Q_1,Q_2\subgp E$ and set $P_i=\phi(Q_i)$ for $i=1,2$. Let $g\in G$, $h\in H$ be such that ${}^g(P_1,e_{P_1})=(P_2,e_{P_2})$, ${}^h(Q_1,f_{Q_1})=(Q_2,f_{Q_2})$, and $c_g\phi=\phi c_h:Q_1\isoto P_2$. Note that for $i=1,2$
		\begin{equation*}
			(\Delta(P_i,\phi,Q_i),e_{P_i}\tensor f_{Q_i}^\ast)\leq(D\times E,e_D\tensor f_E^\ast)
		\end{equation*}
		is a containment of $A\tensor B^\ast$-Brauer pairs and
		\begin{equation*}
			{}^{(g,h)}(\Delta(P_1,\phi,Q_1),e_{P_1}\tensor f_{Q_1}^\ast)=(\Delta(P_2,\phi,Q_2),e_{P_2}\tensor f_{Q_2}^\ast),
		\end{equation*}
		so $c_{(g,h)}:\Delta(P_1,\phi,Q_1)\isoto\Delta(P_2,\phi,Q_2)$ is an $\mathcal{A}\times\mathcal{B}^\ast$-isomorphism. Since the character tuple $(\theta_R)_{R\subgp D\times E}$ is $\mathcal{A}\times\mathcal{B}^\ast$-fixed it follows that ${}^{(g,h)}\theta_{\Delta(P_1,\phi,Q_1)}=\theta_{\Delta(P_2,\phi,Q_2)}$. In other words ${}^{(g,h)}\chi_{Q_1}=\chi_{Q_2}$, as needed.
		
		Next we verify (2). Let $Q\subgp E$ and set $P=\phi(Q)$. Let $((u,v),\epsilon)\in\BE_\OO(Y_Q,e_P\tensor f_Q^\ast)$ be such that
		\begin{equation*}
			(\gp{(u,v)},\epsilon)\leq(N_{D\times E}(\Delta(P,\phi,Q)),e_{N_D(P)}\tensor f_{N_E(Q)}^\ast)
		\end{equation*}
		and let $(s,t)\in C_{Y_Q}(u,v)_{p'}$. First we prove (a). Assume that $u=\phi(v)$. Lemma \ref{lem:normalizerbrauerpairs} implies that
		\begin{equation*}
			(\gp{(u,v)},\tr_{C_{Y_Q\cap Y_{Q\gp{v}}}(u,v)}^{C_{Y_Q}(u,v)}(e_{P\gp{u}}\tensor f_{Q\gp{v}}^\ast))
		\end{equation*}
		is an $\OO Y_Q(e_P\tensor f_Q^\ast)$-Brauer pair contained in $(N_{D\times E}(\Delta(P,\phi,Q)),e_{N_D(P)}\tensor f_{N_E(Q)}^\ast)$, hence
		\begin{equation*}
			\epsilon=\tr_{C_{Y_Q\cap Y_{Q\gp{v}}}(u,v)}^{C_{Y_Q}(u,v)}(e_{P\gp{u}}\tensor f_{Q\gp{v}}^\ast)
		\end{equation*}
		by the uniqueness of Brauer pairs contained in a fixed Brauer pair. The equality
		\begin{equation*}
			\chi_Q((us,vt)\epsilon)=\Ind_{C_{Y_Q\cap Y_{Q\gp{v}}}(u,v)}^{C_{Y_Q}(u,v)}(\Res_{C_{Y_Q\cap Y_{Q\gp{v}}}(u,v)}^{Y_{Q\gp{v}}}(\chi_{Q\gp{v}}))(s,t).
		\end{equation*}
		then follows from the coherence condition of Corollary \ref{cor:diagcoherencecor3} applied at $\Delta(P,\phi,Q)$, $(u,v)$, and $(s,t)$. This proves (a) and we move on to (b). Assume that $\chi_Q((us,vt)\epsilon)\neq 0$. Then the coherence condition of Corollary \ref{cor:diagcoherencecor3} implies that $\theta_{\Delta(P,\phi,Q)\gp{(u,v)}}\neq 0$, hence $\gamma(\Delta(P,\phi,Q)\gp{(u,v)},e_{P\gp{u}}\tensor f_{Q\gp{v}}^\ast)\neq 0$. Thus $(\Delta(P,\phi,Q)\gp{(u,v)},e_{P\gp{u}}\tensor f_{Q\gp{v}}^\ast)$ is a $\gamma$-Brauer pair. Since any two maximal $\gamma$-Brauer pairs are $G\times H$-conjugate (see \cite[Theorem 10.11(b)]{Boltje_2020}) we have
		\begin{equation*}
			(\Delta(P,\phi,Q)\gp{(u,v)},e_{P\gp{u}}\tensor f_{Q\gp{v}}^\ast)\leq_{G\times H}(\Delta(D,\phi,E),e_D\tensor f_E^\ast).
		\end{equation*}
		This proves (b) and completes the verification of (2).
		
		Finally, the equalities in condition (3) of Definition \ref{defn:strongisotypy} follow from \cite[Propositions 11.8, 11.9]{Boltje_2020}, noting that
		\begin{equation*}
			Y_Q\ast Y_Q^\circ=N_{G\times G}(\Delta(P),e_P\tensor e_P^\ast)
		\end{equation*}
		and
		\begin{equation*}
			Y_Q^\circ\ast Y_Q=N_{H\times H}(\Delta(Q),f_Q\tensor f_Q^\ast).
		\end{equation*}
	\end{proof}
	
	
	\begin{lemma}\label{lem:maximalBrauerpairs}
		Let $B\in\Bl(\OO G)$ and let $0\neq\gamma\in T_\OO(B)$. Set
		\begin{align*}
			X(\gamma)	&:=\set{(P,e)\in\BP_\OO(B)|\chi_{\gamma(P,e)}\neq 0}\\
			Y(\gamma)	&:=\set{(P,e)\in\BP_\OO(B)|\gamma(P,e)\neq 0}\\
			Z(\gamma)	&:=\set{(P,e)\in\BP_\OO(B)|M(P,e)\neq 0\text{ for some indecomposable }M\in\operatorname{supp}(\gamma)}.
		\end{align*}
		(Here $\operatorname{supp}(\gamma)$ denotes the \textit{support} of $\gamma$, which is the set of isomorphism classes of indecomposable trivial source $B$-modules appearing in $\gamma$ with nonzero coefficient when $\gamma$ is written as a linear combination of the standard basis elements of $T_\OO(B)$.) Then
		\begin{equation*}
			\max X(\gamma)=\max Y(\gamma) =\max Z(\gamma).
		\end{equation*}
	\end{lemma}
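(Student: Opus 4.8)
The plan is to prove the three maxima coincide by establishing the containments $X(\gamma)\subseteq Z(\gamma)$, $Y(\gamma)\subseteq Z(\gamma)$, and then showing that any maximal element of $Z(\gamma)$ lies in both $X(\gamma)$ and $Y(\gamma)$; this last step is where the real work is. First I would recall the basic facts about the $\OO$-lifted Brauer construction for trivial source modules: for a $B$-module $M$ and $(P,e)\in\BP_\OO(B)$ one has $M(P,e)=e\Res_{I_{(P,e)}}^{N_G(P)}M(P)$, and $\gamma(P,e)$ is defined by applying this construction $\Z$-linearly to $\gamma=\sum_M a_M[M]$. The character $\chi_{\gamma(P,e)}$ is then the character of $\gamma(P,e)$. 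The easy containments: if $\chi_{\gamma(P,e)}\neq 0$ then $\gamma(P,e)\neq 0$ (a nonzero virtual character forces a nonzero virtual module), so $X(\gamma)\subseteq Y(\gamma)$; and if $\gamma(P,e)=\sum_M a_M M(P,e)\neq 0$ then some summand $M(P,e)\neq 0$ with $a_M\neq 0$, so $M\in\operatorname{supp}(\gamma)$ and $(P,e)\in Z(\gamma)$, giving $Y(\gamma)\subseteq Z(\gamma)$. Hence $\max X(\gamma)\le\max Y(\gamma)\le\max Z(\gamma)$ in the sense that every element of $X(\gamma)$ or $Y(\gamma)$ lies in $Z(\gamma)$, so it suffices to show every maximal element of $Z(\gamma)$ belongs to $X(\gamma)$.

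The key step is therefore: let $(P,e)$ be maximal in $Z(\gamma)$; show $\chi_{\gamma(P,e)}\neq 0$. I would work with the coherence machinery of Theorem~\ref{thm:coherence2}/Theorem~\ref{thm:coherence3}, or more directly with the Brauer construction at the level of $I_{(P,e)}$. The point is that $M(P,e)$, for $M$ a trivial source $B$-module with $M(P,e)\neq 0$, is a nonzero trivial source $\OO I_{(P,e)}e$-module, and by maximality of $(P,e)$ in $Z(\gamma)$ the Brauer construction of $M(P,e)$ at any nontrivial $p$-subgroup $Q$ of $I_{(P,e)}$ must vanish for all $M\in\operatorname{supp}(\gamma)$ — this uses Lemma~\ref{lem:brauerconst}, which identifies $(M(P,e))(Q)$ with $e(Q)\Res\, M(PQ)$, together with the fact that $(P,e)\nor(PQ,f)$ for suitable $f$, so a nonzero $(M(P,e))(Q)$ would produce a strictly larger element of $Z(\gamma)$. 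Consequently $\gamma(P,e)=\sum_M a_M M(P,e)$ is a virtual combination of trivial source $\OO I_{(P,e)}e$-modules each of which is a \emph{projective} $\OO[I_{(P,e)}/P]e$-module after passing to the quotient (a trivial source module all of whose Brauer quotients at nontrivial subgroups of $I_{(P,e)}/P$ vanish is projective relative to $P$, hence projective over $I_{(P,e)}/P$). Now $\gamma(P,e)\neq 0$ as a virtual module since $(P,e)\in Z(\gamma)$ forces at least one $M(P,e)\neq 0$ with $a_M\neq 0$ and — crucially — the $M(P,e)$ corresponding to distinct isomorphism classes $M$ with $P$ as vertex are themselves non-isomorphic projective indecomposables over $\OO[I_{(P,e)}/P]e$ (Brauer correspondence of trivial source modules with their vertex quotients, as in \cite[Proposition 3.3(c)]{Boltje_2020} applied over $I_{(P,e)}$). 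Since characters of projective indecomposable modules are $\K$-linearly independent, the nonzero virtual projective module $\gamma(P,e)$ has nonzero character, i.e. $\chi_{\gamma(P,e)}\neq 0$, so $(P,e)\in X(\gamma)$.

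Combining: every maximal element of $Z(\gamma)$ lies in $X(\gamma)$, and every element of $X(\gamma)$ and of $Y(\gamma)$ lies in $Z(\gamma)$, while also every maximal element of $Z(\gamma)$ lies in $Y(\gamma)$ by the same argument (or trivially, since $(P,e)\in X(\gamma)\subseteq Y(\gamma)$). Therefore the three posets $X(\gamma)$, $Y(\gamma)$, $Z(\gamma)$ have the same set of maximal elements, which gives $\max X(\gamma)=\max Y(\gamma)=\max Z(\gamma)$. I expect the main obstacle to be the bookkeeping in the key step — precisely, arguing that a trivial source $\OO I_{(P,e)}e$-module whose Brauer quotients at all nontrivial $p$-subgroups of $I_{(P,e)}$ vanish must be projective as an $\OO[I_{(P,e)}/P]e$-module (one needs $P$ to act trivially, which follows since $M(P)$ is already an $N_G(P)/P$-module, and then relative $P$-projectivity plus vanishing of higher Brauer quotients forces projectivity over the quotient), and then invoking the Brauer-correspondence bijection over the group $I_{(P,e)}$ rather than $G$ to conclude the relevant characters are linearly independent. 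Everything else is a straightforward comparison of the three defining conditions.
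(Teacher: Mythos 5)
Your proposal is correct and follows essentially the same route as the paper: reduce to showing a maximal element $(P,e)$ of $Z(\gamma)$ lies in $X(\gamma)$, observe that maximality forces $(P,e)$ to be a maximal $M$-Brauer pair for each contributing $M\in\operatorname{supp}(\gamma)$, and conclude via the injectivity of the Brauer construction onto projective indecomposables of $\OO I_{(P,e)}e$ together with the $\K$-linear independence of their characters. The paper simply cites this injectivity as a known fact rather than re-deriving the vanishing of higher Brauer quotients from Lemma \ref{lem:brauerconst} as you do, but the argument is the same.
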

	
	
	\begin{proof}
		Recall that if $(Q,f)\in\BP_\OO(B)$ then the Brauer construction $-(Q,f):T_\OO(B)\to T_\OO(N_G(Q,f),f)$ restricts to an injective map from the span of the isomorphism classes $[M]$ of indecomposable trivial source $B$-modules $M$ such that $(Q,f)$ is a maximal $M$-Brauer pair to the span of the isomorphism classes of projective indecomposable $\OO N_G(Q,f)f$-modules in $T_\OO(N_G(Q,f),f)$.
		
		It is clear that $X(\gamma)\subseteq Y(\gamma)\subseteq Z(\gamma)$. Let $(Q,f)\in\max Z(\gamma)$. Let $M$ be any indecomposable module in the support of $\gamma$ satisfying $M(Q,f)\neq 0$. Then $(Q,f)$ is a maximal $M$-Brauer pair, for if $(Q,f)\leq(Q',f')$ with $(Q',f')$ a maximal $M$-Brauer pair then $(Q',f')\in Z(\gamma)$ and hence $(Q,f)=(Q',f')$. Now write $\gamma=\sum a_{[M]}[M]$ where $[M]$ runs over the standard basis elements of $T_\OO(B)$. Then
		\begin{align*}
			\gamma(Q,f)	&=\sum_{\substack{[M]\in\operatorname{supp}(\gamma)\\ M(Q,f)\neq 0}}a_{[M]}[M(Q,f)]=\sum_{\substack{[M]\in\operatorname{supp}(\gamma)\\ (Q,f)\in\max \BP_\OO(M)}}a_{[M]}[M(Q,f)]\\
			&=\left(\sum_{\substack{[M]\in\operatorname{supp}(\gamma)\\ (Q,f)\in\max \BP_\OO(M)}}a_{[M]}[M]\right)(Q,f).
		\end{align*}
		By the remark in the first paragraph of this proof it follows that $\gamma(Q,f)\neq 0$ and that $\chi_{\gamma(Q,f)}\neq 0$. Thus $(Q,f)\in Y(\gamma)$ and $(Q,f)\in X(\gamma)$. Since $X(\gamma)$ and $Y(\gamma)$ are both subsets of $Z(\gamma)$ it follows that $(Q,f)\in\max Y(\gamma)$ and $(Q,f)\in\max X(\gamma)$. Since $(Q,f)$ was an arbitrary maximal element of $Z(\gamma)$ this gives
		\begin{equation*}
			\max Z(\gamma)\subseteq\max Y(\gamma)\qquad\text{and}\qquad \max Z(\gamma)\subseteq\max X(\gamma).
		\end{equation*}
		The reverse containments follow easily from the fact that $X(\gamma)\subseteq Y(\gamma)\subseteq Z(\gamma)$.
	\end{proof}
	
	
	\begin{lemma}\label{lem:strongisocomesfrompperm}
		Assume Hypotheses \ref{hyp} and let $\chi_Q$, $Q\subgp E$, be a strong isotypy between $A$ and $B$. Then there exists a $p$-permutation equivalence $\gamma\in T^\Delta(A,B)$ such that $(\Delta(D,\phi,E),e_D\tensor f_E^\ast)$ is a maximal $\gamma$-Brauer pair and
		\begin{equation*}
			\chi_Q=\chi_{\gamma(\Delta(\phi(Q),\phi,Q),e_{\phi(Q)}\tensor f_Q^\ast)}
		\end{equation*}
		for each $Q\subgp E$.
	\end{lemma}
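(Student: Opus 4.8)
The plan is to realise the family $(\chi_Q)_{Q\subgp E}$ as a coherent character tuple and then to appeal to Corollary~\ref{cor:diagcoherencecor3}. Recall from Lemma~\ref{lem:tensorfusion} that $\mathcal{A}\times\mathcal{B}^\ast=\mathcal{F}_{(D\times E,e_D\tensor f_E^\ast)}(G\times H,A\tensor_\OO B^\ast)$ and that $(D\times E,e_D\tensor f_E^\ast)$ is a maximal $A\tensor_\OO B^\ast$-Brauer pair. Call $R\subgp D\times E$ \emph{admissible} if it is $\mathcal{A}\times\mathcal{B}^\ast$-conjugate to $\Delta(\phi(Q),\phi,Q)$ for some $Q\subgp E$. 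For $R$ not admissible put $\theta_R:=0$; for $R$ admissible, choose such a $Q$ and an $\mathcal{A}\times\mathcal{B}^\ast$-isomorphism $(\alpha,\beta)\colon\Delta(\phi(Q),\phi,Q)\isoto R$, and put $\theta_R:={}^{(\alpha,\beta)}\chi_Q$, where ${}^{(\alpha,\beta)}(\cdot)$ is ``conjugation by a fusion isomorphism'' in the sense of the discussion preceding Proposition~\ref{prop:pi}, applied to the block $A\tensor_\OO B^\ast$. As there, ${}^{(\alpha,\beta)}(\cdot)$ depends only on $(\alpha,\beta)$, and condition~(1) of Definition~\ref{defn:strongisotypy} is precisely what is needed to see that $\theta_R$ does not depend on the choices of $Q$ and $(\alpha,\beta)$: two choices yield an $\mathcal{A}\times\mathcal{B}^\ast$-isomorphism between diagonal subgroups which, realised by a pair $(g,h)\in G\times H$, satisfies the hypotheses of~(1). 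Since $\mathcal{A}$-isomorphisms carry $e_{\phi(Q)}$ to $e_{p_1(R)}$ and $\mathcal{B}$-isomorphisms carry $f_Q$ to $f_{p_2(R)}$, the resulting tuple lies in $\left(\prod_{R\subgp D\times E}R_\K(Y_R/R,e_{p_1(R)}\tensor f_{p_2(R)}^\ast)\right)^{\mathcal{A}\times\mathcal{B}^\ast}$ (with $Y_R=N_{G\times H}(R,e_{p_1(R)}\tensor f_{p_2(R)}^\ast)$, so that $Y_{\Delta(\phi(Q),\phi,Q)}=Y_Q$), vanishes on $R\notin S_p^\Delta(G\times H)$, and satisfies $\theta_{\Delta(\phi(Q),\phi,Q)}=\chi_Q$.

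By Corollary~\ref{cor:diagcoherencecor3} it then remains to verify, for all $R\subgp D\times E$, all $p$-elements $(u,v)\in N_{D\times E}(R)$, and all $(s,t)\in C_{Y_R}(u,v)_{p'}$, the coherence identity
\[
\theta_R\bigl((us,vt)\,\epsilon_{\gp{(u,v)}}\bigr)=\Ind_{C_{Y_R\cap Y_{R\gp{(u,v)}}}(u,v)}^{C_{Y_R}(u,v)}\Bigl(\Res_{C_{Y_R\cap Y_{R\gp{(u,v)}}}(u,v)}^{Y_{R\gp{(u,v)}}}\bigl(\theta_{R\gp{(u,v)}}\bigr)\Bigr)(s,t),
\]
where $\epsilon_{\gp{(u,v)}}=\tr_{C_{Y_R\cap Y_{R\gp{(u,v)}}}(u,v)}^{C_{Y_R}(u,v)}(e_{p_1(R)\gp u}\tensor f_{p_2(R)\gp v}^\ast)$. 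Using the $\mathcal{A}\times\mathcal{B}^\ast$-invariance of $(\theta_R)$ one reduces at once to $R=\Delta(P,\phi,Q)$ with $P=\phi(Q)$. Lemma~\ref{lem:normalizerbrauerpairs}, applied to the block $A\tensor_\OO B^\ast$, its maximal Brauer pair $(D\times E,e_D\tensor f_E^\ast)$ and the subgroup $R$, shows that $((u,v),\epsilon_{\gp{(u,v)}})$ is a Brauer element of $\OO Y_Q(e_P\tensor f_Q^\ast)$ with $(\gp{(u,v)},\epsilon_{\gp{(u,v)}})\leq(N_{D\times E}(R),e_{N_D(P)}\tensor f_{N_E(Q)}^\ast)$, so it is a legitimate input for condition~(2) of Definition~\ref{defn:strongisotypy}. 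If $u=\phi(v)$, then $R\gp{(u,v)}=\Delta(\phi(Q\gp v),\phi,Q\gp v)$ is admissible with $\theta_{R\gp{(u,v)}}=\chi_{Q\gp v}$, and the identity is exactly condition~(2a). If $R\gp{(u,v)}\notin S_p^\Delta(G\times H)$, or is twisted diagonal but not admissible, then the right-hand side is $0$; and condition~(2b) forces the left-hand side to be $0$ as well, since a twisted diagonal $R\gp{(u,v)}$ would, whenever $\theta_R((us,vt)\epsilon_{\gp{(u,v)}})\neq0$, be $G\times H$-conjugate to a subpair of $(\Delta(D,\phi,E),e_D\tensor f_E^\ast)$, hence admissible.

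The essential case is $u\neq\phi(v)$ with $R\gp{(u,v)}$ admissible; propagating condition~(2a) from the ``straight'' pairs $(\phi(v),v)$ to all normalising pairs $(u,v)$ is the main obstacle of the proof. Here $R\gp{(u,v)}=\Delta(P\gp u,\sigma,Q\gp v)$ with $\sigma|_Q=\phi$ and $\sigma(v)=u$, and admissibility supplies an $\mathcal{A}$-isomorphism $\alpha$ and a $\mathcal{B}$-isomorphism $\beta$ of the type appearing in the hypothesis of Lemma~\ref{lem:moreon2b}. That lemma yields an $\mathcal{N}_{\mathcal{A}}(P)$-isomorphism $\psi\colon P\gp u\isoto P\gp{\phi(v)}$ with $\psi|_P=\id_P$ and $\psi(u)=\phi(v)$. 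Writing $\psi=c_w$ with $w\in I_P$ (via $\mathcal{N}_{\mathcal{A}}(P)=\mathcal{F}_{(N_D(P),e_{N_D(P)})}(I_P,e_P)$, Lemma~\ref{lem:normalizerbrauerpairs}(a)), the equality $\psi|_P=\id_P$ forces $w\in C_G(P)$; hence $(w,1)$ lies in $Y_Q$ and acts there as an inner automorphism, so fixes $\chi_Q$, while ${}^{(w,1)}(us,vt)=(\phi(v)\,{}^ws,vt)$ and ${}^{(w,1)}R\gp{(u,v)}=\Delta(\phi(Q\gp v),\phi,Q\gp v)$. A short computation using Lemma~\ref{lem:normalizerbrauerpairs}(b) and the uniqueness of Brauer subpairs gives ${}^we_{P\gp u}=e_{P\gp{\phi(v)}}$, whence ${}^{(w,1)}\epsilon_{\gp{(u,v)}}=\epsilon_{\gp{(\phi(v),v)}}$; feeding the already-verified case $u=\phi(v)$ at $(\phi(v),v)$ and using the conjugation-equivariance of $\Ind$, $\Res$ and of the tuple $(\theta_R)$ transports the desired identity back to $(u,v)$. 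This completes the verification, so $\gamma:=\delta^{-1}((\theta_R))\in T^\Delta(A,B)$ is a well-defined element with $\chi_{\gamma(\Delta(\phi(Q),\phi,Q),e_{\phi(Q)}\tensor f_Q^\ast)}=\theta_{\Delta(\phi(Q),\phi,Q)}=\chi_Q$ for each $Q\subgp E$.

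It remains to check that $\gamma$ is a $p$-permutation equivalence with $(\Delta(D,\phi,E),e_D\tensor f_E^\ast)$ a maximal $\gamma$-Brauer pair. Condition~(3) with $Q=E$ (so $\phi(Q)=D$) gives $\chi_E\extens{H}{Y_E,Y_E^\circ}\chi_E^\circ=[\K C_G(D)e_D]\neq0$, whence $\chi_E\neq0$ and $\gamma\neq0$, so $(\Delta(D,\phi,E),e_D\tensor f_E^\ast)\in X(\gamma)$ in the notation of Lemma~\ref{lem:maximalBrauerpairs}. Because $\theta_R=0$ unless $R$ is admissible, and every admissible $R$ has order at most $|\Delta(D,\phi,E)|=|D|$, the first component of every member of $X(\gamma)$ is $G\times H$-conjugate to a subgroup of $\Delta(D,\phi,E)$; a comparison of orders then shows $(\Delta(D,\phi,E),e_D\tensor f_E^\ast)$ is maximal in $X(\gamma)$, hence, by Lemma~\ref{lem:maximalBrauerpairs}, a maximal $\gamma$-Brauer pair. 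Finally, condition~(3) says exactly that the coherence tuples of $\gamma\cdot_H\gamma^\circ$ and of $[\OO Ge_A]$ agree at the diagonal Brauer pairs $(\Delta(\phi(Q)),e_{\phi(Q)}\tensor e_{\phi(Q)}^\ast)$, and symmetrically for $\gamma^\circ\cdot_G\gamma$ and $[\OO Hf_B]$ — here one uses the local description of the composition product through the extended tensor product of Brauer constructions, cf.\ \cite[Sections~6 and 11]{Boltje_2020}. Since the coherence map $\delta$ is injective on $T^\Delta(A,A)$ and $T^\Delta(B,B)$ (Corollary~\ref{cor:diagcoherencecor3}), it follows that $\gamma\cdot_H\gamma^\circ=[\OO Ge_A]$ and $\gamma^\circ\cdot_G\gamma=[\OO Hf_B]$, i.e.\ that $\gamma$ is a $p$-permutation equivalence.
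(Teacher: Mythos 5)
Your overall strategy coincides with the paper's: extend $(\chi_Q)_{Q\subgp E}$ to a tuple $(\theta_R)_{R\subgp D\times E}$ supported on the $\mathcal{A}\times\mathcal{B}^\ast$-conjugates of the subgroups $\Delta(\phi(Q),\phi,Q)$, verify the coherence condition of Corollary \ref{cor:diagcoherencecor3}, and pull back through $\delta$. Two steps, however, are genuinely incomplete.

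First, the assertion that ``one reduces at once to $R=\Delta(P,\phi,Q)$'' by invariance is exactly where the real work lies, and as stated it fails. An $\mathcal{A}\times\mathcal{B}^\ast$-isomorphism $c_{(g,h)}\colon R\isoto\Delta(P,\phi,Q)$ is defined only on $R$; it need not carry $(u,v)\in N_{D\times E}(R)$ into $N_{D\times E}(\Delta(P,\phi,Q))$, nor carry $(\gp{(u,v)},\epsilon_{\gp{(u,v)}})$ to a Brauer element satisfying the containment hypothesis of condition (2) of Definition \ref{defn:strongisotypy}. When $R\gp{(u,v)}$ is admissible this is repaired by conjugating $R\gp{(u,v)}$ (rather than $R$) onto a subgroup of $\Delta(D,\phi,E)$, which incidentally lands you directly in the case $u=\phi(v)$ and makes your ``essential case'' and the appeal to Lemma \ref{lem:moreon2b} unnecessary (your $(w,1)$-conjugation argument there is correct, just avoidable). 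When $R\gp{(u,v)}$ is \emph{not} admissible there is no such shortcut: to bring the transported Brauer element under $(N_{D\times E}(\Delta(P,\phi,Q)),e_{N_D(P)}\tensor f_{N_E(Q)}^\ast)$ one must first take $Q$ fully $\mathcal{B}$-normalized (so that $\Delta(P,\phi,Q)$ is fully $\mathcal{A}\times\mathcal{B}^\ast$-normalized by Lemma \ref{lem:twisteddiagfullynormalized}, hence the local pair is maximal and $\Delta(P,\phi,Q)$ is receptive), conjugate within $Y_Q$ to align the Brauer element, and then use receptivity to extend $c_{(g,h)}$ to $R\gp{(u,v)}$ so that all the data transport coherently before condition (2b) can be applied. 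This is the bulk of the paper's argument and is missing from yours.

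Second, the final paragraph. The paper uses condition (3) only to conclude $\chi_Q\neq 0$, and then invokes the characterization in \cite[Theorems 12.2, 12.3]{Boltje_2020}: it suffices that all maximal $\gamma$-Brauer pairs be $G\times H$-conjugate. Your alternative --- deducing $\gamma\cdot_H\gamma^\circ=[A]$ from condition (3) and injectivity of $\delta$ --- is not complete as written. The map $\delta$ on $T^\Delta(A,A)$ is indexed by \emph{all} subgroups of $D\times D$, while condition (3) together with invariance only controls the components of $\gamma\cdot_H\gamma^\circ$ at subgroups $\mathcal{A}\times\mathcal{A}^\ast$-conjugate to some $\Delta(P)$; you must still show the components vanish at twisted diagonal subgroups $\Delta(P_1,\sigma,P_2)$ with $\sigma$ not an $\mathcal{A}$-isomorphism. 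That can be extracted from the support of $\gamma$ and the local description of the tensor product, but it is an argument, not a citation, and as it stands this step is a gap.
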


	\begin{proof}
		Recall that if $\mathcal{B}^\ast:=\mathcal{F}_{(E,f_E^\ast)}(H,B^\ast)$ then $\mathcal{B}=\mathcal{B}^\ast$ and also $\mathcal{A}\times\mathcal{B}^\ast=\mathcal{F}_{(D\times E,e_D\tensor f_E^\ast)}(G\times H,A\tensor B^\ast)$ by Lemma \ref{lem:tensorfusion}. Recall also that if $R\subgp D\times E$ then $$(R,e_{p_1(R)}\tensor f_{p_2(R)}^\ast)\leq(D\times E,e_D\tensor f_E^\ast)$$ is a containment of $A\tensor B^\ast$-Brauer pairs, again by Lemma \ref{lem:tensorfusion}. For each $R\subgp D\times E$ set $X_R=N_{G\times H}(R,e_{p_1(R)}\tensor f_{p_2(R)}^\ast)$. Note that if $Q\subgp E$ then $X_{\Delta(\phi(Q),\phi,Q)}=Y_Q$. 
		
		For each $R\subgp D\times E$ we define a (virtual) character
		\begin{equation*}
			\theta_R\in R_\K(X_R/R,e_{p_1(R)}\tensor f_{p_2(R)}^\ast)
		\end{equation*}
		as follows: let $R\subgp D\times E$. If there exists a subgroup $Q\subgp E$ and an $\mathcal{A}\times\mathcal{B}^\ast$-isomorphism $\psi:\Delta(\phi(Q),\phi,Q)\isoto R$ set $\theta_R:={}^\psi\chi_Q$; otherwise, set $\theta_R=0$. Note that in the former case $\theta_R$ does not depend on the subgroup $Q$ or the isomorphism $\psi$: indeed, suppose that $Q'\subgp E$ and $\psi':\Delta(\phi(Q'),\phi,Q')\isoto R$ is another isomorphism in $\mathcal{A}\times\mathcal{B}^\ast$. Then $\psi^{-1}\psi':\Delta(\phi(Q'),\phi,Q')\isoto\Delta(\phi(Q),\phi,Q)$ is an $\mathcal{A}\times\mathcal{B}^\ast$-isomorphism. Let $(g,h)\in G\times H$ be such that $\psi^{-1}\psi'=c_{(g,h)}$ and 
		\begin{equation*}
			{}^{(g,h)}(\Delta(\phi(Q'),\phi,Q'),e_{\phi(Q')}\tensor f_{Q'}^\ast)=(\Delta(\phi(Q),\phi,Q),e_{\phi(Q)}\tensor f_Q^\ast).
		\end{equation*}
		By definition we have ${}^{\psi^{-1}\psi'}\chi_{Q'}={}^{(g,h)}\chi_{Q'}$. Note that ${}^g(\phi(Q'),e_{\phi(Q')})=(\phi(Q),e_{\phi(Q)})$ and ${}^h(Q',f_{Q'})=(Q,f_Q)$ by Lemma \ref{lem:pi1pi2}. Note also that $c_g\phi=\phi c_h:Q'\isoto\phi(Q)$. So by condition (1) of Definition \ref{defn:strongisotypy} we have ${}^{(g,h)}\chi_{Q'}=\chi_Q$, i.e., ${}^{\psi^{-1}\psi'}\chi_{Q'}=\chi_Q$. Thus ${}^{\psi'}\chi_{Q'}={}^\psi\chi_Q$ and the definition of $\theta_R$ does not depend on the choice of $Q$ or $\psi$.
		
		We now have a tuple $(\theta_R)\in\prod_{R\subgp D\times E}R_\K(X_R/R,e_{p_1(R)}\tensor f_{p_2(R)}^\ast)$. Note that if $Q\subgp E$ then $\theta_{\Delta(\phi(Q),\phi,Q)}=\chi_Q$. Recall that we have an injective homomorphism
		\begin{equation*}
			\delta:T(A,B)\into\left(\prod_{R\subgp D\times E}R_\K(X_R/R,e_{p_1(R)}\tensor f_{p_2(R)}^\ast)\right)^{\mathcal{A}\times\mathcal{B}^\ast}
		\end{equation*}
		which maps the isomorphism class of a trivial source $A\tensor B^\ast$-module $M$ to $(\chi_{M(R,e_{p_1(R)}\tensor f_{p_2(R)}^\ast)})$, and that by Corollary \ref{cor:diagcoherencecor3} the image of $T^\Delta(A,B)$ under $\delta$ is the subgroup of tuples $(\theta_R)\in \left(\prod_{R\subgp D\times E}R_\K(X_R/R,e_{p_1(R)}\tensor f_{p_2(R)}^\ast)\right)^{\mathcal{A}\times\mathcal{B}^\ast}$ satisfying $\theta_R=0$ if $R\notin S_p^\Delta(G\times H)$ and the ``coherence condition''
		\begin{equation}\label{eqn:cc}
			\theta_R((us,vt)\epsilon_{\gp{(u,v)}})=\Ind_{C_{X_R\cap X_{R\gp{(u,v)}}}(u,v)}^{C_{X_R}(u,v)}(\Res_{C_{X_R\cap X_{R\gp{(u,v)}}}(u,v)}^{X_{R\gp{(u,v)}}}(\theta_{R\gp{(u,v)}}))(s,t)
		\end{equation}
		for all $R\subgp D\times E$, $(u,v)\in N_{D\times E}(R)$, and $(s,t)\in C_{X_R}(u,v)_{p'}$, where
		\begin{equation*}
			\epsilon_{\gp{(u,v)}}=\tr_{C_{X_R\cap X_{R\gp{(u,v)}}}(u,v)}^{C_{X_R}(u,v)}(e_{p_1(R)\gp{u}}\tensor f_{p_2(R)\gp{v}}^\ast).
		\end{equation*}
		I claim that the tuple $(\theta_R)$ defined above belongs to $\delta(T^\Delta(A,B))$. It is clear that $\theta_R=0$ if $R$ is not a twisted diagonal subgroup of $G\times H$ and it is not hard to see that the tuple $(\theta_R)$ is $\mathcal{A}\times\mathcal{B}^\ast$-fixed: to check this, one needs to show that if $\psi:R\isoto S$ is an $\mathcal{A}\times\mathcal{B}^\ast$-isomorphism then ${}^\psi\theta_R=\theta_S$. But if $R$ is not $\mathcal{A}\times\mathcal{B}^\ast$-isomorphic to a subgroup of the form $\Delta(\phi(Q),\phi,Q)$ for any $Q\subgp E$ then neither is $S$, and hence both $\theta_R$ and $\theta_S$ are 0, and if there exists a subgroup $Q\subgp E$ and an $\mathcal{A}\times\mathcal{B}^\ast$-isomorphism $\psi':\Delta(\phi(Q),\phi,Q)\isoto R$ then $\psi\psi':\Delta(\phi(Q),\phi,Q)\isoto S$ is an $\mathcal{A}\times\mathcal{B}^\ast$-isomorphism and we have
		\begin{equation*}
			{}^\psi\theta_R={}^\psi({}^{\psi'}\chi_Q)={}^{\psi\psi'}\chi_Q=\theta_S.
		\end{equation*}
		So to verify the claim it remains to show that the tuple $(\theta_R)$ satisfies the ``coherence condition'' (\ref{eqn:cc}) given above.
		
		Let $R\subgp D\times E$, $(u,v)\in N_{D\times E}(R)$, and $(s,t)\in C_{X_R}(u,v)_{p'}$. Set
		\begin{equation*}
			\epsilon_{\gp{(u,v)}}=\tr_{C_{X_R\cap X_{R\gp{(u,v)}}}(u,v)}^{C_{X_R}(u,v)}(e_{p_1(R)\gp{u}}\tensor f_{p_2(R)\gp{v}}^\ast).
		\end{equation*}
		By Lemma \ref{lem:normalizerbrauerpairs} $(\gp{(u,v)},\epsilon_{\gp{(u,v)}})$ is an $\OO X_R(e_{p_1(R)}\tensor f_{p_2(R)}^\ast)$-Brauer pair. 
		
		Suppose first that $R$ is not $\mathcal{A}\times\mathcal{B}^\ast$-isomorphic to any subgroup of $\Delta(D,\phi,E)$. Then neither is $R\gp{(u,v)}$, so $\theta_R=0$ and $\theta_{R\gp{(u,v)}}=0$. In this case, Equation (\ref{eqn:cc}) clearly holds.
		
		Now suppose that $R\gp{(u,v)}$ is $\mathcal{A}\times\mathcal{B}^\ast$-isomorphic to a subgroup of $\Delta(D,\phi,E)$. Let $\psi':\Delta(P',\phi,Q')\isoto R\gp{(u,v)}$ be an $\mathcal{A}\times\mathcal{B}^\ast$-isomorphism where $Q'\subgp E$ and $P'=\phi(Q')$. Write $\Delta(P,\phi,Q)$ for the unique subgroup of $\Delta(P',\phi,Q')$ such that $\psi'(\Delta(P,\phi,Q))=R$ and write $\psi$ for the restriction of $\psi'$ to $\Delta(P,\phi,Q)$. Then $\psi:\Delta(P,\phi,Q)\isoto R$ is an $\mathcal{A}\times\mathcal{B}^\ast$-isomorphism. Let us also write $(u',v')$ for the unique element of $\Delta(P',\phi,Q')$ satisfying $\psi'((u',v'))=(u,v)$. Note that $u'=\phi(v')$, that $(u',v')\in N_{D\times E}(\Delta(P,\phi,Q))$, and that $\Delta(P',\phi,Q')=\Delta(P\gp{u'},\phi,Q\gp{v'})$. Now let $(g,h)\in G\times H$ be such that $(\psi')^{-1}=c_{(g,h)}$ and
		\begin{align*}
			{}^{(g,h)}(R\gp{(u,v)},e_{p_1(R)\gp{u}}\tensor f_{p_2(R)\gp{v}}^\ast)	&=(\Delta(P',\phi,Q'),e_{P'}\tensor f_{Q'}^\ast)\\
			&=(\Delta(P\gp{u'},\phi,Q\gp{v'}),e_{P\gp{u'}}\tensor f_{Q\gp{v'}}^\ast).
		\end{align*}
		Note that $(u',v')=({}^gu,{}^hv)$. Also, $\psi^{-1}=c_{(g,h)}$ and
		\begin{equation*}
			{}^{(g,h)}(R,e_{p_1(R)}\tensor f_{p_2(R)}^\ast)=(\Delta(P,\phi,Q),e_P\tensor f_Q^\ast).
		\end{equation*}
		It follows that ${}^{(g,h)}(\gp{(u,v)},\epsilon_{\gp{(u,v)}})\in\BP_\OO(Y_Q,e_P\tensor f_Q^\ast)$. In other words, if we set $\epsilon'={}^{(g,h)}\epsilon_{\gp{(u,v)}}$ then $(\gp{(u',v')},\epsilon')$ is an $\OO Y_Q(e_P\tensor f_Q^\ast)$-Brauer pair. Note that
		\begin{align*}
			\epsilon'	&={}^{(g,h)}\epsilon_{\gp{(u,v)}}\\
						&={}^{(g,h)}\tr_{C_{X_R\cap X_{R\gp{(u,v)}}}(u,v)}^{C_{X_R}(u,v)}(e_{p_1(R)\gp{u}}\tensor f_{p_2(R)\gp{v}}^\ast)\\
						&=\tr_{C_{Y_Q\cap Y_{Q\gp{v'}}}(u',v')}^{C_{Y_Q}(u',v')}(e_{P\gp{u'}}\tensor f_{Q\gp{v'}}^\ast),
		\end{align*}
		so by Lemma \ref{lem:normalizerbrauerpairs}
		\begin{equation*}
			(\gp{(u',v')},\epsilon')\leq(N_{D\times E}(\Delta(P,\phi,Q)),e_{N_D(P)}\tensor f_{N_E(Q)}^\ast)
		\end{equation*}
		is a containment of $\OO Y_Q(e_P\tensor f_Q^\ast)$-Brauer pairs. Set $s'={}^gs$ and $t'={}^ht$. Then $(s',t')\in C_{Y_Q}(u',v')_{p'}$. Condition (2a) of Definition \ref{defn:strongisotypy} then implies that
		\begin{equation*}
		\chi_Q((u's',v't')\epsilon')=\Ind_{C_{Y_Q\cap Y_{Q\gp{v'}}}(u',v')}^{C_{Y_Q}(u',v')}(\Res_{C_{Y_Q\cap Y_{Q\gp{v'}}}(u',v')}^{Y_{Q\gp{v'}}}(\chi_{Q\gp{v'}}))(s',t').
		\end{equation*}
		Now $\theta_R={}^\psi\chi_Q={}^{(g,h)^{-1}}\chi_Q$ and $\theta_{R\gp{(u,v)}}={}^{\psi'}\chi_{Q'}={}^{(g,h)^{-1}}\chi_{Q\gp{v'}}$, so we can compute:
		\begin{align*}
			\theta_R((us,vt)\epsilon_{\gp{(u,v)}})	&={}^{(g,h)^{-1}}\chi_Q((us,vt)\epsilon_{\gp{(u,v)}})\\
				&=\chi_Q((u's',v't')\epsilon')\\
				&=\Ind_{C_{Y_Q\cap Y_{Q\gp{v'}}}(u',v')}^{C_{Y_Q}(u',v')}(\Res_{C_{Y_Q\cap Y_{Q\gp{v'}}}(u',v')}^{Y_{Q\gp{v'}}}(\chi_{Q\gp{v'}}))(s',t')\\
				&=\Ind_{{}^{(g,h)}C_{X_R\cap X_{R\gp{(u,v)}}}(u,v)}^{{}^{(g,h)}C_{X_R}(u,v)}(\Res_{{}^{(g,h)}C_{X_R\cap X_{R\gp{(u,v)}}}(u,v)}^{{}^{(g,h)}X_{R\gp{(u,v)}}}({}^{(g,h)}\theta_{R\gp{(u,v)}}))({}^gs,{}^ht)\\
				&={}^{(g,h)}[\Ind_{C_{X_R\cap X_{R\gp{(u,v)}}}(u,v)}^{C_{X_R}(u,v)}(\Res_{C_{X_R\cap X_{R\gp{(u,v)}}}(u,v)}^{X_{R\gp{(u,v)}}}(\theta_{R\gp{(u,v)}}))]({}^{(g,h)}(s,t))\\
				&=\Ind_{C_{X_R\cap X_{R\gp{(u,v)}}}(u,v)}^{C_{X_R}(u,v)}(\Res_{C_{X_R\cap X_{R\gp{(u,v)}}}(u,v)}^{X_{R\gp{(u,v)}}}(\theta_{R\gp{(u,v)}}))(s,t)
		\end{align*}
		So Equation (\ref{eqn:cc}) holds in this case as well.
		
		The final case to consider is the case in which $R$ is $\mathcal{A}\times\mathcal{B}^\ast$-isomorphic to a subgroup of $\Delta(D,\phi,E)$ but $R\gp{(u,v)}$ is not. In this case there exists a subgroup $\Delta(P,\phi,Q)\subgp\Delta(D,\phi,E)$ and an $\mathcal{A}\times\mathcal{B}^\ast$-isomorphism $\psi:\Delta(P,\phi,Q)\isoto R$. We may assume without loss of generality that $Q$ is a fully $\mathcal{B}$-normalized subgroup of $E$, for if $Q'\in Q^{\mathcal{B}}$ is a fully $\mathcal{B}$-normalized subgroup of $E$ and $\beta:Q'\isoto Q$ is a $\mathcal{B}$-isomorphism then, setting $P'=\phi(Q')$, we have that $\alpha:=\phi\beta\phi^{-1}:P'\isoto P$ is an $\mathcal{A}$-isomorphism, hence that $\psi':=(\alpha,\beta):\Delta(P',\phi,Q')\isoto\Delta(P,\phi,Q)$ is an $\mathcal{A}\times\mathcal{B}^\ast$-isomorphism, and so $\psi\psi':\Delta(P',\phi,Q')\isoto R$ is an $\mathcal{A}\times\mathcal{B}^\ast$-isomorphism. 
		
		Now since $Q$ is fully $\mathcal{B}$-normalized and $\phi:\mathcal{B}\isoto\mathcal{A}$ is an isomorphism of fusion systems, Lemma \ref{lem:twisteddiagfullynormalized} implies that $\Delta(P,\phi,Q)$ is a fully $\mathcal{A}\times\mathcal{B}^\ast$-normalized subgroup of $D\times E$. In particular, we have that $(N_{D\times E}(\Delta(P,\phi,Q)),e_{N_D(P)}\tensor f_{N_E(Q)}^\ast)$ is a maximal $\OO Y_Q(e_P\tensor f_Q^\ast)$-Brauer pair by Lemma \ref{lem:normalizerbrauerpairs}.
		
		Now let $(g,h)\in G\times H$ be such that $\psi^{-1}=c_{(g,h)}$ and
		\begin{equation*}
			{}^{(g,h)}(R,e_{p_1(R)}\tensor f_{p_2(R)}^\ast)=(\Delta(P,\phi,Q),e_P\tensor f_Q^\ast).
		\end{equation*}
		Since $(\gp{(u,v)},\epsilon_{\gp{(u,v)}})\in\BP_\OO(X_R,e_{p_1(R)}\tensor f_{p_2(R)}^\ast)$ we have that ${}^{(g,h)}(\gp{(u,v)},\epsilon_{\gp{(u,v)}})\in\BP_\OO(Y_Q,e_P\tensor f_Q^\ast)$. Because $(N_{D\times E}(\Delta(P,\phi,Q)),e_{N_D(P)}\tensor f_{N_E(Q)}^\ast)$ is a maximal $\OO Y_Q(e_P\tensor f_Q^\ast)$-Brauer pair there exists an element $(x,y)\in Y_Q$ such that
		\begin{equation*}
			{}^{(x,y)}({}^{(g,h)}(\gp{(u,v)},\epsilon_{\gp{(u,v)}}))\leq (N_{D\times E}(\Delta(P,\phi,Q)),e_{N_D(P)}\tensor f_{N_E(Q)}^\ast).
		\end{equation*}
		Set $(g',h')=(xg,yh)$. Note that $c_{(x,y)}\in\Aut_{\mathcal{A}\times\mathcal{B}^\ast}(\Delta(P,\phi,Q))$. Set $\psi'=\psi\circ c_{(x,y)}^{-1}:\Delta(P,\phi,Q)\isoto R$. Note that $\psi'$ is an $\mathcal{A}\times\mathcal{B}^\ast$-isomorphism. Note also that $(\psi')^{-1}=c_{(g',h')}$ and that
		\begin{equation*}
			{}^{(g',h')}(R,e_{p_1(R)}\tensor f_{p_2(R)}^\ast)=(\Delta(P,\phi,Q),e_P\tensor f_Q^\ast).
		\end{equation*}
		Thus, after replacing $\psi$ with $\psi'$ and $(g,h)$ with $(g',h')$, we may assume without loss of generality that
		\begin{equation*}
			{}^{(g,h)}(\gp{(u,v)},\epsilon_{\gp{(u,v)}})\leq(N_{D\times E}(\Delta(P,\phi,Q)),e_{N_D(P)}\tensor f_{N_E(Q)}^\ast).
		\end{equation*}
	
		Now since $\Delta(P,\phi,Q)$ is fully $\mathcal{A}\times\mathcal{B}^\ast$-normalized also $\Delta(P,\phi,Q)$ is receptive in $\mathcal{A}\times\mathcal{B}^\ast$. Since ${}^{(g,h)}(u,v)\in N_{D\times E}(\Delta(P,\phi,Q))$ we have that --- in the notation of \cite[Part I, Definition 2.2]{Aschbacher_2011} --- $(u,v)\in N_{\psi^{-1}}$. Therefore $\psi^{-1}$ extends to an $\mathcal{A}\times\mathcal{B}^\ast$-homomorphism defined on $R\gp{(u,v)}$. In other words, there exists an element $(g',h')\in G\times H$ such that
		\begin{equation*}
			{}^{(g',h')}(R\gp{(u,v)},e_{p_1(R)\gp{u}}\tensor f_{p_2(R)\gp{v}}^\ast)\leq(D\times E,e_D\tensor f_E^\ast)
		\end{equation*}
		and $\psi^{-1}=c_{(g',h')}:R\isoto\Delta(P,\phi,Q)$. Note that this implies
		\begin{equation*}
			{}^{(g',h')}(R,e_{p_1(R)}\tensor f_{p_2(R)}^\ast)=(\Delta(P,\phi,Q),e_P\tensor f_Q^\ast)
		\end{equation*}
		and ${}^{(g',h')}(u,v)\in N_{D\times E}(\Delta(P,\phi,Q))$. Furthermore
		\begin{align*}
			{}^{(g',h')}\epsilon_{\gp{(u,v)}}&={}^{(g',h')}\tr_{C_{X_R\cap X_{R\gp{(u,v)}}}(u,v)}^{C_{X_R}(u,v)}(e_{p_1(R)\gp{u}}\tensor f_{p_2(R)\gp{v}}^\ast)\\
			&=\tr_{C_{Y_Q\cap X_{\Delta(P,\phi,Q)\gp{({}^{g'}u,{}^{h'}v)}}}({}^{g'}u,{}^{h'}v)}^{C_{Y_Q}({}^{g'}u,{}^{h'}v)}(e_{P\gp{{}^{g'}u}}\tensor f_{Q\gp{{}^{h'}v}}^\ast).
		\end{align*}
		So by Lemma \ref{lem:normalizerbrauerpairs}
		\begin{equation*}
			{}^{(g',h')}(\gp{(u,v)},\epsilon_{\gp{(u,v)}})\leq(N_{D\times E}(\Delta(P,\phi,Q)),e_{N_D(P)}\tensor f_{N_E(Q)}^\ast)
		\end{equation*}
		is a containment of $\OO Y_Q(e_P\tensor f_Q^\ast)$-Brauer pairs. Thus we may assume without loss of generality that
		\begin{equation*}
			{}^{(g,h)}(R\gp{(u,v)},e_{p_1(R)\gp{u}}\tensor f_{p_2(R)\gp{v}}^\ast)\leq(D\times E,e_D\tensor f_E^\ast).
		\end{equation*}
	
		Set $(u',v')={}^{(g,h)}(u,v)$, $(s',t')={}^{(g,h)}(s,t)$, and $\epsilon'={}^{(g,h)}\epsilon_{\gp{(u,v)}}$. Then $((u',v'),\epsilon')\in\BE_\OO(Y_Q,e_P\tensor f_Q^\ast)$,
		\begin{equation*}
			(\gp{(u',v')},\epsilon')\leq(N_{D\times E}(\Delta(P,\phi,Q)),e_{N_D(P)}\tensor f_{N_E(Q)}^\ast),
		\end{equation*}
		and $(s',t')\in C_{Y_Q}(u',v')_{p'}$. Note that
		\begin{equation*}
			{}^{(g,h)}(R\gp{(u,v)},e_{p_1(R)\gp{u}}\tensor f_{p_2(R)\gp{v}}^\ast)=(\Delta(P,\phi,Q)\gp{(u',v')},e_{P\gp{u'}}\tensor f_{Q\gp{v'}}^\ast).
		\end{equation*}
		Now, since we are assuming that $R\gp{(u,v)}$ is not $\mathcal{A}\times\mathcal{B}^\ast$-isomorphic to a subgroup of $\Delta(D,\phi,E)$ we have $\theta_{R\gp{(u,v)}}=0$ by definition. So to verify Equation (\ref{eqn:cc}) in this final case we need to show that $\theta_R((us,vt)\epsilon_{\gp{(u,v)}})=0$. We have $\theta_R={}^\psi\chi_Q={}^{(g,h)^{-1}}\chi_Q$, so $\theta_R((us,vt)\epsilon_{\gp{(u,v)}})=\chi_Q((u's',v't')\epsilon')$. Suppose $\chi_Q((u's',v't')\epsilon')\neq 0$. Then by condition (2b) of Definition \ref{defn:strongisotypy} we have
		\begin{equation*}
			(\Delta(P,\phi,Q)\gp{(u',v')},e_{P\gp{u'}}\tensor f_{Q\gp{v'}}^\ast)\leq_{G\times H}(\Delta(D,\phi,E),e_D\tensor f_E^\ast).
		\end{equation*}
		But this implies that
		\begin{equation*}
			(R\gp{(u,v)},e_{p_1(R)\gp{u}}\tensor f_{p_2(R)\gp{v}}^\ast)\leq_{G\times H}(\Delta(D,\phi,E),e_D\tensor f_E^\ast),
		\end{equation*}
		or in other words, that $R\gp{(u,v)}$ is $\mathcal{A}\times\mathcal{B}^\ast$-isomorphic to a subgroup of $\Delta(D,\phi,E)$. So we must have $\chi_Q((u's',v't')\epsilon')=0$, hence $\theta_R((us,vt)\epsilon_{\gp{(u,v)}})=0$ and Equation (\ref{eqn:cc}) holds. This completes the proof of the claim that $(\theta_R)$ belongs to $\delta(T^\Delta(A,B))$.
		
		Let $\gamma\in T^\Delta(A,B)$ be such that $\delta(\gamma)=(\theta_R)$. Note that for each $Q\subgp E$ we have
		\begin{equation*}
			\chi_Q=\theta_{\Delta(\phi(Q),\phi,Q)}=\chi_{\gamma(\Delta(\phi(Q),\phi,Q),e_{\phi(Q)}\tensor f_Q^\ast)}
		\end{equation*}
		by definition of the map $\delta$. Now Condition (3) of Definition \ref{defn:strongisotypy} implies in particular that $\chi_Q\neq 0$ for all $Q\subgp E$. Therefore $\gamma(\Delta(D,\phi,E),e_D\tensor f_E^\ast)\neq 0$, i.e., $(\Delta(D,\phi,E),e_D\tensor f_E^\ast)$ is a $\gamma$-Brauer pair. In fact, $(\Delta(D,\phi,E),e_D\tensor f_E^\ast)$ is a maximal $\gamma$-Brauer pair since any $\gamma$-Brauer pair is an $A\tensor B^\ast$-Brauer pair and must have a twisted diagonal subgroup in its first entry. 
		
		To complete the proof it remains to show that $\gamma$ is a $p$-permutation equivalence. By \cite[Theorems 12.2, 12.3]{Boltje_2020} it suffices to show that any maximal $\gamma$-Brauer pair is $G\times H$-conjugate to $(\Delta(D,\phi,E),e_D\tensor f_E^\ast)$. Let $(\Delta(S,\psi,T),e\tensor f^\ast)\in\BP_\OO(\gamma)$ be maximal. Then
		\begin{equation*}
			(\Delta(S,\psi,T),e\tensor f^\ast)\leq_{G\times H}(D\times E,e_D\tensor f_E^\ast)
		\end{equation*}
		Since $\BP_\OO(\gamma)$ is stable under conjugation we may assume without loss of generality that $(\Delta(S,\psi,T),e\tensor f^\ast)$ is contained in $(D\times E,e_D\tensor f_E^\ast)$. Then $S\subgp D$, $e=e_S$, $T\subgp E$, and $f=f_T$. By Lemma \ref{lem:maximalBrauerpairs} we have
		\begin{equation*}
			\theta_{\Delta(S,\psi,T)}=\chi_{\gamma(\Delta(S,\psi,T),e_S\tensor f_T^\ast)}\neq 0.
		\end{equation*}
		It follows that $\Delta(S,\psi,T)$ is $\mathcal{A}\times\mathcal{B}^\ast$-isomorphic to a subgroup of $\Delta(D,\phi,E)$. In other words, there exists an element $(g,h)\in G\times H$ such that
		\begin{equation*}
			{}^{(g,h)}(\Delta(S,\psi,T),e_S\tensor f_T^\ast)\leq(\Delta(D,\phi,E),e_D\tensor f_E^\ast).
		\end{equation*}
		Since $(\Delta(S,\psi,T),e_S\tensor f_T^\ast)$ is a maximal $\gamma$-Brauer pair we must have $${}^{(g,h)}(\Delta(S,\psi,T),e_S\tensor f_T^\ast)=(\Delta(D,\phi,E),e_D\tensor f_E^\ast).$$ The proof is complete.
	\end{proof}


	\begin{theorem}\label{thm:ppermequivtosi}
		Assume Hypotheses \ref{hyp}. Then the construction of Lemma \ref{lem:charsofppermequiv} defines a bijection from the set of $p$-permutation equivalences $\gamma$ such that $(\Delta(D,\phi,E),e_D\tensor f_E^\ast)$ is a maximal $\gamma$-Brauer pair and the set of strong isotypies between $A$ and $B$ relative to $(D,e_D)$, $(E,f_E)$, and $\phi:E\isoto D$.
	\end{theorem}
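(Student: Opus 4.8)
The plan is to show that the two constructions --- the one from Lemma~\ref{lem:charsofppermequiv} (which takes a $p$-permutation equivalence $\gamma$ with maximal $\gamma$-Brauer pair $(\Delta(D,\phi,E),e_D\tensor f_E^\ast)$ and produces a strong isotypy $(\chi_Q)_{Q\subgp E}$) and the one from Lemma~\ref{lem:strongisocomesfrompperm} (which takes a strong isotypy and produces such a $p$-permutation equivalence) --- are mutually inverse bijections. First I would set up notation: let $\mathcal{P}$ denote the set of $p$-permutation equivalences $\gamma\in T^\Delta(A,B)$ for which $(\Delta(D,\phi,E),e_D\tensor f_E^\ast)$ is a maximal $\gamma$-Brauer pair, and let $\mathcal{S}$ denote the set of strong isotypies between $A$ and $B$ relative to $(D,e_D)$, $(E,f_E)$, $\phi$. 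Lemma~\ref{lem:charsofppermequiv} gives a well-defined map $\Sigma:\mathcal{P}\to\mathcal{S}$, $\gamma\mapsto(\chi_{\gamma(\Delta(\phi(Q),\phi,Q),e_{\phi(Q)}\tensor f_Q^\ast)})_{Q\subgp E}$, and Lemma~\ref{lem:strongisocomesfrompperm} provides for each strong isotypy a $p$-permutation equivalence $\gamma$ realizing it; I would package the latter as a map $\Pi:\mathcal{S}\to\mathcal{P}$ once I check it is well-defined, which is the first real task.

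The key observation driving everything is that, via the injective homomorphism $\delta=\delta_{A\tensor B^\ast}$ of Theorem~\ref{thm:diagcoherence3}/Corollary~\ref{cor:diagcoherencecor3}, an element of $T^\Delta(A,B)$ is completely determined by its image tuple $(\theta_R)_{R\subgp D\times E}$, and in the proof of Lemma~\ref{lem:strongisocomesfrompperm} the tuple $(\theta_R)$ attached to a strong isotypy $(\chi_Q)$ is built so that $\theta_{\Delta(\phi(Q),\phi,Q)}=\chi_Q$ for all $Q\subgp E$, with $\theta_R=0$ unless $R$ is $\mathcal{A}\times\mathcal{B}^\ast$-conjugate to some $\Delta(\phi(Q),\phi,Q)$. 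So I would argue: (i) $\Pi$ is well-defined, i.e.\ the $\gamma\in T^\Delta(A,B)$ with $\delta(\gamma)=(\theta_R)$ produced in Lemma~\ref{lem:strongisocomesfrompperm} is genuinely a $p$-permutation equivalence with the required maximal Brauer pair --- but this is exactly the content of Lemma~\ref{lem:strongisocomesfrompperm}, so there is nothing further to do. (ii) $\Sigma\circ\Pi=\id_{\mathcal{S}}$: given a strong isotypy $(\chi_Q)$, form $\gamma=\Pi((\chi_Q))$; then by construction $\chi_{\gamma(\Delta(\phi(Q),\phi,Q),e_{\phi(Q)}\tensor f_Q^\ast)}=\theta_{\Delta(\phi(Q),\phi,Q)}=\chi_Q$ for each $Q$, which says precisely $\Sigma(\gamma)=(\chi_Q)$. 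This direction is immediate from the definition of $\delta$ together with how $(\theta_R)$ was defined.

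The substantive direction is (iii) $\Pi\circ\Sigma=\id_{\mathcal{P}}$: given $\gamma\in\mathcal{P}$, let $(\chi_Q)=\Sigma(\gamma)$ and $\gamma'=\Pi((\chi_Q))$; I must show $\gamma'=\gamma$. Since $\delta$ is injective it suffices to show $\delta(\gamma')=\delta(\gamma)$, i.e.\ that the tuple $(\theta_R)$ built from $(\chi_Q)$ in Lemma~\ref{lem:strongisocomesfrompperm} agrees with $\delta(\gamma)=(\theta_R^\gamma)_{R\subgp D\times E}$ where $\theta_R^\gamma=\chi_{\gamma(R,e_{p_1(R)}\tensor f_{p_2(R)}^\ast)}$. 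For $R=\Delta(\phi(Q),\phi,Q)$ this is exactly the definition of $\chi_Q$. For general $R\subgp D\times E$: by Theorem~\ref{thm:diagcoherence3}, $\theta_R^\gamma=0$ unless $R\in S_p^\Delta(G\times H)$, and since $(\theta_R^\gamma)$ is $\mathcal{A}\times\mathcal{B}^\ast$-fixed and (by Lemma~\ref{lem:tensorfusion}, as in Lemma~\ref{lem:charsofppermequiv}) $(\Delta(D,\phi,E),e_D\tensor f_E^\ast)$ is a maximal $\gamma$-Brauer pair, Lemma~\ref{lem:maximalBrauerpairs} shows $\theta_R^\gamma\neq 0$ forces $R$ to be $\mathcal{A}\times\mathcal{B}^\ast$-conjugate to a subgroup of $\Delta(D,\phi,E)$, hence (every subgroup of $\Delta(D,\phi,E)$ being of the form $\Delta(\phi(Q),\phi,Q)$) $\mathcal{A}\times\mathcal{B}^\ast$-conjugate to some $\Delta(\phi(Q),\phi,Q)$. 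Thus both $\theta_R$ and $\theta_R^\gamma$ vanish outside the $\mathcal{A}\times\mathcal{B}^\ast$-orbit of the subgroups $\Delta(\phi(Q),\phi,Q)$, and on that orbit both are determined by $\mathcal{A}\times\mathcal{B}^\ast$-equivariance from their common values $\chi_Q$ at $\Delta(\phi(Q),\phi,Q)$ --- using that $(\theta_R^\gamma)$ is $\mathcal{A}\times\mathcal{B}^\ast$-fixed by Corollary~\ref{cor:diagcoherencecor3} and that $(\theta_R)$ was explicitly defined to be $\mathcal{A}\times\mathcal{B}^\ast$-equivariant. Hence $(\theta_R)=(\theta_R^\gamma)$, so $\gamma'=\gamma$, and $\Sigma,\Pi$ are mutually inverse bijections.

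The main obstacle is bookkeeping rather than conceptual: one must be careful that the subgroup $\Delta(\phi(Q),\phi,Q)$ is uniquely determined up to $\mathcal{A}\times\mathcal{B}^\ast$-conjugacy by its $\mathcal{A}\times\mathcal{B}^\ast$-isomorphism type together with its ``diagonal'' shape, and that the well-definedness argument already carried out in Lemma~\ref{lem:strongisocomesfrompperm} (that $\theta_R:={}^\psi\chi_Q$ is independent of $Q$ and $\psi$, using condition (1) of Definition~\ref{defn:strongisotypy}) is exactly what matches up with the $\mathcal{A}\times\mathcal{B}^\ast$-equivariance of $\delta(\gamma)$; once these compatibilities are spelled out the equality $\delta(\gamma')=\delta(\gamma)$ is forced. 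I would also remark at the end that injectivity of $\Sigma$ on $\mathcal{P}$ follows formally from $\Pi\circ\Sigma=\id$ and surjectivity from $\Sigma\circ\Pi=\id$, so no separate argument is needed.
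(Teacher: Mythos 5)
Your proposal is correct and matches the paper's (very terse) proof, which simply cites Lemmas \ref{lem:charsofppermequiv} and \ref{lem:strongisocomesfrompperm}; your elaboration of why these two constructions are mutually inverse --- in particular the injectivity step $\Pi\circ\Sigma=\id$, argued via the injectivity of $\delta$ and the fact that a tuple in $\delta(T^\Delta(A,B))$ vanishing off $S_p^\Delta(G\times H)$ is determined by $\mathcal{A}\times\mathcal{B}^\ast$-equivariance from its values at the subgroups $\Delta(\phi(Q),\phi,Q)$ --- is exactly the content the paper leaves implicit. The only point to make fully explicit is that the vanishing of $\theta_R^\gamma$ for $R$ not $\mathcal{A}\times\mathcal{B}^\ast$-conjugate into $\Delta(D,\phi,E)$ uses, besides Lemma \ref{lem:maximalBrauerpairs}, the conjugacy of all maximal $\gamma$-Brauer pairs (\cite[Theorem 10.11(b)]{Boltje_2020}), just as in the proof of Lemma \ref{lem:charsofppermequiv}.
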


	\begin{proof}
		This follows from Lemmas \ref{lem:charsofppermequiv} and \ref{lem:strongisocomesfrompperm}.
	\end{proof}

\section{Connection with isotypies}

	Let $G$ and $H$ be finite groups, $(\K,\OO,F)$ a $p$-modular system large enough for $G\times H$, $A$ a block of $\OO G$, and let $B$ be a block of $\OO H$. Assume Hypotheses \ref{hyp}, and the notation set there. In this section we show that a strong isotypy between $A$ and $B$ restricts to an isotypy between $A$ and $B$. The definition of isotypy we will use is the one given in \cite[Definition 15.3]{Boltje_2020}.
	
	\begin{definition}\label{defn:isotypy}
		Let $A\in\Bl(\OO G)$, $B\in\Bl(\OO H)$, and assume Hypotheses \ref{hyp}. An \textit{isotypy} between $A$ and $B$ is a family of perfect isometries
		\begin{equation*}
			\mu_Q\in R_\K(C_G(\phi(Q))\times C_H(Q),e_{\phi(Q)}\tensor f_Q^\ast),\qquad Q\subgp E
		\end{equation*}
		such that the following conditions are satisfied:
		\begin{itemize}
			\item[(1)] (Equivariance) For every $Q\subgp E$ and $(g,h)\in G\times H$ such that $${}^{(g,h)}(\Delta(\phi(Q),\phi,Q),e_{\phi(Q)}\tensor f_Q^\ast)\leq(\Delta(D,\phi,E),e_D\tensor f_E^\ast)$$ one has ${}^{(g,h)}\mu_Q=\mu_{{}^hQ}$.
			\item[(2)] (Compatibility) For every $Q\subgp E$ and $v\in C_E(Q)$ the diagram below commutes
				\begin{equation}\label{ntscommutes}
				\begin{tikzcd}
					{CF(C_H(Q),f_Q;\K)} &&& {CF(C_G(P),e_P;\K)} \\
					{CF_{p'}(C_H(Q\gp{v}),f_{Q\gp{v}};\K)} &&& {CF_{p'}(C_G(P\gp{u}),e_{P\gp{u}};\K)}
					\arrow["{\mu_Q\tensor_{C_H(Q)}-}", from=1-1, to=1-4]
					\arrow["{d_{C_H(Q)}^{v,f_{Q\gp{v}}}}"', from=1-1, to=2-1]
					\arrow["{d_{C_G(P)}^{u,e_{P\gp{u}}}}", from=1-4, to=2-4]
					\arrow["{\mu_{Q\gp{v}}\tensor_{C_H(Q\gp{v})}-}"', from=2-1, to=2-4]
				\end{tikzcd}
			\end{equation}
			where $P=\phi(Q)$ and $u=\phi(v)$.
		\end{itemize}
	\end{definition}

	\begin{theorem}\label{thm:strongisotimpliesisot}
		Let $A\in\Bl(\OO G)$, $B\in\Bl(\OO H)$, and assume Hypotheses \ref{hyp}. Let $\chi_Q$, $Q\subgp E$, be a strong isotypy between $A$ and $B$. For each $Q\subgp E$ set
		\begin{equation*}
			\mu_Q=\Res_{C_G(\phi(Q))\times C_H(Q)}^{Y_Q}(\chi_Q)\in R_\K(C_G(\phi(Q))\times C_H(Q),e_{\phi(Q)}\tensor f_Q^\ast).
		\end{equation*}
		Then the characters $\mu_Q$, $Q\subgp E$, form an isotypy between $A$ and $B$.
	\end{theorem}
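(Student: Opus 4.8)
The plan is to verify the three requirements of Definition \ref{defn:isotypy} for the family $(\mu_Q)_{Q\subgp E}$: that each $\mu_Q$ is a perfect isometry, the equivariance condition (1), and the compatibility condition (2). One could shortcut much of this by invoking Lemma \ref{lem:strongisocomesfrompperm} to realize $(\chi_Q)$ as the character family of a $p$-permutation equivalence and quoting the fact that such an equivalence induces an isotypy, but I will instead argue directly from the three defining conditions of a strong isotypy, since each translates rather transparently into one piece of Definition \ref{defn:isotypy}.

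The equivariance condition is essentially immediate from condition (1) of Definition \ref{defn:strongisotypy}. If $(g,h)\in G\times H$ satisfies ${}^{(g,h)}(\Delta(\phi(Q),\phi,Q),e_{\phi(Q)}\tensor f_Q^\ast)\subgp(\Delta(D,\phi,E),e_D\tensor f_E^\ast)$, then, writing $P=\phi(Q)$ and using the twisted-diagonal conjugation formula together with Lemmas \ref{lem:pi1pi2} and \ref{lem:tensorfusion}, this containment forces ${}^g(P,e_P)=(\phi({}^hQ),e_{\phi({}^hQ)})$, ${}^h(Q,f_Q)=({}^hQ,f_{{}^hQ})$ and $c_g\phi=\phi c_h$ on $Q$; condition (1) of Definition \ref{defn:strongisotypy} then gives ${}^{(g,h)}\chi_Q=\chi_{{}^hQ}$, and restricting to centralizers (note that ${}^{(g,h)}$ carries $C_G(P)\times C_H(Q)=C_{G\times H}(\Delta(P,\phi,Q))$ onto $C_G(\phi({}^hQ))\times C_H({}^hQ)$) yields ${}^{(g,h)}\mu_Q=\mu_{{}^hQ}$. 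For the perfect isometry property I would restrict the two identities of condition (3). Since $C_G(P)\times C_H(Q)=C_{G\times H}(\Delta(P,\phi,Q))\subgp Y_Q$, and likewise $C_G(P)\times C_G(P)=C_{G\times G}(\Delta(P))$ sits inside $N_{G\times G}(\Delta(P),e_P\tensor e_P^\ast)$, one checks that restriction to these subgroups turns the extended tensor products $\chi_Q\extens{H}{Y_Q,Y_Q^\circ}\chi_Q^\circ$ and $\chi_Q^\circ\extens{G}{Y_Q^\circ,Y_Q}\chi_Q$ of condition (3) into the ordinary tensor products $\mu_Q\tensor_{C_H(Q)}\mu_Q^\circ$ and $\mu_Q^\circ\tensor_{C_G(P)}\mu_Q$. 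Condition (3) then yields $\mu_Q\tensor_{C_H(Q)}\mu_Q^\circ=[\K C_G(P)e_P]$ and $\mu_Q^\circ\tensor_{C_G(P)}\mu_Q=[\K C_H(Q)f_Q]$, which are precisely the bimodule-character identities characterizing a perfect isometry between $\OO C_G(P)e_P$ and $\OO C_H(Q)f_Q$ (the usual integrality and separation properties following formally, cf.\ \cite{Boltje_2020}).

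The compatibility condition is the substantive part. Fix $Q\subgp E$, $v\in C_E(Q)$, $P=\phi(Q)$, $u=\phi(v)$; the diagram (\ref{ntscommutes}) commutes exactly when
\[
d_{C_G(P)}^{u,e_{P\gp{u}}}(\mu_Q\tensor_{C_H(Q)}\nu)=\mu_{Q\gp{v}}\tensor_{C_H(Q\gp{v})}d_{C_H(Q)}^{v,f_{Q\gp{v}}}(\nu)
\]
for all $\nu\in CF(C_H(Q),f_Q;\K)$. I would apply Proposition \ref{prop:Brouediagram} to $\mu_Q\in CF(C_G(P)\times C_H(Q),e_P\tensor f_Q^\ast;\K)$ and extract the $(u,e_{P\gp{u}})$-component: this writes the left-hand side as a sum over representatives $(v',f')$ of the $C_H(Q)$-classes of $f_Q$-Brauer elements of $\OO C_H(Q)$ of terms $d_{C_G(P)\times C_H(Q)}^{(u,v'),e_{P\gp{u}}\tensor f'^\ast}(\mu_Q)\tensor_{C_H(v')}d_{C_H(Q)}^{v',f'}(\nu)$. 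The key reduction is that every such term vanishes unless $(v',f')$ is $C_H(Q)$-conjugate to $(v,f_{Q\gp{v}})$: because generalized decomposition commutes with restriction, $d_{C_G(P)\times C_H(Q)}^{(u,v'),\dots}(\mu_Q)$ is, after cutting by the block $e_{P\gp{u}}\tensor f'^\ast$, a restriction of a generalized decomposition of $\chi_Q$ on $Y_Q$, which is killed by Brauer's second main theorem on $Y_Q$ unless the relevant $p$-element lies (up to conjugacy) in $N_{D\times E}(\Delta(P,\phi,Q))$, and then Corollary \ref{cor:2bcor} together with condition (2b) of Definition \ref{defn:strongisotypy} forces $u$ and $\phi(v')$ to be $\mathcal{N}_{\mathcal{A}}(P)$-conjugate, whence a fusion-system argument (using that $\phi$ is an isomorphism of fusion systems $\mathcal{B}\isoto\mathcal{A}$) pins down $(v',f')$ up to $C_H(Q)$-conjugacy. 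For the surviving term I would use condition (2a) of Definition \ref{defn:strongisotypy}, rewritten via Proposition \ref{prop:gendecompvalues} as an identity of generalized decomposition maps on $Y_Q$ and then restricted to the centralizer $C_G(P\gp{u})\times C_H(Q\gp{v})$, to identify $d_{C_G(P)\times C_H(Q)}^{(u,v),e_{P\gp{u}}\tensor f_{Q\gp{v}}^\ast}(\mu_Q)\tensor_{C_H(v)}(-)$ with $\mu_{Q\gp{v}}\tensor_{C_H(Q\gp{v})}(-)$, producing the right-hand side.

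I expect the bookkeeping in this last step to be the main obstacle: one must track the block idempotents $\epsilon$ living on $C_{Y_Q}(u,v)$ against the block $e_{P\gp{u}}\tensor f_{Q\gp{v}}^\ast$ on the smaller group $C_G(P\gp{u})\times C_H(Q\gp{v})$ (these are related by block covering, as in Lemmas \ref{lem:brauereltnormalizer} and \ref{lem:normalizerbrauerpairs}, not by equality), verify that the generalized decomposition maps on $Y_Q$ restrict correctly to the centralizer, and confirm that the index set of the sum produced by Proposition \ref{prop:Brouediagram} genuinely collapses to a single Brauer-element class. The equivariance and perfect-isometry parts should be routine once these restriction-compatibility statements are in place.
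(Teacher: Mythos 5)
Your plan is sound and, for the two substantive conditions, tracks the paper's own proof closely: equivariance is read off from condition (1) of Definition \ref{defn:strongisotypy} exactly as in the paper, and your compatibility argument --- apply Proposition \ref{prop:Brouediagram} to $\mu_Q$, kill the off-diagonal terms of the resulting sum using condition (2b), and identify the surviving term using condition (2a) --- is precisely the paper's strategy. Where you genuinely diverge is the perfect-isometry claim: the paper does not argue from condition (3) directly, but instead invokes Lemma \ref{lem:strongisocomesfrompperm} to realize $(\chi_Q)$ as the character family of a $p$-permutation equivalence and then quotes \cite[Proposition 11.9]{Boltje_2020}. Your direct route works because for $(g_1,h)\in Y_Q$ with $g_1\in C_G(P)$ the relation $c_{g_1}\phi=\phi c_h$ forces $h\in C_H(Q)$, so the $h$-sum in the character formula for $\chi_Q\extens{H}{Y_Q,Y_Q^\circ}\chi_Q^\circ$ collapses onto $C_H(Q)$ when restricted to $C_G(P)\times C_G(P)$; this is a pleasant shortcut, but it is an argument you would have to supply, not something that is immediate, and it reproves a special case of the cited result.

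Two points in the compatibility step need more care than your sketch gives them. First, the paper begins by reducing to the case where $Q$ is fully $\mathcal{B}$-centralized (using equivariance, Lemma \ref{lem:conjandtensor}, and Corollary \ref{cor:conjandgendecomp}); this is not optional bookkeeping, since it is what makes $(C_E(Q),f_{QC_E(Q)})$ a maximal $\OO C_H(Q)f_Q$-Brauer pair, so that the index set $\mathcal{V}$ of Proposition \ref{prop:Brouediagram} can be taken inside $C_E(Q)$ and matched with $\mathcal{U}=\phi(\mathcal{V})$, and so that arbitrary Brauer elements of $Y_Q$ can be conjugated into $(N_{D\times E}(\Delta(P,\phi,Q)),e_{N_D(P)}\tensor f_{N_E(Q)}^\ast)$ before conditions (2a)/(2b) apply. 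Second, for the vanishing of the terms with $x\neq\phi(y)$ you need more than the $\mathcal{N}_{\mathcal{A}}(P)$-conjugacy recorded in Corollary \ref{cor:2bcor}: you need part (1) of Lemma \ref{lem:moreon2b}, which produces an isomorphism $\psi$ with $\psi|_P=\id_P$, hence $\mathcal{C}_{\mathcal{A}}(P)$-conjugacy of $x$ and $\phi(y)$; only then does choosing $\mathcal{U}$ as a set of $\mathcal{C}_{\mathcal{A}}(P)$-class representatives force $x=\phi(y)$. With those two repairs, and the identity $(e_{P\gp{x}}\tensor f_{Q\gp{y}}^\ast)\Res^{C_{Y_Q}(x,y)}_{C_G(P\gp{x})\times C_H(Q\gp{y})}(d_{Y_Q}^{(x,y),\epsilon_{\gp{(x,y)}}}(\chi_Q))=d_{C_G(P)\times C_H(Q)}^{(x,y),e_{P\gp{x}}\tensor f_{Q\gp{y}}^\ast}(\mu_Q)$ that you correctly anticipate, your outline completes to the paper's proof.
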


	

	\begin{proof}
		Let $Q\subgp E$ and set $P=\phi(Q)$. Then $\mu_Q$ is a perfect isometry by Lemma \ref{lem:strongisocomesfrompperm} and \cite[Proposition 11.9]{Boltje_2020}. If $(g,h)\in G\times H$ is such that
		\begin{equation*}
			{}^{(g,h)}(\Delta(P,\phi,Q),e_P\tensor f_Q^\ast)\leq(\Delta(D,\phi,E),e_D\tensor f_E^\ast)
		\end{equation*}
		then $\phi({}^hQ)={}^gP$, ${}^g(P,e_P)=({}^gP,e_{{}^gP})$, ${}^h(Q,f_Q)=({}^hQ,f_{{}^hQ})$, and $c_g\phi=\phi c_h:Q\isoto {}^gP$, so by (1) of Definition \ref{defn:strongisotypy} we have ${}^{(g,h)}\chi_Q=\chi_{{}^hQ}$. It follows that ${}^{(g,h)}\mu_Q=\mu_{{}^hQ}$. We have thus shown that the characters $\mu_Q$, $Q\subgp E$, satisfy the equivariance condition of Definition \ref{defn:isotypy}, and all that remains to check is that the compatiblity condition also holds.
		
		Continue to let $Q\subgp E$ and $P=\phi(Q)$. Let $v\in C_E(Q)$ and set $u=\phi(v)\in C_D(P)$. It suffices to check that the compatiblity condition holds in the case where $Q$ is fully $\mathcal{B}$-centralized. Indeed, let $Q'$ be a fully $\mathcal{B}$-centralized subgroup that is isomorphic in $\mathcal{B}$ to $Q$. Then $Q'$ is receptive, so there exists an element $h\in H$ such that ${}^h(Q,f_Q)=(Q',f_{Q'})$ and ${}^h(Q\gp{v},f_{Q\gp{v}})\leq(E,f_E)$. Notice that $v':={}^hv\in C_E(Q')$. If we set $P'=\phi(Q')$ then since $\phi:\mathcal{B}\isoto\mathcal{A}$ is an isomorphism there exists an element $g\in G$ such that ${}^g(P,e_P)=(P',e_{P'})$, ${}^g(P\gp{u},e_{P\gp{u}})\leq(D,e_D)$, and $c_g=\phi c_h\phi^{-1}:P\gp{u}\isoto P'\gp{{}^gu}$. Note that $u':={}^gu=\phi({}^hv)\in C_D(P')$. We have
		\begin{equation*}
			{}^{(g,h)}\mu_Q=\mu_{Q'}\qquad\text{and}\qquad {}^{(g,h)}\mu_{Q\gp{v}}=\mu_{Q'\gp{v'}}
		\end{equation*}
		by equivariance. Now if
		\begin{equation*}
			d_{C_G(P')}^{u',e_{P'\gp{u'}}}\circ(\mu_{Q'}\tensor_{C_H(Q')}-)=(\mu_{Q'\gp{v'}}\tensor_{C_H(Q'\gp{v'})}-)\circ d_{C_H(Q')}^{v',f_{Q'\gp{v'}}}
		\end{equation*}
		then the formulas given in Lemma \ref{lem:conjandtensor} and Corollary \ref{cor:conjandgendecomp} show that the diagram (\ref{ntscommutes}) commutes. Thus we are reduced to the case where $Q$ is fully $\mathcal{B}$-centralized. In this case $P$ is fully $\mathcal{A}$-centralized, $(C_D(P),e_{PC_D(P)})$ is a maximal $\OO C_G(P)e_P$-Brauer pair, and $(C_E(Q),f_{QC_E(Q)})$ is a maximal $\OO C_H(Q)f_Q$-Brauer pair. If $U\subgp C_D(P)$ then $e_{PU}$ is the unique block idempotent of $\OO C_{C_G(P)}(U)=\OO C_G(PU)$ for which $(U,e_{PU})\leq(C_D(P),e_{PC_D(P)})$ is a containment of $\OO C_G(P)e_P$-Brauer pairs. Likewise if $V\subgp C_E(Q)$ then $(V,f_{QV})\leq(C_E(Q),f_{QC_E(Q)})$ is a containment of $\OO C_H(Q)f_Q$-Brauer pairs. By \cite[Theorem 3.19(b)]{Aschbacher_2011} we have
		\begin{equation*}
			\mathcal{C}_{\mathcal{A}}(P)=\mathcal{F}_{(C_D(P),e_{PC_D(P)})}(C_G(P),e_P)
		\end{equation*}
		and
		\begin{equation*}
			\mathcal{C}_{\mathcal{B}}(Q)=\mathcal{F}_{(C_E(Q),f_{QC_E(Q)})}(C_H(Q),f_Q).
		\end{equation*}
		In particular, the isomorphism $\phi:\mathcal{B}\isoto\mathcal{A}$ restricts to an isomorphism $\phi:\mathcal{F}_{(C_E(Q),f_{QC_E(Q)})}(C_H(Q),f_Q)\to \mathcal{F}_{(C_D(P),e_{PC_D(P)})}(C_G(P),e_P)$. Let $\mathcal{V}$ denote a set of representatives for the $\mathcal{C}_{\mathcal{B}}(Q)$-conjugacy classes of elements of $C_E(Q)$ such that $v\in\mathcal{V}$. Set $\mathcal{U}=\phi(\mathcal{V})$. Then $\mathcal{U}$ is a set of representatives for the $\mathcal{C}_{\mathcal{A}}(P)$-conjugacy classes of $C_D(P)$ and $u\in\mathcal{U}$. By Proposition \ref{prop:Brouediagram} the diagram below commutes:
		\begin{equation}\label{ntscommutes3}
			\begin{tikzcd}
				{CF(C_H(Q),f_Q;\K)} && {CF(C_G(P),e_P;\K)} \\
				{\bigoplus_{y\in\mathcal{V}}CF_{p'}(C_H(Q\gp{y}),f_{Q\gp{y}};\K)} && {\bigoplus_{x\in\mathcal{U}}CF_{p'}(C_G(P\gp{x}),e_{P\gp{x}};\K)}
				\arrow["{\mu_Q\tensor_{C_H(Q)}-}", from=1-1, to=1-3]
				\arrow[from=2-1, to=2-3]
				\arrow["{\dsum_{y\in\mathcal{V}}d_{C_H(Q)}^{y,f_{Q\gp{y}}}}"', from=1-1, to=2-1]
				\arrow["{\dsum_{x\in\mathcal{U}}d_{C_G(P)}^{x,e_{P\gp{x}}}}", from=1-3, to=2-3]
			\end{tikzcd}
		\end{equation}
		where the lower horizontal map is defined by
		\begin{equation*}
			\sum_{y\in\mathcal{V}}\psi_y\mapsto\sum_{x\in\mathcal{U}}\sum_{y\in\mathcal{V}}d_{C_G(P)\times C_H(Q)}^{(x,y),e_{P\gp{x}}\tensor f_{Q\gp{y}}^\ast}(\mu_Q)\tensor_{C_H(Q\gp{y})}\psi_y.
		\end{equation*}
		Now let $x\in\mathcal{U}$ and let $y\in\mathcal{V}$. Let $\epsilon_{\gp{(x,y)}}$ denote the $C_{Y_Q}(x,y)$-orbit sum of $e_{P\gp{x}}\tensor f_{Q\gp{y}}^\ast$. Then by Lemma \ref{lem:normalizerbrauerpairs} $((x,y),\epsilon_{\gp{(x,y)}})\in\BE_\OO(Y_Q,e_P\tensor f_Q^\ast)$ and
		\begin{equation*}
			(\gp{(x,y)},\epsilon_{\gp{(x,y)}})\leq(N_{D\times E}(\Delta(P,\phi,Q)),e_{N_D(P)}\tensor f_{N_E(Q)}^\ast).
		\end{equation*}
		Therefore, by condition (2a) of Definition \ref{defn:strongisotypy}, if $x=\phi(y)$ then
		\begin{equation*}
			d_{Y_Q}^{(x,y),\epsilon_{\gp{(x,y)}}}(\chi_Q)=d_{C_{Y_Q}(x,y)}(\Ind_{C_{Y_Q\cap Y_{Q\gp{y}}}(x,y)}^{C_{Y_Q}(x,y)}(\Res_{C_{Y_Q\cap Y_{Q\gp{y}}}(x,y)}^{Y_{Q\gp{y}}}(\chi_{Q\gp{y}}))).
		\end{equation*}
		If $x\neq\phi(y)$ then by condition (2b) of Definition \ref{defn:strongisotypy} and Lemma \ref{lem:moreon2b} we have
		\begin{equation*}
			d_{Y_Q}^{(x,y),\epsilon_{\gp{(x,y)}}}(\chi_Q)=0.
		\end{equation*}
		Next we observe that
		\begin{equation*}
			\Res_{C_G(P\gp{x})\times C_H(Q\gp{y})}^{C_{Y_Q}(x,y)}(d_{Y_Q}^{(x,y),\epsilon_{\gp{(x,y)}}}(\chi_Q))
		\end{equation*}
		is a class function in $CF_{p'}(C_G(P\gp{x})\times C_H(Q\gp{y}),\epsilon_{\gp{(x,y)}};\K)$ whose evaluation at a $p'$-element $(s,t)\in[C_G(P\gp{x})\times C_H(Q\gp{y})]_{p'}$ is equal to
		\begin{align*}
			\chi_Q((xs,yt)\epsilon_{\gp{(x,y)}})	&=\sum_{e\tensor f^\ast\in\Orb_{C_{Y_Q}(x,y)}(e_{P\gp{x}}\tensor f_{Q\gp{y}}^\ast)}\chi_Q((xs,yt)(e\tensor f^\ast))\\
			&=\sum_{e\tensor f^\ast\in\Orb_{C_{Y_Q}(x,y)}(e_{P\gp{x}}\tensor f_{Q\gp{y}}^\ast)}d_{C_G(P)\times C_H(Q)}^{(x,y),e\tensor f^\ast}(\Res_{C_G(P)\times C_H(Q)}^{Y_Q}(\chi_Q))(s,t)\\
			&=\sum_{e\tensor f^\ast\in\Orb_{C_{Y_Q}(x,y)}(e_{P\gp{x}}\tensor f_{Q\gp{y}}^\ast)}d_{C_G(P)\times C_H(Q)}^{(x,y),e\tensor f^\ast}(\mu_Q)(s,t).
		\end{align*}
		It follows that
		\begin{equation*}
			(e_{P\gp{x}}\tensor f_{Q\gp{y}}^\ast)\Res_{C_G(P\gp{x})\times C_H(Q\gp{y})}^{C_{Y_Q}(x,y)}(d_{Y_Q}^{(x,y),\epsilon_{\gp{(x,y)}}}(\chi_Q))=d_{C_G(P)\times C_H(Q)}^{(x,y),e_{P\gp{x}}\tensor f_{Q\gp{y}}^\ast}(\mu_Q).
		\end{equation*}
		In particular, if $x\neq\phi(y)$ then
		\begin{equation*}
			d_{C_G(P)\times C_H(Q)}^{(x,y),e_{P\gp{x}}\tensor f_{Q\gp{y}}^\ast}(\mu_Q)=0
		\end{equation*}
		and if $x=\phi(y)$ then one computes via the Mackey formula that
		\begin{equation*}
			d_{C_G(P)\times C_H(Q)}^{(x,y),e_{P\gp{x}}\tensor f_{Q\gp{y}}^\ast}(\mu_Q)=d_{C_G(P\gp{x})\times C_H(Q\gp{y})}(\mu_{Q\gp{y}})
		\end{equation*}
		It follows that the lower horizontal map in the diagram (\ref{ntscommutes3}) maps a class function $\psi_y\in CF_{p'}(C_H(Q\gp{y}),f_{Q\gp{y}};\K)$ to
		\begin{align*}
			\psi_y	&\mapsto \sum_{x\in\mathcal{U}}d_{C_G(P)\times C_H(Q)}^{(x,y),e_{P\gp{x}}\tensor f_{Q\gp{y}}^\ast}(\mu_Q)\tensor_{C_H(Q\gp{y})}\psi_y\\
			&=d_{C_G(P\gp{\phi(y)})\times C_H(Q\gp{y})}(\mu_{Q\gp{y}})\tensor\psi_y\\
			&=\mu_{Q\gp{y}}\tensor_{C_H(Q\gp{y})}\psi_y.
		\end{align*}
		Letting $x=u$ and $y=v$, the above implies that the diagram (\ref{ntscommutes}) of the compatibility condition commutes. The proof is complete.
	\end{proof}

\section{Commutative diagrams from a strong isotypy}

	Continue to let $G$ and $H$ be finite groups, $(\K,\OO,F)$ a $p$-modular system large enough for $G\times H$, $A$ a block of $\OO G$, and let $B$ be a block of $\OO H$. Assume Hypotheses \ref{hyp} and the notation set there. In this section we describe a commutative diagram that is induced from a strong isotypy and which extends the commutative diagram of the compatibility condition in the definition of ``isotypy.''
	
	\begin{lemma}\label{lem:commdiagramattrivlevel}
		Let $A\in\Bl(\OO G)$ and $B\in\Bl(\OO H)$. Assume Hypotheses \ref{hyp} and let $\chi_Q$, $Q\subgp E$, be a strong isotypy between $A$ and $B$. Let $v\in E$ and set $u=\phi(v)$. Then the diagram below commutes:
		\begin{equation}\label{commdia}
			\begin{tikzcd}
				{CF(H,B;\K)} &&&& {CF(G,A;\K)} \\
				{CF_{p'}(C_H(v),f_{\gp{v}};\K)} &&&& {CF_{p'}(C_G(u),e_{\gp{u}};\K)}
				\arrow["{\chi_{\set{1}}\tensor_H-}", from=1-1, to=1-5]
				\arrow["{d_H^{v,f_{\gp{v}}}}"', from=1-1, to=2-1]
				\arrow["{d_G^{u,e_{\gp{u}}}}", from=1-5, to=2-5]
				\arrow["{\Res_{C_G(u)\times C_H(v)}^{Y_{\gp{v}}}(\chi_{\gp{v}})\tensor_{C_H(v)}-}"', from=2-1, to=2-5]
			\end{tikzcd}
		\end{equation}
		Moreover, $\chi_{\set{1}}$ and $\Res_{C_G(u)\times C_H(v)}^{Y_{\gp{v}}}(\chi_{\gp{v}})$ are perfect isometries.
	\end{lemma}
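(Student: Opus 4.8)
The plan is to recognize the square (\ref{commdia}) as the $Q=\set{1}$ instance of the compatibility square (\ref{ntscommutes}) attached to the isotypy produced by Theorem \ref{thm:strongisotimpliesisot}. First I would record the relevant degenerations under Hypotheses \ref{hyp}. Since $\phi(\set{1})=\set{1}$ one has $e_{\set{1}}=e_A$ and $f_{\set{1}}=e_B$ (the block idempotents of $A$ and $B$), one has $\Delta(\set{1},\phi,\set{1})=\set{(1,1)}$, and therefore
\begin{equation*}
	Y_{\set{1}}=N_{G\times H}(\Delta(\phi(\set{1}),\phi,\set{1}),e_{\phi(\set{1})}\tensor f_{\set{1}}^\ast)=N_{G\times H}(\set{(1,1)},e_A\tensor e_B^\ast)=G\times H,
\end{equation*}
the trivial subgroup being normal and $e_A\tensor e_B^\ast$ being a (central) block idempotent of $\OO[G\times H]$. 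Consequently the perfect isometry attached to $Q=\set{1}$ by Theorem \ref{thm:strongisotimpliesisot} is $\mu_{\set{1}}=\Res_{C_G(\set{1})\times C_H(\set{1})}^{Y_{\set{1}}}(\chi_{\set{1}})=\Res_{G\times H}^{G\times H}(\chi_{\set{1}})=\chi_{\set{1}}$, and for the given $v\in E=C_E(\set{1})$, writing $u=\phi(v)$, the one attached to $Q=\gp{v}$ is $\mu_{\gp{v}}=\Res_{C_G(\phi(\gp{v}))\times C_H(\gp{v})}^{Y_{\gp{v}}}(\chi_{\gp{v}})=\Res_{C_G(u)\times C_H(v)}^{Y_{\gp{v}}}(\chi_{\gp{v}})$, which is exactly the class function tensored along the bottom edge of (\ref{commdia}).

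Next I would invoke Theorem \ref{thm:strongisotimpliesisot}: the family $(\mu_Q)_{Q\subgp E}$ is an isotypy between $A$ and $B$ in the sense of Definition \ref{defn:isotypy}. Since every member of an isotypy is by definition a perfect isometry, applying this to $Q=\set{1}$ and $Q=\gp{v}$ yields at once that $\chi_{\set{1}}=\mu_{\set{1}}$ and $\Res_{C_G(u)\times C_H(v)}^{Y_{\gp{v}}}(\chi_{\gp{v}})=\mu_{\gp{v}}$ are perfect isometries, which is the ``moreover'' assertion. (This is also a direct consequence of \cite[Proposition 11.9]{Boltje_2020} applied to the $p$-permutation equivalence supplied by Lemma \ref{lem:strongisocomesfrompperm}.) Moreover, the compatibility condition (2) of Definition \ref{defn:isotypy} asserts that the square (\ref{ntscommutes}) commutes for every $Q\subgp E$ and every $v\in C_E(Q)$.

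Finally I would specialize that compatibility square to $Q=\set{1}$ and to our element $v\in E$. Unwinding the notation in (\ref{ntscommutes}) — $C_H(\set{1})=H$ with $f_{\set{1}}=e_B$, so $CF(C_H(\set{1}),f_{\set{1}};\K)=CF(H,B;\K)$; $P=\phi(\set{1})=\set{1}$ with $e_{\set{1}}=e_A$, so $CF(C_G(\set{1}),e_{\set{1}};\K)=CF(G,A;\K)$; $\set{1}\gp{v}=\gp{v}$, $\set{1}\gp{u}=\gp{u}$, $C_H(\gp v)=C_H(v)$, $C_G(\gp u)=C_G(u)$; and the two horizontal arrows being $\mu_{\set{1}}\tensor_H-=\chi_{\set{1}}\tensor_H-$ and $\mu_{\gp{v}}\tensor_{C_H(v)}-=\Res_{C_G(u)\times C_H(v)}^{Y_{\gp{v}}}(\chi_{\gp{v}})\tensor_{C_H(v)}-$ — the square (\ref{ntscommutes}) becomes verbatim the square (\ref{commdia}). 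Hence (\ref{commdia}) commutes. There is no genuine obstacle here beyond matching up notation; all of the substance already lies in Theorem \ref{thm:strongisotimpliesisot} and, upstream, in Lemma \ref{lem:strongisocomesfrompperm}. (Should a self-contained argument be preferred, one could instead feed $\mu=\chi_{\set{1}}$ into Proposition \ref{prop:Brouediagram}, identify the $(u,v)$-entry $d_{G\times H}^{(u,v),e_{\gp{u}}\tensor f_{\gp{v}}^\ast}(\chi_{\set{1}})$ with the ordinary decomposition of $\mu_{\gp{v}}$ via the coherence condition of Corollary \ref{cor:diagcoherencecor3} at $R=\set{(1,1)}$ — legitimate precisely because $u=\phi(v)$ makes $\gp{(u,v)}=\Delta(\gp{u},\phi,\gp{v})$ twisted diagonal with trivial orbit sum — and observe that the remaining matrix entries vanish because the indecomposable summands of $\chi_{\set{1}}$ have twisted diagonal vertices; but the route through Theorem \ref{thm:strongisotimpliesisot} is shorter.)
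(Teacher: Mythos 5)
Your proposal is correct and follows exactly the paper's own (one-line) argument: apply Theorem \ref{thm:strongisotimpliesisot} to get an isotypy and specialize the compatibility square of Definition \ref{defn:isotypy} to $Q=\set{1}$, noting $Y_{\set{1}}=G\times H$ so that $\mu_{\set{1}}=\chi_{\set{1}}$. The unwinding of notation you supply is accurate (modulo writing $e_B$ where the paper's convention would be $f_B$ for the identity of $B$), and the ``moreover'' clause follows as you say since the members of an isotypy are perfect isometries by definition.
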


	\begin{proof}
		This is a consequence of Theorem \ref{thm:strongisotimpliesisot} and an application of the compatibility condition in Definition \ref{defn:isotypy} with $Q=\set{1}$.
	\end{proof}

	\begin{theorem}\label{thm:mydiagrams}
		Let $A\in\Bl(\OO G)$ and $B\in\Bl(\OO H)$. Assume Hypotheses \ref{hyp} and let $\chi_Q$, $Q\subgp E$, be a strong isotypy between $A$ and $B$. Fix a subgroup $Q\subgp E$ and set $P=\phi(Q)$. For each subgroup $U\subgp N_D(P)$ write $\epsilon_U$ for the unique block idempotent of $\OO C_{I_P}(U)$ such that $(U,\epsilon_U)\leq(N_D(P),e_{N_D(P)})$ is a containment of $\OO I_Pe_P$-Brauer pairs and for each $V\subgp N_E(Q)$ write $\varphi_V$ for the unique block idempotent of $\OO C_{J_Q}(V)$ such that $(V,\varphi_V)\leq(N_E(Q),f_{N_E(Q)})$ is a containment of $\OO J_Qf_Q$-Brauer pairs (see Lemma \ref{lem:normalizerbrauerpairs}). Let $v\in N_E(Q)$ and set $u=\phi(v)\in N_D(P)$. Let $\chi_{Q\gp{v}}'$ denote the character
		\begin{equation*}
			\chi_{Q\gp{v}}'=\Ind_{C_{Y_Q\cap Y_{Q\gp{v}}}(u,v)}^{C_{I_P}(u)\times C_{J_Q}(v)}\Res_{C_{Y_Q\cap Y_{Q\gp{v}}}(u,v)}^{Y_{Q\gp{v}}}\chi_{Q\gp{v}}.
		\end{equation*}
		Then $\Ind_{Y_Q}^{I_P\times J_Q}(\chi_Q)$ and $\chi_{Q\gp{v}}'$ are perfect isometries and the diagram below commutes:
		\begin{equation}\label{mydiagram}\begin{tikzcd}
				{CF(J_Q,f_Q;\K)} &&&& {CF(I_P,e_P;\K)} \\
				\\
				{CF_{p'}(C_{J_Q}(v),\varphi_{\gp{v}};\K)} &&&& {CF_{p'}(C_{I_P}(u),\epsilon_{\gp{u}};\K)}
				\arrow["{\Ind_{Y_Q}^{I_P\times J_Q}(\chi_Q)\tensor_{J_Q}-}", from=1-1, to=1-5]
				\arrow["{d_{J_Q}^{v,\varphi_{\gp{v}}}}"', from=1-1, to=3-1]
				\arrow["{d_{I_P}^{u,\epsilon_{\gp{u}}}}", from=1-5, to=3-5]
				\arrow["{\chi_{Q\gp{v}}'\tensor_{C_{J_Q}(v)}-}"', from=3-1, to=3-5]
		\end{tikzcd}\end{equation}
		Moreover, if $v\in C_E(Q)$ then the diagrams (\ref{ntscommutes}) and (\ref{mydiagram}) form a commutative cube:
		\begin{equation*}
			\begin{tikzcd}[row sep=tiny, column sep=tiny, font=\tiny]
				& CF(C_H(Q),f_Q;\K) \arrow[rr] \arrow[dd] & & CF(C_G(P),e_P;\K) \arrow[dd] \\
				CF(J_Q,f_Q;\K) \arrow[ur, "\Res"] \arrow[rr, crossing over] \arrow[dd] & & CF(I_P,e_P;\K) \arrow[ur, "\Res"'] \\
				& CF_{p'}(C_H(Q\gp{v},f_{Q\gp{v}};\K) \arrow[rr] & & CF_{p'}(C_G(P\gp{u}),e_{P\gp{u}};\K) \\
				CF_{p'}(C_{J_Q}(v),\varphi_{\gp{v}};\K) \arrow[ur, "f_{Q\gp{v}}\Res" near end] \arrow[rr] & & CF_{p'}(C_{I_P}(u),\epsilon_{\gp{u}};\K) \arrow[ur, "e_{P\gp{u}}\Res"' near end] \arrow[from=uu, crossing over]\\
			\end{tikzcd}
		\end{equation*}
	\end{theorem}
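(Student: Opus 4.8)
The plan is to reduce everything to the local structure of $p$-permutation equivalences. By Lemma~\ref{lem:strongisocomesfrompperm} there is a $p$-permutation equivalence $\gamma\in T^\Delta(A,B)$ with maximal $\gamma$-Brauer pair $(\Delta(D,\phi,E),e_D\tensor f_E^\ast)$ such that $\chi_W=\chi_{\gamma(\Delta(\phi(W),\phi,W),e_{\phi(W)}\tensor f_W^\ast)}$ for every $W\subgp E$; in particular $\chi_Q=\chi_{\gamma(\Delta(P,\phi,Q),e_P\tensor f_Q^\ast)}$. Conjugating the whole diagram by a suitable element of $G\times H$ and invoking condition~(1) of Definition~\ref{defn:strongisotypy}, I may assume $Q$ is fully $\mathcal{B}$-normalized; then $(N_D(P),e_{N_D(P)})$ and $(N_E(Q),f_{N_E(Q)})$ are maximal Brauer pairs for the blocks $\OO I_Pe_P$ and $\OO J_Qf_Q$ by Lemma~\ref{lem:normalizerbrauerpairs}(a), the idempotents $\epsilon_U$, $\varphi_V$ of the theorem are exactly those produced by Lemma~\ref{lem:normalizerbrauerpairs}(b), and $\Delta(P,\phi,Q)$ is fully $\mathcal{A}\times\mathcal{B}^\ast$-normalized by Lemma~\ref{lem:twisteddiagfullynormalized}. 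Now set $\gamma_{(P,Q)}:=\Ind_{Y_Q}^{I_P\times J_Q}\bigl(\gamma(\Delta(P,\phi,Q),e_P\tensor f_Q^\ast)\bigr)$. Using the local theory of $p$-permutation equivalences of \cite{Boltje_2020}, the fact that $(\Delta(P,\phi,Q),e_P\tensor f_Q^\ast)$ is a $\gamma$-Brauer pair gives that $\gamma_{(P,Q)}$ is a $p$-permutation equivalence between $\OO I_Pe_P$ and $\OO J_Qf_Q$, with maximal Brauer pair $(\Delta(N_D(P),\phi,N_E(Q)),e_{N_D(P)}\tensor f_{N_E(Q)}^\ast)$ and character $\Ind_{Y_Q}^{I_P\times J_Q}(\chi_Q)$; since $p$-permutation equivalences are perfect isometries (\cite[Proposition~11.9]{Boltje_2020}), $\Ind_{Y_Q}^{I_P\times J_Q}(\chi_Q)$ is a perfect isometry. (Alternatively this can be extracted from condition~(3) of Definition~\ref{defn:strongisotypy} by a Mackey computation for the extended tensor product together with the Fong-type identity $\Ind_{N_{G\times G}(\Delta(P),e_P\tensor e_P^\ast)}^{I_P\times I_P}([\K C_G(P)e_P])=[\K I_Pe_P]$.)

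Applying Lemma~\ref{lem:charsofppermequiv} to $\gamma_{(P,Q)}$, relative to the maximal Brauer pairs above and the restriction $\phi\colon N_E(Q)\isoto N_D(P)$ (which induces $\mathcal{N}_{\mathcal{B}}(Q)\isoto\mathcal{N}_{\mathcal{A}}(P)$), I get a strong isotypy between $\OO I_Pe_P$ and $\OO J_Qf_Q$ whose character at a subgroup $W\subgp N_E(Q)$ is $\chi^{(P,Q)}_W:=\chi_{\gamma_{(P,Q)}(\Delta(\phi(W),\phi,W),\epsilon_W\tensor\varphi_W^\ast)}$. The crucial step is the identification $\Res_{C_{I_P}(u)\times C_{J_Q}(v)}(\chi^{(P,Q)}_{\gp{v}})=\chi_{Q\gp{v}}'$, equivalently that the Brauer construction of the induced module $\gamma_{(P,Q)}$ at $(\gp{(u,v)},\epsilon_{\gp{u}}\tensor\varphi_{\gp{v}}^\ast)$, restricted to the relevant centralizer, has character $\chi_{Q\gp{v}}'$. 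To prove it I would expand the Brauer construction of $\Ind_{Y_Q}^{I_P\times J_Q}(\cdots)$ via the Mackey formula for Brauer constructions, use Lemma~\ref{lem:brauerconst} to relate the iterated Brauer constructions that arise, and apply coherence condition~(2a) of Definition~\ref{defn:strongisotypy} to $\chi_Q$ — which identifies $d_{Y_Q}^{(u,v),\epsilon_{\gp{(u,v)}}}(\chi_Q)$ with the image under $d_{C_{Y_Q}(u,v)}^1$ of $\Ind_{C_{Y_Q\cap Y_{Q\gp{v}}}(u,v)}^{C_{Y_Q}(u,v)}\Res_{C_{Y_Q\cap Y_{Q\gp{v}}}(u,v)}^{Y_{Q\gp{v}}}(\chi_{Q\gp{v}})$, precisely the group-theoretic shape of $\chi_{Q\gp{v}}'$ after the further induction from $C_{Y_Q}(u,v)$ to $C_{I_P}(u)\times C_{J_Q}(v)$. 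Granting this, $\chi_{Q\gp{v}}'$ is the perfect isometry $\mu^{(P,Q)}_{\gp{v}}$ of the isotypy induced by $\gamma_{(P,Q)}$ (Theorem~\ref{thm:strongisotimpliesisot}), hence is a perfect isometry.

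With these two facts in hand, the commutativity of diagram~(\ref{mydiagram}) is precisely diagram~(\ref{commdia}) of Lemma~\ref{lem:commdiagramattrivlevel} applied to the strong isotypy coming from $\gamma_{(P,Q)}$ and evaluated at the element $v\in N_E(Q)$ with $u=\phi(v)$: the top map is $\chi^{(P,Q)}_{\set{1}}\tensor_{J_Q}-=\Ind_{Y_Q}^{I_P\times J_Q}(\chi_Q)\tensor_{J_Q}-$, the vertical maps are $d_{J_Q}^{v,\varphi_{\gp{v}}}$ and $d_{I_P}^{u,\epsilon_{\gp{u}}}$ (the block idempotents matching by Lemma~\ref{lem:normalizerbrauerpairs}(b)), and the bottom map is $\mu^{(P,Q)}_{\gp{v}}\tensor_{C_{J_Q}(v)}-=\chi_{Q\gp{v}}'\tensor_{C_{J_Q}(v)}-$. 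One can also prove commutativity directly by feeding $\mu=\Ind_{Y_Q}^{I_P\times J_Q}(\chi_Q)$ into Proposition~\ref{prop:Brouediagram}, projecting onto the $(u,\epsilon_{\gp{u}})$- and $(v,\varphi_{\gp{v}})$-components, annihilating the off-diagonal contributions with Corollary~\ref{cor:2bcor}, and evaluating the surviving term with condition~(2a). For the commutative cube, the back face is (\ref{ntscommutes}) and the front face is (\ref{mydiagram}), both already established for $v\in C_E(Q)$; the two side faces express the compatibility of restriction with the generalized decomposition map and with the $\tensor$-operation, i.e.\ Corollary~\ref{cor:conjandgendecomp} and Lemma~\ref{lem:conjandtensor} together with the defining relations $\mu_Q=\Res_{C_G(P)\times C_H(Q)}^{Y_Q}(\chi_Q)$ and $\mu_{Q\gp{v}}=\Res_{C_G(P\gp{u})\times C_H(Q\gp{v})}^{Y_{Q\gp{v}}}(\chi_{Q\gp{v}})$; and the top and bottom faces reduce, by a Mackey argument using $p_1(Y_Q)=I_P$ and $p_2(Y_Q)=J_Q$, to the identity $\Res_{C_G(P)}^{I_P}\bigl(\Ind_{Y_Q}^{I_P\times J_Q}(\chi_Q)\tensor_{J_Q}\psi\bigr)=\mu_Q\tensor_{C_H(Q)}\Res_{C_H(Q)}^{J_Q}(\psi)$ and its decomposed analogue.

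The main obstacle is the identification in the second paragraph: matching the Mackey decomposition of the Brauer construction of the induced module $\gamma_{(P,Q)}$ — controlling in particular which $I_P\times J_Q$-conjugates of $\gp{(u,v)}$ lie in $Y_Q$ and which block idempotents survive the relevant cuts — against the nested operator $\Ind_{C_{Y_Q\cap Y_{Q\gp{v}}}(u,v)}^{C_{I_P}(u)\times C_{J_Q}(v)}\Res_{C_{Y_Q\cap Y_{Q\gp{v}}}(u,v)}^{Y_{Q\gp{v}}}$ appearing in the definition of $\chi_{Q\gp{v}}'$. Once this bookkeeping of induction/restriction indices and block idempotents is carried out (it rests on Lemmas~\ref{lem:normalizerbrauerpairs}, \ref{lem:GHBrauerpaircontainment} and~\ref{lem:brauerpairAB}), everything else is formal.
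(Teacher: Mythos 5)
Your proposal follows essentially the same route as the paper: reduce to $Q$ fully $\mathcal{B}$-normalized, form the induced $p$-permutation equivalence $\gamma_Q=\Ind_{Y_Q}^{I_P\times J_Q}(\gamma(\Delta(P,\phi,Q),e_P\tensor f_Q^\ast))$ between $\OO I_Pe_P$ and $\OO J_Qf_Q$ via \cite[Theorem 11.4, Proposition 11.5(b)]{Boltje_2020}, apply Lemma \ref{lem:charsofppermequiv} to obtain a strong isotypy at the normalizer level, and conclude by Lemma \ref{lem:commdiagramattrivlevel} after identifying $\theta_{\set{1}}$ and the restriction of $\theta_{\gp{v}}$ with $\Ind_{Y_Q}^{I_P\times J_Q}(\chi_Q)$ and $\chi_{Q\gp{v}}'$. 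The "crucial identification" you flag as the main obstacle is precisely the step the paper itself dispatches with a brief "after noting that...", so your sketch of how to verify it via the Mackey formula for Brauer constructions and condition (2a) is a reasonable filling-in rather than a divergence.
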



	\begin{proof}
		We may assume without loss of generality that $Q$ is fully $\mathcal{B}$-normalized. Then $P$ is fully $\mathcal{A}$-normalized. By Lemma \ref{lem:normalizerbrauerpairs} $(N_D(P),e_{N_D(P)})$ is a maximal $\OO I_Pe_P$-Brauer pair and $(N_E(Q),f_{N_E(Q)})$ is a maximal $\OO J_Qf_Q$-Brauer pair. Furthermore,
		\begin{align*}
			\mathcal{N}_{\mathcal{A}}(P)	&=\mathcal{F}_{(N_D(P),e_{N_D(P)})}(I_P,e_P)\\
			\mathcal{N}_{\mathcal{B}}(Q)	&=\mathcal{F}_{(N_E(Q),f_{N_E(Q)})}(J_Q,f_Q),
		\end{align*}
		and $\phi:\mathcal{B}\isoto\mathcal{A}$ restricts to an isomorphism $\phi:\mathcal{N}_{\mathcal{B}}(Q)\isoto\mathcal{N}_{\mathcal{A}}(P)$.
		
		Let $\gamma\in T^\Delta(A,B)$ be the unique $p$-permutation equivalence with maximal $\gamma$-Brauer pair $(\Delta(D,\phi,E),e_D\tensor f_E^\ast)$ mapping to $(\chi_Q)_{Q\subgp E}$ under the bijection in Theorem \ref{thm:ppermequivtosi}. Set
		\begin{equation*}
			\gamma_Q=\Ind_{Y_Q}^{I_P\times J_Q}(\gamma(\Delta(P,\phi,Q),e_P\tensor f_Q^\ast))\in T^\Delta(\OO I_Pe_P,\OO J_Qf_Q).
		\end{equation*}
		Then $\gamma_Q$ is a $p$-permutation equivalence between $\OO I_Pe_P$ and $\OO J_Qf_Q$ by \cite[Theorem 11.4]{Boltje_2020}, and $(\Delta(N_D(P),\phi,N_E(Q)),e_{N_D(P)}\tensor f_{N_E(Q)}^\ast)$ is a maximal $\gamma_Q$-Brauer pair by \cite[Proposition 11.5(b)]{Boltje_2020}. For each $V\subgp N_E(Q)$ set
		\begin{equation*}
			Z_V=N_{I_P\times J_Q}(\Delta(\phi(V),\phi,V),\epsilon_{\phi(V)}\tensor\varphi_V^\ast)
		\end{equation*}
		and set
		\begin{equation*}
			\theta_V=\chi_{\gamma_Q(\Delta(\phi(V),\phi,V),\epsilon_{\phi(V)}\tensor\varphi_V^\ast)}.
		\end{equation*}
		Then by Lemma \ref{lem:charsofppermequiv} the characters $\theta_V$, $V\subgp N_E(Q)$ form a strong isotypy between $\OO I_Pe_P$ and $\OO J_Qf_Q$. Therefore by Lemma \ref{lem:commdiagramattrivlevel} the diagram below commutes:
		\[\begin{tikzcd}
			{CF(J_Q,f_Q;\K)} &&&& {CF(I_P,e_P;\K)} \\
			{CF_{p'}(C_{J_Q}(v),\varphi_{\gp{v}};\K)} &&&& {CF_{p'}(C_{I_P}(u),\epsilon_{\gp{u}};\K)}
			\arrow["{\theta_{\set{1}}\tensor_{J_Q}-}", from=1-1, to=1-5]
			\arrow["{\Res_{C_{I_P}(u)\times C_{J_Q}(v)}^{Z_{\gp{v}}}(\theta_{\gp{v}})\tensor_{C_{J_Q}(v)}-}"', from=2-1, to=2-5]
			\arrow["{d_{J_Q}^{v,\varphi_{\gp{v}}}}"', from=1-1, to=2-1]
			\arrow["{d_{I_P}^{u,\epsilon_{\gp{u}}}}", from=1-5, to=2-5]
		\end{tikzcd}\]
		The commutativity of the diagram (\ref{mydiagram}) follows after noting that $\theta_{\set{1}}=\Ind_{Y_Q}^{I_P\times J_Q}(\chi_Q)$ and $\Res_{C_{I_P}(u)\times C_{J_Q}(v)}^{Z_{\gp{v}}}(\theta_{\gp{v}})=\chi_{Q\gp{v}}'$. The final statement regarding the ``commutative cube'' is easy to verify.
	\end{proof}
	
	
	\bibliographystyle{plain}
	\bibliography{../../../../../Bibliography/bibliography}
	
\end{document}